\newtheorem{thm}{Theorem}
\newtheorem{cor}[thm]{Corollary}
\newtheorem{defi}[thm]{Definition}
\newtheorem{rem}[thm]{Remark}
\newtheorem{nota}[thm]{Notation}
\newtheorem{exa}[thm]{Example}
\newtheorem{princ}[thm]{Principle}
\newtheorem{ack}[thm]{Acknowledgement}
\newtheorem{conj}[thm]{Conjecture}
\newtheorem*{tempo*}{Template}
\newtheorem{theorem}[thm]{Theorem}
\newtheorem{lemma}[thm]{Lemma}
\newtheorem{definition}[thm]{Definition}
\newtheorem{corollary}[thm]{Corollary}
\newtheorem{remark}[thm]{Remark}
\newcommand\be{\begin{equation}}
\newcommand\ee{\end{equation}} 
\def\bdefi{\begin{defi}\rm}
\def\edefi{\end{defi}}
\def\bnota{\begin{nota}\rm}
\def\enota{\end{nota}}
\def\FIVE{\Pi_{1}^{1}\text{-\textup{\textsf{CA}}}_{0}}
\def\SIX{\Pi_{2}^{1}\text{-\textsf{\textup{CA}}}_{0}}
\def\SIXK{\Pi_{k}^{1}\text{-\textsf{\textup{CA}}}_{0}^{\omega}}
\def\ATR{\textup{\textsf{ATR}}}
\def\Z{\textup{\textsf{Z}}}
\def\NFP{\textup{\textsf{NFP}}}
\def\ZFC{\textup{\textsf{ZFC}}}
\def\ZF{\textup{\textsf{ZF}}}
 \def\r{\mathbb{r}}
\def\RCA{\textup{\textsf{RCA}}}
\def\({\textup{(}}
\def\){\textup{)}}
\def\WO{\textup{\textsf{WO}}}
\def\RCAo{\textup{\textsf{RCA}}_{0}^{\omega}}
\def\ACAo{\textup{\textsf{ACA}}_{0}^{\omega}}
\def\WKL{\textup{\textsf{WKL}}}
\def\WWKL{\textup{\textsf{WWKL}}}
\def\bye{\end{document}}
\def\N{{\mathbb  N}}
\def\Q{{\mathbb  Q}}
\def\R{{\mathbb  R}}
\def\L{\textsf{\textup{L}}}
\def\I{{\textsf{\textup{I}}}}
\def\MUC{\textup{\textsf{MUC}}}
\def\di{\rightarrow}
\def\asa{\leftrightarrow}
\def\ACA{\textup{\textsf{ACA}}}
\def\QFAC{\textup{\textsf{QF-AC}}}
\def\HBU{\textup{\textsf{HBU}}}
\def\Y{\textup{\textsf{Y}}}
\def\PHM{\textup{\textsf{Pohm}}}
\def\HAR{\textup{\textsf{Harnack}}}
\def\BW{\textup{\textsf{BW}}}
\def\LEB{\textup{\textsf{Lebesgue}}}
\def\cocode{\textup{\textsf{cocode}}}
\def\TP{\textup{\textsf{Tp}}}
\def\DCA{\Delta\textup{\textsf{-CA}}}
\def\NCC{\textup{\textsf{NCC}}}
\def\NBIJ{\textup{\textsf{NBI}}}
\def\NBI{\textup{\textsf{NBI}}}
\def\STS{\textup{\textsf{STS}}}
\def\Arz{\textup{\textsf{Arz}}}
\def\NIN{\textup{\textsf{NIN}}}
\def\BCT{\textup{\textsf{BCT}}}
\def\w{\textup{\textsf{w}}}
\def\SS{\textup{\textsf{S}}}
\def\BOOT{\textup{\textsf{BOOT}}}
\def\open{\textup{\textsf{open}}}
\def\blambda{\pmb{\lambda}}
\def\LIN{\textup{\textsf{LIN}}}
\def\WHBU{\textup{\textsf{WHBU}}}
\def\HBC{\textup{\textsf{HBC}}}
\def\eps{\varepsilon}
\def\RT{\textup{\textsf{RT}}}
\def\ECF{\textup{\textsf{ECF}}}
\newcommand{\Tp}{\mathsf{Tp}}
\numberwithin{equation}{section}
\numberwithin{thm}{section}
\begin{document}
\title{On the uncountability of $\R$}

\author{Dag Normann}
\address{Department of Mathematics, The University 
of Oslo, P.O. Box 1053, Blindern N-0316 Oslo, Norway}
\email{dnormann@math.uio.no}
\author{Sam Sanders}
\address{Department of Philosophy II, RUB Bochum, Germany}
\email{sasander@me.com}
\keywords{Uncountability of $\R$, Reverse Mathematics, Kleene S1-S9, higher-order computability theory}
\subjclass[2010]{03B30, 03F35, 03D55, 03D30}
\begin{abstract} 
Cantor's first set theory paper (1874) establishes the uncountability of $\R$.  
We study this most basic mathematical fact formulated in the language of higher-order arithmetic.  
In particular, we investigate the logical and computational properties of $\NIN$ (resp.\ $\NBI$), i.e.\ the third-order statement \emph{there is no injection \(resp.\ bijection\) from $[0,1]$ to $\N$}.   
Working in Kohlenbach's \emph{higher-order Reverse Mathematics}, we show that $\NIN$ and $\NBI$ are \emph{hard to prove} in terms of (conventional) comprehension axioms, while many basic theorems, like Arzel\`a's convergence theorem \emph{for the Riemann integral} (1885), are shown to imply $\NIN$ and/or $\NBI$.  Working in Kleene's higher-order computability theory based on S1-S9, we show that the following fourth-order process based on $\NIN$ is similarly hard to compute: for a given $[0,1]\di \N$-function, find reals in the unit interval that map to the same natural number. 
\end{abstract}
%
\maketitle
\thispagestyle{empty}

\section{Introduction}\label{intro}
By definition, the uncountability of $\R$ deals with arbitrary mappings from $\R$ to $\N$.  In our opinion, this principle is therefore best studied in a language that has such mappings as first-class citizens. 
Thus, we adopt the language of higher-order arithmetic and shall study the logical and computational properties of the uncountability of $\R$, the latter formulated \emph{in full generality} using third-order objects.
This study is therefore part of \emph{higher-order Reverse Mathematics} and Kleene's \emph{higher-order computability theory}, as explained in detail in the next sections. 
\subsection{Summary}\label{apples}
In a nutshell, we study the logical and computational properties of \emph{the uncountability of $\R$}, established in 1874 by Cantor in his \emph{first} set theory paper \cite{cantor1}, in the guise of the following natural principles:
\begin{itemize}
\item $\NIN$: \emph{there is no injection from $[0,1]$ to $\N$},
\item  $\NBI$: \emph{there is no bijection from $[0,1]$ to $\N$}.
\end{itemize}
In this paper, \emph{principle} generally refers to a statement of \emph{ordinary mathematics}\footnote{Simpson describes \emph{ordinary mathematics} in \cite{simpson2}*{I.1} as \emph{that body of mathematics that is prior to or independent of the introduction of abstract set theoretic concepts}.  The uncountability of $\R$ is studied by Simpson in \cite{simpson2}*{II.4.9}, i.e.\ the former seems to count as ordinary.}, and our aim is to investigate the logical and computational properties of these.
The principle $\NIN$ will take centre stage, while $\NBI$ will be shown to have some interesting properties as well. 
Now, a central and important aspect of mathematical logic is the classification of principles and objects in hierarchies based on logical or computational strength. 
A natural question would therefore \emph{seem to be} where $\NIN$ is located in the well-known hierarchies of logical and computational strength, generally based on \emph{comprehension} and \emph{discontinuous} functionals.  

\smallskip

We provide an answer to this question in this paper \emph{and} explain why this answer (and question) is unsatisfactory.  
Intuitively speaking, $\NIN$ is a very weak principle, yet we need rather strong comprehension axioms to prove it.  
Moreover, $\NIN$ is equivalent to restrictions of itself involving natural function classes, like semi-continuity and bounded variation (see Remark \ref{lafke}).
Thus, the logical properties of $\NIN$ are \textbf{not} due to the quantification over \emph{arbitrary} $\R\di \N$-functions in $\NIN$.   

\smallskip

Similarly, we need strong (discontinuous) comprehension functionals to compute the real numbers claimed to exist by $\NIN$ in terms of the data, in Kleene's higher-order framework.   
The reason for this paradox is that we are comparing two fundamentally different classes. 
Indeed, a fundamental division here is between \emph{normal} and \emph{non-normal} objects and principles, where the former give rise to \emph{discontinuous objects} and the latter do not (see Definition \ref{norma} for the exact formulation).  
For reference, $\NIN$ and $\NBI$ are \emph{non-normal} as they do not imply the existence of a discontinuous function on $\R$.
In this paper, all principles are part of third-order arithmetic, i.e.\ `non-normal vs normal' refers to the existence of a discontinuous function on $\R$.
The associated computations are one type-level higher. 

\smallskip

In fact, the `normal vs non-normal' distinction yield two (fairly independent) scales for classifying logical and computational strength: the standard one is the `normal' scale based on comprehension and \emph{discontinuous} objects, like the G\"odel hierarchy, Reverse Mathematics, and Kleene's quantifiers (see Section \ref{prelim}).  
However, we have shown in \cites{dagsam, dagsamII, dagsamIII, dagsamV, dagsamVI,dagsamVII} that the normal scale classifies many intuitively weak non-normal objects and principles as `rather strong'.  
We establish the same for $\NIN$ in Theorem \ref{dick} below.   
These observations imply the need for a `non-normal' scale based on (classically valid) \emph{continuity} axioms and related objects, going back to Brouwer's intuitionistic mathematics. 
The non-normal scale, and its connection to second-order arithmetic is explored in \cite{samph}, and is discussed in Section \ref{XCX}.

\smallskip

In Figure \ref{dd} below, we provide a classification of $\NIN$ and $\NBI$ relative to other non-normal principles.  
We exhibit numerous \emph{basic} theorems that imply these principles, including \emph{Arzel\`a's convergence theorem for the Riemann integral} (\cite{arse2}, 1885) and central theorems from \emph{Reverse Mathematics} (see Section \ref{prelim1}) formulated with the \emph{standard} definition of `countable set' based on injections/bijections to $\N$ (Definition \ref{standard}).
Some of these connections are made into computational results. 
As it turns out, $\NIN$ is among the weakest principles (in terms of logical and computational properties) on the non-normal scale. 
In this way, our results on $\NIN$ `reprove' many of the results in \cites{dagsam, dagsamII, dagsamIII, dagsamV, dagsamVI ,dagsamVII}, a nice bonus.
Put another way, this paper encompasses and greatly extends \cites{dagsam, dagsamII, dagsamIII, dagsamV, dagsamVI ,dagsamVII} based on perhaps the most basic property of $\R$ known to anyone with a modicum of knowledge about mathematics. 

\smallskip

We also show that theorems about countable sets can be `explosive', i.e.\ they become much stronger when combined with discontinuous functionals.  We show that the Bolzano-Weierstrass theorem for \emph{countable sets in Cantor space} gives rise to $\SIX$ when combined with higher-order $\FIVE$, i.e.\ the Suslin functional (Theorem \ref{BOOM}).  The system $\SIX$ is the
the current upper limit for RM, previously only reachable via topology (see \cites{mummymf, mummyphd, mummy}).  
Moreover, according to Rathjen \cite{rathjenICM}*{\S3}, the strength of $\SIX$ \emph{dwarfs} that of $\FIVE$, where the latter constitutes our previously `best explosion' (see Remark \ref{pimp}). 
Note that the associated Bolzano-Weierstrass theorem \emph{for sequences} in Cantor space is equivalent to $\ACA_{0}$, 
and the formulation using countable sets does not go beyond $\ACA_{0}$ \emph{in isolation}.    
We list a number of theorems about open\footnote{Open sets $O\subset \R$ in \cite{dagsamVII} are represented by $Y:\R\di \R$.  In particular, `$x\in O$' is short for $Y(x)>_{\R}0$ and $x\in O$ implies there is $n\in \N$ such that $y\in O$ for $|x-y|<\frac{1}{2^{n}}$.} sets from \cite{dagsamVII} with similar `explosive' properties.

\smallskip

While the aforementioned results are interesting to any audience of mathematicians, we also attempt to explain the underlying techniques to non-specialists, in particular Kleene's higher-order computability theory based on S1-S9 and the associated \emph{Gandy selection}.  
We sketch the historical background to this paper in Section \ref{kiko}, while a more detailed overview of our results is in Section \ref{XCX}.

\smallskip

Finally, the following principle is (potentially) stronger than $\NIN$, where \emph{weakly countable} essentially means that the set is the union over $\N$ of \emph{finite} sets. Example~\ref{tarkin} lists the exact definitions of the latter italicised notions.
\begin{itemize}
\item The unit interval $[0,1]$ is not weakly countable.  
\end{itemize}
This notion of countability only came to the fore after we finished \cite{dagsamXI}.  We have added it to this paper in light of the basic nature of this principle, in particular the definitions of `weakly countable' and `finite set'.  

\subsection{Background: Cantor and the uncountability of the reals}\label{kiko}
Georg Cantor is the pioneer of the field \emph{set theory}, which has evolved into the current foundations of mathematics $\ZFC$, i.e.\ \emph{Zermelo-Fraenkel set theory with the Axiom of Choice}; Cantor also gave us the \emph{Continuum Hypothesis}, the first problem on Hilbert's famous list of 23 open problems (\cites{hilbertendlich, hilbertlist}), which turned out to be independent of $\ZFC$, as shown by G\"odel and Cohen (\cites{cohen1, cohen2, goeset}).  
The interested reader can find a detailed account of Cantor's life and work in \cite{dauben1}. 

\smallskip

Our interest goes out to Cantor's \emph{first} set theory paper \cite{cantor1}, published in 1874 and boasting a Wikipedia page (\cite{wica}).
This short paper includes the following:  
\begin{quote}
Furthermore, the theorem in \S2 presents itself as the reason why collections of real numbers forming a so-called continuum (such as, all the real numbers which are $\geq 0$ and $\leq 1$), cannot correspond one-to-one with the collection $(\nu)$ [of natural numbers]; 
\end{quote}
This quote may be found in \cite{cantor1}*{p.\ 259} (German) and in \cite{ewa}*{p.\ 841}, \cite{grayk}*{p.\ 820}, and \cite{dauben1}*{p.\ 50}, translated to English.
Cantor's observation about the natural and real numbers may be formulated as the \emph{uncountability of $\R$}, taking into account that Cantor only introduced the notion of cardinality some years later in \cite{cantor2}.

\smallskip

Dauben provides an explanation in \cite{dauben1}*{p.\ 68-69} of why Cantor only mentions the uncountability of $\R$ in passing, as a seemingly unimportant \emph{fait divers}, in the development 
of a new proof of Liouville's theorem (on the existence of transcendental numbers).  According to Dauben, Cantor wrote \cite{cantor1} in its existing form so as to avoid rejection by Kronecker, 
one of the editors and well-known for his extreme stance against infinitary mathematics.  Weierstrass seems to have played a similar, but more moderate role, according to Ferreir\'os (\cite{nofega}*{p.\ 184}).       
In a nutshell, while results like the uncountability of $\R$ took centre stage for Cantor at the time, he deliberately downplayed them in \cite{cantor1}, so as to appease Kronecker and Weierstrass.  

\smallskip

Next, in the above quote, Cantor deduces the uncountability of $\R$ from another theorem, 
and the latter essentially\footnote{Cantor states in \cite{cantor1} that the sequence in Theorem \ref{cant2} can be given according to `any law'.} expresses the following.  
\begin{thm}\label{cant2}
For any sequence of distinct real numbers $(x_{n})_{n\in \N}$ and any interval $[a,b]$, there is $y\in [a,b]$ such that $y$ is different from $x_{n}$ for all $n\in \N$.
\end{thm}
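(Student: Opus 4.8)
The plan is to reproduce Cantor's original nested-interval argument, whose only ingredient beyond elementary bookkeeping is the least-upper-bound property of $\R$. Throughout I assume $a<b$, as the statement requires. First I would construct, by primitive recursion on $k$, a nested sequence of non-degenerate open intervals $I_{k}=(a_{k},b_{k})$ with $I_{0}:=(a,b)$: given $I_{k}$, put $S_{k+1}:=\{\, n\in\N: x_{n}\in I_{k}\,\}$; if $S_{k+1}$ has at most one element the construction halts, and otherwise, letting $p<q$ be its two least elements, set $a_{k+1}:=\min(x_{p},x_{q})$, $b_{k+1}:=\max(x_{p},x_{q})$ and $I_{k+1}:=(a_{k+1},b_{k+1})$. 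Here distinctness of the $x_{n}$ ensures $a_{k+1}<b_{k+1}$, and by construction $a_{k}<a_{k+1}<b_{k+1}<b_{k}$ whenever defined, so $(a_{k})$ is strictly increasing, $(b_{k})$ strictly decreasing, and $a_{k}<b_{j}$ for all $j,k$.

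Next I would split into two cases. If the construction halts at some stage $k$, then $I_{k}$ is a non-degenerate interval meeting the sequence in at most one point, so $I_{k}$ contains a point $y$ differing from all $x_{n}$, and we are done. If the construction never halts, I would set $\alpha:=\sup_{k}a_{k}$ and $\beta:=\inf_{k}b_{k}$; these exist by completeness, $\alpha\le\beta$ follows from $a_{k}<b_{j}$, and strict monotonicity yields $a_{k}<\alpha\le\beta<b_{k}$ for every $k$, so $y:=\alpha$ lies in every $I_{k}$.

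It then remains to check that this $y$ differs from every $x_{n}$, and this is the only step requiring real thought. The key observation is that the pairs of indices chosen across the construction are pairwise disjoint: once $x_{p},x_{q}$ are taken as the endpoints of $I_{k}$ they lie outside $I_{k}$, hence outside every later $I_{k'}$, so $p,q\notin S_{k'}$ for $k'>k$. Now fix $n$. Either $x_{n}$ is eventually chosen as an endpoint of some $I_{k}$, in which case $x_{n}\notin I_{k}$; or it never is, in which case at each stage $k+1$ with $x_{n}\in I_{k}$ we have $n\in S_{k+1}$ but $n$ is not among the two least elements of $S_{k+1}$, so the pair chosen at that stage consists of indices strictly below $n$ --- and since these pairs are disjoint and there are only $n$ naturals below $n$, this can happen at most $\lfloor n/2\rfloor$ times, forcing $x_{n}\notin I_{k}$ for some $k$. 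In both cases some $I_{k}$ omits $x_{n}$, whereas $y\in I_{k}$; hence $y\neq x_{n}$.

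The main obstacle, then, is precisely this last verification: a priori some $x_{n}$ might avoid ever becoming an endpoint and yet remain trapped inside every $I_{k}$, and the disjointness-plus-counting argument is what rules this out. With an eye to the rest of the paper I would also record that forming $S_{k+1}$ and extracting its two least elements requires deciding the predicate ``$x_{n}\in I_{k}$'', and that $\alpha,\beta$ require suprema of bounded sequences, so the argument as organised above is naturally carried out in $\ACA_{0}$ rather than $\RCA_{0}$.
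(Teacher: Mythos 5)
Your argument is correct --- it is Cantor's original 1874 nested-interval proof, and you have supplied the one genuinely delicate step, namely the disjointness-of-pairs counting that forces every $x_{n}$ out of some $I_{k}$ (either $x_{n}$ becomes an endpoint and is excluded from the next open interval, or each stage at which $x_{n}\in I_{k}$ consumes a fresh pair of indices below $n$, which can happen only finitely often). It is, however, a genuinely different proof from the one the paper relies on. The paper does not reprove Theorem~\ref{cant2}; it invokes \cite{simpson2}*{II.4.9} (and reproduces that construction explicitly in the proof of Theorem~\ref{hungzo}): at stage $n$ one replaces $[a_{n},b_{n}]$ by a quarter-length subinterval chosen, on the basis of the single rational approximation $q_{n,2n+3}$ of $x_{n}$, so as to avoid $x_{n}$ outright. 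That route needs no distinctness hypothesis, yields $y$ as a fast-converging Cauchy sequence computable (indeed efficiently) from the data, and goes through in $\RCA_{0}$ --- features the paper uses essentially: the computability of $y$ in the proofs of Theorems~\ref{dick}, \ref{flahu}, and \ref{STS}, and the ``finitely much information about each $x_{n}$'' property in Theorem~\ref{hungzo} to replace $\QFAC^{0,1}$ by $\NCC$. Your construction, by contrast, must decide the $\Sigma^{0}_{1}$ predicate ``$x_{n}\in I_{k}$'', extract least elements of such sets, and form $\sup_{k}a_{k}$, so, as you correctly note, it naturally lives in $\ACA_{0}$; it also makes essential use of the distinctness of the $x_{n}$ to keep the intervals non-degenerate. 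What you gain is historical fidelity to \cite{cantor1}; what the paper's version gains is exactly the effectivity that makes the contrast between the constructive Theorem~\ref{cant2} and its highly non-constructive corollary $\NIN$ the point of Section~\ref{kiko}.
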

As it happens, \emph{a lot} has been written about Theorem \ref{cant2}, its constructive status in particular.  We refer to \cite{grayk} for a detailed discussion and overview of this matter.  
We mention that \cite{grayk} includes an efficient computer program that computes the number $y$ from Theorem \ref{cant2} in terms of the other data; a proof of Theorem \ref{cant2} in a weak logical system (expressing `computable mathematics') can be found in \cite{simpson2}*{II.4.9}, while a proof in Bishop's \emph{Constructive Analysis} is found in \cite{bish1}*{p.\ 25}.  

\smallskip

In conclusion, we may safely claim that Theorem \ref{cant2} has a \emph{constructive proof}, for the various interpretations the latter term has. 
Since Cantor uses Theorem~\ref{cant2} to conclude the \emph{uncountability of $\R$} in \cite{cantor1}, it is a natural question what the logical and computational properties of the latter are, as formalised by $\NIN$ and $\NBI$.  
While of independent historical and conceptual interest, $\NIN$ and $\NBI$ shall be seen to take a central place in our ongoing project on the the logical and computational properties of the uncountable, as may be gleaned from Figure \ref{dd} in the next section.  We will observe that $\NIN$ is the most natural object of study, while (some) interesting results pertaining to $\NBI$ can be obtained.

\subsection{Logical and computational properties of the uncountability of $\R$}\label{XCX}
We sketch the results to be obtained in this paper in some detail. 
\subsubsection{Introduction: $\NIN$ and its variations}
In this section, we provide detailed (but standard) definitions of $\NIN$ and $\NBI$ as well as some conceptual discussion.  
We shall then sketch the to-be-obtained logical and computational properties of these principles relative to the `normal' scale based on comprehension and discontinuous functionals (Section \ref{norm}), as well as relative to the `non-normal' scale (Section~\ref{nonnorm}) as summarised in Figure \ref{dd}.  

\smallskip

First of all, we stress that the aforementioned notions `normal' and `non-normal' have a specific technical meaning detailed in Definition \ref{norma}.
Intuitively speaking, the normal scale is the well-known (conventional) comprehension hierarchy, while the non-normal scale is a new and independent scale.  

\smallskip

Secondly, to be absolutely clear, the uncountability of $\R$ is a statement about arbitrary mappings with domain $\R$.  Hence, the principles $\NIN$ and its ilk are \textbf{inherently third-order}, i.e.\ the below 
should be interpreted in \emph{classical}\footnote{As it turns out, there are some results on the uncountability of $\R$ in (semi-)constructive mathematics (\cites{bauer1, olipo1, diendien}).  These do not seem to relate directly to our below results.} higher-order arithmetic, namely Kohlenbach's \emph{higher-order} Reverse Mathematics (\cite{kohlenbach2}).  
Similarly, computational properties are to be interpreted in Kleene's \emph{higher-order} computability theory provided by S1-S9 (\cite{kleeneS1S9, longmann}).  These frameworks are discussed in some detail in Section \ref{prelim}.

\smallskip

Thirdly, in light of the logical and computational properties of Cantor's Theorem~\ref{cant2} from \cite{cantor1} \emph{and all the attention this has received}, one naturally wonders about the logical and computational properties of Cantor's corollary from \cite{cantor1}, namely the uncountability of $\R$.  To this end, we shall study the following principles and associated functionals as in the next section.  
\begin{princ}[$\NIN$]
For any $Y:[0,1]\di \N$, there are $x, y\in [0,1]$ such that $x\ne_{\R} y$ and $Y(x)=_{\N}Y(y)$. 
\end{princ}
\begin{princ}[$\NBI$]
For any $Y:[0,1]\di \N$, \textbf{either} there are $x, y\in [0,1]$ such that $x\ne_{\R} y$ and $Y(x)=_{\N}Y(y)$, \textbf{or} there is $N\in \N$ such that $(\forall x\in [0,1])(Y(x)\ne N)$.
\end{princ}
Finally, we stress that by Theorem \ref{flahu}, $\NIN$ can be proved \emph{without the Axiom of Choice}, i.e.\ within $\ZF$ set theory.
Hence, it is a natural question which (comprehension) axioms imply $\NIN$, as discussed in Section \ref{norm}.

\subsubsection{The uncountability of $\R$ and comprehension}\label{norm}
We discuss the logical and computational properties of $\NIN$ relative to the `normal' scale based on \emph{comprehension} and \emph{discontinuous} functionals.  
As noted in Section \ref{intro}, this is only a stepping stone towards a better picture, discussed in Section \ref{nonnorm} and summarised by Figure \ref{dd}. 

\smallskip

First of all, the logical hardness of a theorem is generally calibrated by what fragments of the comprehension axiom are needed for a proof.  
Indeed, the very aim of the \emph{Reverse Mathematics} program is to find the minimal (set-existence) axioms that prove a theorem of ordinary mathematics.  
We discuss Reverse Mathematics (RM hereafter) in some detail in Section \ref{prelim1} and note that RM-results fit in the medium range of the
 \emph{G\"odel hierarchy} (\cite{sigohi}), where this medium range is populated by fragments of second-order arithmetic $\Z_{2}$.

\smallskip

Now, Simpson studies Theorem~\ref{cant2} in \cite{simpson2}*{II.4.9}, suggesting that it and $\NIN$ qualify as ordinary mathematics.
This reference also establishes that Theorem \ref{cant2} is provable in a weak system involving only `computable' comprehension.  
By contrast, there are two `canonical' conservative extensions of second-order arithmetic $\Z_{2}$, called $\Z_{2}^{\omega}$ and $\Z_{2}^{\Omega}$, such that 
$\NIN$ \emph{cannot} be proved in $\Z_{2}^{\omega}$ and $\NIN$ can be proved in $\Z_{2}^{\Omega}$ (see Theorems \ref{dick} and \ref{flahu}).
Moreover, $\Z_{2}^{\omega}$ is based on \emph{third-order} functionals $\SS_{k}^{2}$ that can decide (second-order) $\Pi_{k}^{1}$-formulas, while $\Z_{2}^{\Omega}$ is based on Kleene's \emph{fourth-order} axiom $(\exists^{3})$.
We refer to Section \ref{prelim} for further details and definitions.  

\smallskip

Secondly, Turing's famous `machine' model introduced in \cite{tur37}, provides an intuitive and convincing formalism that captures the notion of `computing with real numbers'.  
This formalism does not apply to e.g.\ arbitrary $\R\di \R$-functions and Kleene later introduced his S1-S9 schemes which capture `computing with higher-order objects' (\cites{kleeneS1S9, longmann}).  
With this framework in mind, studying the computational properties of $\NIN$ means studying functionals $N$ satisfying the specification:
\be\tag{$\NIN(N)$}
(\forall Y:[0,1]\di \N)(  N(Y)(0)\ne_{\R} N(Y)(1)\wedge  Y(N(Y)(0))=Y(N(Y)(1)) ).
\ee
In a nutshell, $N(Y)=(x, y)$ computes the real numbers claimed to exist by $\NIN$.
As to precedent, the functional $N$ is a special case of Luckhardt's \emph{continuity indicators} from \cite{lucky}*{p.\ 243} which have the same functionality. 

\smallskip

Interpreting `computation' as in Kleene's S1-S9 (\cite{kleeneS1S9, longmann}), we show that $N$ as in $\NIN(N)$ \emph{cannot} be computed by any of the aforementioned `comprehension' functionals $\SS_{k}^{2}$ that give rise to $\Z_{2}^{\omega}$.
By contrast, the number $y$ in Theorem \ref{cant2} is outright (and efficiently) computable from the other data (\cite{grayk,simpson2}).  Our negative result is fundamentally based on a technique\footnote{Intuitively speaking, to build a model of $\Z_{2}^{\omega}+\neg\NIN$ or to show that $N$ as in $\NIN(N)$ is not (S1-S9) computable in any $\SS_{k}^{2}$, one starts with the observation that any $f\in 2^{\N}$ computable in some $\SS_{k}^{2}$, comes with some $e\in \N$, which is a code for the S1-S9-algorithm computing $f$ from $\SS_{k}^{2}$.  The Axiom of Choice of course provides a choice function $\Phi:2^{\N}\di \N$, i.e.\ $\Phi(f)=e$ with the previous notations, but Gandy selection (see Section \ref{xnx}) guarantees there is such a choice function $\Phi_{0}$ that is also S1-S9-computable relative to some $\SS_{k}^{2}$.  In this way, the type structure $\mathcal{M}$ consisting of all objects (S1-S9) computable in some $\SS_{k}^{2}$ has the desired properties:  $\mathcal{M}$ is trivially a model of $\Z_{2}^{\omega}$ and satisfies $\neg\NIN$, as $\Phi_{0}$ is (relative to $\mathcal{M}$) an injection from $2^{\N}$ to $\N$.} called \emph{Gandy selection} (see Section \ref{xnx}).    

\smallskip

In light of the above, the logical and computational properties of $\NIN$ \emph{expressed in terms of comprehension} are rather unsatisfactory.  
Indeed, the systems $\Z_{2}^{\omega}$ and $\Z_{2}^{\Omega}$ are both conservative extensions of $\Z_{2}$, but the former cannot prove $\NIN$ while the latter can. 
A similar phenomenon occurs for the computational properties of $\NIN$ as captured by the functional $N$ satisfying $\NIN(N)$ in Kleene's higher-order framework.  
It would be desirable to have a scale in which an intuitively\footnote{For instance, $\NIN$ does not imply any principle from the RM zoo (\cite{damirzoo}).  Our below results combined with \cite{kruisje}*{Theorem 3} show that $\NIN$ yields a conservative extension of arithmetical comprehension, as provided by Feferman's $\mu$-operator from Section \ref{lll}.} `weak' principle like $\NIN$ also falls into the formal `weak' category, and the same for the functional $N$ as in $\NIN(N)$.  The latter is \emph{strongly non-normal} following Definition~\ref{norma}.

\smallskip

The reason for this discrepancy is that \emph{we are comparing two fundamentally different categories}.  
Indeed, the functionals $\SS_{k}^{2}$ from $\Z_{2}^{\omega}$ and $\exists^{3}$ from $\Z_{2}^{\Omega}$ are `normal', i.e.\ they imply the existence of (and even compute) a \emph{discontinuous function} (say on $2^{\N}$).  By contrast, $\NIN$ and the functional $N$ from $\NIN(N)$ are `non-normal', implying that they \emph{do not} yield the existence of (let alone compute) discontinuous functions.   
It is an empirical observation (see \cites{dagsam, dagsamII, dagsamIII, dagsamV, dagsamVI, dagsamVII}) that measuring the strength of \emph{non-normal} objects and principles via \emph{normal} objects and principles always leads to the same unsatisfactory picture as in the previous paragraph based on $\Z_{2}^{\omega}$ and $\Z_{2}^{\Omega}$ in which intuitively `weak' principles are not assigned the formal `weak' category.  We provide a solution to all these problems in the next section. 

\subsubsection{The uncountability of $\R$ and the non-normal world}\label{nonnorm}
In this section, we sketch part of the non-normal world from \cite{samph}, and the place of $\NIN$ within it.  

\smallskip

First of all, the following figure provides an overview of some of our results for $\NIN$ and $\NBI$.   
Further definitions can be found in Section \ref{lll} while implications not involving $\NIN$ or $\NBI$ are in \cites{samph, dagsamIII, dagsamV, dagsamVII, dagsamVI}.
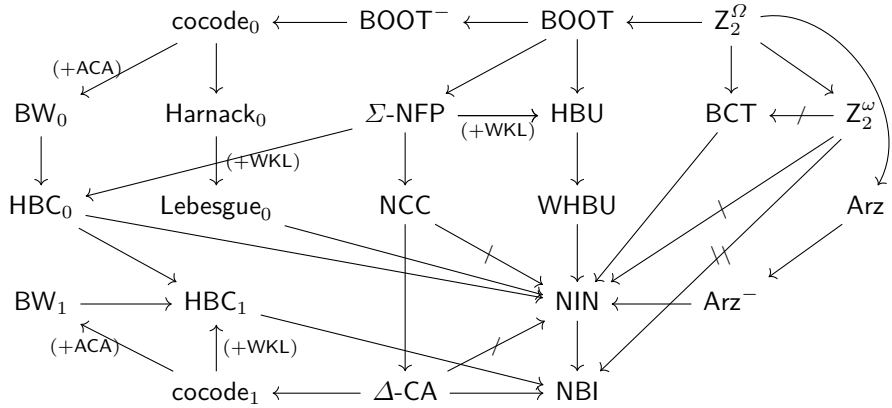
\begin{figure}[h]
\begin{tikzcd}
~& \arrow[ld, rightarrow,"(+\ACA)" anchor=east, bend right=0]\cocode_{0}\arrow[d]&\arrow[l, rightarrow,"" anchor=south, bend right=0]\BOOT^{-}& \arrow[l]\BOOT\arrow[d]\arrow[dl]&\arrow[l, bend right=0] \Z_{2}^{\Omega} \arrow{d} \arrow[rd] \arrow[rdd,rightarrow, bend left=65, anchor=north]&  \\
\BW_{0}\arrow[d]&\HAR_{0} \arrow[d, bend right=0]&\arrow[dll,rightarrow,"(+\WKL)" anchor=west]\Sigma\text{-}\NFP\arrow[d]  \arrow[r]\arrow[r,rightarrow,"(+\WKL)" anchor=north]&\HBU\arrow[d]&\BCT \arrow{ldd}& \Z_{2}^{\omega}\arrow[l,rightarrow,"/" anchor=center]\arrow[lldd,rightarrow,"\setminus" anchor=center]\arrow[llddd,rightarrow,"\setminus\setminus" anchor=center]~ \\
\HBC_{0}\arrow[rrrd]\arrow[rd]&\LEB_{0}\arrow[rrd]&  \NCC\arrow[dd, bend right=0]\arrow[dr,rightarrow,"/" anchor=center]  & \WHBU\arrow{d}&~ &  \arrow[ld, bend left=0]\Arz\\
\BW_{1}\arrow[r]&\HBC_{1}\arrow[rrd]& ~&\NIN \arrow[d]& \arrow[l]\Arz^{-} &  ~\\
~& \cocode_{1} \arrow[u, rightarrow,"(+\WKL)" anchor=west, bend right=0]\arrow[ul, rightarrow,"(+\ACA)" anchor=east, bend right=0]& \arrow[l]\DCA\arrow[ru,rightarrow,"/" anchor=center]\arrow[r]&\NBI & ~ &    
\end{tikzcd} 
\caption{Our main results in Reverse Mathematics}
\label{dd}
\end{figure}~\\
Secondly, we point out that $\Arz$ is Arzel\`a's convergence theorem \emph{for the Riemann integral}, published in 1885 (\cite{arse2}), i.e.\ ordinary mathematics if ever there was such.  
Curiously, $\Arz^{-}$, i.e.\ $\Arz$ formulated with \emph{Tao's metastability} in the conclusion, still implies $\NIN$.
Moreover, Figure \ref{dd} is only the tip of the proverbial iceberg: Appendix~\ref{extraextrareadallaboutit} contains more than a dozen basic theorems that imply $\NIN$ or $\NBI$.

\smallskip

Thirdly, we single out $\cocode_{i}$ from Figure \ref{dd} as it expresses that the word `countable' has the same meaning in RM and in mainstream mathematics, i.e.\ this `coding principle' seems crucial to anyone seeking to interpret the results of RM in a more general context.  For instance, $\textsf{Harnack}_{0}$ states \emph{a countable set has Lebesgue measure zero} (Harnack, 1885 \cite{harny}), while $\LEB_{0}$ is the \emph{Lebesgue criterion for Riemann integrability} restricted to countable sets.   
A similar observation can be made for the RM of topology (\cites{mummyphd,mummy,mummymf}), based as it is on \emph{countable} bases (see Example~\ref{mummyrem}).

\smallskip

Fourth, $\HBC_{i}$ captures the well-known Heine-Borel theorem for countable coverings, where `countable' has its \emph{usual/original} definition as used by Borel (see Sections~\ref{uncountsyn} and~\ref{uncountsym2}).
Curiously, these `countable' results are proved using $\HBU\di \NIN$, where the antecedent is the \emph{uncountable} Heine-Borel theorem.  Similarly, $\BW_{i}$ is the Bolzano-Weierstrass theorem providing suprema for countable sets in $[0,1]$. 
At the very least, these observations call for a thorough investigation of the role of `countable set' in RM.  This is all the more so in light of Theorem~\ref{BOOM} which shows that the Bolzano-Weierstrass theorem for \emph{countable sets}  in $2^{\N}$ yields $\SIX$ when combined with higher-order $\FIVE$, i.e.\ the Suslin functional.  As discussed in Remark \ref{pimp}, $\SIX$ seems to be the current `upper bound' of RM. 

\smallskip

Fifth, the negative results in Figure~\ref{dd} do not change if we add countable choice as in $\QFAC^{0,1}$ to $\Z_{2}^{\omega}$, except for the arrow that is crossed out twice.  
We stress that the functionals $\SS_{k}^{2}$ used to define $\Z_{2}^{\omega}$ are \emph{third-order} objects and that $\NIN$ and $\NBI$ are part of the language of \emph{third-order} arithmetic.  
By contrast, Kleene's $\exists^{3}$ used to define $\Z_{2}^{\Omega}$, is fundamentally \emph{fourth-order} in nature.  

\smallskip

Sixth, we discuss the non-normal nature of $\BOOT$, $\HBU$, and other principles.
To this end, consider the following implications, some of which are well-known.
\be\label{wrang}\tag{\textsf{P}}
\ACA_{0}\di \WKL_{0}\di \WWKL_{0}\di \RCA_{0} \textup{ and } \BOOT\di \HBU\di \WHBU \di \RCAo,
\ee
where we note that $\BOOT$ is an example of \emph{unconventional}\footnote{Formula classes like $\Pi_{k}^{1}$ allow for first- and second-order parameters (only), and the associated comprehension axiom is called \emph{conventional} comprehension (see e.g.\ Section \ref{lll}).  Now, $\BOOT$ as in Principle \ref{bopri} is formulated as $(\forall Y^{2})(\exists X \subset \N)(\forall n^{0})\big[ n\in X \asa (\exists f^{1})(Y(f, n)=0)    \big]$, i.e.\ comprehension involving third-order parameters $Y$.   To the best of our knowledge, \cites{littlefef, kohlenbach4} are the only places unconventional comprehension has been studied before.} comprehension. 

\smallskip

Recall that the $\ECF$-translation is the canonical embedding of higher-order into second-order arithmetic, as discussed in Remark \ref{ECF}.
The following crucial metamathematical properties \eqref{klo} and \eqref{klo2} about \eqref{wrang} and $\ECF$ are shown in \cite{samph}.
\begin{enumerate}
 \renewcommand{\theenumi}{\alph{enumi}}
\item The $\ECF$-translation maps the implications on the right of \eqref{wrang} to the implications on the left of \eqref{wrang}. \label{klo}
\item Under $\ECF$, equivalences to principles on the right of \eqref{wrang} are mapped to equivalences to principles on the left of \eqref{wrang}.  \label{klo2}
\end{enumerate}
Thus, item \eqref{klo} establishes that the right-hand side consists of non-normal principles, as $\ECF$ maps the existence of discontinuous functions to `$0=1$'.
An example of item~\eqref{klo2} is as follows: $\BOOT$ is equivalent to a certain monotone convergence theorem for \textbf{nets}; $\ECF$ translates this equivalence to the well-known equivalence
between $\ACA_{0}$ and the monotone convergence theorem for \textbf{sequences} (\cite{simpson2}*{III.2}). 

\smallskip

In light of \eqref{wrang} and items \eqref{klo} and \eqref{klo2}, the second-order world (involving $\ACA_{0}$ and weaker principles) is a reflection
of the non-normal world under $\ECF$.  
Similar results hold for $\ATR_{0}$ and $\FIVE$, as proved in \cite{samph}*{\S4}.   As expected, $\Z_{2}^{\omega}$ cannot prove $\BOOT$ as in \eqref{wrang}, but $\Z_{2}^{\Omega}$ can.   
The non-normal world as in \eqref{wrang} is (a small part of) a hierarchy based on the \emph{neighbourhood function principle} ($\NFP$; see \cite{samph}*{\S5} and Section \ref{bereft}), a classically valid continuity axiom 
from Brouwer's intuitionistic mathematics (\cite{troeleke1}). 
For the purposes of this paper, $\BOOT$ and \eqref{wrang} are sufficient. 

\smallskip

Finally, we believe the topic of this paper, namely what are the logical and computational properties of the uncountability of $\R$, to be of general interest to any audience of mathematicians as we identify surprising results about 
a very well-studied topic in the foundations of mathematics, namely the genesis of set theory.  
Beyond this, we have formulated the below proofs in such a way as to appeal to an as broad as possible audience.  In particular, the below is meant to showcase the techniques used to establish the results in \cites{dagsamIII, dagsamV, dagsamVI, dagsamVII}, which are part of our ongoing project on the logical and computational properties of the uncountable. 

\smallskip

Furthermore, by Figure \ref{dd}, $\NIN$ is implied by most of the (third-order) principles we have hitherto studied, e.g.\ the Lindel\"of lemma and the Heine-Borel $(\HBU)$, Vitali $(\WHBU)$, and Baire category ($\BCT$) theorems.  
Our results for $\NIN$ and $\NBI$, namely that they are provable in $\Z_{2}^{\Omega}$ and not in $\Z_{2}^{\omega}$ and extensions, imply that all stronger principles behave in the same way, thus reproving many results from \cite{dagsam, dagsamII, dagsamIII, dagsamV, dagsamVI ,dagsamVII}.

\section{Two frameworks}\label{prelim}
We discuss \emph{Reverse Mathematics} in Section \ref{prelim1} and introduce Kohlenbach's generalisation to \emph{higher-order arithmetic}, and the associated base theory $\RCAo$.  
We introduce higher-order \emph{computability theory}, following Kleene's computation schemes S1-S9, in Section \ref{HCT}. 
Based on this framework, we obtain some model constructions (Section \ref{models}) that are essential to our independence results in Section \ref{main}.  

\subsection{Reverse Mathematics}\label{prelim1}
We discuss Reverse Mathematics (Section \ref{introrm}) and introduce -in full detail- Kohlenbach's base theory of \emph{higher-order} Reverse Mathematics (Section \ref{rmbt}).
Some essential axioms, functionals, and notations may be found in Sections \ref{kkk} and \ref{lll}.
\subsubsection{Introduction}\label{introrm}
Reverse Mathematics (RM hereafter) is a program in the foundations of mathematics initiated around 1975 by Friedman (\cites{fried,fried2}) and developed extensively by Simpson (\cite{simpson2}).  
The aim of RM is to identify the minimal axioms needed to prove theorems of ordinary, i.e.\ non-set theoretical, mathematics. 

\smallskip

We refer to \cite{stillebron} for a basic introduction to RM and to \cite{simpson2, simpson1} for an overview of RM.  We expect basic familiarity with RM, but do sketch some aspects of Kohlenbach's \emph{higher-order} RM (\cite{kohlenbach2}) essential to this paper, including the base theory $\RCAo$ (Definition \ref{kase}).  

\smallskip

First of all, in contrast to `classical' RM based on \emph{second-order arithmetic} $\Z_{2}$, higher-order RM uses $\L_{\omega}$, the richer language of \emph{higher-order arithmetic}.  
Indeed, while the former is restricted to natural numbers and sets of natural numbers, higher-order arithmetic can accommodate sets of sets of natural numbers, sets of sets of sets of natural numbers, et cetera.  
To formalise this idea, we introduce the collection of \emph{all finite types} $\mathbf{T}$, defined by the two clauses:
\begin{center}
(i) $0\in \mathbf{T}$   and   (ii)  If $\sigma, \tau\in \mathbf{T}$ then $( \sigma \di \tau) \in \mathbf{T}$,
\end{center}
where $0$ is the type of natural numbers, and $\sigma\di \tau$ is the type of mappings from objects of type $\sigma$ to objects of type $\tau$.
In this way, $1\equiv 0\di 0$ is the type of functions from numbers to numbers, and  $n+1\equiv n\di 0$.  Viewing sets as given by characteristic functions, we note that $\Z_{2}$ only includes objects of type $0$ and $1$.    

\smallskip

Secondly, the language $\L_{\omega}$ includes variables $x^{\rho}, y^{\rho}, z^{\rho},\dots$ of any finite type $\rho\in \mathbf{T}$.  Types may be omitted when they can be inferred from context.  
The constants of $\L_{\omega}$ include the type $0$ objects $0, 1$ and $ <_{0}, +_{0}, \times_{0},=_{0}$  which are intended to have their usual meaning as operations on $\N$.
Equality at higher types is defined in terms of `$=_{0}$' as follows: for any objects $x^{\tau}, y^{\tau}$, we have
\be\label{aparth}
[x=_{\tau}y] \equiv (\forall z_{1}^{\tau_{1}}\dots z_{k}^{\tau_{k}})[xz_{1}\dots z_{k}=_{0}yz_{1}\dots z_{k}],
\ee
if the type $\tau$ is composed as $\tau\equiv(\tau_{1}\di \dots\di \tau_{k}\di 0)$.  
Furthermore, $\L_{\omega}$ also includes the \emph{recursor constant} $\mathbf{R}_{\sigma}$ for any $\sigma\in \mathbf{T}$, which allows for iteration on type $\sigma$-objects as in the special case \eqref{special}.  Formulas and terms are defined as usual.  
One obtains the sub-language $\L_{n+2}$ by restricting the above type formation rule to produce only type $n+1$ objects (and related types of similar complexity).        

\subsubsection{The base theory of higher-order Reverse Mathematics}\label{rmbt}
We introduce Kohlenbach's base theory $\RCAo$, first introduced in \cite{kohlenbach2}*{\S2}.
\bdefi\label{kase} 
The base theory $\RCAo$ consists of the following axioms.
\begin{enumerate}
 \renewcommand{\theenumi}{\alph{enumi}}
\item  Basic axioms expressing that $0, 1, <_{0}, +_{0}, \times_{0}$ form an ordered semi-ring with equality $=_{0}$.
\item Basic axioms defining the well-known $\Pi$ and $\Sigma$ combinators (aka $K$ and $S$ in \cite{avi2}), which allow for the definition of \emph{$\lambda$-abstraction}. 
\item The defining axiom of the recursor constant $\mathbf{R}_{0}$: for $m^{0}$ and $f^{1}$: 
\be\label{special}
\mathbf{R}_{0}(f, m, 0):= m \textup{ and } \mathbf{R}_{0}(f, m, n+1):= f(n, \mathbf{R}_{0}(f, m, n)).
\ee
\item The \emph{axiom of extensionality}: for all $\rho, \tau\in \mathbf{T}$, we have:
\be\label{EXT}\tag{$\textsf{\textup{E}}_{\rho, \tau}$}  
(\forall  x^{\rho},y^{\rho}, \varphi^{\rho\di \tau}) \big[x=_{\rho} y \di \varphi(x)=_{\tau}\varphi(y)   \big].
\ee 
\item The induction axiom for quantifier-free formulas of $\L_{\omega}$.
\item $\QFAC^{1,0}$: the quantifier-free Axiom of Choice as in Definition \ref{QFAC}.
\end{enumerate}
\edefi
\noindent
Note that variables (of any finite type) are allowed in quantifier-free formulas of the language $\L_{\omega}$: only quantifiers are banned.
Recursion as in \eqref{special} is called \emph{primitive recursion}; the class of functionals obtained from $\mathbf{R}_{\rho}$ for all $\rho \in \mathbf{T}$ is called \emph{G\"odel's system $T$} of all (higher-order) primitive recursive functionals. 
\bdefi\label{QFAC} The axiom $\QFAC$ consists of the following for all $\sigma, \tau \in \textbf{T}$:
\be\tag{$\QFAC^{\sigma,\tau}$}
(\forall x^{\sigma})(\exists y^{\tau})A(x, y)\di (\exists Y^{\sigma\di \tau})(\forall x^{\sigma})A(x, Y(x)),
\ee
for any quantifier-free formula $A$ in the language of $\L_{\omega}$.
\edefi
As discussed in \cite{kohlenbach2}*{\S2}, $\RCAo$ and $\RCA_{0}$ prove the same sentences `up to language' as the latter is set-based and the former function-based.   
This conservation results is obtained via the so-called $\ECF$-interpretation, which we now discuss. 
\begin{rem}[The $\ECF$-interpretation]\label{ECF}\rm
The (rather) technical definition of $\ECF$ may be found in \cite{troelstra1}*{p.\ 138, \S2.6}.
Intuitively, the $\ECF$-interpretation $[A]_{\ECF}$ of a formula $A\in \L_{\omega}$ is just $A$ with all variables 
of type two and higher replaced by type one variables ranging over so-called `associates' or `RM-codes' (see \cite{kohlenbach4}*{\S4}); the latter are (countable) representations of continuous functionals.  
The $\ECF$-interpretation connects $\RCAo$ and $\RCA_{0}$ (see \cite{kohlenbach2}*{Prop.\ 3.1}) in that if $\RCAo$ proves $A$, then $\RCA_{0}$ proves $[A]_{\ECF}$, again `up to language', as $\RCA_{0}$ is 
formulated using sets, and $[A]_{\ECF}$ is formulated using types, i.e.\ using type zero and one objects.  
\end{rem}
In light of the widespread use of codes in RM and the common practise of identifying codes with the objects being coded, it is no exaggeration to refer to $\ECF$ as the \emph{canonical} embedding of higher-order into second-order arithmetic. 

\smallskip

Finally as noted above, Theorem \ref{cant2} is provable in the base theory. 
\begin{thm}[\cite{simpson2}*{II.4.9}] The following is provable in $\RCA_{0}$.
For any sequence of real numbers $(x_{n})_{n\in \N}$, there is a real $y$ different from $x_{n}$ for all $n\in \N$.
\end{thm}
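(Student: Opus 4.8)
The plan is to carry out Cantor's classical interval-nesting (``shrinking boxes'') argument, arranged so that every step is effective and uses only $\Delta^{0}_{1}$-comprehension, hence goes through in $\RCA_{0}$. Fix a sequence $(x_{n})_{n\in\N}$ of reals, each coded by a rational Cauchy sequence with the standard fixed modulus; the goal is to produce a single real $y\in[0,1]$ with $y\neq_{\R}x_{n}$ for every $n$.

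First I would build, by primitive recursion on $k$, a nested sequence of closed \emph{rational} intervals $[a_{k},b_{k}]$ with $[a_{0},b_{0}]=[0,1]$ and $b_{k}-a_{k}=3^{-k}$, such that $x_{k}$ is kept at positive distance from $[a_{k+1},b_{k+1}]$. At stage $k$, given $[a_{k},b_{k}]$ of length $\delta=3^{-k}$, I would use the modulus of $x_{k}$ to compute a rational $q$ with $|x_{k}-q|\leq\delta/10$, and then split $[a_{k},b_{k}]$ into its three closed thirds. The ``forbidden zone'' $[q-\delta/10,\,q+\delta/10]$ has length $\delta/5$, which is smaller than the gap $\delta/3$ separating the two outer thirds, so it cannot meet both of them; I would let $[a_{k+1},b_{k+1}]$ be the first outer third it misses. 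Crucially, deciding which outer third is missed only requires comparing rationals, so the whole recursion is primitive recursive in $(x_{n})_{n}$ and the sequence $\langle(a_{k},b_{k}):k\in\N\rangle$ is available in $\RCA_{0}$.

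Then I would set $y:=\lim_{k}a_{k}$: since $(a_{k})$ is nondecreasing, $(b_{k})$ nonincreasing, and $b_{k}-a_{k}=3^{-k}$, this limit exists with an explicit modulus of convergence, so $y$ is a genuine real of $[0,1]$ in $\RCA_{0}$ and $y\in[a_{k},b_{k}]$ for all $k$. Finally, for each fixed $n$, the interval $[a_{n+1},b_{n+1}]$ was chosen disjoint from the closed rational interval $[q-\delta/10,\,q+\delta/10]$ containing $x_{n}$, so these two closed bounded intervals are separated by a positive rational $\eta$; as $y$ lies in the former and $x_{n}$ in the latter, $|y-x_{n}|\geq\eta>0$, i.e.\ $y\neq_{\R}x_{n}$, and $n$ was arbitrary.

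The only genuine obstacle is the familiar one: the order relation on reals is not decidable, so one cannot simply pick ``the half of $[a_{k},b_{k}]$ not containing $x_{k}$''. The device that resolves this is to approximate $x_{k}$ by a rational to precision $\delta/10$ and to split into thirds rather than halves, which reduces every case distinction in the construction to a decision about rationals; once that is in place the remainder is routine bookkeeping about Cauchy moduli, all formalizable in $\RCA_{0}$.
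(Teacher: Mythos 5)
Your proof is correct and takes essentially the same route as the argument the paper relies on from \cite{simpson2}*{II.4.9} (sketched again in the proof of Theorem \ref{hungzo}): an effective interval-nesting in which every case distinction is reduced to a comparison of rationals, so that only finitely much information about each $x_{n}$ is used and the whole construction is primitive recursive in the given sequence. The only difference is cosmetic --- you trisect and use a $\delta/10$-rational approximation, whereas Simpson's recursion \eqref{enuff} selects outer quarters using the $(2n+3)$-th term of the Cauchy representation directly.
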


\subsubsection{Notations and the like}\label{kkk}
We introduce the usual notations for common mathematical notions, like real numbers, as also introduced in \cite{kohlenbach2}.  
\begin{defi}[Real numbers and related notions in $\RCAo$]\label{keepintireal}\rm~
\begin{enumerate}
 \renewcommand{\theenumi}{\alph{enumi}}
\item Natural numbers correspond to type zero objects, and we use `$n^{0}$' and `$n\in \N$' interchangeably.  Rational numbers are defined as signed quotients of natural numbers, and `$q\in \Q$' and `$<_{\Q}$' have their usual meaning.    
\item Real numbers are coded by fast-converging Cauchy sequences $q_{(\cdot)}:\N\di \Q$, i.e.\  such that $(\forall n^{0}, i^{0})(|q_{n}-q_{n+i}|<_{\Q} \frac{1}{2^{n}})$.  
We use Kohlenbach's `hat function' from \cite{kohlenbach2}*{p.\ 289} to guarantee that every $q^{1}$ defines a real number.  
\item We write `$x\in \R$' to express that $x^{1}:=(q^{1}_{(\cdot)})$ represents a real as in the previous item and write $[x](k):=q_{k}$ for the $k$-th approximation of $x$.    
\item Two reals $x, y$ represented by $q_{(\cdot)}$ and $r_{(\cdot)}$ are \emph{equal}, denoted $x=_{\R}y$, if $(\forall n^{0})(|q_{n}-r_{n}|\leq {2^{-n+1}})$. Inequality `$<_{\R}$' is defined similarly.  
We sometimes omit the subscript `$\R$' if it is clear from context.           
\item Functions $F:\R\di \R$ are represented by $\Phi^{1\di 1}$ mapping equal reals to equal reals, i.e.\ extensionality as in $(\forall x , y\in \R)(x=_{\R}y\di \Phi(x)=_{\R}\Phi(y))$.\label{EXTEN}
\item The relation `$x\leq_{\tau}y$' is defined as in \eqref{aparth} but with `$\leq_{0}$' instead of `$=_{0}$'.  Binary sequences are denoted `$f^{1}, g^{1}\leq_{1}1$', but also `$f,g\in C$' or `$f, g\in 2^{\N}$'.  Elements of Baire space are given by $f^{1}, g^{1}$, but also denoted `$f, g\in \N^{\N}$'.
\item For a binary sequence $f^{1}$, the associated real in $[0,1]$ is $\r(f):=\sum_{n=0}^{\infty}\frac{f(n)}{2^{n+1}}$.\label{detrippe}
\item Sets of type $\rho$ objects $X^{\rho\di 0}, Y^{\rho\di 0}, \dots$ are given by their characteristic functions $F^{\rho\di 0}_{X}\leq_{\rho\di 0}1$, i.e.\ we write `$x\in X$' for $ F_{X}(x)=_{0}1$. \label{koer} 
\end{enumerate}
\end{defi}
For completeness, we list the following notational convention for finite sequences.  
\begin{nota}[Finite sequences]\label{skim}\rm
The type for `finite sequences of objects of type $\rho$' is denoted $\rho^{*}$, which we shall only use for $\rho=0,1$.  
Since the usual coding of pairs of numbers goes through in $\RCAo$, we shall not always distinguish between $0$ and $0^{*}$. 
Similarly, we assume a fixed coding for finite sequences of type $1$ and shall make use of the type `$1^{*}$'.  
In general, we do not always distinguish between `$s^{\rho}$' and `$\langle s^{\rho}\rangle$', where the former is `the object $s$ of type $\rho$', and the latter is `the sequence of type $\rho^{*}$ with only element $s^{\rho}$'.  The empty sequence for the type $\rho^{*}$ is denoted by `$\langle \rangle_{\rho}$', usually with the typing omitted.  

\smallskip

Furthermore, we denote by `$|s|=n$' the length of the finite sequence $s^{\rho^{*}}=\langle s_{0}^{\rho},s_{1}^{\rho},\dots,s_{n-1}^{\rho}\rangle$, where $|\langle\rangle|=0$, i.e.\ the empty sequence has length zero.
For sequences $s^{\rho^{*}}, t^{\rho^{*}}$, we denote by `$s*t$' the concatenation of $s$ and $t$, i.e.\ $(s*t)(i)=s(i)$ for $i<|s|$ and $(s*t)(j)=t(|s|-j)$ for $|s|\leq j< |s|+|t|$. For a sequence $s^{\rho^{*}}$, we define $\overline{s}N:=\langle s(0), s(1), \dots,  s(N-1)\rangle $ for $N^{0}<|s|$.  
For a sequence $\alpha^{0\di \rho}$, we also write $\overline{\alpha}N=\langle \alpha(0), \alpha(1),\dots, \alpha(N-1)\rangle$ for \emph{any} $N^{0}$.  By way of shorthand, 
$(\forall q^{\rho}\in Q^{\rho^{*}})A(q)$ abbreviates $(\forall i^{0}<|Q|)A(Q(i))$, which is (equivalent to) quantifier-free if $A$ is.   
\end{nota}
\subsubsection{Some axioms and functionals}\label{lll}
As noted in Section \ref{intro}, the logical hardness of a theorem is measured via what fragment of the comprehension axiom is needed for a proof.  
For this reason, we introduce some axioms and functionals related to \emph{higher-order comprehension} in this section.
We are mostly dealing with \emph{conventional} comprehension here, i.e.\ only parameters over $\N$ and $\N^{\N}$ are allowed in formula classes like $\Pi_{k}^{1}$ and $\Sigma_{k}^{1}$.

\smallskip

First of all, the following functional is clearly discontinuous at $f=11\dots$; in fact, $(\exists^{2})$ is equivalent to the existence of $F:\R\di\R$ such that $F(x)=1$ if $x>_{\R}0$, and $0$ otherwise (\cite{kohlenbach2}*{\S3}).  This fact shall be repeated often.  
\be\label{muk}\tag{$\exists^{2}$}
(\exists \varphi^{2}\leq_{2}1)(\forall f^{1})\big[(\exists n)(f(n)=0) \asa \varphi(f)=0    \big]. 
\ee
Related to $(\exists^{2})$, the functional $\mu^{2}$ in $(\mu^{2})$ is also called \emph{Feferman's $\mu$} (\cite{avi2}).
\begin{align}\label{mu}\tag{$\mu^{2}$}
(\exists \mu^{2})(\forall f^{1})\big[ (\exists n)(f(n)=0) \di [f(\mu(f))=0&\wedge (\forall i<\mu(f))(f(i)\ne 0) ]\\
& \wedge [ (\forall n)(f(n)\ne0)\di   \mu(f)=0]    \big], \notag
\end{align}
We have $(\exists^{2})\asa (\mu^{2})$ over $\RCAo$ and $\ACAo\equiv\RCAo+(\exists^{2})$ proves the same sentences as $\ACA_{0}$ by \cite{hunterphd}*{Theorem~2.5}. 

\smallskip

Secondly, the functional $\SS^{2}$ in $(\SS^{2})$ is called \emph{the Suslin functional} (\cite{kohlenbach2}).
\be\tag{$\SS^{2}$}
(\exists\SS^{2}\leq_{2}1)(\forall f^{1})\big[  (\exists g^{1})(\forall n^{0})(f(\overline{g}n)=0)\asa \SS(f)=0  \big], 
\ee
The system $\FIVE^{\omega}\equiv \RCAo+(\SS^{2})$ proves the same $\Pi_{3}^{1}$-sentences as $\FIVE$ by \cite{yamayamaharehare}*{Theorem 2.2}.   
By definition, the Suslin functional $\SS^{2}$ can decide whether a $\Sigma_{1}^{1}$-formula as in the left-hand side of $(\SS^{2})$ is true or false.   We similarly define the functional $\SS_{k}^{2}$ which decides the truth or falsity of $\Sigma_{k}^{1}$-formulas from $\L_{2}$; we also define 
the system $\SIXK$ as $\RCAo+(\SS_{k}^{2})$, where  $(\SS_{k}^{2})$ expresses that $\SS_{k}^{2}$ exists.  
We note that the operators $\nu_{n}$ from \cite{boekskeopendoen}*{p.\ 129} are essentially $\SS_{n}^{2}$ strengthened to return a witness (if existant) to the $\Sigma_{n}^{1}$-formula at hand.  

\smallskip

\noindent
Thirdly, full second-order arithmetic $\Z_{2}$ is readily derived from $\cup_{k}\SIXK$, or from:
\be\tag{$\exists^{3}$}
(\exists E^{3}\leq_{3}1)(\forall Y^{2})\big[  (\exists f^{1})(Y(f)=0)\asa E(Y)=0  \big], 
\ee
and we therefore define $\Z_{2}^{\Omega}\equiv \RCAo+(\exists^{3})$ and $\Z_{2}^\omega\equiv \cup_{k}\SIXK$, which are conservative over $\Z_{2}$ by \cite{hunterphd}*{Cor.\ 2.6}. 
Despite this close connection, $\Z_{2}^{\omega}$ and $\Z_{2}^{\Omega}$ can behave quite differently, as discussed in e.g.\ \cite{dagsamIII}*{\S2.2}.   
The functional from $(\exists^{3})$ is also called `$\exists^{3}$', and we use the same convention for other functionals.  

\smallskip

Fourth, the Heine-Borel theorem states the existence of a finite sub-covering for an open covering of certain spaces. 
Now, a functional $\Psi:\R\di \R^{+}$ gives rise to a \emph{canonical cover} $\cup_{x\in I} I_{x}^{\Psi}$ for $I\equiv [0,1]$, where $I_{x}^{\Psi}$ is the open interval $(x-\Psi(x), x+\Psi(x))$.  
Hence, the uncountable covering $\cup_{x\in I} I_{x}^{\Psi}$ has a finite sub-covering by the Heine-Borel theorem; in symbols:
\begin{princ}[$\HBU$]
$(\forall \Psi:\R\di \R^{+})(\exists  y_{0}, \dots, y_{k}\in I){(\forall x\in I)}(\exists i\leq k)(x\in I_{y_{i}}^{\Psi}).$
\end{princ}
Note that $\HBU$ is almost verbatim \emph{Cousin's lemma} (see \cite{cousin1}*{p.\ 22}), i.e.\ the Heine-Borel theorem restricted to canonical covers.  
This restriction does not make a difference, as studied in \cite{sahotop}.
Let $\WHBU$ be the following weakening of $\HBU$:
\begin{princ}[$\WHBU$] For any $\Psi:\R\di \R^{+} $ and $ \eps>_{\R}0$, there are pairwise distinct $ y_{0}, \dots, y_{k}\in I$ with $1-\eps <_{\R}\sum_{i\leq k}|J_{y_{i}}^{\Psi}| $, where $J_{y_{i+1}}^{\Psi}:= I_{y_{i+1}}^{\Psi}\setminus (\cup_{j\leq i}I_{y_{i}}^{\Psi}) $.
\end{princ}
Note that $\WHBU$ expresses the essence of the Vitali covering theorem for \emph{uncountable} coverings; Vitali already
considered the latter in \cite{vitaliorg}.  
By \cite{dagsamIII, dagsamV,dagsamVI}, $\Z_{2}^{\Omega}$ proves $\HBU$ and $\WHBU$ but $\Z_{2}^{\omega}+\QFAC^{0,1}$ cannot.
Basic properties of the \emph{gauge integral} (\cite{zwette, mullingitover}) are equivalent to $\HBU$ while $\WHBU$ is equivalent to basic properties of the Lebesgue integral (without RM-codes; \cite{dagsamVI}).  

\smallskip

We note that $\HBU$ (resp.\ $\WHBU$) is the higher-order counterpart of $\WKL$ (resp.\ $\WWKL$), i.e.\ \emph{weak K\"onig's lemma} (resp.\ \emph{weak weak K\"onig's lemma}) from \cite{simpson2}*{IV and X} as $\ECF$ maps $\HBU$ (resp.\ $\WHBU$) to $\WKL$ (resp.\ $\WWKL$), i.e.\ these are (intuitively) \emph{weak} principles. 

\smallskip

Finally, the aforementioned results suggest that (higher-order) comprehension as in $\SIXK$ is not the right way of measuring the strength of $\HBU$. 
As a better alternative, we have introduced the following axiom in \cite{samph}.
\begin{princ}[$\BOOT$]\label{bopri}
$(\forall Y^{2})(\exists X \subset \N)(\forall n^{0})\big[ n\in X \asa (\exists f^{1})(Y(f, n)=0)    \big]. $
\end{princ}
By \cite{samph}*{\S3}, $\BOOT$ is equivalent to convergence theorems for \emph{nets}, we have the implication $\BOOT\di \HBU$, and $\RCAo+\BOOT$ has the same first-order strength as $\ACA_{0}$.  
Moreover, $\BOOT$ is a natural fragment of Feferman's \emph{projection axiom} $\textsf{(Proj1)}$ from \cite{littlefef}.
Thus, $\BOOT$ is a natural axiom that provides a better `scale' for measuring the strength of $\HBU$ and its ilk, as discussed in \cite{samph} and Section \ref{nonnorm}.

\subsection{Higher-order computability theory}\label{HCT}
\subsubsection{Introduction}\label{floep2}
As noted above, some of our main results will be proved using techniques from computability theory.
Thus, we first make our notion of `computability' precise as follows.  
\begin{enumerate}
\item[(I)] We adopt $\ZFC$, i.e.\ Zermelo-Fraenkel set theory with the Axiom of Choice, as the official metatheory for all results, unless explicitly stated otherwise.
\item[(II)] We adopt Kleene's notion of \emph{higher-order computation} as given by his nine clauses S1-S9 (see \cite{longmann}*{Ch.\ 5} or \cite{kleeneS1S9}) as our official notion of `computable'.
\end{enumerate}
Similar to \cites{dagsam,dagsamII, dagsamIII, dagsamV, dagsamVI, dagsamVII}, one main aim of this paper is the study of functionals of type 3 that are \emph{natural} from the perspective of mathematical practise. 

\smallskip

We refer to \cite{longmann} for a thorough and recent overview of higher-order computability theory.  
We provide an intuitive introduction to S1-S9 in Sections \ref{esnaain} and \ref{kasic}.
We also sketch one of our main techniques, called \emph{Gandy selection}, in Section \ref{xnx}.  
Intuitively speaking, this method expresses that S1-S9 computability satisfies an effective version of the Axiom of Choice. 
Using this technique, we construct a number of models in Section \ref{models} that establish our independence results in Section~\ref{main}.

\smallskip

Finally, we mention the distinction between `normal' and `non-normal' functionals  based on the following definition from \cite{longmann}*{\S5.4}. 
We note that $\exists^{n}$ is essentially just $\exists^{3}$ from Section \ref{lll} with all types `bumped up' to level $n$.  
\begin{defi}\label{norma}
For $n\geq 2$, a functional of type $n$ is called \emph{normal} if it computes Kleene's $\exists^{n}$ following S1-S9, and \emph{non-normal} otherwise.  
\end{defi}
Similarly, we call a statement about type $n$ objects ($n\geq 2$) \emph{normal} if it implies the existence of $\exists^{n}$ over Kohlenbach's base theory from Section \ref{prelim1}, and \emph{non-normal} otherwise.  
We also use `\emph{strongly} non-normal' for type $3$ functionals that do not compute $\exists^{3}$ \emph{relative to $\exists^{2}$}.  
The realiser $N$ of $\NIN$ from Section \ref{norm} is a natural example of a non-normal functional, as discussed in Section \ref{floep}.

\smallskip

In this paper, all principles we study are part of third-order arithmetic, i.e.\ `non-normal vs normal' refers to $\exists^{2}$ in this case.  
The associated realisers are fourth-order, i.e.\ `non-normal vs normal' then refers to $\exists^{3}$.  
Note that by \cite{kohlenbach2}*{\S3}, $(\exists^{2})$ is equivalent to the existence of a discontinuous function on $\R$.

\subsubsection{Kleene's computation schemes}\label{esnaain}
For those familiar with \emph{Turing} computability, Kleene's S1-S9 in a nutshell is as follows: the schemes S1-S8 merely introduce (higher-order) primitive recursion, while S9 essentially states that the
recursion theorem holds. In this section we will provide a slightly more detailed introduction to Kleene computability.  All further details can be found in  \cite{longmann}*{\S 5.1.4}.
\begin{definition}{\em A \emph{type structure} ${ \Tp}$ is a sequence $\{\Tp[k]\}_{k \in \N}$ as follows.
\begin{itemize} 
\item $\Tp[0] = \N$.
\item For all $k \in \N$, $\Tp[k+1]$ is a set of functions $\Phi:\Tp[k] \rightarrow \N$.
\end{itemize}}
\end{definition} 
We note that $\Tp$ involves only total objects.  
As we will see, the Kleene schemes can be interpreted for all type structures. One of our applications of   type structures is that they will serve as models for  fragments of higher-order arithmetic, structures for the language $\L_{\omega}$. While the Kleene schemes are defined for pure types, the language $\L_{\omega}$ is over a richer set of types, known as the \emph{finite} types. However, assuming some modest closure properties of a type structure $ \Tp$, the extension to the finite types is unique (see \cite{longmann}*{\S4.2}). This is the case when $\Tp$ is \emph{Kleene closed} as in Definition \ref{godalmach}.

\smallskip

The following main definition is  \cite{longmann}*{Definition 5.1.1} adjusted to a type structure $\Tp$.
We assume a standard sequence numbering over $\N$: variables $a$, $b$, $c$, $x$, $y$, $q$, $e$, and $d$ denote elements of $\N$, while $f$, $g, \dots$ denote elements of $\Tp[1]$. 
We let $\Phi_i^{k_i}$ stand for an element of $\Tp[k_i]$.
The index `$e$' in \eqref{K} serves as a G\"odel number denoting the $e$-th Kleene algorithm. 
We use the notation `$\{e\}_\TP$' if we need to specify the particular type structure $\Tp$.
\begin{definition}[Kleene S1-S9 schemes relative to $\TP$]
{\em Let $\TP = \{\Tp[k]\}_{k \in \N}$ be a type structure. Over the latter, we define the relation
\be\label{K}\tag{\textsf{K}}
\{e\}(\Phi_0^{k_0} , \ldots , \Phi_{n-1}^{k_{n-1}}) = a
\ee
by a monotone inductive definition as follows. We omit the upper indices for the types whenever they are clear from context.
\begin{itemize}
\item[S1] If $e = \langle 1 \rangle$, then $\{e\}(x,\vec \Phi) = x+1$.
\item[S2] If $e = \langle 2,q\rangle$, then $\{e\}(\vec \Phi) = q$.
\item[S3] If $e = \langle 3 \rangle$, then $ \{e\}(x,\vec \Phi) = x$.
\item[S4] If $e = \langle 4 , e_0 , e_1\rangle$, then $\{e\}(\vec \Phi) = a$ if for some $b$ we have that $\{e_1\}(\vec \Phi) = b$ and $\{e_0\}(b,\vec \Phi) = a$.
\item[S5] If $e = \langle 5 , e_0 , e_1 \rangle$ then
\begin{itemize}
\item[-] $\{e\}(0, \vec \Phi) = a$ if $\{e_0\}(\vec \Phi) = a$,
\item[-] $\{e\}(x+1 , \vec \Phi) = a$ if there is some $b$ such that $\{e\}(x,\vec \Phi) = b$ and $\{e_1\}(b, \vec \Phi) = a$.

\end{itemize}
\item[S6] If $e = \langle 6 , d , \pi(0) , \ldots , \pi(n-1)\rangle$, where $\pi$ is a permutation of $\{0 , \ldots , n-1\}$, then $\{e\}(\Phi_0 , \ldots , \Phi_{n-1}) = a$ if $\{d\}(\Phi_{\pi(0)} , \ldots , \Phi_{\pi(n-1)}) = a$.
\item[S7] If $e = \langle 7 \rangle$, then $\{e\}(f,x,\vec \Phi) = f(x)$.
\item[S8] If $e = \langle 8, d\rangle$, then $\{e\}(\Phi^{k+2}, \vec \Phi) = a$ if there is a $\phi^{k+1} \in \Tp[k+1]$ such that $\Phi(\phi) = a$ and for all $\xi^k \in \Tp[k]$ we have that   $\{d\}(\Phi , \xi , \vec \Phi) = \phi(\xi)$.
\item[S9] If $e = \langle 9,m+1\rangle$ and $m \leq n$, $\{e\}(d,\Phi_0 , \ldots , \Phi_{n}) = a$ if $\{d\}(\Phi_0 , \ldots , \Phi_{m}) = a$.
\end{itemize}}
\end{definition}
Intuitively speaking, $\{e\}(\vec \Phi) = a$ represents the result of a terminating computation, where S1, S2, S3, and S7 provide us with the initial computation, S4 is composition, S5 represents primitive recursion, and S6 represents permutations of arguments. Finally, S8 represents higher-order-composition \emph{with two requirements}: that the computation of $\{d\}(\Phi , \xi , \vec \Phi)$ must terminate for all $\xi \in \Tp[k]$ and that the functional $\phi$ thus computed must be an element of $\Tp[k+1]$. In the original definition from \cite{kleeneS1S9}, the schemes are interpreted over the maximal type structure and the latter requirement is vacuous.  This requirement is essential to our development, as will become clear below. 
\begin{definition}[Kleene computability]\label{uneek}
{\em  ~ \begin{itemize}\label{godalmach}
\item[(a)] Let $\Tp$ be a type structure, let $\phi:\Tp[k] \rightarrow \N$, and let $\vec \Phi$ be in $\TP$ as above.  We say that $\phi$ is \emph{Kleene computable} in $\vec \Phi$ (over $\TP$) if there is an index $e$ such that for all $\xi \in \Tp[k]$ we have that $\{e\}(\xi , \vec \Phi) = \phi(\xi)$.
\item[(b)] The type structure $\TP$ is \emph{Kleene closed} if for all $k$ and all $\phi:\Tp[k] \rightarrow \N$ that are Kleene computable in elements in $\TP$, we have that $\phi \in \Tp[k+1]$. 
\end{itemize}}
\end{definition}
When a type structure $\TP$ is Kleene closed, it will have a canonical extension to an interpretation $\Tp[\sigma]$ for all finite types $\sigma$ as in the language $\L_\omega$. This is folklore and is discussed at length in \cite[\S 4.2]{longmann}. 
We use $\TP^*$ to denote this unique extension. 
What is important to us is that if $\TP$ is Kleene closed, then $\TP^*$ is a model of $\RCA_0^\omega$ and all terms in G\"odel's $T$ have canonical interpretations in $\TP^*$.

\smallskip

Finally, we motivate our choice of framework as follows.  
\begin{remark}[Church-Turing-Kleene]{\em 
First of all, there is no `Church-Turing thesis' for higher-order computability theory: there are several competing concepts based on $\mu$-computability, fixed point constructors, partial functionals, and so forth.  
We refer to \cite{longmann} for a detailed overview. We have primarily used Kleene's concept because it has proved to be useful in the construction of structures for extensions of $\RCA_0^\omega$, proving theorems of logical independence. 

\smallskip

Secondly, as a kind of `dichotomy phenomenon', we have observed that when we prove that one object of interest is computable in another object of interest, we can usually do this within G\"odel's $T$, or even with a weaker notion of relative computability.  By contrast, when we prove a \emph{non-computability} result, we can do this for the (relatively strong) concept of Kleene computability, but \emph{not necessarily} for stronger concepts.}
\end{remark}
In the next sections, we discuss some aspects of Kleene computability essential to this paper while foregoing a full introduction.  
For instance, we will not prove that \emph{being computable in} is transitive, while we freely use this fact.  

\subsubsection{Basic results in Kleene computability theory}\label{kasic}
We discuss two folklore results (Lemmas \ref{lemma.absolute} and \ref{lemma.blowout}) and one important theorem (Theorem \ref{thm.ext}).

\smallskip

As to the first folklore result, since the relation $\{e\}(\vec \Phi) = a$ is defined by a positive inductive definition, all such \emph{computation tuples} $\langle e , \vec \Phi , a \rangle$ will either stem from one of the schemes S1, S2, S3 or S7, or there will be a unique base of other such computation tuples. The elements of this base are called \emph{the immediate sub-computations} and the transitive closure of this relation gives us the well-founded relation of \emph{being a sub-computation}. 
The ordinal rank of a terminating computation is, by definition, the rank of the corresponding computation tuple in this well-founded relation.
The list of arguments in a sub-computation may contain more objects, but as they are only added through the use of S8, they will only be two type levels (or more) below the maximal type in the original list. In particular this means that if all arguments in a computation $\{e\}(\vec \Phi) = a$ are of type 2 or lower, all extra arguments in the sub-computations will be integers. This further means that in a computation like this, say of the form $\{e\}(\vec F , \vec f , \vec a) = b$, where each $F_i$ is of type 2, whenever we use the value  $F_i(g)$ in a sub-computation based on S8, $g$ itself will be computable in the argument list.  
For the sake of notational simplicity, we often order the arguments of a computation according to their types.  
\begin{lemma}[Restriction] \label{lemma.absolute}
 Let $A \subseteq  \N^\N$ and consider $F:\N^\N \rightarrow \N$ with its restriction to $A$ denoted $F_A$. If all $f$ computable in $F$ and elements in $A$ are in $A$, then for all indices $e$, for all $\vec f$ from $A$ and all $\vec a$ we have that \[\{e\}(F,\vec f , \vec a) \simeq \{e\}(F_A , \vec f , \vec a)\] where `$\simeq$' is the \emph{Kleene-equality}: both sides are either undefined or both sides are defined and equal.
 \end{lemma}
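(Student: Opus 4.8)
\textbf{Proof plan for Lemma \ref{lemma.absolute} (Restriction).}
The plan is to argue by induction on the rank of the (terminating) computation tuple $\langle e,\vec\Phi,a\rangle$, establishing the stronger simultaneous claim that for every index $e$, every tuple $\vec f$ from $A$ and every tuple $\vec a$ of integers, the computation $\{e\}(F,\vec f,\vec a)$ terminates with value $b$ if and only if $\{e\}(F_A,\vec f,\vec a)$ terminates with value $b$, and moreover the two computations have the same sub-computation structure. The direction from $F_A$ to $F$ is the easy one: a terminating computation relative to $F_A$ only ever queries $F$ at arguments in $A$ (since $F_A$ is only defined there), so replacing $F_A$ by $F$ changes nothing. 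The substantive direction is from $F$ to $F_A$, where one must verify that a terminating computation relative to $F$ never queries $F$ outside $A$.

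First I would set up the induction: a terminating computation tuple either arises from one of the initial schemes S1, S2, S3, S7 (base case, trivial since $F$ is not consulted and the value depends only on the integer/type-$1$ arguments), or it has a well-defined finite set of immediate sub-computations of strictly smaller rank. For the inductive step I would go scheme by scheme. Schemes S4, S5, S6, S9 just compose, iterate, permute, or drop arguments, and the argument lists of the sub-computations contain only objects already present (hence only $f$'s from $A$ and integers), so the induction hypothesis applies directly. The only scheme that introduces genuinely new information is S8: here $e=\langle 8,d\rangle$ and $\{e\}(F,\vec\Phi)=a$ holds because $F(\phi)=a$ for the functional $\phi$ defined by $\phi(\xi)=\{d\}(F,\xi,\vec\Phi)$ for all $\xi\in\Tp[1]=\N^\N$, with the side requirement that this $\phi$ lies in $\Tp[1]$. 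The key point, as the excerpt already notes in the paragraph preceding the lemma, is that in a computation all of whose arguments are of type $\le 2$, any higher-type argument $F$ appearing in a sub-computation is only ever applied to an object $g$ that is itself computable in the current argument list. So here the relevant $\phi$ — and hence any argument at which $F$ gets evaluated further down — is Kleene computable in $F$ together with the $f$'s from $A$ and the integers $\vec a$; by the hypothesis of the lemma every such object lies in $A$. Therefore every query to $F$ inside the whole computation is a query at a point of $A$, where $F$ and $F_A$ agree, and by the induction hypothesis applied to the sub-computation computing $\phi(\xi)$ for each $\xi$, the same $\phi$ is produced over $F_A$; then $F_A(\phi)=F(\phi)=a$ and the S8 clause fires identically.

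The main obstacle I anticipate is making the S8 case fully rigorous: one has to be careful that the functional $\phi$ witnessing the S8 clause is genuinely the \emph{same} object whether computed via $F$ or via $F_A$ — which requires the induction hypothesis to already give agreement of the sub-computations $\{d\}(F,\xi,\vec\Phi)\simeq\{d\}(F_A,\xi,\vec\Phi)$ for \emph{all} $\xi$, uniformly — and that membership of $\phi$ in $\Tp[1]$ (the extra S8 requirement) transfers. Since here $\Tp[1]=\N^\N$ is the full function space, the membership requirement is automatic, which simplifies matters; the only real work is the uniformity over $\xi$, handled by the inductive hypothesis. One should also note that the hypothesis ``all $f$ computable in $F$ and elements of $A$ are in $A$'' is exactly what is needed to close the loop at every S8 node, and that ranks of sub-computations in the $F$-computation and the $F_A$-computation match step for step, so the well-founded induction is legitimate. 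Finally, the Kleene-equality formulation (both undefined, or both defined and equal) is obtained by running the same induction on the complementary positive inductive definition of non-termination, or equivalently by observing that the argument above shows the two positive inductive definitions generate exactly the same set of computation tuples.
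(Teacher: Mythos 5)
Your plan is correct and matches the paper's (folklore, unproved-in-detail) argument exactly: the paper's entire justification is the observation in the paragraph preceding the lemma that in a computation with arguments of type $\le 2$ every object $g$ at which $F$ is queried via S8 is itself computable in the argument list, hence lies in $A$ by hypothesis, and your induction on the well-founded sub-computation relation is the standard way to make that precise. One cosmetic slip: in the S8 case with a type-$2$ argument $F$, the variable $\xi$ ranges over $\Tp[0]=\N$ and the witnessing $\phi$ lies in $\Tp[1]=\N^\N$ (you wrote $\xi\in\Tp[1]$), but this does not affect the argument.
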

We also have the following lemma, combining the observation of Lemma \ref{lemma.absolute} with the transitivity of the relation \emph{computable in}.
\begin{lemma}[Sandwich]\label{lemma.blowout} 
Let $F,G:\N^\N \rightarrow \N$ and assume that there is a partial functional $H \subseteq G$ that is partially computable in $F$ and total on the set of functions computable in $F$. 
Then all functions computable in $G$ are computable in $F$. 
\end{lemma}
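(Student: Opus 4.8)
The plan is to combine the closure/restriction idea behind Lemma~\ref{lemma.absolute} with transitivity of relative computability, organised as a transfinite induction on computation rank. Write $A\subseteq\N^{\N}$ for the set of functions Kleene computable in $F$. By transitivity, anything computable in $F$ together with finitely many elements of $A$ is again computable in $F$, so $A$ satisfies the hypothesis of Lemma~\ref{lemma.absolute} relative to $F$. Moreover, since $H\subseteq G$ and $H$ is total on $A$, the functionals $G$ and $H$ agree on $A$; as $H$ is partially computable in $F$ and total on $A$, the restriction $G_{A}=H_{A}\colon A\to\N$ is (fully) Kleene computable in $F$, say via a fixed index $i$.

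Next I would fix $h\in\N^{\N}$ computable in $G$, say $h(n)=\{e\}(n,G)$ for all $n$; after a permutation of arguments (S6) it suffices to analyse computations of the form $\{e'\}(G,\vec a)$ with $\vec a$ a tuple of integers. Since $G$ has type $2$, the sub-computation analysis recorded just before Lemma~\ref{lemma.absolute} guarantees that every sub-computation of such a computation again has only integer arguments apart from the oracle $G$, so one may induct on the ordinal rank of a terminating computation $\{e'\}(G,\vec a)=b$ and prove simultaneously: (a) every call to the oracle $G$ occurring in the computation is at an argument lying in $A$, and (b) the value $b$ is Kleene computable in $F$ from the integer parameters $\vec a$, via an index obtained primitive-recursively from $e'$. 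Clauses S1--S6 and S9 would follow at once by applying the induction hypothesis to the immediate sub-computations (clause S7 cannot occur, since no type-$1$ argument is present). The decisive case is S8, where $e'=\langle 8,d\rangle$ and the value is $G(\psi)$ for the function $\psi\in\N^{\N}$ given by $\psi(\xi)=\{d\}(G,\xi,\vec a)$ for all $\xi\in\N$; each of these is a sub-computation of strictly smaller rank, so by (b) the function $\psi$ is computable in $F$ together with the number parameters $\vec a$, whence $\psi\in A$. Thus the final oracle call $G(\psi)$ satisfies (a), and since $\psi\in A$ we get $G(\psi)=H(\psi)=H_{A}(\psi)=\{i\}(\psi,F)$; because $\psi$ is itself computable in $F$ from $\vec a$, transitivity of ``computable in'' converts this into a computation of $G(\psi)$ in $F$ from $\vec a$, uniformly in $d$ through the fixed index $i$. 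This closes the induction.

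Applying (b) to the top-level computations $\{e\}(n,G)=h(n)$ then produces a single index computing $h(n)$ in $F$ uniformly in $n$, so $h$ is computable in $F$, as required; if one also wants the conclusion for type-$2$ functionals computable in $G$, the same induction works, now additionally carrying type-$1$ arguments, which enter only through S7 and are treated exactly as oracle-free data because $A$ is closed under the relevant computations. I expect the transfinite induction in the second paragraph --- in particular the S8 case --- to be the main obstacle: one must guarantee that the functional argument $\psi$ handed to the oracle always lands inside $A$, so that the partiality of $H$ never interferes and $G$ can be traded for the $F$-computable $H_{A}$, and one must keep the construction uniform enough to yield an honest Kleene index witnessing that $h$ is computable in $F$.
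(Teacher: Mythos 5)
Your proof is correct and follows exactly the route the paper intends: the paper gives no explicit proof of the Sandwich lemma, only the remark that it combines the restriction observation preceding Lemma~\ref{lemma.absolute} with transitivity of \emph{computable in}, and your rank induction (with the S8 case showing every oracle argument $\psi$ lands in the set $A$ of $F$-computable functions, where $G$ can be traded for the $F$-computable $H_A$) is the standard elaboration of precisely that idea. The only loose point, the uniformity of the index transformation across S9, is handled in the usual way by the recursion theorem and does not affect correctness.
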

As to the second folklore result, the following theorem shows that we have a high degree of flexibility when defining type structures from sets of functionals. 
\begin{theorem}\label{thm.ext} 
Let $A \subseteq \N^\N$ and let $B$ be a set of functionals $F:A \rightarrow \N$. 
Assume that all $f$ computable in a sequence from $B$ and $A$  are in $A$. 
Then there is a Kleene closed type structure $\TP$ such that $A = \Tp[1]$ and $B \subseteq \Tp[2]$.
\end{theorem}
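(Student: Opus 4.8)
The plan is to build $\TP$ one level at a time. I would set $\Tp[0]:=\N$ and $\Tp[1]:=A$, and then, assuming $\Tp[0],\dots,\Tp[k]$ have already been constructed (with $k\ge 1$), define
\[
\Tp[k+1]:=\big\{\,\Phi:\Tp[k]\to\N \;\big|\; \Phi \text{ is Kleene computable over } (\Tp[0],\dots,\Tp[k]) \text{ in a finite tuple from } B\cup\Tp[0]\cup\dots\cup\Tp[k]\,\big\}.
\]
Since ``Kleene computable over'' includes totality, every member of $\Tp[k+1]$ is a genuine total function on $\Tp[k]$, so $\TP:=\{\Tp[k]\}_{k\in\N}$ really is a type structure. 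The first thing I would check is that this definition is not circular: by the structural observation recalled just before Lemma~\ref{lemma.absolute}, a computation whose top-level argument list has types $\le\max(k,2)$ only ever invokes clause S8 with a functional of that height, and hence only consults the sets $\Tp[j]$ for $j<k+1$; thus the relation used in the display above depends only on the already-constructed levels. I would also record that $B\subseteq\Tp[2]$, since every functional is Kleene computable relative to itself: for $F\in B$ an index built from S7, S6 and S8 computes $F(\xi)$ from the single oracle $F$, uniformly in $\xi^1\in\Tp[1]$.

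Next I would prove the key auxiliary claim: \emph{every object occurring in $\TP$ is Kleene computable in a finite tuple from $B\cup A$}. This is an induction on the level, using that a member of $\Tp[k+1]$ is computable in a tuple from $B\cup\Tp[0]\cup\dots\cup\Tp[k]$ whose entries are each, inductively, computable in a tuple from $B\cup A$, together with transitivity of ``computable in'' (used freely in the paper). Granting this claim, Kleene-closedness and the identity $\Tp[1]=A$ come out as follows. Suppose $\phi:\Tp[n]\to\N$ is Kleene computable over $\TP$ in a finite tuple $\vec\Psi$ of elements of $\TP$, and let $M$ be the largest type occurring among $\vec\Psi$. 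While $M>\max(n,2)$, I would replace each type-$M$ entry of $\vec\Psi$ by a finite lower-type tuple witnessing its membership in $\Tp[M]$; by transitivity $\phi$ stays computable in the enlarged tuple, which has strictly fewer type-$M$ entries, so iterating drives $M$ down to $\max(n,2)$. If $M\le n$, the computation consults only levels $\le n$ and so is a computation over $(\Tp[0],\dots,\Tp[n])$ in a tuple from $B\cup\Tp[0]\cup\dots\cup\Tp[n]$, whence $\phi\in\Tp[n+1]$ by definition. The remaining case $n<M=2$ forces $n\le 1$ and $\phi$ of type $\le 2$; replacing any $\Tp[2]$-entries by tuples from $B\cup\Tp[1]$ gives a tuple from $B\cup A$, so for $n=1$ we get $\phi\in\Tp[2]$ and for $n=0$ the hypothesis of the theorem gives $\phi\in A$. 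This last point also shows nothing outside $A$ is ever forced into $\Tp[1]$, so $\Tp[1]=A$; together with $B\subseteq\Tp[2]$ this is the theorem. (One could instead set this up as a single monotone inductive definition of the computation relation and the $\Tp[k]$ simultaneously, but the level-by-level recursion is cleaner once the cited structural fact is in hand.)

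I expect the main obstacle to be making precise the interplay between the Kleene computation relation and the type structure inside clause S8. Two things need care: first, the stratification argument showing that $\Tp[k+1]$ genuinely depends only on $\Tp[0],\dots,\Tp[k]$ (so that the recursion is legitimate), which is exactly where the ``sub-computations add arguments two type levels below the maximum'' observation is used; and second, checking that the functionals produced at stage $k+1$ are total on \emph{all} of $\Tp[k]$, which is what makes $\TP$ a bona fide type structure rather than a structure of partial functionals. The other somewhat delicate point is the repeated ``oracle-type reduction'' in the closedness argument: its content is that any computation with high-type oracles can be rewritten, by substituting witnessing data, into one whose oracles live at the bottom levels together with $\Tp[n]$, i.e.\ in precisely the shape permitted by the definition of $\Tp[n+1]$.
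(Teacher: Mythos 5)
Your construction is essentially the paper's own proof: the paper defines $\Tp[0]=\N$ and $\Tp[k+1]$ as the functionals on $\Tp[k]$ computable over $\{\Tp[i]\}_{i\le k}$ in a finite sequence from $A$ and $B$, and then invokes exactly the two facts you elaborate (sub-computations only introduce arguments of lower type, and transitivity of relative computability) to conclude Kleene closedness. Your write-up merely spells out the bookkeeping that the paper compresses into one sentence --- seeding $\Tp[1]:=A$ to sidestep the apparent circularity at level one, verifying $B\subseteq\Tp[2]$, and the oracle-type reduction --- and is correct.
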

\begin{proof} We define $\Tp[k]$ by recursion on $k$ as follows:
\begin{itemize}
\item $\Tp[0] = \N$
\item $\Tp[k+1]$ is the set of functionals $\Phi:\Tp[k] \rightarrow \N$ that are computable over $\{\Tp[i]\}_{i \leq k}$ in a finite sequence from $A$ and $B$. 
\end{itemize}
Since sub-computations only involve extra arguments of lower types, computability over $\{\Tp[i]\}_{i \leq k}$ will be the same as computability over $\TP$, and since \emph{computable in} is transitive, $\TP$ will be Kleene closed. 
\end{proof}
The \emph{fan functional} is a non-normal object computing a modulus of uniform continuity for continuous inputs (\cite{longmann}*{\S8.3}).  

\begin{corollary} 
There is a Kleene closed type structure $\TP$ such that $\Tp[1]$ satisfies $\ACA_0$, but with no fan functional in $\Tp[3]$.
\end{corollary}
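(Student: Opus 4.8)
The plan is to apply Theorem \ref{thm.ext} to a carefully chosen pair $(A,B)$ and then to rule out the fan functional by a computability-theoretic diagonalisation. Concretely, take $A$ to be the collection of arithmetical elements of $\N^{\N}$ and take $B=\emptyset$. The hypothesis of Theorem \ref{thm.ext} is immediate: a computation whose only oracles are finitely many type-$1$ objects from $A$ is an ordinary relative Turing computation, and the arithmetical functions are closed under Turing reducibility and finite joins, so every $f$ Kleene computable in a finite sequence from $A$ is again in $A$. Theorem \ref{thm.ext} then yields a Kleene closed type structure $\TP$ with $\Tp[1]=A$. Since $A$ is closed under the Turing jump and under arithmetical comprehension, $\Tp[1]$ — viewed as the second-order part of a structure for $\L_{2}$ — is a model of $\ACA_{0}$, which takes care of the first half.

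For the second half, the key structural observation, read off from the construction in the proof of Theorem \ref{thm.ext}, is that \emph{every} member of $\Tp[k+1]$, for every $k$, is Kleene computable over $\TP$ from a single finite sequence of elements of $A$, i.e.\ from finitely many arithmetical type-$1$ functions and no higher-type oracle whatsoever. In particular, any putative fan functional $M\in\Tp[3]$ would satisfy $M(\cdot)=\{e\}(\cdot,\vec p)$ for some index $e$ and arithmetical $\vec p$; for the same reason, every $F\in\Tp[2]$ is Kleene computable from arithmetical type-$1$ data, hence is \emph{continuous}. So it suffices to show that no $\{e\}(\cdot,\vec p)$ with $\vec p$ of type $1$ acts as a fan functional on the continuous functionals of $\Tp[2]$. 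This is a (trivial) relativisation of the classical fact that the fan functional is not S1-S9 computable from type-$1$/continuous data; see \cite{longmann}*{\S8.3} and the references there to Tait's argument. Note it is precisely the absence of $\exists^{2}$ from $\Tp[2]$ (forced by $B=\emptyset$) that is at work here: with $\exists^{2}$ available one could decide, for each $n$, whether $F$ is constant on every cylinder of depth $n$ and search for the least such $n$ — a search that terminates because any model of $\ACA_{0}$ satisfies $\WKL_{0}$ — so the restriction on $B$ is essential.

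To reprove the non-computability, one uses the \emph{finite use} of the oracle argument, exactly as in the discussion preceding Lemma \ref{lemma.absolute}: if $\{e\}(F,\vec p)$ halts, then (all extra sub-computation arguments being integers) the computation consults $F$ at only finitely many type-$1$ points, each itself computable along the way, so there is an $N$ bounding the depth of all binary sequences at which $F$ is inspected. Given $e$, the recursion theorem (scheme S9) produces a uniformly continuous $F\in\Tp[2]$ — computable from $\vec p$ and $e$ alone, hence genuinely an element of $\Tp[2]$ — which is constant on every cylinder of depth $N$ but not constant on some cylinder of depth $N-1$ (or else $\{e\}(F,\vec p)$ diverges, which is equally fatal). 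Either way $\{e\}(F,\vec p)$ fails to be a correct modulus of uniform continuity for $F$, so $M$ is not a fan functional, a contradiction; hence $\Tp[3]$ contains no fan functional.

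The main obstacle is this last step: arranging the self-referential definition of the diagonalising input $F$ through the recursion theorem so that $F$ is simultaneously total on $\Tp[1]$, uniformly continuous, a bona fide element of $\Tp[2]$, and genuinely defeats $\{e\}$, while keeping exact track of which finitely many values of $F$ the computation $\{e\}(F,\vec p)$ actually reads (and ensuring the bookkeeping is stable under the recursion-theorem fixed point). Everything else — the choice of $(A,B)$, the verification of the hypothesis of Theorem \ref{thm.ext}, the check that $\Tp[1]\models\ACA_{0}$, and the reduction of the fan functional's non-membership in $\Tp[3]$ to its non-computability from continuous data — is routine once the structural observation about the members of $\Tp[k+1]$ is in hand.
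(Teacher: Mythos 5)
Your proposal is correct and follows essentially the same route as the paper: take $A$ to be the arithmetical functions and $B=\emptyset$, apply Theorem \ref{thm.ext}, and observe that since every element of $\Tp[3]$ is then Kleene computable from finitely many type-$1$ oracles, the known fact that the fan functional is not Kleene computable in any function (\cite{longmann}*{\S8.3}) rules it out. The only difference is that the paper simply cites that non-computability result, whereas you sketch a re-derivation of it via the recursion theorem; that sketch is the standard (Tait-style) argument and is not needed for the corollary itself.
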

\begin{proof} Let $A$ be the set of arithmetically defined functions and $B$ be empty. Since the fan functional is not Kleene computable in any function (\cite{longmann}*{\S8.3}), the type structure derived from the proof of Theorem \ref{thm.ext} will have this property. 
\end{proof}
All elements in $\Tp[2]$ in the proof are continuous with an associate in $\Tp[1]$.  
Moreover, there is a Kleene closed type structure $\TP'$ containing all of $\N^\N$, but not the fan functional, by the same argument.
In this case, $\Tp'[2]$ are exactly the continuous functionals, implying that the fan functional is total, but not in $\Tp'[3]$.

\subsubsection{Advanced Kleene computability theory}\label{xnx}
We discuss \emph{Gandy selection}, a technique expressing that Kleene computability satsifies a kind of `computable Axiom of Choice'.  
This technique is based on \emph{stage comparison}, discussed next. 

\smallskip

Now, given a type structure $\TP$, Kleene computations are defined by a positive inductive definition and the set of computations $\{e\}(\vec \Phi) = a$ are equipped with \emph{ordinal ranks}.  For the latter, we refer to \cite{longmann}*{\S5.1.1} and the first folklore result in Section \ref{kasic}.
Moreover, if the elements of $\vec \Phi$ have types $\leq 2$, this ordinal rank is \emph{countable}, which holds for any Kleene closed type structure. 
\begin{defi}[Norm of a computation]
If $\{e\}(F , \vec f , \vec a)$ terminates, we let the ordinal $||\langle e, F , \vec f , \vec a\rangle||$ denote the norm. If the computation does not terminate, we set the norm to $\infty$, or equivalently in this context, to $\aleph_1$.
\edefi
Recall the functional \ref{muk} introduced in Section \ref{lll}. A functional $F$ of type 2 is called \emph{normal} if $\exists^2$ is computable in $F$, a definition that works for all Kleene closed type structures. If $F$ is normal, we can use $\exists^2$ and the recursion theorem to prove the following theorem, originally due to  Gandy \cite{supergandy}.
\begin{theorem}[Stage comparison] 
If $F^{2}$ is normal, there is an $F$-computable function $P$ such that if $\{e\}(F, \vec f , \vec a)$ and $\{d\}(F , \vec g , \vec b)$ are two alleged computations, then $P(\langle e , \vec f , \vec a , d , \vec g , \vec b\rangle) $ terminates if and only if at least one of the two alleged computations terminate, and then $P$ decides if $||\langle F , \vec f , \vec a\rangle|| < ||\langle F , \vec g , \vec b \rangle||$ or not. 
\end{theorem}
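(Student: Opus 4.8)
The plan is to construct $P$ by the recursion theorem (scheme S9) as a functional computable in $F$, using that a normal $F$ computes $\exists^{2}$ and therefore permits unbounded number searches. The starting point is that, once all argument lists are restricted to types $\leq 2$, the relation $\{e\}(F,\vec{\Phi})=a$ is given by a \emph{monotone inductive definition} whose one-step operator is positive and, because $F$ is normal, computable from $F$: decoding the leading index $e$ of an alleged computation $c$ tells us which of S1--S9 it instantiates and hence what its \emph{immediate subcomputations} are. For S1, S2, S3, S7 there are none --- $c$ converges with norm $0$, unless the indices or types mismatch, in which case $c$ diverges and $||c||=\infty$; for S4, S5, S6 there are finitely many, some of which (the second subcomputation of an S4, the successive subcomputations of an S5 recursion) are determined \emph{dynamically} by the values returned by earlier ones; and for S8 --- the only genuinely infinitary case, since $F$ has type $2$ --- there is the countable family $\{\langle d',n,\vec{\Phi}\rangle : n\in\N\}$ together with the side conditions that $\phi:=\lambda n.\{d'\}(F,n,\vec{\Phi})$ be total and that $F(\phi)=a$. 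Note that once $\phi$ is total it is automatically Kleene computable in $F$ and the other arguments, hence lies in $\Tp[1]$ by Kleene closure, so no separate membership test for $\phi$ is needed. The norm $||c||$ is the stage at which $c$ enters this induction, i.e.\ $\sup\{\,||c'||+1 : c'\text{ an immediate subcomputation of }c\,\}$, with $||c||=\infty$ as soon as some necessary subcomputation diverges.

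Given this, I would define $P(c_0,c_1)$, where $c_0$ and $c_1$ abbreviate the two alleged computations $\{e\}(F,\vec{f},\vec{a})$ and $\{d\}(F,\vec{g},\vec{b})$, to converge exactly when at least one of them converges and, in that case, to output the truth value of $||c_0||<||c_1||$. The body of $P$ first reads off the schemes of $e$ and $d$; if either is an initial scheme (or a mismatch) it reads off that computation's convergence status and norm directly and combines the two. Otherwise it applies the elementary ordinal identities
\begin{align*}
||c_0||\leq ||c_1|| &\iff (\forall c\in\mathrm{Sub}(c_0))(\exists c'\in\mathrm{Sub}(c_1))\,||c||\leq ||c'||,\\
||c_0||<||c_1|| &\iff (\exists c'\in\mathrm{Sub}(c_1))\,||c_0||\leq ||c'||,
\end{align*}
reducing the comparison of $c_0$ and $c_1$ to comparisons between \emph{pairs of immediate subcomputations}, each of which is a recursive call to $P$; the (finite or countable) quantifiers over $\mathrm{Sub}(c_0)$ and $\mathrm{Sub}(c_1)$ are executed with $\exists^{2}$, and the dynamically determined subcomputations of S4 and S5 are produced through recursive calls rather than by evaluating $c_0$ or $c_1$ outright. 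It is this last point --- always phrasing the exploration of one computation tree as a comparison query against the other, so that the two trees are unfolded in lockstep --- that makes the construction work: a naive ``evaluate $\{e_1\}$ first'' strategy would diverge whenever the left computation diverges, even when the right one converges.

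Correctness and the exact convergence domain I would establish by transfinite induction on $\min(||c_0||,||c_1||)$. When this ordinal is countable (equivalently, at least one of $c_0,c_1$ converges), one checks that every recursive call in the body of $P$ is on a pair in which the side of smaller norm still converges with a \emph{strictly} smaller norm, so the induction hypothesis yields convergence of $P(c_0,c_1)$ together with the correct boolean. When both $c_0$ and $c_1$ diverge, a symmetric inspection of the schemes shows that every recursive call is again on a divergent/divergent pair, so $P(c_0,c_1)$ diverges and no spurious value is returned. Since the body of $P$ is assembled from $F$, $\exists^{2}$, primitive recursion, and the self-reference provided by S9, $P$ is $F$-computable, as claimed.

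I expect the main obstacle to be precisely the ``lockstep'' bookkeeping for the schemes S4, S5 and especially S8: the case analysis must be arranged so that in the doubly-divergent case no branch accidentally halts, while in the mixed case (one side diverges, the other converges at some countable stage) every query that could block is paired with a query on the converging side that eventually unblocks it. The countable branching of S8 is absorbed into the $\exists^{2}$ searches, and its side conditions reduce --- via Kleene closure, as noted above --- to ``all the relevant width-$n$ subcomputations converge and $F$ returns the correct value'', itself a further batch of comparison queries together with one evaluation of $F$. The remaining ingredients (the finite scheme-versus-scheme case split, the handling of S6 permutations, and the elementary ordinal arithmetic above) are routine.
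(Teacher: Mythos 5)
Your proposal is correct and is essentially the standard Gandy argument that the paper itself only gestures at (it cites Gandy and Longley--Normann and notes merely that the proof uses $\exists^{2}$ and the recursion theorem): a self-referential definition of $P$ via S9, the ordinal identities reducing norm comparison to comparisons of immediate subcomputations, $\exists^{2}$ to absorb the countable branching of S8, and the lockstep unfolding that certifies convergence of a dynamically determined subcomputation by a comparison against the converging side before evaluating it. The points you flag as delicate (the S4/S5 bookkeeping and the doubly-divergent case, handled by well-foundedness of converging $P$-computations) are exactly the delicate points of the classical proof, and your treatment of them is sound.
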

A `soft' consequence of stage comparison is \emph{Gandy selection}, first proved in \cite{supergandy}.  We only state the version we need in this paper.
Intuitively, $\lambda f.\{d\}(F, f)$ is a (partial) choice function with the biggest possible domain.
\begin{theorem}[Gandy Selection]\label{gandyselection}
Let $F^{2}$ be normal. Let $A \subset \N \times \N^\N$ and $e$ be such that $(a,f) \in A$ if and only if $\{e\}(F,f,a)$ terminates \($A$ is semi-computable in $F$\). Then there is an index $d$ such that $\{d\}(F,f)$ terminates if and only if there exists $a\in\N$ such that $(a,f) \in A$, and then $\{d\}(F,f)$ is one of these numbers.
\end{theorem}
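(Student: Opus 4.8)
The plan is to derive the Gandy Selection theorem from the stage comparison theorem together with $\exists^{2}$ and the recursion theorem (S9), all of which are available once $F^{2}$ is normal. Write $\|\cdot\|$ for the norm of an $F$-computation as in the Norm definition above, with the convention that a divergent computation has norm $\aleph_{1}$; since every computation in play has arguments of type $\leq 2$, all finite norms are countable ordinals. Fix the index $e$ with $(a,f)\in A \asa \{e\}(F,f,a)\downarrow$. The target is an index $d$ such that $\{d\}(F,f)$ converges precisely when some $a$ has $(a,f)\in A$, and then returns such an $a$.

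The central observation is that, for a fixed $f$ for which at least one $a$ works, the family of ordinals $\big\{\, \|\langle e,F,f,a\rangle\| : a\in\N \,\big\}$ has a least element, and there is a least $a_{0}$ achieving it. We make $\{d\}(F,f)$ search for and return this $a_{0}$. The point is that ``$a_{0}$ is correct'' decomposes into: (i) $\{e\}(F,f,a_{0})$ converges --- which is only semi-decidable in $F$ --- together with (ii), \emph{given} (i), no $a$ yields a strictly smaller norm. Condition (ii) is decidable once (i) holds: by the stage comparison theorem there is an $F$-computable $P$ which, on a pair of alleged computations, converges iff at least one of them does and then decides the comparison of their norms; so once $\{e\}(F,f,a_{0})\downarrow$, the map $a\mapsto \big[\, \|\langle e,F,f,a\rangle\| < \|\langle e,F,f,a_{0}\rangle\| \,\big]$ is a \emph{total} $F$-computable Boolean function (each relevant $P$-call converges because its second argument does), and $\exists^{2}$ decides whether this function ever takes the value ``true''. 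Thus there is, uniformly in $a_{0}$, a single $F$-computable ``test'' computation which converges iff $a_{0}$ is a least-norm index: if $\{e\}(F,f,a_{0})\downarrow$ it proceeds to the $\exists^{2}$-decidable check in (ii); if $\{e\}(F,f,a_{0})\uparrow$ it diverges (when some other $a$ converges, the $\exists^{2}$-step is fed a merely partial function and hangs; when no $a$ converges at all, $P$ never returns and it again hangs).

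It remains to run these tests for all $a_{0}\in\N$ ``in parallel'' and output one $a_{0}$ whose test converges. This dovetailing is itself an instance of selecting, from an $\N$-indexed uniformly $F$-computable family of computations, one of least norm; we implement it by a $\mu$-search (via $\exists^{2}$) in which the search for $a_{0}$ is made well-founded by comparing, through $P$, the norm of the $a_{0}$-th test with the norm of the very computation $\{d\}(F,f)$ being defined --- the self-reference being supplied by the recursion theorem (S9), and legitimate since all computations involved have countable norms. When some $a$ works, a least-norm index $a_{0}$ exists, its test converges, and the search locates such an $a_{0}$, so $\{d\}(F,f)$ halts and returns it, and $(a_{0},f)\in A$ as required; when no $a$ works, every test diverges and hence so does $\{d\}(F,f)$. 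Since only $P$ and $\exists^{2}$ (both $\leq_{F}$) and the index $e$ enter the construction, $d$ depends on $F$ alone.

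The main obstacle is precisely this last step: organizing an unbounded parallel search so that it does not stall on a small non-verifying $a_{0}$ yet still diverges globally when nothing verifies. One cannot decide ``$\exists a\,\{e\}(F,f,a)\downarrow$'' from $F$, so the construction must be arranged so that this existential information is carried by the (non-)termination of $\{d\}$ itself; this is exactly the classical combination of stage comparison with the recursion theorem due to Gandy, and the only real care needed is the bookkeeping that keeps every invoked computation at type $\leq 2$, so that the ordinal-rank and monotone-induction arguments underlying stage comparison remain valid.
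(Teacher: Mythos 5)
Your proposal is correct and follows essentially the same route as the paper's (deliberately brief) proof: select the $a$'s whose computations $\{e\}(F,f,a)$ have minimal ordinal rank via stage comparison, take the least such $a$, and use the recursion theorem to make the self-referential search terminate exactly when some $a$ works; the paper omits the bookkeeping you spell out and defers to Longley--Normann \S 5.4. Your fleshed-out version of the dovetailing step is the standard Gandy argument and adds nothing contrary to the paper's sketch.
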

\begin{proof}
The proof makes use of the recursion theorem, and the idea is first to select the nonempty \emph{computable} set of those $a$ leading to a computation $\{e\}(F,f,a)$ of minimal ordinal rank, and then to select the least $a$ among those. We omit further details and refer to \cite{longmann}*{\S5.4} instead.
\end{proof}
Most of our applications of Gandy selection are based on the following corollary. 
Intuitively speaking, the functional $G$ computes an $F$-index for $f$.
\begin{corollary}\label{cor.inj} 
Let $F^{2}$ be normal. Then there is a partial functional $G$ computable in $F$ which terminates if and only if the input $f$ is computable in $F$, and such that we have $G(f) = e\di (\forall a)(f(a) = \{e\}(F,a))$.  
\end{corollary}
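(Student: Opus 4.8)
The plan is to deduce the statement directly from Gandy selection (Theorem~\ref{gandyselection}). Concretely, I would exhibit a set $A\subseteq \N\times\N^{\N}$, semicomputable in $F$, whose fibre over a given $f$ is nonempty exactly when $f$ is computable in $F$, and all of whose members over $f$ are $F$-indices for $f$; feeding such an $A$ to Theorem~\ref{gandyselection} then produces the desired $G$. Since $F$ is normal, $\exists^{2}$ is computable in $F$, and the whole difficulty is concentrated in one point: semi-deciding in $F$ whether a given index defines, relative to $F$ alone, a \emph{total} function on $\N$. This is where scheme~S8 earns its keep.

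First I would fix (by the recursion theorem) an index $e'$ such that $\{e'\}(F,e,f)$ attempts to compute $\exists^{2}(h_{e,f})$ \emph{via} S8, where $h_{e,f}$ is the function with $h_{e,f}(a)=1$ when $\{e\}(F,a)=f(a)$ as computations, and $h_{e,f}(a)=0$ when $\{e\}(F,a)$ terminates with a value $\ne f(a)$; here $f$ is consulted as a type-$1$ oracle through S7, and $h_{e,f}$, being Kleene computable in $F$ and $f$, lies in $\Tp[1]$ whenever it is total, as required by S8. By the built-in demand of S8 that the subcomputation terminate on \emph{every} argument, $\{e'\}(F,e,f)$ terminates precisely when $\lambda a.\{e\}(F,a)$ is total, i.e.\ when $e$ defines some $g\in\N^{\N}$ relative to $F$ alone; and in that case the output $v_{e,f}:=\exists^{2}(h_{e,f})$ satisfies $v_{e,f}\ne 0$ iff $\{e\}(F,a)=f(a)$ for \emph{all} $a$, i.e.\ iff $e$ is an $F$-index for $f$.

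Next I would put $A:=\{(e,f): \{e'\}(F,e,f)\text{ terminates and }\{e'\}(F,e,f)\ne 0\}$. Running $\{e'\}(F,e,f)$ and accepting when it halts with a nonzero value (and otherwise diverging, via an unbounded search that never succeeds, with arguments permuted by S6) exhibits $A$ in exactly the form Theorem~\ref{gandyselection} requires. By the previous paragraph, $(e,f)\in A$ forces $(\forall a)(f(a)=\{e\}(F,a))$, and some $e$ has $(e,f)\in A$ iff $f$ is computable in $F$. Gandy selection then yields an index $d$ with $\{d\}(F,f)$ terminating iff some fibre element lies in $A$, hence iff $f$ is computable in $F$, and with $\{d\}(F,f)$ then being such an $e$; taking $G:=\lambda f.\{d\}(F,f)$ finishes the proof.

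The main obstacle is the passage from ``$\lambda a.\{e\}(F,a)$ is total'' to ``$\{e'\}(F,e,f)$ terminates''. This is genuine work rather than bookkeeping, because the set of $e$ defining a total function relative to a normal $F$ is in general far from semidecidable in the naive sense; it is precisely the non-vacuous totality requirement in the version of S8 used in this paper—together with $\exists^{2}$ to compress the test ``for all $a$'' into a single functional application—that makes it expressible as termination of one Kleene computation relative to $F$. Once that is in hand, everything else (the oracle comparison with $f$, packaging $A$ in the exact shape of Theorem~\ref{gandyselection}, and unwinding the conclusion) is routine.
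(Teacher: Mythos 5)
Your proof is correct and follows essentially the same route as the paper: the paper simply observes that, for normal $F$, the relation $(\forall a)(f(a)=\{e\}(F,a))$ is semi-computable in $F$ and then applies Gandy selection. You have merely spelled out the ``clearly semi-computable'' step (the S8/$\exists^{2}$ totality test followed by the conversion of ``halts with nonzero value'' into pure termination), which is exactly the intended justification.
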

\begin{proof} When $F$ is normal, the relation $(\forall a \in \N)(f(a) = \{e\}(F,a))$ is clearly\\ semi-computable, and we can apply Gandy Selection.\end{proof}
Note that the functional $G$ is always injective.
Of course, these results are equally valid for all Kleene closed type structures, and we may replace $\N^\N$ in Theorem \ref{gandyselection} with any finite product of $\N$ and $\N^\N$.

\subsubsection{Two type structures}\label{models}
In this section, we define two Kleene closed type structures ${\bf P}$ and $\bf Q$ that are crucial for the below independence results involving $\Z_{2}^{\omega}$.
Moreover, the construction of ${\bf Q}$ unifies constructions from earlier work that were used to prove e.g.\ \cite{dagsam}*{Theorem~3.4}, \cite{dagsamV}*{Theorem 4.9}, and \cite{dagsamVII}*{Theorem 3.5}.

\smallskip

We note that ${\bf P}$ is constructed under the set-theoretical assumption that $\textsf{V = L}$.  
There is no harm in this, since what is of interest is the logic of the structure, which statements are true and which are false, and our results will not depend on the assumption that $\textsf{V = L}$; they are proved within $\ZF$.
For the unfamiliar reader, the axiom $\textsf{V = L}$, which expresses that every set is constructible, was used by G\"odel to show that the Continuum Hypothesis is consistent with $\ZFC$ (see \cite{devil} for details).

\smallskip

Recall the functionals $\SS^{2}_{k}$ from Section \ref{lll}. We use the assumption $\textsf{V = L}$ motivated by the following fact from set theory.
\begin{lemma}[$\textsf{V = L}$] Let $A \subseteq \N^\N$ be closed under computability relative to all $\SS^{2}_{k}$. Then all $\Pi^1_n$-formulas are absolute for $A$ for all $n$. \end{lemma}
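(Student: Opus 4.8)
The plan is to exploit the definability of the iterated Suslin functionals $\SS^2_k$ and the structure of the constructible hierarchy. First I would recall the classical fact that, assuming $\textsf{V = L}$, there is a $\Sigma^1_2$ well-ordering of $\N^\N$, and more generally that $\N^\N \cap L_{\omega_1}$ already contains every real and that the $\Sigma^1_n$ and $\Pi^1_n$ relations on $\N^\N$ behave well with respect to the natural stratification coming from Gödel's condensation lemma. The key observation is that the functionals $\SS^2_k$ are precisely designed to decide $\Sigma^1_k$-truth: if $A$ is closed under computability relative to all the $\SS^2_k$, then for each $n$ the $\Pi^1_n$ satisfaction relation, as a predicate of a real parameter in $A$, is \emph{itself} computable (uniformly) from $\SS^2_n$ together with $\exists^2$ — indeed $\SS^2_n$ by definition returns the truth value of $\Sigma^1_n$ formulas with real parameters, and these parameters may be taken from $A$ since $A$ is closed. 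Hence the set $\{\, x \in A : M \models \varphi(x)\,\}$ computed relative to $\SS^2_n$ is the \emph{true} $\Pi^1_n$ set.

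The core of the argument is then to check that $\Sigma^1_n$-truth (equivalently $\Pi^1_n$-truth) does not change when the second-order quantifiers are relativized to $A$ rather than ranging over all of $\N^\N$. This is where $\textsf{V = L}$ enters essentially. I would argue by induction on $n$. The base case $n \leq 1$ is immediate: $\Sigma^1_1$ statements are absolute for any model of $\ACA_0$ containing the relevant reals, because a $\Sigma^1_1$ witness (an infinite path, or a real coding a well-ordering in the $\Sigma^1_1$ normal form) is computable from the $\Pi^0_1$ tree/ordering it describes, hence lies in $A$. For the inductive step, a $\Sigma^1_{n+1}$ statement $(\exists g)\,\psi(x,g)$ with $\psi$ being $\Pi^1_n$: if it holds in $V$, then by $\textsf{V = L}$ there is a \emph{least} such witness $g$ in the canonical $\Sigma^1_2$ (hence $\Sigma^1_{n+1}$-definable) well-ordering of the reals, and this least witness is definable from $x$ by a $\Sigma^1_{n+1}$ formula; by the inductive hypothesis applied to the $\Pi^1_n$ matrix, $g$ together with the verification of $\psi(x,g)$ can be located inside $A$ — more precisely, $g$ is computable from $\SS^2_{n+1}$ and $x$ via a search through the well-ordering, using the inductive absoluteness to check the $\Pi^1_n$ condition at each stage — so $g \in A$ and the existential is witnessed inside $A$. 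The converse direction (truth in $A$ implies truth in $V$) is the easy half, since a witness in $A$ is in particular a witness in $V$ and the $\Pi^1_n$ matrix transfers downward by the inductive hypothesis.

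The main obstacle I anticipate is making the "least witness in the $L$-ordering is itself $\SS^2_{n+1}$-computable from the parameter, and therefore lands in $A$" step fully rigorous: one needs that the canonical well-ordering restricted to the reals relevant at level $n+1$ has order type below $\omega_1$ in a way that the search terminates, and that the $\Sigma^1_2$ definition of this well-ordering can be fed to $\SS^2_{k}$ for suitable $k$ (possibly $k$ somewhat larger than $n+1$, which is harmless since $A$ is closed under \emph{all} the $\SS^2_k$). The cleanest route is probably to cite Shoenfield-style absoluteness between $A$ viewed as (the reals of) an inner-model-like structure and $L$ itself: since $A \supseteq \N^\N \cap L_{\alpha}$ for a suitable admissible or $\Sigma_n$-correct $\alpha$ (because $A$ is closed under $\SS^2_n$, which computes a universal $\Sigma^1_n$ set and hence "sees" enough of $L$), the $\Pi^1_n$ formulas are absolute between $A$ and $L = V$ by the usual reflection/condensation argument. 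I would present the proof along these lines, doing the induction explicitly for the reader and invoking condensation for the set-theoretic input, and flag that no choice beyond what $\textsf{V = L}$ provides is needed.
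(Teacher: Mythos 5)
Your overall strategy coincides with the paper's (very terse) proof: for $n>2$ the absoluteness is extracted from the $\Delta^1_2$ well-ordering of $\N^\N$ given by $\textsf{V=L}$, by noting that the $<_L$-least witness of a true $\Sigma^1_{n+1}$ statement is a $\Delta^1_{n+1}$ singleton in the parameter and hence computable from $\SS^{2}_{n+1}$ and the parameter, so it lands in $A$; your inductive step is essentially this argument and is fine (your worry about ``searching through the well-ordering'' is a red herring --- one does not search, one decides each value of the least witness by $\SS^{2}_{n+1}$-queries to its $\Sigma^1_{n+1}\wedge\Pi^1_{n+1}$ definition).

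There is, however, a genuine error in your base case. You assert that $\Sigma^1_1$ statements are absolute for any $\omega$-model of $\ACA_{0}$ containing the parameters, because ``a $\Sigma^1_1$ witness is computable from the tree it describes.'' Both claims are false: an ill-founded recursive tree need not have any arithmetic (or even hyperarithmetic) path, so for instance the $\omega$-model of all arithmetic sets satisfies $\ACA_{0}$ but is not $\Sigma^1_1$-correct. What saves the lemma is precisely the hypothesis you are not using at this point: $A$ is closed under computability relative to $\SS^{2}_{1}=\SS^{2}$, and the \emph{leftmost} path of an ill-founded tree $T$ is computable from $\SS^{2}$ and $T$ (decide ``does some path extend $\sigma$?'' with $\SS^{2}$ and extend $\sigma$ accordingly). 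The same care is needed at $n=2$: $\Sigma^1_2$-correctness of $A$ is the ``general fact independent of $\textsf{V=L}$'' the paper alludes to, and it comes from Kond\^o uniformization --- a nonempty $\Sigma^1_2(x)$ set contains a $\Pi^1_1(x)$ singleton, and such singletons are computable from $\SS^{2}$ and $x$, hence lie in $A$. Replace your $\ACA_{0}$ justification with these basis-theorem arguments and the proof is correct.
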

\begin{proof} 
For $n \leq 2$, this is a general fact independent of the assumption \textsf{V = L}, and for $n > 2$ it  is a consequence of the existence of a $\Delta^1_2$-well-ordering of $\N^\N$. 
\end{proof}
\begin{definition}[$\textsf{V = L}$]\label{defP}{\em 
Let $\SS^{2}_{\omega}$ be the join of all the functionals $\SS^{2}_{k}$, and let ${\bf P}$ be the Kleene closed  type-structure, as obtained from Theorem \ref{thm.ext}, where ${\bf P}[1]$ is the set of functions computable in $\SS^{2}_{\omega}$ and the restriction of $\SS^{2}_{\omega}$ to ${\bf P}[1]$ is in ${\bf P}[2]$.}
\end{definition}
The model ${\bf P}$, under another name, has been used to prove \cite{dagsamV}*{Theorem 4.3}.
Recall the unique extension $\TP^{*}$ of $\TP$ introduced below Definition \ref{uneek}.
\begin{lemma}\label{lemmaP}
 ${\bf P}^*$ derived  from ${\bf P}$ as defined above is a model for $\Z_{2}^{\omega} + \QFAC^{0,1}$. 
 \end{lemma}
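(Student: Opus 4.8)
The plan is to verify that ${\bf P}^{*}$ satisfies each of the axiom groups constituting $\RCAo$ together with $(\SS_{k}^{2})$ for every $k$ and $\QFAC^{0,1}$. First I would invoke the general machinery already in place: since ${\bf P}$ is Kleene closed (being obtained from Theorem \ref{thm.ext}), its canonical extension ${\bf P}^{*}$ to all finite types exists, is a model of $\RCAo$, and interprets all terms of G\"odel's $T$; this is exactly the ``folklore'' fact recorded just after Definition \ref{uneek}. So the base theory is free, and it remains only to check the extra axioms.

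Next I would argue that ${\bf P}^{*}\models (\SS_{k}^{2})$ for each $k$. By construction ${\bf P}[1]$ is the set of functions computable in $\SS_{\omega}^{2}$, the join of all $\SS_{k}^{2}$, and the restriction of $\SS_{\omega}^{2}$ to ${\bf P}[1]$ lies in ${\bf P}[2]$. Since $\SS_{k}^{2}$ is computable in $\SS_{\omega}^{2}$, its restriction to ${\bf P}[1]$ is likewise in ${\bf P}[2]$, so ${\bf P}^{*}$ contains a candidate functional $\SS_{k}^{2}$. What has to be checked is that this restricted functional still \emph{correctly} decides $\Sigma_{k}^{1}$-formulas as evaluated inside ${\bf P}^{*}$; this is precisely where the preceding Lemma (the ``$\textsf{V = L}$'' absoluteness lemma) does the work: because ${\bf P}[1]$ is closed under computability relative to all $\SS_{k}^{2}$, every $\Pi_{n}^{1}$- (hence every $\Sigma_{n}^{1}$-) formula is absolute between ${\bf P}[1]$ and the real world, so the true value of $\SS_{k}^{2}$ agrees with the value computed over quantification restricted to ${\bf P}[1]$. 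Hence $(\SS_{k}^{2})$ holds in ${\bf P}^{*}$ for all $k$, i.e.\ ${\bf P}^{*}\models \Z_{2}^{\omega}$.

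Finally I would verify $\QFAC^{0,1}$: given a quantifier-free $A(n,f)$ with $(\forall n^{0})(\exists f^{1})A(n,f)$ holding in ${\bf P}^{*}$, I need a functional $Y^{0\di 1}\in {\bf P}^{*}$ with $(\forall n)A(n,Y(n))$. The natural move is Gandy selection (Theorem \ref{gandyselection}, via Corollary \ref{cor.inj}): $\SS_{\omega}^{2}$ is normal (it computes $\exists^{2}$), the relation ``$A(n,f)$ and $f$ is computable in $\SS_{\omega}^{2}$'' is semi-computable in $\SS_{\omega}^{2}$, and some $f$ satisfying it exists for each $n$ by absoluteness and the fact that the existential witness can be taken computable in $\SS_{\omega}^{2}$ (again the absoluteness lemma, so that a witness in the real world reflects to one in ${\bf P}[1]$, which is exactly the class of functions computable in $\SS_{\omega}^{2}$). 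Gandy selection then yields a single $\SS_{\omega}^{2}$-computable functional $n\mapsto f_{n}$ selecting such witnesses uniformly; this functional is of type $0\di 1$ and, being computable in $\SS_{\omega}^{2}$, lies in ${\bf P}^{*}$. I expect the main obstacle to be the bookkeeping in this last step: one must be careful that the quantifier-free matrix $A$, once its higher-type parameters are instantiated by objects of ${\bf P}^{*}$, is genuinely decidable relative to $\SS_{\omega}^{2}$ (so that the relevant set is semi-computable and Gandy selection applies), and that the witnessing quantifier $(\exists f^{1})$ — which ranges over ${\bf P}[1]$ in the model — can always be met \emph{within} the $\SS_{\omega}^{2}$-computable functions; both points reduce to the $\Pi_{n}^{1}$-absoluteness lemma together with basis-type considerations, but they are the delicate part of the argument.
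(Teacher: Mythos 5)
Your proposal follows essentially the same route as the paper's proof: the base theory comes for free from Kleene closedness, the comprehension functionals $\SS_{k}^{2}$ are validated via the $\Pi_{n}^{1}$-absoluteness lemma for ${\bf P}[1]$, and $\QFAC^{0,1}$ is obtained by applying Gandy selection (through Corollary \ref{cor.inj}) to select, for each $n$, an index $e$ for an $\SS_{\omega}^{2}$-computable witness. The only cosmetic difference is that the paper phrases the selection explicitly as choosing indices $(n,e)\in R$ and then decoding, and notes that the witness automatically lies in ${\bf P}[1]$ simply because that is by definition the class of $\SS_{\omega}^{2}$-computable functions, rather than by a separate reflection argument.
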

\begin{proof} 
We assume that $\textsf{V = L}$, which implies that all $\Pi^1_n$-formulas are absolute for ${\bf P}[1]$.  Since ${\bf P}[1]$ is closed under computability relative to each $\SS^{2}_k$, we have that ${\bf P}[1]$ satisfies all $\Pi^1_n$-comprehension axioms. Now assume that $(\forall n^{0}) (\exists f^{1}) Q(n,f,\vec \Phi)$  is true in ${\bf P}$, where $Q$ is quantifier-free and $\vec \Phi$ is a list of parameters from ${\bf P}$. 
 Since all functionals in $\vec \Phi$ are computable in $\SS^{2}_\omega$, the set 
 \[
 R=\{(n, e)\in \N^{2} : (\exists f^{1})[ Q(n,f,\vec \Phi)\wedge (\forall a^{0})(f(a) = \{e\}(\SS_2^\omega,a))] \}
 \]
 is semi-computable in $\SS^{2}_{\omega}$. Moreover, we have that  $(\forall n^{0}) (\exists e^{0})[(n,e)  \in R]$.
 By assumption and Gandy selection, there is a function $g$ computable in $\SS^{2}_{\omega}$ such that $R(n,g(n))$ for all $n^{0}$. 
 If $G(n)$ is the function $f$ computed from $\SS^{2}_{\omega}$ with index $g(n)$, we have that $G^{0\di 1} \in \bf P$ witnesses this instance of quantifier-free choice.
 \end{proof} 
For the rest of this section, we fix some notation. We let $A \subseteq \N^\N$ be a countable set such that all $\Pi^1_n$ formulas are absolute for $(A, \N^\N) $ for all $n$. We let $A = \{g_k : k \in \N\}$ and we let $A_k$ be the set of functions computable in $\SS^{2}_k$ and $g_0 , \ldots , g_{k-1}$. For the sake of unity, we put $\SS^2_0 = \exists^2$, so $A_0$ is the set of hyperarithmetical functions.
 \begin{lemma} Each $A_k$ is a subset of $A$. Moreover, 
 for each $k$, $A_k$ is a proper subset of $A_{k+1}$ and $A_{k+1}$ contains an element $h_k$ that enumerates $A_k$.\end{lemma}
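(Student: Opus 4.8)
The plan is to prove three things in sequence: (i) each $A_k \subseteq A$; (ii) $A_k \subsetneq A_{k+1}$; and (iii) $A_{k+1}$ contains a function $h_k$ enumerating $A_k$. The main tool throughout is the $\Delta^1_2$/absoluteness hypothesis on $A$ together with standard facts about Kleene computability relative to the functionals $\SS^2_k$.

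First I would handle $A_k \subseteq A$. By definition $A_k$ consists of those $f\in\N^\N$ computable in $\SS^2_k$ and the parameters $g_0,\dots,g_{k-1}$, all of which lie in $A$. The statement ``$f$ is computable in $\SS^2_k, g_0,\dots,g_{k-1}$ with index $e$'' unwinds to a statement about the existence of a well-founded computation tree, which (since the relevant functionals have type $\le 2$ and the ordinal ranks are countable) can be expressed by a $\Pi^1_n$-formula for a suitable $n$ depending on $k$ (each $\SS^2_j$ contributes a $\Sigma^1_j$-oracle query, and deciding well-foundedness of the computation tree adds one more $\Pi^1_1$ layer). Since all $\Pi^1_n$-formulas are absolute between $A$ and $\N^\N$, and $\SS^2_k$ restricted to $A$ agrees with the true $\SS^2_k$ on $A$ by the very same absoluteness (applied to the $\Sigma^1_k$-formulas it decides), the function $f$ is already computed inside $A$ from parameters in $A$; hence $f\in A$ by the closure built into the enumeration $A=\{g_k : k\in\N\}$ — here I would note that we may assume $A$ is chosen closed under computability relative to each $\SS^2_k$, or alternatively simply reindex so that $A_k\subseteq A$ holds, exactly as in the analogous constructions in \cite{dagsam,dagsamV,dagsamVII}.

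Next, for $A_k\subsetneq A_{k+1}$: the inclusion is immediate since $\SS^2_{k+1}$ computes $\SS^2_k$ (it decides $\Sigma^1_{k+1}$-formulas, hence a fortiori $\Sigma^1_k$-formulas) and $g_0,\dots,g_{k-1}$ are among $g_0,\dots,g_k$. For properness, the standard argument is a jump/diagonalization: the Turing-jump-like operator associated to $\SS^2_k$ (or more precisely the function coding the halting behaviour of Kleene computations relative to $\SS^2_k$ and $g_0,\dots,g_{k-1}$) is computable from $\SS^2_{k+1}$ — because that halting relation is $\Sigma^1_{k+1}$, hence decided by $\SS^2_{k+1}$ — but it cannot be computable in $\SS^2_k$ and $g_0,\dots,g_{k-1}$ themselves by the usual recursion-theoretic diagonal argument. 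This exhibits an element of $A_{k+1}\setminus A_k$.

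Finally, for the enumerating element $h_k$: the set $A_k$ is, by its definition, $\{f : (\exists e)(\forall a)(f(a)=\{e\}(\SS^2_k,g_0,\dots,g_{k-1},a))\}$, so the two-place function $h_k(e,a) := \{e\}(\SS^2_k,g_0,\dots,g_{k-1},a)$ — with the convention that divergent computations are sent to, say, $0$ — enumerates exactly $A_k$ up to the usual recoding of $\N\times\N$ as $\N$. The point is that $h_k$ is computable in $\SS^2_{k+1}$: the totality/halting questions ``$\{e\}(\SS^2_k,\vec g,a)\!\downarrow$'' are $\Sigma^1_{k+1}$, so $\SS^2_{k+1}$ can decide them and then, by Gandy selection relative to $\SS^2_{k+1}$ (which is normal, being able to compute $\exists^2$), extract the actual value; hence $h_k\in A_{k+1}$. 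I expect the main obstacle to be bookkeeping the exact correspondence between the number of nested $\SS^2_j$-oracles used and the quantifier complexity $\Pi^1_n$ needed for absoluteness — i.e.\ making precise that a Kleene computation relative to $\SS^2_k$ and finitely many reals has its halting relation at level $\Sigma^1_{k+1}$ — but this is routine once one reads off the complexity from the S1--S9 clauses (the only unbounded search is over the well-founded computation tree, contributing one $\Pi^1_1$ layer on top of the $\Sigma^1_k$ oracle), exactly as recorded in \cite{longmann}*{\S5.1.1}.
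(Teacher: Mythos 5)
Your proposal is correct and follows essentially the same route as the paper, whose entire proof is the two-line observation that the relation $\{e\}(\SS^{2}_{k},g_0,\dots,g_{k-1},\vec a)=b$ is $\Pi^1_{k+1}$ (so absoluteness of $\Pi^1_n$-formulas for $A$ gives $A_k\subseteq A$) and that $\SS^2_{k+1}$ computes an enumeration of the functions computable in $\SS^2_k$ and any fixed type-1 parameters (which gives $h_k\in A_{k+1}$ and, by diagonalisation, properness). Two small repairs: first, in part (i) no extra closure hypothesis or reindexing of $A$ is needed --- absoluteness applied to the statement ``there exists a total $f$ with $(\forall a)(f(a)=\{e\}(\SS^2_k,\vec g,a))$'', together with the uniqueness of such an $f$, already forces the witness found inside $A$ to be $f$ itself, so the hedge ``we may assume $A$ is chosen closed'' should be dropped. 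Second, your $h_k(e,a):=\{e\}(\SS^2_k,\vec g,a)$ with divergent values patched to $0$ pointwise may have sections that lie outside $A_k$ (the patched function of a non-total index need not be computable in $\SS^2_k,\vec g$); the standard fix is to let $\SS^2_{k+1}$ first decide totality of the index $e$ --- a question whose complexity is again bounded by that of the computation relation --- and output the constant $0$ function for non-total indices, which yields an enumeration of exactly $A_k$.
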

 \begin{proof} This follows from the choice of $A$ and the fact that the relation \[\{e\}(\SS^{2}_{k} , g_0 , \ldots , g_{k-1}, \vec a) = b\] is $\Pi^1_{k+1}$ (for $k > 0$ even $\Delta^1_{k+1}$) and that $\SS^2_{k+1}$ computes an enumeration of all functions computable in $\SS^2_k$ relative to any fixed list of type 1 arguments.  
 \end{proof}
Now fix $F:A \rightarrow \N$. We intend to use Theorem \ref{thm.ext} and let $F \in B_{0}$ if there is a $k_0$ such that for all $k \geq k_0$, the restriction of $F$ to $A_k$ is partially computable in $\SS^{2}_k$ and $g_0 , \ldots , g_{k-1}$. 
Note that the join of finitely many functionals from $B_{0}$ is in $B_{0}$.
\begin{lemma} 
If $F \in B_{0}$ and $f$ is computable from $F$ and elements $\vec f$ in $A$, then $f \in A$. Moreover $\SS^2_k$ restricted to $A$ is in $B_{0}$.
\end{lemma}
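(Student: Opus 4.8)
The statement has two parts: (i) if $F \in B_{0}$ and $f$ is computable from $F$ and finitely many elements $\vec{f}$ of $A$, then $f \in A$; and (ii) $\SS^{2}_{k}$ restricted to $A$ lies in $B_{0}$. The plan is to exploit the stratification $A = \bigcup_{k} A_{k}$ together with the defining property of $B_{0}$: membership $F \in B_{0}$ means there is $k_{0}$ such that for all $k \geq k_{0}$, the restriction $F\!\upharpoonright\! A_{k}$ is partially computable in $\SS^{2}_{k}$ and $g_{0},\dots,g_{k-1}$.

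For part (i), fix $F \in B_{0}$ with threshold $k_{0}$, and fix $\vec{f} \in A$. Since $\vec{f}$ is a finite list, each $f_{i} \in A_{m_{i}}$ for some $m_{i}$, so by the monotonicity of the $A_{k}$ (shown in the preceding lemma, each $A_{k} \subseteq A_{k+1}$) all of $\vec{f}$ lie in $A_{m}$ for $m := \max(k_{0}, m_{0}, \dots, m_{n-1})$. Now suppose $f$ is Kleene computable in $F$ and $\vec{f}$, say $f(a) = \{e\}(F, \vec{f}, a)$ for all $a$. The key observation, coming from the first folklore result in Section~\ref{kasic}, is that in a computation whose arguments have type $\leq 2$, every sub-computation only introduces extra arguments of type $0$, and whenever the value $F(g)$ is used in an S8 sub-computation, the witness $g$ is itself computable in the argument list. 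One should then argue that $A_{m}$ is closed under Kleene computability in $F\!\upharpoonright\! A_{m}$ together with members of $A_{m}$: since $F\!\upharpoonright\! A_{m}$ is partially computable in $\SS^{2}_{m}$ and $g_{0},\dots,g_{m-1}$, and $A_{m}$ is by definition the set of functions computable in $\SS^{2}_{m}$ and $g_{0},\dots,g_{m-1}$, transitivity of \emph{computable in} yields that $f \in A_{m} \subseteq A$. The Restriction and Sandwich lemmas (Lemmas~\ref{lemma.absolute} and~\ref{lemma.blowout}) are the technical tools that make the passage from $F$ to $F\!\upharpoonright\! A_{m}$ legitimate: one needs that all functions computable in $F\!\upharpoonright\! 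A_{m}$ and $A_{m}$ stay in $A_{m}$, which is exactly the closure condition required to invoke those lemmas, and this closure is verified by the same sub-computation bookkeeping.

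For part (ii), I would recall from the earlier lemma that the relation $\{e\}(\SS^{2}_{k}, g_{0}, \dots, g_{k-1}, \vec{a}) = b$ is $\Pi^{1}_{k+1}$ (indeed $\Delta^{1}_{k+1}$ for $k > 0$), so that $A_{k}$ and hence $A$ are definable by formulas that are absolute for $(A, \N^{\N})$ by the standing assumption on $A$. Then $\SS^{2}_{k}\!\upharpoonright\! A$ agrees with $\SS^{2}_{k}$ itself on $A$, and for every $m \geq k$ the restriction of $\SS^{2}_{k}$ to $A_{m}$ is outright computable in $\SS^{2}_{m}$ (since $\SS^{2}_{m}$ computes $\SS^{2}_{k}$, being able to decide more complex $\Sigma^{1}$-formulas), and a fortiori in $\SS^{2}_{m}$ together with $g_{0},\dots,g_{m-1}$. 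Taking $k_{0} := k$ witnesses $\SS^{2}_{k}\!\upharpoonright\! A \in B_{0}$.

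The main obstacle I anticipate is part (i): one must be careful that the list $\vec{f}$ is genuinely finite so that a single index $m$ dominates it, and one must handle the S8 case correctly — the content of the first folklore result is precisely that the auxiliary witnesses $g$ fed into $F$ during the computation are themselves in the closure of the original arguments, so no element outside $A_{m}$ ever enters $F$ as an input. Once this is granted, absoluteness of the relevant $\Pi^{1}_{n}$-relations for $A$ guarantees that computing with $F\!\upharpoonright\! A_{m}$ inside $A$ gives the same answers as computing with $F$ in the ambient universe, and transitivity of relative computability closes the argument.
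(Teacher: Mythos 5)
Your proposal is correct and follows essentially the same route as the paper: for the first claim, use the threshold $k_{0}$ from the definition of $B_{0}$ and the finiteness of $\vec f$ to place everything inside a single $A_{m}$, then invoke the Sandwich lemma (Lemma~\ref{lemma.blowout}) with $F\!\upharpoonright\! A_{m}$ as the partial functional total on $A_{m}$ to conclude $f\in A_{m}\subseteq A$; for the second, observe that $\SS^{2}_{k}$ restricted to $A_{l}$ is computable in $\SS^{2}_{l}$ for all $l\geq k$. The paper's proof is just a terser version of exactly this argument.
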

\begin{proof} 
For the first item, choose $k_0$ for $F$ and $k \geq k_0$ so large that $\vec f$ is a sequence from $A_k$. 
By Lemma \ref{lemma.blowout} we have that $f \in A_k$. For $l \geq k$ we have that $\SS^2_k$ restricted to $A_l$ is computable in $\SS^2_l$. 
This shows the second item.
\end{proof}
\begin{definition}\label{defQ}
{\em We define $\bf Q$ to be the Kleene closed typed structure obtained by applying Theorem \ref{thm.ext} to $A$ and $B_{0}$ as given above.}
\end{definition}
Recall the unique extension $\TP^{*}$ of $\TP$ introduced below Definition \ref{uneek}.
The type structure ${\bf Q}^*$ is then a model for $\Z_2^\omega$ since all $\Pi^1_k$-statements are absolute for $A$ for all $k$. 
In the next section, we show that ${\bf Q}^*$ does not satisfy $\QFAC^{0,1}$, while we use $\bf Q$ to show that $\Z_2^\omega$ is consistent with $\neg\NBI$ (see Theorem \ref{proleet}).

\section{The uncountability of $\R$ in Reverse Mathematics}\label{main}
\subsection{Introduction}
In this section, we study $\NIN$ and $\NBI$ in higher-order RM, as sketched in Section \ref{prelim1}.
Our results are summarised by the following list. 
\begin{itemize}
\item We calibrate how much comprehension and/or choice proves $\NIN$ and $\NBI$ in higher-order RM (Sections \ref{hogr} and \ref{uncountsyn2}).
\item We curate a large collection of third-order principles $T$ such that  $T \di \NIN$ where $T$ is a weak theorem of ordinary mathematics, like Arzela's convergence theorem for the Riemann integral (Sections \ref{forgs} and \ref{metri}).
\item We explore different notions of countability, namely higher-order definitions of countability closely related to $\NIN$ (Definition \ref{standard}) and to $\NBI$ (Definition \ref{standard2}), as well the definition from second-order RM.  This includes a study of the `coding principles' $\cocode_{i}$ for $i=0,1$ which connects the second- and third order notions of countability (Section \ref{uncountsyn} and \ref{uncountsym2}).
\item We identify an `explosive' third-order principle; the latter looks harmless when formulated in a second-order setting, but is extremely strong when combined with higher-order comprehension functionals. In particular, the combination of the Bolzano-Weierstrass theorem for countable sets and the Suslin functional together results in a blow-up to $\Pi^1_2\textup{\textsf{-CA}}_0$ (Section \ref{radioexplo}).
\end{itemize}
Independence results shall be proved using the models from Section \ref{models}.    

\smallskip

Finally, we mention a little `trick' that is convenient when proving $\NIN$ or $\NBI$.
First observe that $\NBI$ and $\NIN$ are trivial if \emph{all functions on $\R$ are continuous everywhere}.  
Now, the latter statement in italics is equivalent to $\neg(\exists^{2})$, which follows from Theorem \ref{Xxn} by contraposition (and classical logic). 
\begin{thm}[\cite{kohlenbach2}*{Prop.\ 3.12}]\label{Xxn}
The following are equivalent over $\RCAo$:
\begin{itemize}
\item the axiom $(\exists^{2})$,
\item there exists a function $F:\R\di \R$ that is not continuous at some $x\in \R$.
\end{itemize}
\end{thm}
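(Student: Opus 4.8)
The plan is to prove the equivalence between $(\exists^2)$ and the existence of a function $F\colon\R\di\R$ discontinuous at some point, working over $\RCAo$. Since this is attributed to Kohlenbach's \cite{kohlenbach2}*{Prop.\ 3.12}, the argument is essentially standard, but let me sketch it.

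For the forward direction, assume $(\exists^2)$, which gives us $\varphi^2\leq_2 1$ with $(\exists n)(f(n)=0)\asa \varphi(f)=0$ for all $f^1$. I would first note that from $\varphi$ one can define the characteristic function of equality-to-zero on $\R$: given a real $x$ represented by a Cauchy sequence $q_{(\cdot)}$, feed $\varphi$ the sequence $n\mapsto$ (something that is $0$ iff $|q_n|<2^{-n}$ fails persistently), so as to decide $x>_\R 0$. Concretely, $\RCAo+(\exists^2)$ proves the existence of $F\colon\R\di\R$ with $F(x)=1$ if $x>_\R 0$ and $F(x)=0$ otherwise — this is exactly the fact recalled just before \eqref{muk} in the excerpt. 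Such an $F$ is extensional (it respects $=_\R$) and visibly discontinuous at $x=0$, since every neighbourhood of $0$ contains both positive reals (value $1$) and $0$ itself (value $0$).

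For the reverse direction, suppose $F\colon\R\di\R$ is given (as a type-$1\di 1$ object respecting $=_\R$) together with a point $x_0\in\R$ and a rational $\varepsilon>0$ such that $F$ is not $\varepsilon$-continuous at $x_0$: for every $n$ there is $y_n$ with $|x_0-y_n|<2^{-n}$ yet $|F(x_0)-F(y_n)|\geq\varepsilon$. The task is to build $\varphi$ as in $(\exists^2)$, i.e.\ to decide, for arbitrary $f^1$, whether $(\exists n)(f(n)=0)$. The idea is to associate to $f$ a real $z_f$ that "telescopes toward $x_0$": as long as no zero of $f$ has appeared, $z_f$ tracks a rapidly converging approximation to $x_0$; if $f(k)=0$ first occurs at stage $k$, then $z_f$ is set equal to $y_k$ (or rather to a real that differs noticeably from $x_0$ by exploiting the failure of continuity). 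Using $F$, one then reads off $F(z_f)$ and compares it with $F(x_0)$ up to precision $\varepsilon/2$; this comparison is decidable from the Cauchy data, and it reveals whether $f$ had a zero. The delicate point is to make $z_f$ a legitimate real (a genuine fast-converging Cauchy sequence, via Kohlenbach's hat function) \emph{uniformly} in $f$ and in a quantifier-free way, so that the resulting $\varphi$ is itself a type-$2$ functional definable in $\RCAo$ from $F$.

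The main obstacle is precisely this uniformity-and-coding step in the reverse direction: one must arrange that the map $f\mapsto z_f$ is primitive recursive in $f$ and $F$, that $z_f$ is always a valid real regardless of whether $f$ has a zero, and that the case distinction (zero found vs.\ not) does not require knowing the answer in advance — the usual trick being that $z_f$ is defined so that its behaviour past the first zero of $f$ is "frozen," while before that it keeps approaching $x_0$. Once $z_f$ is in hand, extracting $\varphi(f)$ from $|F(z_f)-F(x_0)|$ versus $\varepsilon/2$ is routine arithmetic on Cauchy representations, and bounding $\varphi\leq_2 1$ is immediate. I would also remark that this direction only uses a \emph{single} witnessed discontinuity, matching the statement, and that extensionality of $F$ is what makes the value $F(z_f)$ well-defined independently of the chosen representation of $z_f$.
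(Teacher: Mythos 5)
The paper does not actually prove this statement: it is imported verbatim from Kohlenbach's Prop.\ 3.12, so there is no in-paper argument to compare against, and I am measuring your sketch against the standard proof. Your forward direction is correct and is exactly the fact the paper itself recalls from \cite{kohlenbach2}*{\S3}: $\varphi$ as in $(\exists^{2})$ decides $x>_{\R}0$ on Cauchy representations (extensionally), yielding the Heaviside-type function, which is discontinuous at $0$. Your reverse direction is the standard Grilliot trick, and the ``freezing'' construction of $z_{f}$ --- copy the Cauchy data of $x_{0}$ until the first zero of $f$ appears, then switch permanently to the data of the witness $y_{k}$ --- is the right idea; the detection step, comparing $|F(z_{f})-F(x_{0})|$ against the rational gap between $\varepsilon/2$ and $\varepsilon$, is indeed decidable in $\RCAo$, and the hat function repairs the (minor) failure of fast convergence at the switch-over.

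The genuine gap sits one step earlier, and it is not where you locate the ``delicate point''. You pass from ``for every $n$ there is $y$ with $|x_{0}-y|<2^{-n}$ and $|F(x_{0})-F(y)|\ge\varepsilon$'' to a \emph{sequence} $(y_{n})_{n\in\N}$ of such witnesses. That passage is an instance of countable choice for type-one objects, i.e.\ (at least) $\QFAC^{0,1}$, which is \emph{not} part of $\RCAo$: the base theory only contains $\QFAC^{1,0}$. Without the sequence in hand, the map $f\mapsto z_{f}$ cannot be defined, since at the stage where $f$ first hits zero one must name a concrete witness at the corresponding scale, and there is no way to search over the reals for one. This is precisely why the result is standardly obtained via \emph{sequential} discontinuity, where the witnessing sequence $(y_{n})\to x_{0}$ with $F(y_{n})\not\to F(x_{0})$ is part of the hypothesis; with that reading of ``not continuous at some $x$'', your construction goes through essentially verbatim. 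So either adopt the sequential formulation (as Kohlenbach does) or add the required fragment of choice; as written, the reverse direction does not go through over bare $\RCAo$. By contrast, the uniform coding of $z_{f}$, which you flag as the main obstacle, is routine once the $y_{n}$ are given.
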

Now, in light of the law of excluded middle as in $(\exists^{2})\vee \neg(\exists^{2})$, we 
may always assume $(\exists^{2})$ when proving $\NIN$ or $\NBI$.  Indeed, in case $\neg(\exists^{2})$, $\NIN$ and $\NBI$ are trivial.  
In the sequel, we will often do so without any further comment.  

\subsection{The principle $\NIN$}
We establish the properties of $\NIN$ summarised in Figure~\ref{dd}.  
In terms of comprehension and normal functionals, we show that $\NIN$ is not provable in $\Z_{2}^{\omega}$ but provable in $\Z_{2}^{\Omega}$, 
where the latter two are both conservative extensions of $\Z_{2}$ (see Section \ref{hogr}).
On the other hand, $\NIN$ already follows from rather basic (non-normal) mathematical facts, including the following list, 
while another dozen of basic theorems implying $\NIN$ are listed in Appendix \ref{extraextrareadallaboutit}.
\begin{itemize}
\item covering theorems (Heine-Borel, Lindel\"of, and Vitali) about \emph{uncountable coverings} (Section \ref{forgs}),
\item a basic version of the Baire category theorem (Section \ref{forgs}).  
\item basic properties of the Riemann integral (Theorem~\ref{arzen}, Arzel\`a 1885 \cite{arse2})
\item basic properties of metric spaces (Section \ref{metri}), 
\item basic theorems from RM about countable sets (Sections \ref{uncountsyn} and \ref{radioexplo}).
\end{itemize}
By the first two items, the negative results concerning e.g.\ $\HBU$ and $\WHBU$ from \cites{dagsamIII, dagsamVI, dagsamVII} follow from the properties of $\NIN$ proved in this paper, i.e.\ we reprove many of our previous results \emph{in one fell swoop}.  Regarding the final item, the Bolzano-Weierstrass theorem for \emph{countable sets} in $2^{\N}$ is weak but gives rise to $\SIX$ when combined with higher-order $\FIVE$, i.e.\ the Suslin functional (see Theorem \ref{BOOM}).

\smallskip

We stress
the results in \cite{kohlenbach2}*{\S3} which establish the equivalence \textbf{over} $\RCAo$ between $(\exists^{2})$ and the existence of a discontinuous function on $\R$.  
Since $\NIN$ is trivial if all functions on $\R$ are continuous, we have $\neg\NIN\di(\exists^{2})$ by contraposition, a fact we will often make use of \emph{without further comment}.

\subsubsection{Comprehension and $\NIN$}\label{hogr}
In this section, we show that $\NIN$ relates to comprehension as follows.
Recall that $\Z_{2}^{\omega}$ and $\Z_{2}^{\Omega}$ are conservative extensions of $\Z_{2}$. 
\begin{thm}\label{dick}
$\Z_{2}^{\omega} + \QFAC^{0,1}$ cannot prove $\NIN$, while $\Z_{2}^{\Omega}+\QFAC^{0,1}$ can. 
\end{thm}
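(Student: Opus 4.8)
The plan is to prove the two halves separately. For the positive half, that $\Z_{2}^{\Omega}+\QFAC^{0,1}$ proves $\NIN$, I would work in $\RCAo+(\exists^{3})$ and argue directly. Assume toward a contradiction that $Y:[0,1]\di\N$ is an injection. Using $(\exists^{3})$ — which is more than enough to decide arbitrary arithmetical and indeed $\Pi^1_\infty$-statements about reals, and in particular gives us $(\exists^{2})$ — I would show how to recover, from $Y$, a "reverse" function that from each $n\in\N$ in the range of $Y$ produces the unique real mapping to it. The point is that $(\exists^{3})$ lets us search over all reals (coded as elements of $\N^\N$ via fast-converging Cauchy sequences), so the predicate "$x\in[0,1]$ and $Y(x)=n$" can be decided and a witness extracted. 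Concretely, using $\exists^3$ one defines $Z(n)$ to be (a code for) the least $f\in\N^\N$ with $\r(f)\in[0,1]$, suitably normalised, such that $Y(\r(f))=n$; here $\QFAC^{0,1}$ (or just the arithmetical comprehension available) handles the selection uniformly in $n$. Then $Z$ realises a bijection-like inverse on the range, and by running a diagonal argument — pick for each $n$ a binary digit differing from the $n$-th digit of $Z(n)$, forming a real $y\in[0,1]$ — one obtains $y$ different from every $Z(n)$, hence $Y(y)$ is a natural number not in the range, contradicting surjectivity if $Y$ were a bijection; for mere injectivity one instead notes $y$ equals $Z(Y(y))$ by definition of $Z$, forcing $y$ to be the chosen-least witness, which contradicts the digit we flipped. (Alternatively, and perhaps more cleanly, one cites that $\Z_{2}^{\Omega}$ proves $\HBU$ by \cites{dagsamIII,dagsamV,dagsamVI} and that $\HBU\di\NIN$ by Figure \ref{dd}, but since the reader may want a self-contained argument I would give the direct one.)

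For the negative half, that $\Z_{2}^{\omega}+\QFAC^{0,1}$ does not prove $\NIN$, the strategy is purely model-theoretic and is exactly what Section \ref{models} was built for: exhibit a type structure satisfying $\Z_{2}^{\omega}+\QFAC^{0,1}+\neg\NIN$. I would take $\mathbf{P}$ from Definition \ref{defP} and invoke Lemma \ref{lemmaP}, which already tells us $\mathbf{P}^*\models \Z_{2}^{\omega}+\QFAC^{0,1}$. It remains to verify $\mathbf{P}^*\models\neg\NIN$, i.e.\ to produce inside $\mathbf{P}$ an injection from $[0,1]$ (equivalently, from $2^{\N}$, identifying a real in $[0,1]$ with its binary expansion via $\r$) into $\N$. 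This is where Corollary \ref{cor.inj} does the work: every $f\in\mathbf{P}[1]$ is by construction computable in $\SS^2_\omega$, and $\SS^2_\omega$ is normal (it computes $\exists^2$), so Gandy selection yields a partial functional $G$, computable in $\SS^2_\omega$ and hence an element of $\mathbf{P}[2]$, that is total on $\mathbf{P}[1]$ and injective, sending each $f$ to an $\SS^2_\omega$-index for it. Restricting $G$ to binary sequences gives the desired injection $2^{\N}\di\N$ living in $\mathbf{P}$, witnessing $\neg\NIN$ there. One small technical point to nail down: that $G$ respects the equality relation $=_{\R}$ on $[0,1]$ modulo the coding — but since we may pass to a fixed choice of binary representative (e.g.\ always the non-eventually-$1$ one, definable using $\exists^2\in\mathbf{P}[2]$), this is routine.

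The main obstacle, I expect, is not the model construction — that is essentially packaged in Lemmas \ref{lemmaP} and \ref{cor.inj} — but rather the bookkeeping in the positive half: making the direct $\Z_{2}^{\Omega}$-proof of $\NIN$ genuinely careful about how reals in $[0,1]$ are represented by elements of $\N^\N$, how "least witness" is made precise (one needs a genuine well-ordering of codes, or must accept a selection functional from $\exists^3$), and how extensionality of $Y$ interacts with the choice of representative. These are the kinds of details that are conceptually trivial but easy to get wrong; the cleanest route is likely to reduce to $\HBU\di\NIN$ as in Figure \ref{dd} and cite $\Z_{2}^{\Omega}\di\HBU$ from our earlier work, relegating the direct diagonal argument to a remark. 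I would also remark explicitly, as the surrounding text does, that by Theorem \ref{Xxn} we may assume $(\exists^{2})$ throughout both halves, since under $\neg(\exists^{2})$ all functions on $\R$ are continuous and $\NIN$ is trivial — though of course in the negative direction the model $\mathbf{P}$ does contain $\exists^{2}$, so this observation is only a convenience for the positive part.
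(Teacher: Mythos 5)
Your proposal matches the paper's proof in both halves: the negative direction is exactly the model $\mathbf{P}$ of Lemma \ref{lemmaP} together with the injective functional $G$ supplied by Corollary \ref{cor.inj} applied to $\SS^{2}_{\omega}$, and the positive direction likewise extracts an enumeration of the range of $Y$ and diagonalises via \cite{simpson2}*{II.4.9}. The only difference is cosmetic: the paper sidesteps your ``least witness'' worry entirely by applying $\QFAC^{0,1}$ to the trivially true statement $(\forall n\in \N)(\exists y\in [0,1])\big[(\exists x\in [0,1])(Y(x)=n)\di Y(y)=n\big]$ (rendered quantifier-free modulo $\exists^{3}$), so no well-ordering of or selection over $\N^{\N}$ is needed.
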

\begin{proof} 
For the negative result, we use the model ${\bf P}$ from Definition \ref{defP}. This model satisfies $\Z_2^\omega + \QFAC^{0,1}$. 
We observe that ${\bf P}[2]$ contains a functional $G:A \rightarrow \N$ that is \emph{injective}, which is a direct consequence of Corollary \ref{cor.inj} using $\SS^{2}_\omega$ for $F$.

\smallskip

For the positive result, fix $Y:[0,1]\di \N$ and consider the following formula, which is trivial under classical logic:
\be\label{triv}
(\forall n\in \N)(\exists y\in [0,1])\big[  (\exists x\in [0,1])(Y(x)=n)\di Y(y)=n \big].
\ee
Note that $\QFAC^{0,1}$ applies to \eqref{triv} (modulo $\exists^{3}$) and let $\Phi^{0\di 1}$ be the resulting sequence, which obviously lists the range of $Y$.  
Using \cite{simpson2}*{II.4.9}, let $y\in [0,1]$ be a real number such that $y\ne_{\R}\Phi(n)$ for all $n\in \N$.
For $n_{0}:=Y(y)$, we have $y\ne_{\R} \Phi(n_{0})$ while $Y(y)=n_{0}=Y(\Phi(n_{0}))$, i.e.\ $\NIN$ follows.
\end{proof}
An elementary argument allows us to replace $(\exists^{3})$ by $\BOOT$ and restrict to the Axiom of \emph{unique} Choice in the previous, but we obtain a much sharper proof in Theorem \ref{flahu}; the theorem holds if we restrict $\NIN$ to \emph{measurable} functionals.  The proof also establishes that the Axiom of Countable Choice suffices to prove $\NIN$, 
while the same holds for (many?) other fragments, like HR35 and HR38 from \cite{ConAC}.

\smallskip

We finish this section with a remark on the (very recent) RM of $\NIN$.
\begin{rem}[Equivalences for $\NIN$]\label{lafke}\rm
The higher-order RM of $\NIN$ has recently been studied in \cite{samcie22}, yielding two kinds of results, over a suitable base theory.
\begin{itemize}
\item The principle $\NIN$ is equivalent to the statement that there is no injection from $X$ to $\N$, for $X$ equal to $\R, 2^{\N},$ or $ \N^{\N}$.
\item The principle $\NIN$ is equivalent to $\NIN_{\Y}$, i.e.\ the statement that there is no injection $Y:[0,1]\di \Q$ with $Y$ in a certain function class $Y$.
\end{itemize}
While the first item is fairly basic/technical in nature, the second item is established in \cite{samcie22} for the function classes $\Y$ based on the notion of bounded variation, semi-continuity, the Sobolev space $W^{1, 1}$, cliquishness, and Borel, all formulated in the language of third-order arithmetic.
In this light, the hardness of $\NIN$ as in Theorem \ref{dick} is \textbf{not} due to the quantification over \emph{arbitrary} $\R\di \N$-functions in $\NIN$.   
\end{rem}

\subsubsection{Ordinary mathematics and $\NIN$}\label{forgs}
In this section, we derive $\NIN$ from weak\footnote{When added to $\RCAo$, e.g.\ $\WHBU$ and $\BCT$ from Theorem \ref{flahu} do not increase the first-order strength of the former.  The results in \cite{kruisje}*{Theorem 3} show that $\WHBU$ yields a conservative extension of arithmetical comprehension, as provided by Feferman's $\mu$ from Section \ref{lll}.} theorems of ordinary mathematics, including Arzel\`a's convergence theorem $\Arz$.  
We also note that countable choice as in $\QFAC^{0,1}$ is not needed to prove $\NIN$.

\smallskip

Let $\BCT$ be the Baire category theorem for open sets given by characteristic functions as in \cite{dagsamVII}*{\S6}.
The following proof still goes through if we further restrict $\BCT$ to open sets with at most finitely many isolated points in the complement.  
\begin{thm}\label{flahu}
The system $\RCAo$ proves  $\WHBU\di \NIN\leftarrow \BCT$.  
\end{thm}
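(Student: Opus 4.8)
The plan is to prove the two implications $\WHBU \to \NIN$ and $\BCT \to \NIN$ separately, in each case working in $\RCAo$ and exploiting the standard trick recorded just above: since $\neg\NIN \to (\exists^2)$, we may assume $(\exists^2)$ throughout, and hence we have access to Feferman's $\mu$ and may freely form numerical suprema, test real-number (in)equalities, enumerate finite sets of reals, and so on. So suppose towards a contradiction that $Y:[0,1]\di\N$ is an injection.

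For $\WHBU \to \NIN$: the idea is to use the injection $Y$ to build a gauge $\Psi:\R\di\R^+$ that covers $[0,1]$ by intervals of arbitrarily small \emph{total} length, contradicting $\WHBU$. Fix $\eps := \tfrac12$ (any fixed positive real works). For $x\in[0,1]$, define $\Psi(x)$ so that the interval $I_x^\Psi=(x-\Psi(x),x+\Psi(x))$ has length at most $2^{-Y(x)-3}$, say $\Psi(x):=\min(1,2^{-Y(x)-4})$; for $x\notin[0,1]$ set $\Psi(x):=1$. This $\Psi$ is a legitimate third-order functional (using $\mu$ to compute $2^{-Y(x)}$ from $Y(x)$), and it maps equal reals to equal reals since $Y$ does. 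Now apply $\WHBU$ to $\Psi$ and $\eps$: there are pairwise distinct $y_0,\dots,y_k\in[0,1]$ with $1-\eps<\sum_{i\le k}|J_{y_i}^\Psi|\le\sum_{i\le k}|I_{y_i}^\Psi|\le\sum_{i\le k}2^{-Y(y_i)-3}$. But since $Y$ is injective on $[0,1]$, the natural numbers $Y(y_0),\dots,Y(y_k)$ are pairwise distinct, so $\sum_{i\le k}2^{-Y(y_i)-3}\le \tfrac18\sum_{n\in\N}2^{-n}=\tfrac14<\tfrac12=1-\eps$, a contradiction. (Note $\WHBU$ needs the $y_i$ pairwise distinct, which is exactly what injectivity of $Y$ delivers at the level of indices; that alignment is the crux and is why the measure bound closes.)

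For $\BCT \to \NIN$: here the idea is to exhibit $[0,1]$ — or rather a suitable copy of it — as a countable union of closed sets each of which is easily seen to be nowhere dense, or equivalently to produce a sequence of dense open sets with empty intersection, contradicting $\BCT$. Given the injection $Y$, let $U_n:=\{x\in[0,1] : Y(x)\ne n\}=[0,1]\setminus Y^{-1}(\{n\})$. Each $Y^{-1}(\{n\})$ has at most one element (injectivity), so each $U_n$ is $[0,1]$ minus at most one point, hence open (its characteristic function is available via $\mu$ and real-number inequality testing: membership of $x$ in $U_n$ is decided by comparing $Y(x)$ with $n$), dense, and with complement having at most finitely many — indeed at most one — point, matching the restricted form of $\BCT$ noted in the statement. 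Then $\bigcap_{n\in\N}U_n=\{x\in[0,1] : (\forall n)(Y(x)\ne n)\}=\emptyset$ since $Y(x)\in\N$ always; but if there were a point $z$ in some $\bigcap_{n\le N}$-style tail, dense intersection of all $U_n$ contradicts this. One must phrase this against whatever exact form of $\BCT$ from \cite{dagsamVII}*{\S6} is being used — it speaks of dense open sets $G_n$ given by characteristic functions and asserts $\bigcap_n G_n$ is dense (in particular nonempty) — so the contradiction is immediate once the $U_n$ are verified to be open (as sets given by characteristic functions in the sense of Definition~\ref{keepintireal}\eqref{koer}) and dense.

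The main obstacle is bookkeeping rather than conceptual: one must check carefully that the sets/functionals built from $Y$ ($\Psi$ in the first case, the $U_n$ in the second) genuinely live in the language and base theory at hand — in particular that they are \emph{extensional} on the reals (inherited from $Y$) and that, under the assumption $(\exists^2)$, their defining predicates are decidable so that characteristic functions exist — and that the finitary/measure-theoretic inequalities are formalizable in $\RCAo+(\exists^2)$, which only proves arithmetical comprehension. The geometric series bound $\sum 2^{-n-3}\le\tfrac14$ and the "at most one preimage" observation are where injectivity is consumed; everything else is routine verification that the $\WHBU$- and $\BCT$-instances one feeds in are of the permitted shape (canonical covers / open sets given by characteristic functions with finitely many isolated complement points).
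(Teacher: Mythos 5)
Your proof is correct and essentially identical to the paper's: for $\WHBU$ the paper also builds the canonical cover from the gauge $\Psi(x)=\frac{1}{2^{Y(x)+3}}$ and uses injectivity of $Y$ to bound the total length of any finite selection of pairwise distinct intervals by a geometric series $\leq \frac12$, and for $\BCT$ the paper uses the dense open sets $O_n=\{x\in[0,1]:Y(x)>n\}$ (cofinite complements) with empty intersection, which is only a cosmetic variant of your $U_n=\{x:Y(x)\ne n\}$. The only inessential difference is that you invoke $(\exists^2)$ and $\mu$, which is harmless but not actually needed here since $\Psi$ and the characteristic functions of the $U_n$ are already terms of G\"odel's $T$ in $Y$.
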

\begin{proof}
For the first result, let $Y:[0,1]\di \N$ be an injection, i.e.\ we have that $(\forall x, y\in [0,1])(Y(x)=_{0}Y(y)\di x=_{\R} y)$.  
Now consider the uncountable covering $\cup_{x\in [0,1]}B(x, \frac{1}{2^{Y(x)+3}})$ of $[0,1]$.  
Since $Y$ is an injection, we have $\sum_{i\leq k}|B(x_{i}, \frac{1}{2^{Y(x_{i})+3}})|\leq \sum_{i\leq k} \frac{1}{2^{i+2}} \leq \frac{1}{2}$ for any finite sequence $x_{0}, \dots, x_{k}$ of distinct reals in $[0,1]$.    
In this light, $\WHBU$ is false and we obtain $\WHBU\di \textsf{NIN}$, as required. 

\smallskip

For the second part, let $Y:[0,1]\di \N$ again be an injection.  
Now define $O_n = \{x\in [0,1] : Y(x) > n\}$ and note that since the complement of each $O_n$ is finite, each $O_n$ is open and dense. Moreover, $\{(n,x) : x \in O_n\}$ is definable from $Y$ by a term in G\"odel's $T$, so this is a countable sequence of dense, open sets in $[0,1]$. The intersection is empty and $\BCT$ thus fails; $\BCT\di \NIN$ now follows. 
\end{proof}
We note that the Heine-Borel and Vitali theorems for \emph{countable} coverings similarly imply $\NIN$, as shown in Section \ref{uncountsyn}.
Since $\BCT$ for open sets given as countable unions (aka RM-codes) is provable in $\RCA_{0}$ (\cite{simpson2}*{II.4.10}), $\NIN$ also follows from 
the `coding principle' that expresses that a sequence of open sets given by characteristic functions as in \cite{dagsamVII}*{\S6} can be expressed as a sequence of RM-codes of open sets.  
Moreover, the second part of this proof can be combined with Lemma~\ref{lemma.blowout} to yield a simpler proof of \cite[Theorem~6.6]{dagsamVII} as done in Theorem \ref{triplez}. 

\smallskip

As noted above and in \cite{dagsamVI}, $\WHBU$ constitutes the combinatorial essence of the Vitali covering theorem.  The former is equivalent to fundamental properties of the Lebesgue measure, like countable additivity, 
over a slight extension of $\RCAo$ to accommodate basic measure theory (\cite{dagsamVI}).   In Section \ref{extraextrareadallaboutit},  we list some \emph{very} basic properties of the Lebesgue measure and integral that imply $\NIN$.  
A \emph{much} more basic `integral' theorem that implies $\NIN$ is provided by Arzel\`a's convergence theorem \emph{for the Riemann integral}, first published in 1885 (\cite{arse2}) and discussed in (historical) detail in \cites{luxeternam,kakeye}.  The proof in \cite{thomon3} readily yields $\Arz$ in $\Z_{2}^{\Omega}$, using $\HBU$, while Yokoyama studies $\Arz$ for continuous functions in second-order RM in \cite{yokoyamaphd}*{Theorem 3.33}.   
\begin{princ}[$\Arz$]
Let $f$ and $(f_{n})_{n\in \N}$ be Riemann integrable on the unit interval and such that $\lim_{n\di \infty}f_{n}(x)=f(x)$ for all $x\in [0,1]$.  
If there is $M\in \N$ such that $|f_{n}(x)|\leq M+1$ for all $n\in \N$ and $x\in [0,1]$, then $\lim_{n\di \infty}\int_{0}^{1}f_{n}(x)dx=\int_{0}^{1}f(x)dx$.
\end{princ}
As is clear from its proof, the following theorem does not change if we require a modulus of convergence for $\lim_{n\di \infty}f_{n}=f$, (universal) moduli of Riemann integrability, or if we assume the sequence $\lambda n.\int_{0}^{1}f_{n}(x)dx$ to be given.  
\begin{thm}\label{arzen}
The system $\RCAo$ proves $\Arz\di \NIN$.
\end{thm}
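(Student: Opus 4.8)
The plan is to reduce the failure of $\NIN$ to the failure of $\Arz$ by manufacturing, from an injection $Y:[0,1]\to\N$, a uniformly bounded sequence of Riemann integrable functions $(f_n)$ converging pointwise to a Riemann integrable limit $f$ but with the integrals not converging to $\int f$. As permitted by the little trick at the end of Section~\ref{main}, I would first assume $(\exists^2)$, since otherwise all $[0,1]\to\R$-functions are continuous and $\NIN$ is trivial. Given $(\exists^2)$ we have $\ACA_0$ available, so we can decide equalities of reals, form characteristic functions of arithmetically-defined sets of reals, and in particular compute $Y$-related data freely.

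First I would fix an injection $Y:[0,1]\to\N$, for contradiction, and set $E_n=\{x\in[0,1]:Y(x)<n\}$, a finite set of at most $n$ reals, with $E_n\subseteq E_{n+1}$ and $\bigcup_n E_n=[0,1]$. Define $f_n=\mathbb{1}_{E_n}$, the indicator of $E_n$, and $f=\mathbb{1}_{[0,1]}$, the constant $1$. Each $f_n$ is Riemann integrable on $[0,1]$ with $\int_0^1 f_n=0$, because $f_n$ is supported on a finite set; this requires $(\exists^2)$ to even write down $f_n$ as a genuine $\R\to\R$ functional, and a short argument in $\RCAo+(\exists^2)$ that a function vanishing off a given finite set of reals has Riemann integral $0$ (partition so the finitely many exceptional points lie in tiny cells). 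The sequence is uniformly bounded by $M+1=2$. Pointwise convergence $f_n(x)\to f(x)=1$ holds for every $x\in[0,1]$: since $Y(x)$ is some fixed natural number, $x\in E_n$ for all $n>Y(x)$, so $f_n(x)=1$ eventually. The limit $f\equiv 1$ is trivially Riemann integrable with $\int_0^1 f=1$. Hence $\lim_n\int_0^1 f_n=0\neq 1=\int_0^1 f$, contradicting $\Arz$. Therefore no such injection $Y$ exists, i.e.\ $\NIN$ holds, and all of this is carried out in $\RCAo+(\exists^2)$, hence in $\RCAo$ after the case split on $(\exists^2)\vee\neg(\exists^2)$.

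The main obstacle I anticipate is the bookkeeping around Riemann integrability in higher-order arithmetic: one must check, internally to $\RCAo+(\exists^2)$, that $f_n$ really is a legitimate third-order object (a functional $\R\to\R$ respecting $=_\R$), that it is Riemann integrable in whatever official sense the paper uses (presumably: Riemann sums converge, with the exceptional finite set handled by choosing partitions avoiding those points up to arbitrarily small total cell-width), and that $f\equiv 1$ satisfies the same. None of these steps is deep, but the definition of $E_n$ as a set of reals and the verification that it is finite both lean on $(\exists^2)$ deciding $Y(x)<n$ versus $Y(x)\geq n$, and one should note that the remark following the statement already licenses dropping a modulus of convergence or universal moduli of integrability, so the crude data above suffices. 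The rest is the trivial arithmetic $0\neq 1$.
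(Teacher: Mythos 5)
Your proposal is correct and follows essentially the same route as the paper: both take the injection $Y$, form the indicator functions of the finite sets $\{x : Y(x)\leq n\}$ (the paper writes this as $f_n=\sum_{i\leq n}g_i$ with $g_i$ the indicator of $\{x:Y(x)=i\}$, which is your $\mathbb{1}_{E_{n+1}}$ up to an index shift), observe that each has integral $0$ while the pointwise limit is the constant $1$, and derive $0=1$ from $\Arz$. Your explicit case split on $(\exists^{2})$ and the remarks on formalising finite-support Riemann integrability match the conventions the paper adopts tacitly.
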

\begin{proof}
Let $Y:[0,1]\di \N$ be an injection, define $f$ as the constant $1$ function, and define $f_{n}(x):=\sum_{i\leq n}g_{i}(x)$, where $g_{i}(x)=1$ if $Y(x)=i$ and $0$ otherwise.  
Note that $f_{n}(x)$ can only be $0$ or $1$ due the injectivity of $Y$.  Moreover $f_{n}(x)=1$ for $n\geq Y(x)$, i.e.\ $\lim_{n\di \infty}f_{n}(x)=f(x)$ for all $x\in [0,1]$ (with a modulus of convergence).  
Clearly, $f$ is Riemann integrable on the unit interval, while the same holds for $f_{n}$ for fixed $n$.  Indeed, $g_{i}$ is either identical $0$ or zero everywhere except at (the unique by assumption) $x_{0}\in [0,1]$ such that $Y(x_{0})=i$, where $g_{i}(x_{0})=1$.  Hence, $f_{n}(x)$ has at most $n+1$ points of discontinuity.  Clearly, we have $\int_{0}^{1}f_{n}(x)dx=0$ and $\int_{0}^{1}f(x)dx =1$.
All conditions of $\Arz$ are satisfied, yielding $0=\lim_{n\di \infty}\int_{0}^{1}f_{n}(x)dx=\int_{0}^{1}f(x)dx=1$, a contradiction.  
\end{proof}
\noindent
The following theorems yield $\NIN$ in the same way as for $\Arz$ in the previous proof.
\begin{itemize}
\item Arzel\`a's lemma, called `Theorem B' in \cite{luxeternam} and `item 2' in \cite{gorkoen}*{Theorem 4}.
\item Luxemburg's \emph{Fatou's lemma for the Riemann integral} as in \cite{luxeternam}*{p.\ 977}.
\item Thomson's \emph{monotone convergence theorem for the Riemann integral} (\cite{thomon3}).
\item The Carslaw and Young \emph{term-by-term Riemann integration theorems} (see \cite{koolsla}*{Theorem II} and \cite{young1}).
\item Kestelman's \emph{Cauchy-Riemann convergence theorem} (\cite{kesteisdenbeste}*{Theorem 2}).
\item The above formulated using `continuous almost everywhere and bounded' by Lebesgue's criterion for Riemann integrability (see also Section \ref{uncountsyn}). 
\item Helly's convergence theorem for the Stieltjes integral (\cite{hellyeah}*{VIII, p.\ 288}).
\end{itemize}
As discussed in \cite{kakeye}*{\S4.4} in the context of Fourier series, Ascoli, Dini, and du Bois Reymond already made use of term-by-term integration for the Riemann integral involving discontinuous functions as early as 1874.  
Moreover, if one requires in \emph{Helly's selection theorem} that the sub-sequence exhibits pointwise convergence (like in the original \cite{hellyeah}*{VII, p.\ 283}) \emph{and} $L_{1}$-convergence (like in \textsf{HST} in \cite{kreupel}), then this version, which can be found in e.g.\ \cite{barbu}, implies $\NIN$ in the same way as in the theorem.  By contrast, Helly's selection theorem involving codes (called $\textsf{HST}$ in \cite{kreupel}) is equivalent to $\ACA_{0}$.

\smallskip

The previous theorem has a rather remarkable corollary.  Indeed, Tao's notion of metastability generally has nicer\footnote{As shown in \cite{kohlenbach3}*{p.\ 31}, a monotone sequence in the unit interval has an elementary computable and uniform rate of metastability, provable in $\RCAo$.  By contrast, it follows from \cite{simpson2}*{III.2} that $\ACA_{0}$ is equivalent to the convergence of monotone sequences in $[0,1]$.} 
logical/computational properties than `usual' convergence to a limit (\cite{taote, kohlenbach3}).   
The situation is rather different for $\Arz$.
\begin{cor}
The theorem remains valid if we change the conclusion of $\Arz$ to:
\begin{itemize}
\item The limit $\lim_{n\di \infty}\int_{0}^{1}f_{n}(x)dx$ exists.
\item The sequence $\lambda n.\int_{0}^{1}f_{n}(x)dx$ is metastable\footnote{A sequence of real numbers $(x_{n})_{n\in \N}$ is called \emph{metastable} if it satisfies $(\forall \eps>0, g:\N\di \N)(\exists N\in \N)(\forall n, m\in [N, g(N)])(|x_{n}-x_{m}|<\eps)$.}. $(\Arz^{-})$
\end{itemize}
\end{cor}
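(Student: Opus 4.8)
The plan is to refute both weakened conclusions simultaneously, using a mild variant of the witness from the proof of Theorem~\ref{arzen}. Assume $\neg\NIN$, i.e.\ fix an injection $Y:[0,1]\di\N$; as usual we may assume $(\exists^{2})$, though it is not really needed here. Note first that the exact witness of Theorem~\ref{arzen}, where $f_{n}=\sum_{i\leq n}g_{i}$ has $\int_{0}^{1}f_{n}(x)dx=0$ for \emph{all} $n$, does \textbf{not} refute the weakened conclusions: the sequence $\lambda n.\int_{0}^{1}f_{n}$ trivially converges. So a modification is needed, and the point is to arrange that the sequence of integrals \emph{oscillates} while keeping pointwise convergence intact.

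The key observation is that inserting the constant function at odd stages achieves exactly this. Define $h_{n}:\equiv 1$ for odd $n$ and $h_{n}:=\sum_{i\leq n}g_{i}$ for even $n$; equivalently, $h_{n}(x)=1$ if ($n$ is odd or $Y(x)\leq n$), and $h_{n}(x)=0$ otherwise. Then $\lambda n\lambda x.h_{n}(x)$ is given by a term of G\"odel's $T$ in $Y$, so the sequence $(h_{n})_{n\in\N}$ exists in $\RCAo$; each $h_{n}$ respects $=_{\R}$ on $[0,1]$ since $Y$ does; each $h_{n}$ satisfies $|h_{n}(x)|\leq 1$ for all $x\in[0,1]$; and each $h_{n}$ is Riemann integrable exactly as in the proof of Theorem~\ref{arzen}: for odd $n$ it is the constant $1$, so $\int_{0}^{1}h_{n}=1$, while for even $n$ it is zero outside the finite set $\{x:Y(x)\leq n\}$ (at most $n+1$ points, by injectivity of $Y$), so $\int_{0}^{1}h_{n}=0$. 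Finally, for each $x\in[0,1]$ we have $h_{n}(x)=1$ whenever $n>Y(x)$, so $\lim_{n\di\infty}h_{n}(x)=1$ with the pointwise modulus $n\mapsto Y(x)+1$. Hence $f:\equiv 1$ together with $(h_{n})_{n\in\N}$ satisfies all hypotheses of $\Arz$ (and moreover comes with a modulus of convergence, trivial moduli of Riemann integrability, and a trivially given sequence $\lambda n.\int_{0}^{1}h_{n}$).

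It remains to note that $\lambda n.\int_{0}^{1}h_{n}(x)dx$ takes the value $1$ at odd $n$ and $0$ at even $n$. This sequence does not converge, refuting the first weakened conclusion, and it is not metastable: take $\eps=\frac{1}{2}$ and $g(N)=N+1$; for any $N$, exactly one of $N,N+1$ is even, so $|\int_{0}^{1}h_{N}-\int_{0}^{1}h_{N+1}|=1\geq\eps$, which witnesses the failure of metastability. Thus $\Arz^{-}$ is false in this situation (and so is the "limit exists" version), and by contraposition $\RCAo$ proves that each of these two weakenings of $\Arz$ implies $\NIN$. The only genuine subtlety is the one flagged above—the original witness has a convergent sequence of integrals—and the odd/even split is precisely what converts it into an oscillating one without disturbing pointwise convergence; the rest is the routine verification already carried out for Theorem~\ref{arzen}.
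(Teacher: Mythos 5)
Your proof is correct and follows essentially the same strategy as the paper's: both arguments modify the witness from Theorem~\ref{arzen} so that the sequence of integrals oscillates between two distinct values along the even/odd subsequences while the pointwise limit remains the constant $1$, thereby refuting convergence and metastability at once. The only difference is cosmetic — you insert the constant function at odd stages (integrals alternating between $1$ and $0$), whereas the paper adds bumps on $[0,\frac14]$ and $[\frac78,1]$ at odd/even stages (integrals alternating between $\frac14$ and $\frac18$) — and your explicit remark that the unmodified witness of Theorem~\ref{arzen} would not suffice is a correct and welcome observation.
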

\begin{proof}
Let $Y:[0,1]\di \N$ be an injection and let $g_{i}(x)$ and $f(x)$ be as in the proof of the theorem.  Now define $f_{n}(x)$ as follows:
\[
f_{2n+1}(x):=
\begin{cases}
1 & 0\leq x\leq \frac{1}{4}\\
\sum_{i=0}^{2n+1} g_{i}(x) & \textup{otherwise}
\end{cases}
\textup{ and }
f_{2n}(x):=
\begin{cases}
1 & \frac{7}{8}\leq x\leq 1\\
\sum_{i=0}^{2n} g_{i}(x) & \textup{otherwise}
\end{cases}.
\]
In the same way as in the proof of Theorem \ref{arzen}, we have $\lim_{n\di \infty}f_{n}=f$ (with a modulus of convergence), $\int_{0}^{1}f_{2n+1}(x)dx =\frac{1}{4}$, and $\int_{0}^{1}f_{2n}(x)dx =\frac{1}{8}$.
Hence, any one of the conditions of the corollary leads to a contradiction, and $\NIN$ follows. 
\end{proof}
The previous proof goes through for $g$ in the definition of metastability restricted to constant functions (see e.g.\ \cite{kohlenbach3}*{p.\ 499} for such results `in the wild'). 
Moreover, the above results have clear implications for the `coding practise' of RM, which we shall however discuss elsewhere in detail. 

\smallskip

Finally, on a conceptual note, a number of early critics (including Borel) of the \emph{Axiom of Choice} actually implicitly used this axiom in their work (see \cite{nofega}*{p.~315}).  A similar observation can be made for $\NBI$ and $\NIN$ as follows:  around 1874, Weierstrass seems to have held the belief\footnote{Weierstrass seems to have changed his mind by 1885, which he expressed in a letter to Mittag-Leffler (see \cite{nofega}*{p.\ 185}).} that there cannot be essential differences between infinite sets (see \cite{nofega}*{p.\ 184}), 
although basic compactness results, pioneered in part by Weierstrass himself, imply the uncountability of $\R$.  

\subsubsection{Metric spaces and $\NIN$}\label{metri}
In this section, we derive $\NIN$ from a most basic separability property of metrics on the unit interval. 

\smallskip

Now, the study of metric spaces in RM proceeds -unsurprisingly- via codes, namely a complete separable metric space is represented via a countable and dense subset (\cite{simpson2}*{II.5.1}). 
It is then a natural question how hard it is to prove that this countable and dense subset exists for the original/non-coded metric spaces. 
We study the special case for metrics \emph{defined on the unit interval}, as in Definition \ref{donc} and $\STS$ below, which implies $\NIN$ by Theorem \ref{STS}.  
\bdefi\label{donc}
A functional $d: [0,1]^{2}\di \R$ is a \emph{metric on the unit interval} if it satisfies the following properties for $x, y, z\in [0,1]$:
\begin{enumerate}
 \renewcommand{\theenumi}{\alph{enumi}}
\item $d(x, y)=_{\R}0 \asa  x=_{\R}y$,
\item $0\leq_{\R} d(x, y)=_{\R}d(y, x), $
\item $d(x, y)\leq_{\R} d(x, z)+ d(z, y)$.
\end{enumerate}
We use standard notation like $B_{d}(x, r)$ to denote $\{y\in [0,1]: d(x, y)<r\}$.
\edefi
\bdefi[Countably-compact]
The metric space $([0,1], d)$ is \emph{countably-compact} if for any sequence $(a_{n})_{n\in \N}$ in $[0,1]$ and sequence of rationals $(r_{n})_{n\in \N}$ such that $[0,1]\subset \cup_{n\in \N}B_{d}(a_{n}, r_{n})$, there is $m\in \N$ such that  $[0,1]\subset \cup_{n\leq m}B_{d}(a_{n}, r_{n})$.
\edefi
We note that Definition \ref{SEPKE} is used in constructive mathematics (see \cite{troeleke2}*{Ch.\ 7, Def.\ 2.2}).  
Our notion of separability is also implied by \emph{total boundedness} as used in RM (see \cite{simpson2}*{III.2.3} or \cite{browner}*{p.\ 53}).
According to Simpson (\cite{simpson2}*{p.\ 14}), one cannot speak at all about non-separable spaces in $\L_{2}$.  
\bdefi[Separability]\label{SEPKE}
A metric space $([0,1], d)$ is \emph{separable} if there is a sequence $(x_{n})_{n\in \N}$ in $[0,1]$ such that $(\forall x\in [0,1], k\in \N)(\exists n\in \N)( d(x, x_{n})<\frac{1}{2^{k}})$.
\edefi
\begin{princ}[$\STS$]
A countably-compact metric space $([0,1], d)$ is separable.
\end{princ}
\begin{thm}\label{STS}
The system $\RCAo$ proves $\STS\di \NIN$.
\end{thm}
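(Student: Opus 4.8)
The plan is to assume $\STS$ and deduce $\NIN$, with the discrete metric on $[0,1]$ and Cantor's Theorem \ref{cant2} as the only real ingredients. As in the discussion around Theorem \ref{Xxn}, I would first case-split on $(\exists^{2})$: if $\neg(\exists^{2})$ then every $F:\R\di\R$ is continuous and $\NIN$ is immediate, so it suffices to derive a contradiction from $\STS\wedge(\exists^{2})$. Granting $(\exists^{2})$, the function $d:[0,1]^{2}\di\R$ given by $d(x,y)=0$ if $x=_{\R}y$ and $d(x,y)=1$ otherwise is a genuine third-order functional — this is exactly where $(\exists^{2})$ is needed, namely to decide the $\Pi^{0}_{1}$-condition $x=_{\R}y$ — and it trivially satisfies the three clauses of Definition \ref{donc} together with extensionality.

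Next I would check that $([0,1],d)$ is countably-compact. Given $(a_{n})_{n}$ in $[0,1]$ and rationals $(r_{n})_{n}$ with $[0,1]\subset\bigcup_{n}B_{d}(a_{n},r_{n})$, use $(\exists^{2})$ to decide whether $(\exists n)(r_{n}>_{\Q}1)$. In the positive case the least such $n_{0}$ has $B_{d}(a_{n_{0}},r_{n_{0}})=[0,1]$, since every value of $d$ is $\leq 1<r_{n_{0}}$, so we have a finite subcovering. In the negative case every $B_{d}(a_{n},r_{n})$ is contained in $\{a_{n}\}$, so the covering hypothesis reads $(\forall x\in[0,1])(\exists n)(x=_{\R}a_{n})$; but Theorem \ref{cant2}, available in $\RCAo$, yields $y\in[0,1]$ with $y\neq_{\R}a_{n}$ for all $n$, contradicting the covering. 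Hence the positive case always obtains and $d$ is countably-compact. Applying $\STS$ now produces a sequence $(x_{n})_{n}$ in $[0,1]$ with $(\forall x\in[0,1], k\in\N)(\exists n)(d(x,x_{n})<\tfrac{1}{2^{k}})$; instantiating $k=1$ and using that $d$ takes only the values $0$ and $1$, this says $(\forall x\in[0,1])(\exists n)(x=_{\R}x_{n})$, which Theorem \ref{cant2} again refutes. This contradiction shows $\RCAo$ proves $\STS\di\neg(\exists^{2})$, and hence $\STS\di\NIN$ by the opening reduction.

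The argument has essentially no hard content; the only points deserving care are bookkeeping. One must make sure the discrete metric genuinely qualifies as a functional of type $[0,1]^{2}\di\R$ in the sense of Definition \ref{donc} — which is precisely what forces the harmless initial case-split on $(\exists^{2})$ — and that both appeals to Cantor's result invoke the sequence-version of Theorem \ref{cant2} provable in $\RCAo$ rather than any stronger enumeration principle. (Equivalently, one may phrase everything contrapositively: an injection $Y:[0,1]\di\N$ yields $(\exists^{2})$ via $\neg\NIN\di(\exists^{2})$, after which the discrete metric on $[0,1]$ already witnesses the failure of $\STS$; note that $Y$ is not otherwise used.)
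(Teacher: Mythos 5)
Your argument is correct, but it is a genuinely different proof from the one in the paper, and it in fact yields a stronger conclusion. The paper works directly from a putative injection $Y:[0,1]\di\N$: it sets $d(x,y):=|2^{-Y(x)}-2^{-Y(y)}|$ (with special clauses at $0$), obtains countable compactness from the observation that any ball containing $0$ contains all but finitely many points of $[0,1]$ (by injectivity of $Y$), and then extracts from separability at precision $k=Y(x)+1$ that the dense sequence enumerates $[0,1]$, contradicting \cite{simpson2}*{II.4.9}. You discard $Y$ altogether and use the discrete metric, which exists as soon as $(\exists^{2})$ does; countable compactness of that metric is itself an application of \cite{simpson2}*{II.4.9} (a sequence of singletons cannot cover $[0,1]$, so any genuine ball-cover must contain a ball of radius $>1$), and separability at $k=1$ enumerates $[0,1]$ as before. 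Both routes are sound over $\RCAo$ and rest on the same two ingredients --- a metric whose small balls are singletons, plus Cantor's theorem --- but yours establishes the sharper fact $\RCAo\vdash\STS\di\neg(\exists^{2})$: with Definition \ref{donc} and the ball-cover formulation of countable compactness, the discrete metric is a countably-compact, non-separable metric on $[0,1]$ in every model of $(\exists^{2})$, and indeed in $\ZFC$ itself, so $\STS$ as literally stated is refuted by $(\exists^{2})$ (hence by $\Z_{2}^{\Omega}$) rather than merely implying $\NIN$. The paper's proof cannot reveal this, since its metric only exists under the hypothesis $\neg\NIN$; the price of your shortcut is that the construction is no longer a uniform reduction from the injection $Y$, which is what the paper reuses in the corollary following Theorem \ref{STS}.
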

\begin{proof}
Recall that by \cite{kohlenbach2}*{\S3}, $\NIN$ trivially holds if $\neg(\exists^{2})$ as in the latter case all functions on $\R$ are continuous.  
Thus, we may assume $(\exists^{2})$ for the rest of the proof. 

\smallskip

Suppose $Y:[0,1]\di \N$ is an injection and define $d(x, y):=|\frac{1}{2^{Y(x)}}-\frac{1}{2^{Y(y)}}|$ in case $x, y\in [0,1]$ are non-zero.  Define $d(0, 0):=0$ and $d(x, 0)= d(0, x):= \frac{1}{2^{Y(x)}}$ for non-zero $x\in [0,1]$.
The first item in Definition~\ref{donc} holds by the assumption on $Y$, while the other two items hold by definition. 

\smallskip

The metric space $([0,1], d)$ is countably-compact as $0\in B_{d}(x, r)$ implies $y\in B_{d}(x, r)$ for $y\in [0,1]$ with only finitely many exceptions (due to $Y$ being an injection).  
Let $(x_{n})_{n\in\N}$ be the sequence provided by $\STS$, implying $(\forall x\in [0,1](\exists n\in \N)( d(x, x_{n})<\frac{1}{2^{Y(x)+1}})$ (by taking $k=Y(x)+1$).  
The latter formula implies $(\forall x\in [0,1](\exists n\in \N)( |\frac{1}{2^{Y(x)}}-\frac{1}{2^{Y(x_{n})}}|<\frac{1}{2^{Y(x)+1}})$ by definition.  
Clearly, $|\frac{1}{2^{Y(x)}}-\frac{1}{2^{Y(x_{n})}}|<\frac{1}{2^{Y(x)+1}}$ is only possible if $Y(x)=Y(x_{n})$, implying $x=_{\R}x_{n}$. 
Hence, we have shown that $(x_{n})_{n\in \N}$ lists all reals in the unit interval.
By \cite{simpson2}*{II.4.9}, there is $y\in [0,1]$ such that $y\ne x_{n}$ for all $n\in \N$ (in $\RCA_{0}$).  
This contradiction implies $\NIN$. 
\end{proof}
\begin{cor}
The theorem still goes through upon replacing `separable' in $\STS$ by any of the following.
\begin{enumerate}
 \renewcommand{\theenumi}{\alph{enumi}}
\item Total boundedness as in \cite{simpson2}*{III.2.3} or \cite{browner}*{p.\ 53}.   
\item The Heine-Borel property for \emph{uncountable} covers.\label{lappel}
\item The Lindel\"of property for \emph{uncountable} covers.\label{lappel2}
\item The Vitali covering property as in $\WHBU$ for \emph{uncountable} Vitali covers.  \label{frap}
\end{enumerate}
\end{cor}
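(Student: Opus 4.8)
The plan is to re-run the proof of Theorem \ref{STS} with the same metric, extracting more from whichever separation-type hypothesis is at hand. Assume $\neg\NIN$; then $(\exists^{2})$ holds (since $\neg\NIN\di(\exists^{2})$ by \cite{kohlenbach2}*{\S3}), there is an injection $Y:[0,1]\di\N$, and we build the metric $d$ on $[0,1]$ exactly as in the proof of Theorem \ref{STS}: $d(x,y)=|\frac{1}{2^{Y(x)}}-\frac{1}{2^{Y(y)}}|$ for nonzero $x,y$, with $d(0,0)=0$ and $d(x,0)=d(0,x)=\frac{1}{2^{Y(x)}}$. As shown there, $([0,1],d)$ is countably-compact, so each of the modified principles in (a)--(d) applies to it. I will use two structural facts: (i) every nonzero $x\in[0,1]$ is isolated, since $B_{d}(x,\frac{1}{2^{Y(x)+1}})=\{x\}$ by injectivity of $Y$ --- the very computation at the end of the proof of Theorem \ref{STS}; and (ii) every $d$-ball $B_{d}(z,r)$ is at most countable, since by injectivity of $Y$ at most one real is assigned any given value $\frac{1}{2^{k}}$. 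In cases (a)--(c) the aim is to produce a sequence $(x_{n})_{n\in\N}$ of reals listing every nonzero real in $[0,1]$; adjoining $0$ and invoking Cantor's theorem (\cite{simpson2}*{II.4.9}) then yields the required contradiction, whence $\NIN$.

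For (a), total boundedness (in the formulation of \cite{simpson2}*{III.2.3}) supplies, for each $n$, a finite $\frac{1}{2^{n}}$-net of $([0,1],d)$, and their union is a countable set $D$ dense in $([0,1],d)$. By (i), density forces $D$ to contain every nonzero real: given nonzero $x$, a point of $D$ within $\frac{1}{2^{Y(x)+1}}$ of $x$ must equal $x$ by injectivity of $Y$, exactly as in the last paragraph of the proof of Theorem \ref{STS}. Cases (b) and (c) reduce to (a): apply the assumed covering principle to the canonical cover of $([0,1],d)$ given by $\Psi\equiv\frac{1}{2^{n}}$; a finite subcover (case (b), Heine--Borel) has centres forming a finite $\frac{1}{2^{n}}$-net, and a countable subcover (case (c), Lindel\"of) has centres forming a countable $\frac{1}{2^{n}}$-net. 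In either case the union over $n$ is a countable dense $D$, and one concludes as before.

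Case (d) is the quickest and needs no net-extraction. Since $([0,1],d)$ is countably-compact, the modified principle hands us the $\WHBU$-conclusion for this space: for any $\Psi:[0,1]\di\R^{+}$ and $\eps>0$ there are distinct $y_{0},\dots,y_{k}$ with $1-\eps<\sum_{i\leq k}|J_{y_{i}}^{\Psi}|$, the $J_{y_{i}}^{\Psi}$ disjointifying the $d$-balls $B_{d}(y_{i},\Psi(y_{i}))$. But by (ii) each such ball is at most countable, hence has Lebesgue measure zero (as in $\textsf{Harnack}_{0}$), so $\bigcup_{i\leq k}B_{d}(y_{i},\Psi(y_{i}))$ --- a finite union of null sets --- is null and $\sum_{i\leq k}|J_{y_{i}}^{\Psi}|=0$; with $\eps=\frac{1}{2}$ this contradicts the displayed inequality. (Alternatively one mimics the $\WHBU\di\NIN$ argument of Theorem \ref{flahu}, taking $\Psi(x)=\frac{1}{2^{Y(x)+3}}$ and $d$-balls in place of intervals.)

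The main obstacle is the collection step in (b) and (c): the covering principles, stated for a single canonical cover, assert only that \emph{some} subcover exists for each fixed $n$, so passing from ``for every $n$ there is a $\frac{1}{2^{n}}$-net'' to a single sequence of nets must avoid countable choice --- which Theorems \ref{STS} and \ref{flahu} deliberately do. Here $(\exists^{2})$ does the work: using that for this particular $d$ one may always take the net to contain $0$, the task of locating a valid $\frac{1}{2^{n}}$-net is governed by the finitely many reals with $Y$-value $\leq n$, and $(\exists^{2})$ suffices to search for a code of such a net (respectively of a finite or countable subcover) uniformly in $n$; in the Lindel\"of case one may instead appeal to the sequential formulation of the covering principle. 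By contrast, (d) sidesteps this entirely, which is why it is the cleanest instance.
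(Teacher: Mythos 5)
Your treatment of (a) is fine and is essentially what the paper intends: the RM formulation of total boundedness hands you the entire sequence of finite nets at once, their union is a countable dense set, and isolation of the nonzero points forces that set to list $(0,1]$, after which \cite{simpson2}*{II.4.9} finishes; indeed total boundedness implies separability in the sense of Definition \ref{SEPKE}, so (a) reduces to Theorem \ref{STS} itself. The problems are in (b), (c) and in your primary argument for (d). For (b)/(c) you apply the covering property once for each $n$ to the constant-radius cover and must then assemble the resulting nets into a single sequence. You correctly flag this collection step, but your fix fails: $(\exists^{2})$ cannot ``search for a code of such a net uniformly in $n$''. A $2^{-n}$-net for $d$ must contain every nonzero $x$ with $Y(x)\leq n-1$, since such an $x$ lies at $d$-distance at least $2^{-(Y(x)+1)}\geq 2^{-n}$ from every other point of the space; so producing the $n$-th net amounts to deciding, for each $m<n$, whether $(\exists x\in[0,1])(Y(x)=m)$ and exhibiting the witness. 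That is a search over $\R$ governed by a $\Sigma_{1}^{1}$-formula with the third-order parameter $Y$ --- precisely what $\BOOT^{-}$ is invoked for in Theorem \ref{kloothommel} and precisely what $\exists^{2}$ cannot do (otherwise $\Z_{2}^{\omega}$ would prove $\NIN$ via your own argument, contradicting Theorem \ref{dick}). The paper avoids any collection over $n$: it applies the covering property a single time to $\cup_{x\in[0,1]}B_{d}(x,2^{-(Y(x)+1)})$, whose balls centred at nonzero points are singletons, and argues directly that no finite (resp.\ countable) subcover exists. (Even there one must handle the centre $0$ with care, as $B_{d}(0,r)$ is cofinite in $[0,1]$ for every $r>0$; the singletons, not the ball at $0$, carry the argument.)

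For (d), your main argument is circular: ``each ball is at most countable, hence has Lebesgue measure zero'' is exactly $\HAR_{0}$, which by Theorem \ref{kloothommel} implies $\NIN$ and so is not available in $\RCAo$ --- indeed, under $\neg\NIN$ all of $[0,1]$ is countable in the sense of Definition \ref{standard}, so this inference would yield $\NIN$ with no covering theorem at all (and it is in any case false for the ball at $0$, which is cofinite). Only your parenthetical alternative is sound, and it is the paper's actual proof: take $\Psi(x)=2^{-(Y(x)+3)}$ and bound $\sum_{i\leq k}|J^{\Psi}_{y_{i}}|$ by $\sum_{i\leq k}2\Psi(y_{i})\leq\sum_{j}2^{-(j+2)}=\tfrac{1}{2}$ using injectivity of $Y$, contradicting $1-\eps<\sum_{i\leq k}|J^{\Psi}_{y_{i}}|$. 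You should promote that remark to the argument itself and delete the appeal to $\HAR_{0}$.
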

\begin{proof}
For item \eqref{lappel}, fix an injection $Y:[0,1]\di \N$ and let $d$ be the metric as in the proof of the theorem.  
Then $B_{d}(x, r)=\{x\}$ for $r>0$ small enough and $x\in (0,1]$.  
In particular, the uncountable covering $\cup_{x\in [0,1]}B_{d}(x, \frac{1}{2^{Y(x)+1}})$ of $[0,1]$ cannot have a finite (or countable) sub-cover, and $\NIN$ follows. 

\smallskip

For item \eqref{frap} (and item \eqref{lappel2}), let $Y$ and $d$ be as in the previous paragraph and note that $\cup_{x\in [0,1]}B_{d}(x, \frac{1}{2^{Y(x)+1}})$ is a Vitali cover.  
By the above, no finite sum can be larger than $1/2$, and we are done. 
\end{proof}
It should also be straightforward to derive $\NIN$ from the \emph{non}-separability of e.g.\ the sequence space $\ell^{\infty}$ or the space $\textsf{BV}$ of functions of bounded variation.  

\subsubsection{Countable sets versus sets that are countable}\label{uncountsyn}
We derive $\NIN$ from basic theorems about \emph{countable sets} where the latter has its \emph{usual} meaning, namely Definition~\ref{standard} taken from \cite{kunen}.
Among others, we study the \emph{Lebesgue criterion for Riemann integrability} for countable sets, as well as central theorems from RM concering countable sets as in items \eqref{bokes}-\eqref{bokes3}.  

\smallskip

First of all, we use the usual definition of \emph{countable set}, as follows.
By item \eqref{koer} in Definition~\ref{keepintireal}, sets $A\subset \R$ are given by characteristic functions, as in \cites{kruisje, dagsamVI, dagsamVII, hunterphd}.     
\bdefi[Countable subset of $\R$]\label{standard}~
A set $A\subseteq \R$ is \emph{countable} if there exists $Y:\R\di \N$ such that $(\forall x, y\in A)(Y(x)=_{0}Y(y)\di x=_{\R}y)$. 
\edefi
This definition is from Kunen's textbook on set theory (\cite{kunen}*{p.\ 63}); we \emph{could} additionally require that $Y:\R\di \N$ in Definition \ref{standard} is also \emph{surjective}, as in e.g.\ \cite{hrbacekjech}.  
This stronger notion is called `strongly countable' (see Definition~\ref{standard2}) and studied in Section~\ref{bereft}.  
If we replace `countable' by `strongly countable', all the below proofs go through \emph{mutatis mutandis} for $\NIN$ replaced by $\NBI$.  

\smallskip

Now, a cursory search reveals that the word `countable' appears hundreds of times in the `bible' of RM \cite{simpson2}.  
Sections titles of \cite{simpson2} also reveal that the objects of study are `countable' rings, vector spaces, groups, et cetera.  
Of course, the above definition of `countable subset of $\R$' cannot be expressed in $\L_{2}$.  Indeed, all the 
aforementioned objects are given by \emph{sequences} in $\L_{2}$ (see also \cite{simpson2}*{V.4.2}).  
Thus, the following\footnote{It is a tedious-but-straightforward verification that the below proofs still go through if we replace the equivalence by a forward arrow in $\cocode_{0}$.  One `immediate' example is that $\cocode_{0}\di \HAR_{0}$ in the proof of Theorem \ref{kloothommel}.} 
 `coding principle' $\cocode_{0}$ is \emph{crucial} to RM \emph{if} one wants the results in \cite{simpson2} to have the same scope 
as third-order theorems about countable objects as in Definition \ref{standard}.  This is particularly true for the RM of topology from \cite{mummy,mummyphd, mummymf}, as this enterprise is based on \emph{countable} bases at its very core.
\begin{princ}[$\cocode_{0}$]
For any non-empty countable set $A\subseteq [0,1]$, there is a sequence $(x_{n})_{n\in \N}$ in $A$ such that $(\forall x\in \R)(x\in A\asa (\exists n\in \N)(x_{n}=_{\R}x))$.
\end{princ}
Coding principles for continuous functions\footnote{As an example, $\RCAo+\WKL$ can prove the coding principle `$\textsf{coco}$' that any third-order function on $2^{\N}$ satisfying the usual definition of continuity, has an RM-code (see \cite{kohlenbach4}*{\S4}).} are used or studied in e.g.\ \cites{samrep, kohlenbach4, dagsamVII}.
As it happens, a version of $\cocode_{0}$ for representations has been studied in the context of Weihrauch reducibility in the form of \textsf{List} and \textsf{wList} from \cite{search}*{\S6}).
By Theorem \ref{kloothommel}, a lot of comprehension is needed to prove $\cocode_{0}$, but then the latter is clearly\footnote{Note that $\cocode_{0}$ is trivial given $\neg(\exists^{2})$, just like e.g.\ the Lindel\"of lemma (\cite{dagsamIII, dagsamV}).  Indeed,  $\neg(\exists^{2})$ implies that all functions on $\R$ are continuous by \cite{kohlenbach2}*{\S3}.} non-normal.
We also discuss the following basic theorems, which have fairly trivial proofs when formulated in $\L_{2}$.  Around 1885, Harnack proves the following in \cite{harny}*{p.\ 243} (see \cites{stillerebron, stilstebron, mazout} for a critical discussion).
\begin{princ}[$\textsf{Harnack}_{0}$]
A countable set $A\subset [0,1]$ has measure zero.
\end{princ}
Tao formulates a \emph{pigeon hole principle for measure spaces} in \cite{taoeps}*{p.\ 91} and Principle \ref{phm} is a special case for $[0,1]$.  
To ensure that the union in $\PHM$ exists, we always assume $(\exists^{2})$.  
Note that $\PHM$ is a special case of $\textsf{CUZ}$ from \cite{dagsamVI}.
\begin{princ}[$\PHM$]\label{phm}
For a sequence of sets $E_{n}\subset [0,1]$, if $A=\cup_{n\in \N}E_{n}$ has positive measure, then there is $n_{0}\in \N$ such that $E_{n_{0}}$ has positive measure. 
\end{princ}
Note that these principles can be formulated without\footnote{For $A\subset \R$, let `$A$ has measure zero' mean that for any $\eps>0$, there is a sequence of closed intervals $\big(I_{n}\big)_{n\in \N}$ covering $A$ and such that $\eps >\sum^{\infty}_{n=0 }|J_{n}|$ for $J_{0}:= I_{0}$ and $J_{i+1}:= I_{i+1}\setminus \cup_{j\leq i}I_{j}$.  This is nothing more than the usual definition as used by Tao in e.g.\ \cite{taomes}*{p.\ 19}.\label{clukker}} mentioning the Lebesgue measure, like was done for $\WHBU$, and e.g.\ by Harnack himself in \cite{harny}.  The following is a special case of the \emph{Lebesgue criterion for the Riemann integral}; the latter was discovered before 1870 (\cite{hanky}*{p.\ 92}) with a correct proof in \cites{harny2, duboiske, dinipi}.  
Smith studies $\LEB_{0}$ for a \emph{sequence} of exceptional points in \cite{snutg}, motivated by \cite{hanky}.  
\begin{princ}[$\LEB_{0}$]
A bounded function $f:[0,1]\di \R$ which is continuous outside a countable set $A\subset [0,1]$, is Riemann integrable. 
\end{princ}
Now, the weakest comprehension principle that can prove $\cocode_{0}$ seems to be $\BOOT^{-}$ from \cite{dagsamVII}, which is a weakening of $\BOOT$ via the following extra condition:
\[
(\forall n\in \N)(\exists \textup{ at most one } f\in \N^{\N})(Y(f, n)=0).
\]
As discussed below, $\BOOT^{-}$ follows from basic theorems on open sets as in \cite{dagsamVII}. 
\begin{thm}\label{kloothommel}
The system $\RCAo+\WKL$ proves 
\be\label{cahoot}
\BOOT^{-}\di \cocode_{0}\di  \HAR_{0} \di\LEB_{0}\di \NIN, 
\ee
with $\WKL$ only used in the third implication; $\ACAo$ proves $\PHM\di \HAR_{0}$.  
\end{thm}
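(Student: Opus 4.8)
The plan is to establish the chain \eqref{cahoot} one implication at a time, and then separately prove $\PHM\di\HAR_{0}$ in $\ACAo$. Throughout we may assume $(\exists^{2})$, since all displayed principles are trivial (or vacuously true) when every $\R\di\R$-function is continuous, by Theorem \ref{Xxn} and the remarks following it; in particular $(\exists^{2})$ is available to evaluate the characteristic functions of sets.

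First, $\BOOT^{-}\di\cocode_{0}$. Given a non-empty countable $A\subseteq[0,1]$ with injection $Y:\R\di\N$, I would use $(\exists^{2})$ to search, for each $n\in\N$, the rationals to locate (an approximation to) the unique $x\in A$ with $Y(x)=n$, if it exists; more precisely, define $Y'(f,n)=0$ iff $f$ codes a real in $[0,1]\cap A$ with $Y(f)=n$, which satisfies the uniqueness hypothesis of $\BOOT^{-}$. Then $\BOOT^{-}$ yields the set $X=\{n: (\exists f)(Y'(f,n)=0)\}$; enumerating $X$ and, for $n\in X$, extracting the associated real (again via Gandy-style search with $\exists^{2}$, or directly since the real is unique and hence computable from $\exists^{2}$ and the relevant parameters) produces the desired sequence $(x_{n})_{n\in\N}$ listing $A$. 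The main subtlety here is extracting an actual Cauchy sequence for each such real uniformly in $n$, but since the witness is unique this is a routine $\exists^{2}$-computation.

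Next, $\cocode_{0}\di\HAR_{0}$: given countable $A\subseteq[0,1]$, apply $\cocode_{0}$ to get a sequence $(x_{n})_{n\in\N}$ enumerating $A$; then $A\subseteq\bigcup_{n}[x_{n}-\eps/2^{n+2},x_{n}+\eps/2^{n+2}]$ witnesses that $A$ has measure zero (in the elementary sense of footnote \ref{clukker}), using the formula for the measure of the disjointified intervals $J_{i}$. Then $\HAR_{0}\di\LEB_{0}$, which is where $\WKL$ enters: given bounded $f$ continuous outside a countable $A$, by $\HAR_{0}$ the set $A$ has measure zero, so given $\eps>0$ we cover $A$ by intervals of total length $<\eps$; on the compact complement $f$ is (uniformly) continuous, and a standard Riemann-sum argument — formalised exactly as the proof of the Lebesgue criterion in $\WKL_{0}$ via compactness — shows $f$ is Riemann integrable. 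Finally $\LEB_{0}\di\NIN$: as in the proof of Theorem \ref{flahu}/\ref{arzen}, an injection $Y:[0,1]\di\N$ makes the function $f$ with $f(x)=1$ if $Y(x)$ even and $f(x)=0$ if $Y(x)$ odd (say) continuous outside the countable set $A=[0,1]$ itself — more usefully, take $f$ to be the characteristic function of $\{x: Y(x)\text{ even}\}$; since $Y$ is injective every point is isolated in its fibre, $f$ is continuous outside a countable set, so $\LEB_{0}$ makes $f$ Riemann integrable, yet one checks the upper and lower Darboux sums differ by (close to) $1$ on suitable partitions because both $\{Y\text{ even}\}$ and $\{Y\text{ odd}\}$ meet every subinterval — a contradiction. (Alternatively, and perhaps more cleanly: with $A=[0,1]$ itself being countable under the injection $Y$, $\HAR_{0}$ already gives that $[0,1]$ has measure zero, which is absurd since $\RCAo$ proves $[0,1]$ cannot be covered by intervals of total length $<1$; this gives $\HAR_{0}\di\NIN$ directly and bypasses $\LEB_{0}$, but we keep the stated chain.)

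For $\ACAo\vdash\PHM\di\HAR_{0}$: given a countable $A\subseteq[0,1]$ via injection $Y$, write $A=\bigcup_{n\in\N}E_{n}$ where $E_{n}=\{x\in[0,1]: Y(x)=n\}$, which exists as a sequence of sets using $\exists^{2}$ (available from $\ACAo$). Each $E_{n}$ has at most one point, hence measure zero, hence \emph{not} positive measure. By $\PHM$ (contrapositive), $A=\bigcup_{n}E_{n}$ does not have positive measure either; a short argument in $\ACAo$ — spelling out that "not positive measure" together with being a countable union of measure-zero sets forces "measure zero" in the sense of footnote \ref{clukker} — yields $\HAR_{0}$. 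The main obstacle in the whole theorem is bookkeeping rather than conceptual: making precise, within $\RCAo$ (plus the stated extra axioms) and without assuming more comprehension than allowed, the elementary measure-theoretic manipulations of footnote \ref{clukker} and the passage between the "no finite subcover of small total length" facts and Riemann integrability — in particular keeping the $\WKL$ usage confined to exactly the $\HAR_{0}\di\LEB_{0}$ step and checking that $\PHM\di\HAR_{0}$ genuinely needs only $\ACAo$ (i.e.\ only the sequence $(E_{n})$, obtainable from $\exists^{2}$, and no choice).
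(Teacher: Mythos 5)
Your decomposition matches the paper's, and your $\cocode_{0}\di\HAR_{0}$, $\HAR_{0}\di\LEB_{0}$ and $\PHM\di\HAR_{0}$ steps are essentially the intended arguments. But two steps have genuine gaps. First, in $\BOOT^{-}\di\cocode_{0}$ you apply $\BOOT^{-}$ only to obtain the set $X\subseteq\N$ of those $n$ in the range of $Y$ on $A$, and then claim that extracting the unique witness $x_{n}$ is ``a routine $\exists^{2}$-computation'' because the witness is unique. That is precisely the step that cannot be done from $(\exists^{2})$ alone: deciding whether a given rational ball contains the witness is a statement $(\exists x\in B(q,\frac{1}{2^{m}})\cap A)(Y(x)=n)$ with an existential quantifier over reals, which $\exists^{2}$ does not decide, and uniqueness does not make the witness computable (Gandy selection is a computability-theoretic device over Kleene-computable points, not a proof principle available in $\RCAo+(\exists^{2})$). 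If uniqueness sufficed, $\BOOT^{-}$ would be superfluous and $\cocode_{0}$, hence $\NIN$, would already follow in $\Z_{2}^{\omega}$, contradicting Theorem \ref{dick}. The fix is to apply $\BOOT^{-}$ to the three-parameter formula $(\exists x\in B(q,\frac{1}{2^{m}})\cap A)(Y(x)=n)$ --- which still satisfies the at-most-one-witness condition because $Y$ is injective --- obtaining a set $X\subseteq\N^{2}\times\Q$ from which the interval-halving construction of $(x_{n})_{n\in\N}$ can be carried out; this localisation is the actual content of the implication.

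Second, your main argument for $\LEB_{0}\di\NIN$ fails as stated: for an injection $Y:[0,1]\di\N$ the restriction of $Y$ to a subinterval has infinite range, but an infinite subset of $\N$ need not contain both even and odd numbers, so $\{x:Y(x)\mbox{ even}\}$ and its complement need not both meet every subinterval (the range of $Y$ could consist entirely of even numbers, making your $f$ constant and integrable). The repair is to decouple $f$ from $Y$: under $\neg\NIN$ the set $[0,\frac12]$ is countable in the sense of Definition \ref{standard}, so $\LEB_{0}$ applies to \emph{every} bounded function continuous on $(\frac12,1]$; take $f$ to be the indicator of $\Q$ on $[0,\frac12]$ (and constant on $(\frac12,1]$), which is provably not Riemann integrable. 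Your parenthetical shortcut $\HAR_{0}\di\NIN$ via ``$[0,1]$ has measure zero'' is correct, but it bypasses rather than repairs the stated chain through $\LEB_{0}$.
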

\begin{proof}
We note that all principles in \eqref{cahoot} (as well as $\PHM$ and $\HAR_{0}$) are outright provable in $\RCAo+\WKL$ if all functions on $\N^{\N}$ and $\R$ are continuous.  
Since the latter is the case given $\neg(\exists^{2})$ by \cite{kohlenbach2}*{\S3}, we may assume $(\exists^{2})$ for the rest of the proof.  
The functional $\exists^{2}$ allows us to (uniformly) convert between real numbers and their binary or decimal representations, which we will tacitly do. 

\smallskip

First of all, we show $\BOOT^{-}\di \NIN$; repeating the proof with $[0,1]$ replaced by countable $A\subset [0,1]$, one obtains $\BOOT^{-}\di \cocode_{0}$.  
Let $Y:[0,1]\di \N$ be an injection and use $\BOOT^{-}$ to define $X\subset \N^{2}\times \Q$ such that for all $n, m\in \N$ and $q\in \Q\cap [0,1]$, we have:
\be\label{kilop}\textstyle
(n, m, q)\in X\asa (\exists x\in B(q, \frac{1}{2^{m}})\cap [0,1])(Y(x)= n ).
\ee
Since $Y$ is an injection, the following condition, required for $\BOOT^{-}$, is satisfied: 
\[\textstyle
(\forall n, m\in \N, q\in \Q\cap [0,1])(\exists \textup{ at most one } x\in B(q, \frac{1}{2^{m}})\cap [0,1])(Y(x)= n ).
\]
We now use $X$ from \eqref{kilop} and the well-known interval-halving technique to create a sequence $(x_{n})_{n\in \N}$.  
For fixed $n\in \N$, define $[x_{n}](0)$ as $0$ if $(\exists x\in [0, 1/2))(Y(x)=n)$, and $1/2$ otherwise; define $[x_{n}](m+1)$ as $[x_{n}](m)$ if $(\exists x\in \big[[x_{n}](m), [x_{n}](m)+\frac{1}{2^{m+1}}\big)\big)(Y(x)=n)$, and $[x_{n}](m)+\frac{1}{2^{m+1}}$ otherwise.  By definition, we have:  
\[
(\forall n\in \N)\big[(\exists x\in [0,1])(Y(x)=n)\asa  Y(x_{n})=n \big].
\]
Now use \cite{simpson2}*{II.4.9} to find $y_{0}\in [0,1]$ not in the sequence $(x_{n})_{n\in \N}$.
Then $n_{0}=Y(y_{0})$ yields a contradiction as $x_{n_{0}}\ne_{\R} y $ and $Y(y_{0})=n_{0}=Y(x_{n_{0}})$.

\smallskip

Secondly, for the implication $\cocode_{0} \di \HAR_{0}$, given a countable set $A$ and a sequence $(x_n)_{n\in \N}$ listing its elements, consider $I_{n}:= (x_{n}-\frac{\eps}{2^{n+2}}, x_{n}+\frac{\eps}{2^{n+2}})$, which 
are as required to show that $A$ has measure $0$.  

\smallskip

Thirdly, for the implication $ \HAR_{0}\di \LEB_{0}$, one uses (the proof of) \cite{kohlenbach4}*{Prop.\ 4.7}
to show that a function $f:[0,1]\di \R$ as in $\LEB_{0}$ has a continuous modulus of continuity outside of $A$.
This yields an open covering of $[0,1]\setminus A$, while $\HAR_{0}$ provides an open covering of $A$.
Both coverings are given by sequences.  The proof of \cite{sayo}*{Theorem 10} is now readily adapted to yield that $f$ is Riemann integrable, as required by $\LEB_{0}$.

\smallskip

Fourth, let $Y:[0,1]\di \N$ be an injection and note that $A\equiv[0,\frac{1}{2}]$ is countable by Definition \ref{standard}.
Define $f:[0,1]\di \R$ as $2$ if $x\in (\frac12, 1]$ and the indicator function of $\Q$ otherwise.  By $\LEB_{0}$, this function is Riemann integrable, a contradiction.      

\smallskip

Fifth, assume $\PHM$ and suppose $E\subset [0,1]$ is countable and not measure zero.  
For $Y:[0,1]\di \N$ an injection on $E$, define $E_{n}:=\{x\in E:Y(x)=n\}$ and let $n_{0}$ be such that $E_{n_{0}}$ has positive measure.  
By the definition of measure zero, there must be at least two distinct $x, y\in E_{n_{0}}$, a contradiction as $Y(x)=n_{0}=Y(y)$.
\end{proof}
We now formulate a nice corollary involving $\cocode_{0}$ and its ilk restricted to closed sets as used in RM.
Now, open sets are given in RM by unions $\cup_{n\in \N}(a_{n}, b_{n})$ (\cite{simpson2}*{II.5.6}), while closed sets are complements thereof.  
We refer to such sets as `RM-open' and `RM-closed'.  By the following corollary, $\Z_{2}^{\omega}+\QFAC^{0,1}$ cannot prove that countable RM-closed sets are given by a sequence.  
\begin{cor}\label{frex}
The restriction of $\cocode_{0}$, $\HAR_{0}$, $\PHM$, or $\LEB_{0}$ to RM-closed sets $A$ still implies $\NIN$.
\end{cor}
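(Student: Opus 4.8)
The plan is to check that the proof of Theorem~\ref{kloothommel} already applies each of $\cocode_{0}$, $\HAR_{0}$, $\PHM$, and $\LEB_{0}$ only to countable sets that happen to be RM-closed, so that the corollary follows with essentially no extra work. As always we may assume $(\exists^{2})$, since $\NIN$ is trivial otherwise; this also guarantees that the sets appearing below are RM-closed in the formal sense, i.e.\ complements of countable unions of rational open intervals.

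Concretely, I would fix an injection $Y\colon[0,1]\di\N$ (which exists assuming $\neg\NIN$) and extend it arbitrarily to $\R$, so that every subset of $[0,1]$ is countable in the sense of Definition~\ref{standard}; in particular $[0,1]$ and $[0,\tfrac12]$ are non-empty, countable, and RM-closed, being complements of the RM-open sets $\bigcup_{n}(-n-1,0)\cup\bigcup_{n}(1,n+1)$ and $\bigcup_{n}(-n-1,0)\cup\bigcup_{n}(\tfrac12,n+1)$. Then I would reread the relevant parts of the proof of Theorem~\ref{kloothommel}: for $\cocode_{0}$ restricted to RM-closed sets, applying it to $[0,1]$ produces a sequence listing all of $[0,1]$, contradicting \cite{simpson2}*{II.4.9}; for $\HAR_{0}$ restricted to RM-closed sets, applying it to $[0,1]$ concludes that $[0,1]$ has measure zero, which is false; for $\LEB_{0}$ restricted to RM-closed sets, applying it to $A=[0,\tfrac12]$ together with the function $f$ from the proof of Theorem~\ref{kloothommel} (namely $f=2$ on $(\tfrac12,1]$ and the indicator of $\Q$ on $[0,\tfrac12]$) concludes that $f$ is Riemann integrable, again a contradiction; and for $\PHM$ restricted to RM-closed sets, taking $A:=[0,1]$ and $E_{n}:=\{x\in[0,1]:Y(x)=n\}$ makes $A=\bigcup_{n}E_{n}$ an RM-closed set of positive measure, whence the restricted $\PHM$ yields some $E_{n_{0}}$ of positive measure, forcing two distinct reals in $E_{n_{0}}$, which is absurd since $Y$ is injective. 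As in Theorem~\ref{kloothommel}, the $\cocode_{0}$ and $\LEB_{0}$ cases go through over $\RCAo$, and the $\HAR_{0}$ and $\PHM$ cases over $\RCAo+\WKL$.

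The one point that deserves attention---and the only real content beyond Theorem~\ref{kloothommel}---is that `RM-closed' must be read as `complement of a countable union of rational open intervals'. For $[0,1]$ and $[0,\tfrac12]$ this is the displayed observation above; for the finite sets $E_{n}$ (which one only needs if the restriction in $\PHM$ is read as applying to the pieces rather than to the union $A=[0,1]$) it follows because, given $(\exists^{2})$, each $\{y:y\ne_{\R}x_{i}\}=\{y:y<_{\R}x_{i}\}\cup\{y:y>_{\R}x_{i}\}$ is RM-open and finitely many RM-open sets have RM-open intersection. I do not foresee any genuine obstacle here; the corollary is a routine rereading of the preceding proof.
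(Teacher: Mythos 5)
Your proposal is correct and follows essentially the same route as the paper: the paper proves the $\cocode_{0}$ case exactly as you do (apply the restricted principle to the RM-closed, countable set $[0,1]$ and contradict \cite{simpson2}*{II.4.9}) and then remarks that ``the other results follow in the same way'', which is precisely the rereading of the proof of Theorem~\ref{kloothommel} that you carry out for $\HAR_{0}$, $\PHM$, and $\LEB_{0}$ using the RM-closed sets $[0,1]$ and $[0,\tfrac12]$. Your extra care about what ``RM-closed'' formally means is a harmless elaboration of details the paper leaves implicit.
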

\begin{proof}
Assume $\cocode_{0}$ for RM-closed sets and let $Y:[0,1]\di \N$ be an injection.  
The set $[0,1]$ is clearly RM-closed, as well as countable in the sense of Definition~\ref{standard}.
Hence, there is a sequence $(x_{n})_{n\in \N}$ listing all elements of $[0,1]$.  By \cite{simpson2}*{II.4.9}, there is $y\in [0,1]$ such that $(\forall n\in \N)(x_{n}\ne_{\R}y)$, a contradiction.
The other results follow in the same way.
\end{proof}
\noindent
One can show that $\Arz$ implies the restriction of $\PHM$ to RM-closed sets, while the latter restriction makes elementhood for $A=\cup_{n\in \N}E_{n}$ in $\PHM$ decidable.  

\smallskip

One can push the previous corollary even further as follows: $\NIN$ follows from the statement \emph{an RM-closed and countable subset of $\R$ has measure $<+\infty$.}
Other theorems that imply $\NIN$ in the same way are as follows, where `countable' is always interpreted as in Definition \ref{standard}.  
\begin{enumerate}
\renewcommand{\theenumi}{\roman{enumi}}
\item Heine-Borel theorem for countable collections of open intervals.\label{bokes} 
\item Vitali's covering theorem for countable collections of open intervals.\label{bokes2} 
\item Riemann integrable functions differing on countable sets have equal integral.
\item $|b-a|$ is the measure of: $[a,b]$ plus a countable set (cf.\ Footnote~\ref{clukker}).  
\item Convergence theorems for \textbf{nets} in $[0,1]$ with countable index sets. 
\item For a countable set, the Lebesgue integral of the indicator function is zero.
\item Ascoli-Arzel\`a theorem for countable sets of functions (see e.g. \cite{fourchette}).
\item Bolzano-Weierstrass: a countable set in $[0,1]$ has a supremum.\label{bokes3}
\item Topology formulated with countable bases as in Example \ref{mummyrem}.
\end{enumerate}
Regarding item \eqref{bokes}, Borel in \cite{opborrelen2} uses `countable infinity of intervals' and \textbf{not} `sequence of intervals' in his formulation\footnote{In fact, Borel's proof of the Heine-Borel theorem in \cite{opborrelen2}*{p.\ 42} starts with: \emph{Let us enumerate our intervals, one after the other, according to whatever law, but determined}.  He then proceeds with the usual `interval-halving' proof, similar to Cousin in \cite{cousin1}.} of the Heine-Borel theorem.  
Vitali similarly talks about countable and uncountable `groups' of intervals in \cite{vitaliorg}.  

\smallskip

As a corollary to Theorem \ref{flahu}, we now derive $\NIN$ from item \eqref{bokes} as follows.  
The Heine-Borel theorem for different representations of open coverings is studied in RM (\cite{paultoo}), i.e.\ the motivation for $\HBC_{0}$ is already present in second-order RM.
\begin{princ}[$\HBC_{0}$]
For countable $A\subset \R^{2}$ with $(\forall x\in [0,1])(\exists (a, b)\in A)(x\in (a, b))$, there are $(a_{0}, b_{0}), \dots (a_{k}, b_{k})\in A$ with $(\forall x\in [0,1])(\exists i\leq k)(x\in (a_{i},b_{i} ))$.
\end{princ}
\begin{cor}\label{forgot}
The system $\RCAo$ proves $\HBC_{0}\di \NIN$.
\end{cor}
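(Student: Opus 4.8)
The plan is to recycle the $\WHBU\di\NIN$ argument from the first part of Theorem~\ref{flahu}, the only new ingredient being the observation that, for an injective $Y$, the canonical covering of $[0,1]$ by the intervals $\big(x-\tfrac1{2^{Y(x)+3}},x+\tfrac1{2^{Y(x)+3}}\big)$ is \emph{countable} in the sense of Definition~\ref{standard} (read for the obvious $\R^{2}$-analogue), so that $\HBC_0$ applies in place of $\WHBU$. As usual we may assume $(\exists^2)$, since otherwise $\NIN$ is trivial by Theorem~\ref{Xxn}; in fact it will not be needed here. Towards a contradiction, suppose $Y:[0,1]\di\N$ is an injection, i.e.\ $(\forall x,y\in[0,1])(Y(x)=_{0}Y(y)\di x=_{\R}y)$, and put
\[
A:=\Big\{\big(x-\tfrac1{2^{Y(x)+3}},\,x+\tfrac1{2^{Y(x)+3}}\big):x\in[0,1]\Big\}\subseteq\R^2 .
\]

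First I would verify the hypotheses of $\HBC_0$. The map sending $x\in[0,1]$ to its associated pair is injective, because a pair $(a,b)\in A$ determines $x=\tfrac{a+b}{2}$; hence $(a,b)\mapsto Y(\tfrac{a+b}{2})$ is an injection of $A$ into $\N$, so $A$ is countable. Also, each $x\in[0,1]$ lies in the open interval of its associated pair, so $(\forall x\in[0,1])(\exists(a,b)\in A)(x\in(a,b))$. Applying $\HBC_0$ yields $(a_0,b_0),\dots,(a_k,b_k)\in A$ with $[0,1]\subseteq\bigcup_{i\le k}(a_i,b_i)$; after deleting repetitions we may take these $k+1$ pairs distinct, so the corresponding midpoints $x_i$ are distinct, whence the $Y(x_i)$ are distinct natural numbers. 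Since $(a_i,b_i)$ has length $\tfrac1{2^{Y(x_i)+2}}$, we get $\sum_{i\le k}(b_i-a_i)\le\sum_{j\in\N}\tfrac1{2^{j+2}}=\tfrac12$. On the other hand, $\RCAo$ proves, by induction on the number of intervals, that any finite family of open intervals covering $[0,1]$ has total length at least $1$; this contradiction shows that $\NIN$ holds.

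I do not expect a real obstacle here. The two points deserving a line of justification are that $A$ meets the $\R^{2}$-analogue of the definition of `countable' from Definition~\ref{standard} — immediate from injectivity of $x\mapsto\big(x-\tfrac1{2^{Y(x)+3}},x+\tfrac1{2^{Y(x)+3}}\big)$ — and that the sharp finite subadditivity of interval length used in the last step is available already in $\RCAo$ (peel off an interval containing $0$ and induct). Conceptually the statement is the `countable' shadow of the first half of Theorem~\ref{flahu}: the $\tfrac1{2^{Y(x)+3}}$-balls have total mass $\le\tfrac12$, so no finite — indeed no countable — subfamily of them can cover the unit interval.
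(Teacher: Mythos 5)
Your proof is correct and follows essentially the same route as the paper: the injection makes the canonical covering by the intervals of radius $2^{-(Y(x)+3)}$ countable, $\HBC_{0}$ extracts a finite sub-cover, and the length bound from the first part of Theorem~\ref{flahu} yields the contradiction (the paper merely packages this as ``$\HBC_{0}$ gives $\HBU$, and $\HBU\di\WHBU\di\NIN$'' rather than inlining the computation). One small caveat: your aside that $(\exists^{2})$ ``will not be needed here'' is not quite right, since exhibiting $A$ as a set in the sense of Definition~\ref{keepintireal}, i.e.\ via a characteristic function on $\R^{2}$, requires deciding equality of reals --- the paper flags this in a footnote as a non-trivial exercise --- but as you assume $(\exists^{2})$ anyway, this does not affect the argument.
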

\begin{proof}
Let $Y:[0, 1]\di \N$ be an injection and note that $[0,1]$ is now countable as in Definition \ref{standard}.
Fix $\Psi:[0,1]\di \R^{+}$ and define\footnote{It is a non-trivial exercise to show that the set $A$ can in fact be defined using $\exists^{2}$.} $A$ as the countable set $\{ (x-\Psi(x), x+\Psi(x)):x\in [0,1] \}$.   
Applying $\HBC_{0}$, there is a finite sub-cover and $\HBU$ follows.  Since $\HBU\di \WHBU$, Theorem \ref{flahu} yields $\NIN$, a contradiction. 
\end{proof}
If $\BW_{0}$ denotes item \eqref{bokes3} above, the `usual' proof yields $\BW_{0}\di \HBC_{0}$, as expected. 
The following corollary is immediate and shows the limitations of $\L_{2}$.
\begin{cor}
The system $\Z_{2}^{\omega}+\QFAC^{0,1}$ cannot prove $\HBC_{0}$ or $\BW_{0}$.
\end{cor}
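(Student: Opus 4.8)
The plan is to read off the corollary as an immediate consequence of the implications established just above, combined with Theorem \ref{dick}. The crucial point is that both $\HBC_{0}\di \NIN$ (Corollary \ref{forgot}) and $\BW_{0}\di \HBC_{0}$ (the remark immediately preceding this corollary) are theorems of the base theory $\RCAo$, and hence are available in every extension of $\RCAo$; in particular they are available in $\Z_{2}^{\omega}+\QFAC^{0,1}$, since $\Z_{2}^{\omega}\equiv\cup_{k}\SIXK$ extends $\RCAo$.

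First I would treat $\HBC_{0}$. Suppose toward a contradiction that $\Z_{2}^{\omega}+\QFAC^{0,1}$ proves $\HBC_{0}$. As this system extends $\RCAo$, it also proves $\HBC_{0}\di \NIN$ by Corollary \ref{forgot}, so by modus ponens it proves $\NIN$. This contradicts Theorem \ref{dick}, which asserts that $\Z_{2}^{\omega}+\QFAC^{0,1}$ cannot prove $\NIN$ (witnessed by the model ${\bf P}$ of Definition \ref{defP}, which by Corollary \ref{cor.inj} contains an injection from ${\bf P}[1]$, and hence from the reals of the unit interval, into $\N$). Hence $\Z_{2}^{\omega}+\QFAC^{0,1}$ cannot prove $\HBC_{0}$. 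For $\BW_{0}$ I would compose the usual Heine--Borel argument $\RCAo\vdash \BW_{0}\di \HBC_{0}$ with Corollary \ref{forgot} to obtain $\RCAo\vdash \BW_{0}\di \NIN$; the identical argument then shows that a proof of $\BW_{0}$ in $\Z_{2}^{\omega}+\QFAC^{0,1}$ would yield a proof of $\NIN$ there, again contradicting Theorem \ref{dick}.

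The main (and essentially only) point requiring attention is the bookkeeping just mentioned: one must confirm that the implications $\HBC_{0}\di \NIN$ and $\BW_{0}\di \HBC_{0}$ are genuinely provable in $\RCAo$ (rather than in some auxiliary extension), so that they may be freely combined inside the stronger system with a hypothetical proof of the antecedent. Once this is noted there is no real obstacle, as the non-provability of $\NIN$ — even after adjoining the countable choice schema $\QFAC^{0,1}$ to $\Z_{2}^{\omega}$ — is precisely the content of Theorem \ref{dick}.
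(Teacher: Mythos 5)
Your proposal is correct and is exactly the argument the paper intends: the corollary is left as ``immediate'' precisely because $\RCAo$ proves $\BW_{0}\di\HBC_{0}\di\NIN$, so any proof of either principle in $\Z_{2}^{\omega}+\QFAC^{0,1}$ would yield a proof of $\NIN$ there, contradicting Theorem \ref{dick}. Your bookkeeping about the implications being provable in the base theory is the right point to check, and it holds.
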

\noindent
Finally, we study the RM of $\BW_{0}$, $\HBC_{0}$, etc.\ in \cite{dagsamXI}.  
We do establish the aforementioned `explosion' involving $\SIX$ and the Bolzano-Weierstrass theorem (for countable sets) in the next section.

\subsubsection{An explosive result}\label{radioexplo}
In this section, we establish Theorem \ref{BOOM} which expresses that the Bolzano-Weierstrass theorem for countable sets in $2^{\N}$, namely $\BW_{0}^{C}$ as in Definition \ref{comp}, is highly explosive: it yields $\SIX$ when combined with higher-order $\FIVE$, i.e.\ the Suslin functional $\SS^{2}$.  

\smallskip

First of all, we briefly motivate the importance of Theorem \ref{BOOM} as follows: the results in \cites{dagsamXI,dagsamXII, dagsamXIII} establish equivalences between $\BW_{0}^{C},$ $\cocode_{0}$, and the \emph{Jordan decomposition theorem} over a suitable base theory.  The latter theorem does not mention countable sets and has been classified at the level of $\ACA_{0}$ in second-order RM (\cites{nieyo, kreupel}), i.e.\ the third-order version behaves \emph{quite} differently. 
We discuss the further implications of Theorem~\ref{BOOM} in Remark \ref{pimp} and the details of the formalisation of $\BW_{0}^{C}$ in Remark \ref{Fronk}.

\smallskip

Secondly, we shall use the following version of the Bolzano-Weierstrass theorem.  
\begin{princ}[$\BW_{0}^{C}$]\label{comp}
For any countable $A\subset 2^{\N}$ and $F:2^{\N}\di 2^{\N}$, the supremum $\sup_{f\in A}F(f)$ exists. 
\end{princ}
Note that $\BW_{0}^{C}$ amounts to item \eqref{bokes3} for Cantor space and is provable\footnote{The \emph{intuitionistic fan functional} axiom $\MUC$ added to $\RCAo$ yields a conservative extension of $\WKL_{0}$ by \cite{kohlenbach2}*{Prop.\ 3.15} using $\ECF$.  Given a countable set $A\subset 2^{\N}$, let $Y:2^{\N}\di \N$ be injective on $A$ and apply $\MUC$ to obtain the upper bound of $Y$ on $2^{\N}$.  In this way, the set $A$ must be finite and the supremum from $\BW_{0}^{C}$ is now trivial to find (using $\MUC$).  Note that the previous proof does not really depend on how the set $A\subset 2^{\N}$ is given/represented/coded.} in a conservative extension of $\WKL_{0}$.
By contrast, over $\RCAo$, $\ACA_{0}$ is equivalent to the existence of $\sup_{n\in \N}F(f_{n})$ for \emph{sequences} $(f_{n})_{n\in \N}$ in $2^{\N}$ and \emph{any} $F:2^{\N}\di 2^{\N}$ (via the usual interval-halving proof).  
Nonetheless, $\BW_{0}^{C}$ is quite explosive when combined with the Suslin functional by the following theorem, a computational generalisation\footnote{The $\Sigma_{k+3}^{1}$-uniformisation theorems as in \cite{simpson2}*{VII.6.15} require certain set-theoretic assumptions, like the existence of $X\subset \N$ such that $(\forall Y\subset \N)(Y\in L(X))$, as defined in \cite{simpson2}*{VII.5.8}.  For this reason, we have not generalised Theorem \ref{BOOM} beyond $\SIX$.} of which may be found in Theorem \ref{BOOM2}.  
\begin{thm}\label{BOOM}
The system $\FIVE^{\omega}+\BW_{0}^{C}$ proves $\SIX$.  
\end{thm}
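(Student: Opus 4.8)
The goal is to derive $\Pi^1_2$-comprehension inside $\FIVE^{\omega}+\BW_0^C$; since $(\SS^{2})$ implies $(\exists^{2})$ and hence $\ACA_{0}$, this already yields all of $\SIX$. Fix a $\Pi^1_2$-formula; negating it, it suffices to show that $W:=\{n : (\exists X\in 2^{\N})\,\chi(n,X)\}$ exists for an arbitrary $\Pi^1_1$-formula $\chi$, where function quantifiers are coded as quantifiers over $2^{\N}$ in the standard way. Using $\SS^{2}$ and the well-foundedness normal form of $\Pi^1_1$-formulas, fix a term $\theta$ in $\SS^{2}$ with $\theta(n,X)=1\leftrightarrow\chi(n,X)$ for all $n,X$.

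The first step is to replace the search space $2^{\N}$ by a countable one. Apply the $\Pi^1_1$ (Novikov--Kond\^o--Addison) uniformisation theorem, provable already in $\FIVE$, to $\chi$, obtaining a $\Pi^1_1$ partial function $G:\N\rightharpoonup 2^{\N}$ with $\mathrm{dom}(G)=W$ and $\chi(n,G(n))$ for each $n\in W$; its graph exists by $\Pi^1_1$-comprehension. Now $A:=\mathrm{ran}(G)\subseteq 2^{\N}$ is countable in the sense of Definition~\ref{standard}: the map $X\mapsto \mu m[\,G(m)=X\,]$ (set to $0$ off $A$) is injective on $A$ and, since `$G(m)=X$' is $\Pi^1_1$ and the class $\Pi^1_1$ is closed under number quantification, it is computable in $\SS^{2}$, hence a legitimate functional $2^{\N}\to\N$. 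The point of uniformisation is the equivalence
\[
(\exists X\in 2^{\N})\chi(n,X)\ \leftrightarrow\ (\exists X\in A)\chi(n,X)\qquad(n\in\N),
\]
whose forward direction sends a witness to $G(n)\in A$.

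The second step uses $\BW_0^C$ to carry out the remaining existential search over the countable set $A$. Put $F:2^{\N}\to 2^{\N}$, $F(X):=\lambda k.\,\theta(k,X)$ --- a term in $\SS^{2}$, hence available. By $\BW_0^C$ the supremum $h:=\sup_{X\in A}F(X)\in 2^{\N}$ exists, and, the supremum in $2^{\N}$ being computed coordinatewise,
\[
h(k)=1\ \leftrightarrow\ (\exists X\in A)(F(X)(k)=1)\ \leftrightarrow\ (\exists X\in A)\chi(k,X)\ \leftrightarrow\ k\in W,
\]
by the displayed equivalence. Hence $W=\{k: h(k)=1\}$, a set obtainable from $h$ in $\RCAo$. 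This establishes $\Pi^1_2$-comprehension, and therefore $\SIX$.

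The main obstacle is the uniformisation step: one must invoke precisely the $\Pi^1_1$/$\Sigma^1_2$ uniformisation theorem, which holds in $\FIVE$ without any extra set-theoretic hypotheses --- this is exactly why the argument does not climb past $\SIX$, since the $\Sigma^{1}_{k+3}$-uniformisation theorems do require such hypotheses (cf.\ the footnote preceding the statement). The remaining work is routine bookkeeping: checking that $\theta$, the graph of $G$, the injection on $A$, and $F$ are all available in $\FIVE^{\omega}$, which comes down to $\SS^{2}$ deciding $\Pi^1_1$-predicates via well-foundedness normal forms, together with closure of $\Pi^1_1$ under number quantifiers.
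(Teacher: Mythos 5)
Your overall strategy --- use $\Pi^1_1$-uniformisation (available in $\FIVE$) to cut the witness search down to a countable set $A$ of canonical witnesses, then apply $\BW_{0}^{C}$ to $F(X)=\lambda k.\theta(k,X)$ over $A$ --- is essentially the paper's, which packages the same idea as $\BW_{0}^{C}\di\BOOT^{-}_{C}$ followed by $\FIVE^{\omega}+\BOOT^{-}_{C}\vdash\SIX$. There is, however, a genuine gap in your final step, namely the claim that ``the supremum in $2^{\N}$ [is] computed coordinatewise''. The supremum in $\BW_{0}^{C}$ is the least upper bound with respect to the linear order $2^{\N}$ inherits from $[0,1]$ via $\r$ from Definition \ref{keepintireal}; that is what makes $\BW_{0}^{C}$ a Bolzano--Weierstrass theorem (cf.\ item (viii) in Section \ref{uncountsyn} and the comparison with the $\ACA_{0}$-equivalent sequential version ``via the usual interval-halving proof''). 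For a family that is not upward directed, this least upper bound does not coincide with the coordinatewise supremum: if $F(X_{0})=100\cdots$ and $F(X_{1})=010\cdots$ are the only values attained on $A$, the least upper bound is $100\cdots$, and your displayed equivalence $h(k)=1\asa(\exists X\in A)(F(X)(k)=1)$ fails at $k=1$. Since distinct $n\in W$ generally have distinct canonical witnesses $G(n)$, and a witness for $n$ need not witness $n'$, your family $\{F(X):X\in A\}$ is exactly of this non-directed kind, so the argument breaks at the point where all the work is supposed to happen.

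The repair is the step your write-up omits and the paper's proof performs explicitly: replace $A$ by the (still countable) set $B$ of finite sequences from $A$, and let $F(w)$ for $w\in B$ be the coordinatewise join of $\lambda k.\theta(k,w(i))$ over $i<|w|$ (with $F(w):=00\cdots$ off $B$). The image $\{F(w):w\in B\}$ is then closed under finite joins, hence upward directed, and for a directed family of binary sequences the least upper bound in the order of $[0,1]$ does agree with the coordinatewise union, which is the characteristic function of $W$. The remainder of your argument --- decidability of $\chi$ via $\SS^{2}$ and the $\Pi^1_1$ normal form, the injection $X\mapsto\mu m[G(m)=X]$ witnessing countability of $\mathrm{ran}(G)$, the reduction of $\Pi^1_2$-comprehension to the existence of $W$, and the observation that this is exactly where the set-theoretic hypotheses needed for higher uniformisation block a generalisation past $\SIX$ --- is correct and matches the paper.
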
  
\begin{proof}
The proof consists of two steps: we first show that $\BW_{0}^{C}\di \BOOT^{-}_{C}$ and then show that $\FIVE^{\omega}+\BOOT^{-}_{C}$ proves $\SIX$.
Here, $\BOOT^{-}_{C}$ is the statement that for all $Y^{2}$ such that $(\forall n^{0})(\exists \textup{ at most one } f\in 2^{\N})(Y(f, n)=0)$, we have
\be\label{boo}
(\exists X\subset \N)(\forall n\in \N)(n\in X\asa (\exists g\in 2^{\N})(Y(g, n)=0)).
\ee
Note that $\BOOT^{-}_{C}$ is essentially the restriction to $C$ of $\BOOT^{-}$.

\smallskip

First of all, let $Y^{2}$ be as in $\BOOT_{C}^{-}$.  
Define $G(w^{1^{*}}, k^{0})$ as $1$ if we have $(\exists i<|w|)(Y(w(i), k)=0)$, and $0$ otherwise.  
Define $f_{w}$ as $\lambda k.G(w, k)$ and define the set $A=\{ g\in 2^{\N} :(\exists k^{0})(Y(g, k)=0)\}$.
This set is countable via an obvious injection defined in terms of $Y$.
The set $B=\{w^{1^{*}}: (\forall i<|w|)(w(i)\in A)\}$ is similarly countable, as follows by considering $\r$ from Definition~\ref{keepintireal}.
Modulo coding and $\exists^{2}$, $B$ can be viewed as a subset of $2^{\N}$.  Define $F:2^{\N}\di 2^{\N}$ as $F(w):=f_{w}$ if $w\in B$, and $00\dots$ otherwise.  
Let $g$ be the supremum $\sup_{w\in B}F(w)$ and note that 
\[
(\forall n^{0})\big(g(n)=1\asa (\exists f\in 2^{\N})(Y(f, n)=0)\big),
\]
as required for $\BOOT^{-}_{C}$ and \eqref{boo}.

\smallskip

Secondly, to show that $\FIVE^{\omega}+\BOOT^{-}_{C}$ proves $\SIX$, we make use of the \emph{uniformisation result for $\Pi_{1}^{1}$-formulas}, provable in $\FIVE$ (see \cite{simpson2}*{VI.2}).
As noted in \cite{mummy}*{p.\ 530}, for a $\Sigma_{2}^{1}$-formula $(\exists X\subset \N)\psi(n, X)$ in $\L_{2}$, we may assume 
\be\label{wag}
(\forall n \in \N)(\exists \textup{ at most one } X\subset \N)\psi(n, X)
\ee
due to the aforementioned uniformisation result. 
Moreover, since $\psi(n, X)$ is $\Pi_{1}^{1}$, we may assume it has the normal form $(\forall g^{1})(\exists m^{0})(f(\overline{g}m, \overline{X}m, n)=0)$.  In particular, the latter formula is decidable given $\SS^{2}$ and 
\eqref{wag} becomes $(\forall n^{0})(\exists \textup{ at most one } X\subset \N)(Y( X, n)=0)$ where $Y(X, n)$ is $1-\SS(\lambda \sigma^{0^{*}}.f(\sigma, \overline{X}|\sigma|, n))$.
Applying $\BOOT^{-}_{C}$, the set $\{n\in \N: (\exists X\subset \N)(Y(X, n)=0)   \}$ is exactly $\{ n\in \N: (\exists X\subset \N)\psi(n, X)\}$, and $\SIX$ follows immediately. 
\end{proof}
We discuss the implications of the previous theorem in the following remark. 
\begin{rem}[Explosions and upper limits]\label{pimp}\rm
First of all, we have previously shown that the Lindel\"of lemma for Baire space yields $\FIVE$ when combined with $(\exists^{2})$ (\cite{dagsamIII, dagsamV}).  
We called this an `explosion' since the combination is quite strong compared to the components, which do not go beyond $\ACA_{0}$ \emph{in isolation}.  
Similar results exist for $\HBU$ and $(\exists^{2})$, which reach up to $\ATR_{0}$ in combination but are weak in isolation (\cite{dagsamVII}), namely not going beyond $\ACA_{0}$.
These results should be contrasted with `folklore' second-order results like that no true $\Pi_{2}^{1}$-sentence implies $\FIVE$, even given $\ATR_{0}$ (see \cite{ahar}*{Prop.\ 4.17}, which is titled `Folklore').

\smallskip

Secondly, $\FIVE^{\omega}$ is a conservative extension\footnote{The two final items of \cite{yamayamaharehare}*{Theorem 2.2} are (only) correct for $\QFAC$ replaced by $\QFAC^{0,1}$.} of $\FIVE$ for $\Pi_{3}^{1}$-formulas by \cite{yamayamaharehare}*{Theorem 2.2} and according to Rathjen in \cite{rathjenICM}*{\S3}, the strength of $\SIX$ \emph{dwarfs} that of $\FIVE$. 
Thus, Theorem \ref{BOOM} constitutes a new and more impressive explosion.  Moreover, $\SIX$ seems to be the current upper limit of RM, previously only reachable via rather abstract topology (\cites{mummy, mummymf, mummyphd}). 
By contrast, the Bolzano-Weierstrass theorem has \textbf{much} more of an `ordinary mathematics' flavour.

\smallskip

Thirdly, as to similar explosions, the topic of \cite{dagsamVII} is the study of the logical and computational theorems of ordinary mathematics \emph{pertaining to open sets}, where the latter are represented 
by characteristic functions.  
Now follows a list of such theorems that imply $\BOOT^{-}$, where `open' or `closed' set is to be interpreted as in \cite{dagsamVII}.  In light of its proof, Theorem~\ref{BOOM} then applies to the following theorems too. 
\begin{enumerate}
\renewcommand{\theenumi}{\alph{enumi}}
\item The perfect set theorem.
\item The Cantor-Bendixson theorem.
\item Any non-empty closed set $C\subseteq \R$ is located. 
\item $\open$: any non-empty open set in $\R$ is a union of basic open balls. 
\item The Urysohn lemma for $\R$.
\end{enumerate} 
In general, many theorems about open or closed sets as in \cite{dagsamVII} would imply the coding principle $\open$.  
Assuming $\NCC$ from Section \ref{uncountsyn2}, this also seems to be the case for the Tietze extension theorem, as discussed in \cite{dagsamIX}*{\S3}.
Moreover, we could restrict $\HAR_{0}$ to \emph{closed} sets and generalise $\WHBU$ to coverings of \emph{closed} sets; after this modification, the latter would imply the former. 
\end{rem}
Finally, we discuss why $\BW_{0}^{C}$ does indeed constitute a fragment of the `Bolzano-Weierstrass theorem for countable sets'.
We recall that subsets of $2^{\N}$ are studied in second-order RM via representations (\cite{simpson2}*{I.6.8}).  Thus, `$A_{0}\subset A_{1}$' is interpreted as $(\forall f\in 2^{\N})(\varphi_{A_{0}}(x)\di \varphi_{A_{1}}(x) )$ for certain $\varphi_{A_{i}}\in \L_{2}$ where we think of $A_{i}$ as being the set $\{f\in 2^{\N}: \varphi_{A_{i}}(f) \}$, although such sets $A_{i}$ are not part of the language $\L_{2}$.
\begin{rem}[On formalisation]\label{Fronk}\rm
When interpreted in $\ZFC$, $\BW_{0}^{C}$ expresses that $F(A):=\{g\in 2^{\N}:(\exists f\in A)(F(g)=A) \}$ has a supremum for countable $A\subset C$ and $F:C\di C$, where $F(A)$ is countable because $A$ is, Thus, $\BW_{0}^{C}$ clearly deals with the supremum of countable sets, namely $F(A)$, \emph{from the point of $\ZFC$}.    
It is a natural (and somewhat subtle) RM-question whether the latter insight is still valid when working in weak systems instead of $\ZFC$.  

\smallskip

Towards an answer, we first note that $F(A)$ as above does not always exist as a set in $\Z_{2}^{\omega}$.  
Indeed, the existence of $\{n\in \N: (\exists f\in 2^{\N})(Y(f)=n)  \}$ for any $Y^{2}$ is already equivalent to $\BOOT$ (\cite{samFLO2}).  
Hence, we cannot  hope to prove `$F(A)$ is a countable set' in $\Z_{2}^{\omega}$ because the latter does in general not even prove that $F(A)$ is a set, in the sense of being given by a characteristic function. 

\smallskip

Despite the previous, we can view $F(B)$ from the proof of Theorem \ref{BOOM} as a \emph{subset} of a countable set, where `inclusion' is interpreted in the `comparative' second-order sense mentioned just above.  
Indeed, one readily proves `$F(B)\subset D$', where $D=\{f \in 2^{\N} : (\exists n\in \N)(f=_{1}\sigma_{n}*00\dots) \}$ and where $\sigma_{n}$ is the $n$-th finite binary sequence.  
The set $D$ exists and is countable (following Definition \ref{standard}) given $\ACAo$.  To be absolutely clear, `$F(B)\subset D$' means the following:
\[
(\forall f\in 2^{\N})\big[ (\exists w\in B)(F(w)=_{1}f)  \di f\in D  \big].
\]
In light of the above, we can restrict $\BW_{0}^{C}$ to $A$ and $F$ such that $F(A)\subset D$ for some countable $D\subset 2^{\N}$.
This formulation is however far less elegant.   
\end{rem}

\subsection{The principle $\NBIJ$}\label{bereft}
In this section, we establish the results pertaining to $\NBI$ as summarised in Figure \ref{dd}.
Some of these results are variations of results about $\NIN$, while others are genuinely new.  

\subsubsection{The Axiom of Choice and $\NBI$}\label{uncountsyn2}
We connect $\NBI$ to choice principles, some of which provable in $\ZF$.
On a foundational note, we establish that $\Z_{2}^{\omega}+\neg\NBI$ is a rather strong (consistent) system in which $\R$ can be viewed as a potential infinity. 

\smallskip

First of all, given countable choice, there is the following obvious proof of $\NBIJ$.  
\begin{thm}\label{proleet}
The system $\RCAo+\QFAC^{0,1}$ proves $\NBIJ$, while $\Z_{2}^{\omega}$ does not.  
\end{thm}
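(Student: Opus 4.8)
The plan is to prove the positive part ($\RCAo+\QFAC^{0,1}\vdash\NBI$) by a direct argument, and the negative part ($\Z_{2}^{\omega}\not\vdash\NBI$) by exhibiting the model $\bf Q$ from Definition~\ref{defQ}. For the positive part, I would first invoke the standard trick recorded just before Theorem~\ref{Xxn}: if $\neg(\exists^{2})$ then all functions on $\R$ are continuous and $\NBI$ holds trivially, so we may assume $(\exists^{2})$. Fix $Y:[0,1]\di\N$. Consider the quantifier-free-modulo-$\exists^{2}$ predicate expressing, for each $n\in\N$, ``$n$ is in the range of $Y$ and [some canonically chosen element mapping to $n$]''; more precisely, apply $\QFAC^{0,1}$ to the statement $(\forall n\in\N)(\exists y\in[0,1])[(\exists x\in[0,1])(Y(x)=_{0}n)\di Y(y)=_{0}n]$, which is trivial under classical logic and, using $\exists^{2}$, has quantifier-free matrix after coding reals as elements of Baire space. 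This yields a sequence $\Phi:\N\di[0,1]$ that enumerates the range of $Y$. Now apply \cite{simpson2}*{II.4.9} to $(\Phi(n))_{n\in\N}$ to obtain $y\in[0,1]$ with $y\ne_{\R}\Phi(n)$ for all $n$. Then $n_{0}:=Y(y)$ satisfies: either $n_{0}$ is not in the range of $Y$ — impossible since $Y(y)=n_{0}$ — or $\Phi(n_{0})$ is defined and $Y(\Phi(n_{0}))=n_{0}=Y(y)$ while $\Phi(n_{0})\ne_{\R}y$, giving two distinct reals with the same $Y$-value. Either way the first disjunct of $\NBI$ holds, so in fact we have proved $\NIN$ outright; this is exactly the argument of the positive part of Theorem~\ref{dick} with $(\exists^{3})$ weakened to $\exists^{2}$, which is all that is needed because injectivity of the enumeration is not required here. (I should double-check that the application of $\QFAC^{0,1}$ really goes through with only $\exists^{2}$ rather than $\exists^{3}$: the point is that ``$(\exists x\in[0,1])(Y(x)=n)$'' need not be made decidable — the implication in \eqref{triv}-style formula is vacuously handled, and what $\QFAC^{0,1}$ extracts is a choice function that on inputs $n$ outside the range returns an arbitrary value, which is harmless.)

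For the negative part, I would use the Kleene-closed type structure $\bf Q$ built from a countable $A\subseteq\N^{\N}$ for which all $\Pi^{1}_{n}$-formulas are absolute, together with the class $B_{0}$ of functionals whose restrictions to the $A_{k}$ are eventually partially computable in $\SS^{2}_{k},g_{0},\dots,g_{k-1}$. By the remarks following Definition~\ref{defQ}, $\mathbf{Q}^{*}$ is a model of $\Z_{2}^{\omega}$. The key is that $A$ is \emph{countable in the model's own sense}: enumerate $A=\{g_{k}:k\in\N\}$, and recall from the lemma preceding Definition~\ref{defQ} that for each $k$ the enumerator $h_{k}$ of $A_{k}$ lies in $A_{k+1}\subseteq A$. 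One then wants a single functional $G:A\di\N$ in $\Tp[2]^{\mathbf Q}$ that is a \emph{bijection} onto $\N$. The natural candidate is the Gandy-selection index functional from Corollary~\ref{cor.inj}, but that is only injective; to get surjectivity one has to be careful about which indices get used. I would instead argue more directly: since $A$ is genuinely countable (in $V$, hence — because membership in $A$ and the enumeration are absolute — in $\mathbf Q$), and since the enumeration $k\mapsto g_{k}$ together with an $A$-valued inverse ``$f\mapsto$ least $k$ with $g_{k}=f$'' can be arranged to be in $B_{0}$ (this uses that the relation $g_{k}=f$ is arithmetical in parameters from $A$ and that the $h_{k}$'s witness the needed enumerations at each level), $\mathbf{Q}^{*}$ contains a bijection $2^{\N}\di\N$. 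By \cite{samcie22} (Remark~\ref{lafke}), $\NBI$ for $[0,1]$ is equivalent to its analogue for $2^{\N}$ over a suitable base theory contained in $\Z_{2}^{\omega}$; transporting along $\r$ from Definition~\ref{keepintireal} then gives a bijection contradicting $\NBI$ for $[0,1]$ inside $\mathbf{Q}^{*}$. Hence $\Z_{2}^{\omega}\not\vdash\NBI$.

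The main obstacle, and the step I would spend the most care on, is verifying that the desired bijection (not merely an injection) from $A$ to $\N$ actually lies in $\Tp[2]^{\mathbf Q}$, i.e.\ that both the enumeration $k\mapsto g_{k}$ (as a map $\N\di A$, equivalently a type-$1$ object the model can quantify over) and a suitable inverse belong to $B_{0}$ and survive the passage to $\mathbf Q$ with surjectivity intact. Corollary~\ref{cor.inj} gives injectivity cheaply via Gandy selection relative to a normal functional, but surjectivity onto all of $\N$ is exactly what is delicate: one must ensure every natural number is realised as the selected index of \emph{some} element of $A$, which is why the layered structure $A_{k}\subsetneq A_{k+1}$ and the enumerators $h_{k}$ are doing real work. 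An alternative that sidesteps surjectivity is to weaken the target claim to ``$\mathbf{Q}^{*}\models\neg\NIN$'' and then note $\neg\NIN\di\neg\NBI$ is \emph{false} in general — so this shortcut does not work, confirming that the surjectivity bookkeeping is unavoidable here. I would therefore present the construction of the bijection explicitly, tracking the index of each $g_{k}$ under the chosen enumeration and checking that this assignment is $B_{0}$-computable uniformly in $k$, and that $\mathbf Q$ being Kleene closed guarantees the resulting type-$2$ object is in $\Tp[2]^{\mathbf Q}$.
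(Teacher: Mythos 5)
There is a genuine gap in your positive part, and it is flagged by your own conclusion: you claim your argument ``proves $\NIN$ outright'' in $\RCAo+\QFAC^{0,1}$ (plus $(\exists^{2})$), but that contradicts Theorem \ref{dick}, which shows $\Z_{2}^{\omega}+\QFAC^{0,1}$ (a much stronger system containing $(\exists^{2})$) cannot prove $\NIN$. The error is in the application of $\QFAC^{0,1}$: the schema applies only to \emph{quantifier-free} matrices, and the matrix $(\exists x\in[0,1])(Y(x)=n)\di Y(y)=n$ contains an unbounded existential quantifier over reals. In Theorem \ref{dick} this is repaired by $\exists^{3}$, which decides that existential formula; $\exists^{2}$ does not suffice, and your parenthetical hope that the implication is ``vacuously handled'' has no syntactic content --- $\QFAC$ is a schema, not a semantic principle. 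The correct move, and the reason $\NBI$ (unlike $\NIN$) follows from $\QFAC^{0,1}$ alone, is to use the \emph{surjectivity} of the alleged bijection $Y$: one applies $\QFAC^{0,1}$ to $(\forall n\in\N)(\exists x\in[0,1])(Y(x)=n)$, whose matrix $Y(x)=n$ genuinely is quantifier-free, obtains $(x_{n})_{n\in\N}$ with $Y(x_{n})=n$, and then \cite{simpson2}*{II.4.9} gives $y$ avoiding the sequence, whence $Y(x_{Y(y)})=Y(y)$ with $x_{Y(y)}\ne_{\R}y$. No case split on $(\exists^{2})$ is needed.

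For the negative part you correctly identify both the model $\bf Q$ and the real difficulty (surjectivity, which Corollary \ref{cor.inj} does not provide), but your proposed construction does not close the gap. The inverse map $f\mapsto$ ``least $k$ with $g_{k}=f$'' is not visibly in $B_{0}$: membership in $B_{0}$ requires that the restriction to each $A_{k}$ be partially computable in $\SS^{2}_{k},g_{0},\dots,g_{k-1}$, and an element $f\in A_{k}$ can sit arbitrarily far out in the global enumeration $(g_{j})_{j\in\N}$, which is not available as a parameter at level $k$. The paper instead builds the bijection $F$ as a limit of partial functionals $F_{k}:A_{k}\di\N$ with $F_{k+1}$ extending $F_{k}$, using Gandy selection for injectivity on new elements and, crucially, the countably many finite modifications $h_{k,n}$ of the enumerator $h_{k}$ of $A_{k}$ (none of which lie in $A_{k}$) as a reservoir of fresh arguments on which to ``fill in the holes'' of the range of $F_{k}$ --- this is the device that achieves surjectivity while keeping each stage $\SS^{2}_{k+1}$-computable and the limit in $B_{0}$. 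Your sketch gestures at the right layered structure but would need to be replaced by, or fleshed out into, essentially this staged hole-filling argument.
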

\begin{proof}
For the first part, suppose $Y:[0,1]\di \N$ is a bijection.  Apply $\QFAC^{0,1}$ to $(\forall n\in \N)(\exists x\in [0,1])(Y(x)=n)$ to obtain a sequence $(x_{n})_{n\in \N}$ such that $Y(x_{n})=n$ for all $n\in \N$.  
Now use \cite{simpson2}*{II.4.9} to obtain $y\in [0,1]$ such that $y\ne_{\R} x_{n}$ for all $n\in \N$.  For $n_{0}=Y(y)$, we have $Y(x_{n_{0}})=Y(y)$, a contradiction. 

\smallskip

For the second part, we show that the model ${\bf Q}^*$ from Definition \ref{defQ} contains a bijection from ${\bf Q}[1]$ to $\N$. 
Using the notation from this definition, we construct a functional $F:A \rightarrow \N$ that will be both injective and surjective; we show that $F \in B_{0}$. 
Intuitively, $F$ is the limit of the increasing sequence of partial functionals $F_k:A_k \rightarrow \N$, while each $F_k$ is partially computable in $\SS^{2}_{k}$.  In this way, element-hood in $B_{0}$ is guaranteed, as required for the theorem. 
We define $F_k$ by recursion on $k$, and since $\exists^2$ is available, we may freely check equality between functions. We assume, in this proof, that the pairing function $\langle \cdot , \cdot \rangle$ is surjective.

\smallskip

Let $F_0(f) = \langle 0 , e\rangle$ where $e$ is  an index for computing $f$ from $\exists^2$, obtained using \emph{Gandy selection} as in Theorem \ref{gandyselection} and Corollary \ref{cor.inj}.
Now assume that $F_k$ is constructed such that if $F_k(f) = \langle i,d\rangle$ then $i \leq k$. Let $h_k \in A_{k+1}$ enumerate $A_k$. Let $\{h_{k,n}\}_{n \in \N}$ be a 1-1 enumeration of  all functions $h'$ that equal $h_k$ except for a finite set of arguments. Note that no such function $h'$ will be in $A_k$. Since the range of $F_k$ is computable in $\SS^2_{k+1}$, we can split the definition of $F_{k+1}$ in three cases (with case distinction decidable by $\SS^2_{k+1}$) as follows:
\begin{itemize}
\item[(i)] Put $F_{k+1}(f) = F_k(f)$ if $f \in A_k$.
\item[(ii)] Let $C_k$ be the (infinite by the definition of $F_{k}$) set of $c\in \N$ such that $\langle k,c \rangle$ is not in the range of $F_k$ and let $\{c_{k,n}\}_{n \in \N}$ enumerate $C_k$ in increasing order. Let $F_{k+1}(h_{k,n}) = \langle k , c_n \rangle$. Then $F_{k+1}$ fills in holes in the range left by $F_k$.
\item[(iii)] Suppose $f \in A_{k+1}$, $f \not \in A_k$, and $f \neq h_{k,n}$ for all $n$. 
Put $F_{k+1}(f) = \langle k+1 , e\rangle$, where $e$ is an index for computing $f$ from $\SS^2_{k+1}$ and $g_0 , \ldots , g_{k}$, obtained using Gandy selection (see Theorem \ref{gandyselection} and Corollary \ref{cor.inj}).
\end{itemize}
Item (ii) secures that the limit functional $F$ is surjective as well as injective.
\end{proof}
Since $\NIN$ implies $\NBIJ$, there is a proof of the latter in $\ZF$ \emph{and much weaker fragments}.  
Towards the latter kind of (elementary) proof, the following `weak' choice principle $\NCC$, provable in $\Z^{\Omega}_{2}$, was introduced in \cite{dagsamIX}.   
\begin{princ}[$\NCC$]
For $Y^{2}$ and $A(n, m)\equiv (\exists f\in 2^{\N})(Y(f, m, n)=0)$:
\be\label{garf}
(\forall n \in \N)(\exists m \in \N)A(n,m)\di  (\exists g:\N\di \N)(\forall n\in \N)A(n,g(n)). 
\ee
\end{princ}
The principle $\Sigma$-$\NFP$ is similar\footnote{Similar to \eqref{garf}, $\Sigma$-$\NFP$ states that for $A(\sigma^{0^{*}})\equiv (\exists g\in 2^{\N})(Y(g, \sigma)=0)$, we have 
\be\label{gru}
(\forall f \in \N^{\N})(\exists n \in \N)A(\overline{f}n)\di  (\exists g\in K_{0})(\forall f\in \N^{\N})A(\overline{f}g(f)),
\ee 
where `$g\in K_{0}$' means that $g$ is an RM-code.  The axiom $\NFP$ is \eqref{gru} for any formula $A$ and can be found in \cite{troeleke1}*{p.\ 215}; as studied in \cite{samph}, fragments of $\NFP$ populate the non-normal world.} to $\NCC$ and the former immediately implies the latter, and $\HBU$ together with $\WKL$, as shown in \cite{samph}*{\S5}.  To obtain $\HBC_{0}$, one applies $\Sigma$-$\NFP$ to the formula expressing that the countable set $A\subset \R^{2}$ from $\HBC_{0}$ provides a covering of $[0,1]$; $\WKL$ then implies that the resulting choice function has an upper bound on $[0,1]$.

\smallskip

The motivation for \cite{dagsamIX} and $\NCC$ was as follows: most results in \cites{dagsamIII, dagsamV, dagsamVI, dagsamVII, samph} that use $\QFAC^{0,1}$, 
go through with the latter replaced by $\NCC$; of course, $\Z_{2}^{\omega}$ does not prove $\NCC$.  The latter also does the job for $\NBIJ$ and Theorem \ref{proleet}, as follows.  
\begin{thm}\label{hungzo}
$\RCAo+\NCC$ proves $\NBIJ$, while $\Z_{2}^{\omega}+\NCC$ cannot prove $\NIN$.  
\end{thm}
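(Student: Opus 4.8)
The plan is to mirror Theorem~\ref{proleet}: prove the positive half by a Cantor-diagonalisation argument fed by a choice function obtained from $\NCC$, and prove the negative half using the type structure ${\bf P}$ from Definition~\ref{defP}. For $\RCAo+\NCC\vdash\NBI$, fix $Y:[0,1]\di\N$ and suppose neither disjunct of $\NBI$ holds, i.e.\ $Y$ is both injective and surjective; as elsewhere in this section one may additionally assume $(\exists^{2})$. The point where $\NCC$ is weaker than $\QFAC^{0,1}$ is that it selects \emph{numbers} rather than reals, so it cannot immediately produce a sequence $(x_{n})_{n}$ with $Y(x_{n})=n$. I would get around this with the interval-halving device from the proof of Theorem~\ref{kloothommel}: taking $\lambda f.Y(\r(f))$ as the type-two parameter, apply $\NCC$ to the formula in $\langle n,k\rangle$ and $j$ asserting ``there is $f\in 2^{\N}$ whose first $k$ bits spell out $j<2^{k}$ and with $Y(\r(f))=n$'', which is exactly of the form required by $\NCC$. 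Surjectivity of $Y$ makes the antecedent of $\NCC$ hold, and the resulting choice function $g$ gives rationals $q^{(n)}_{k}:=g(\langle n,k\rangle)/2^{k}$. Injectivity of $Y$ forces, for each $n$, a \emph{unique} real $x_{n}\in[0,1]$ with $Y(x_{n})=n$, and $x_{n}\in[q^{(n)}_{k},q^{(n)}_{k}+2^{-k}]$ for every $k$; hence $(q^{(n)}_{k})_{k}$ is a fast-converging Cauchy sequence coding $x_{n}$, uniformly in $n$. Since $Y$ is a bijection, $(x_{n})_{n}$ lists every real of $[0,1]$, so \cite{simpson2}*{II.4.9} yields $y\in[0,1]$ with $y\ne_{\R}x_{n}$ for all $n$; for $n_{0}:=Y(y)$ injectivity gives $y=_{\R}x_{n_{0}}$, a contradiction.

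For $\Z_{2}^{\omega}+\NCC\not\vdash\NIN$, I would show that ${\bf P}^{*}$ is a model of $\Z_{2}^{\omega}+\NCC+\neg\NIN$. By Lemma~\ref{lemmaP} it satisfies $\Z_{2}^{\omega}+\QFAC^{0,1}$, and by the proof of Theorem~\ref{dick} it satisfies $\neg\NIN$ via the injection $A\di\N$ coming from Corollary~\ref{cor.inj} applied to $\SS^{2}_{\omega}$, so it remains only to check ${\bf P}^{*}\models\NCC$. Given $Y^{2}\in{\bf P}[2]$ and the antecedent $(\forall n)(\exists m)(\exists f\in 2^{\N}\cap{\bf P}[1])(Y(f,m,n)=0)$, recall that ${\bf P}[1]$ consists of the functions computable in $\SS^{2}_{\omega}$, the join of the $\SS^{2}_{k}$, and that ``$\{e\}(\SS^{2}_{k},\vec a)=b$'' is $\Pi^{1}_{k+1}$. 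Consequently ``$\lambda a.\{e\}(\SS^{2}_{k},a)$ is total with values in $\{0,1\}$'' is decidable in a suitable $\SS^{2}_{j}$ (hence in $\SS^{2}_{\omega}$), and on such an index the value $Y(\lambda a.\{e\}(\SS^{2}_{k},a),m,n)$ is computable in $\SS^{2}_{\omega}$; thus the set of triples $\langle m,k,e\rangle$ realising the matrix for a given $n$ is $\SS^{2}_{\omega}$-decidable and non-empty. An unbounded search (available since $\exists^{2}=\SS^{2}_{0}$ is computable in $\SS^{2}_{\omega}$, or alternatively via Gandy selection, Theorem~\ref{gandyselection}) then produces a choice function $g\in{\bf P}[1]$ witnessing this instance of $\NCC$. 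This also explains why the negative result of Theorem~\ref{dick} is not strengthened by adding $\NCC$.

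The main obstacle in the first half is the mismatch between $\NCC$, which only chooses natural numbers, and the sequence of reals one actually wants; it is overcome by encoding a real through its finite binary approximations and using injectivity of $Y$ to force those approximations to cohere into a fast-converging Cauchy sequence. In the second half the delicate point is the verification that ${\bf P}^{*}\models\NCC$: one needs that $\SS^{2}_{\omega}$ is strong enough to recognise which indices compute total $\{0,1\}$-valued functions, so that the choice function demanded by $\NCC$ is itself $\SS^{2}_{\omega}$-computable and therefore lies in ${\bf P}[1]$.
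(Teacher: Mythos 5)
Your proposal is correct and follows the same overall strategy as the paper: the negative half via the model of Theorem \ref{dick}, the positive half by using $\NCC$ to extract rational approximations to the $Y$-preimages and then diagonalising with \cite{simpson2}*{II.4.9}. Two execution differences are worth noting. For the negative half, the paper's proof is one line: the model of Theorem \ref{dick} satisfies $\QFAC^{0,1}$, which implies $\NCC$ over $\RCAo$ (code the pair $(m,f)$ as a single element of $\N^{\N}$ and apply $\QFAC^{0,1}$ to the resulting quantifier-free matrix), so no separate Gandy-selection verification of $\NCC$ in ${\bf P}^{*}$ is needed; your direct check is correct but redundant. For the positive half, the paper applies $\NCC$ only once per $n$, obtaining a single rational $r_{n}=[x](2n+4)$ for some $x$ with $Y(x)=n$, and plugs these straight into the interval-halving construction of II.4.9; the point it advertises as the crux is precisely that the diagonalisation consumes only finitely much information about each preimage, so the reals $x_{n}$ never have to be assembled. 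You instead use injectivity of $Y$ to glue the approximations over all $k$ into genuine fast-converging Cauchy codes for the $x_{n}$ and then invoke II.4.9 as a black box; this is sound (every interval $[q^{(n)}_{k},q^{(n)}_{k}+2^{-k}]$ contains the unique preimage $x_{n}$, even though the witnessing $f\in 2^{\N}$ may vary with $k$), and it in fact establishes the slightly stronger $\cocode_{1}$-style conclusion that $\NCC$ plus a bijection yields an enumeration of $[0,1]$ by a sequence of reals, in the spirit of Theorem \ref{exnihilo} and Corollary \ref{holycoroly}.
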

\begin{proof}
The second part is immediate by Theorem \ref{dick}.
For the first part, let $Y:[0,1]\di \N$ be a bijection and note that we have $(\exists^{2})$ due to \cite{kohlenbach2}*{\S3}.  
The functional $\exists^{2}$ allows us to convert real numbers in $[0,1 ]$ to binary representation.  
Consider the following (trivial) formula
\be\label{comf}
(\forall n\in \N)(\exists q\in \Q\cap [0,1])\big[ (\exists x\in [0,1])(Y(x)=n \wedge [x](2n+4)=q   \big].
\ee
Modulo obvious coding, the square-bracketed formula in \eqref{comf} has the right form for applying $\NCC$.  
Let $(r_{n})_{n\in \N}$ be the resulting sequence of rationals.  
We now consider the proof of \cite{simpson2}*{II.4.9}.  The latter expresses that for every sequence $(x_{n})_{n\in \N}$ of real numbers, there is $y\in [0,1]$ such that $y\ne x_{n}$ for all $n\in \N$.
The real $y$ is defined as the limit $\lim_{n\di \infty}a_{n}$ where $(a_{0}, b_{0})=(0,1)$, $[x_{n}]=q_{n, k}\in \Q$, and
\be\label{enuff}\textstyle
(a_{n+1}, b_{n+1}):=
\begin{cases}
(\frac{a_{n}+3b_{n}}{4} , b_{n}) & q_{n, 2n+3} \leq (a_{n}+b_{n})/2 \\ 
(a_{n }, \frac{3a_{n}+b_{n}}{4}) & \textup{otherwise}
\end{cases}.
\ee
The crux now is to observe that in \eqref{enuff}, one only uses \emph{finitely much information} about each $x_{n}$, namely the approximation $q_{n, 2n+3}$.   
Hence, using $r_{n}$ instead of $q_{n, 2n+3}$, \eqref{enuff} provides a real $y\in [0,1]$ which is such that if $Y(x)=n$, then $x\ne_{\R} y$, for any $x\in [0,1]$ and $n\in \N$.  
Now for $n_{0}:= Y(y)$, there is $z\in [0,1]$ such that $Y(z)=n_{0}$, but $z\ne_{\R} y$ by construction, a contradiction. 
\end{proof}
The previous proof is quite illustrative: $\QFAC^{0,1}$ is often used to produce a sequence of reals; if the subsequent argument only needs
finitely much information about each element in the sequence (which is often the case), then $\NCC$ suffices.   
Let $\NCC_{\w}$ be $\NCC$ with $g$ in \eqref{garf} only providing an \emph{upper bound} on the $m$-variable.
\begin{cor}
The theorem remains valid if we replace $\NCC$ by $\NCC_{\w}$.
\end{cor}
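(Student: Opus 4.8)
The plan is to reprove Theorem~\ref{hungzo} with $\NCC_{\w}$ in the role of $\NCC$. The second assertion requires no new idea: over $\RCAo$ the conclusion of $\NCC$ yields that of $\NCC_{\w}$, and $\NCC$ is itself a consequence of $\QFAC^{0,1}$ (apply the latter to the pairing of a witnessing number $m$ with a witnessing $f\in 2^{\N}$), so every model of $\Z_{2}^{\omega}+\QFAC^{0,1}$ is a model of $\Z_{2}^{\omega}+\NCC_{\w}$; by Lemma~\ref{lemmaP} and the negative half of the proof of Theorem~\ref{dick}, the structure ${\bf P}^{*}$ is such a model and also satisfies $\neg\NIN$, whence $\Z_{2}^{\omega}+\NCC_{\w}$ cannot prove $\NIN$.

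For the first assertion I would begin exactly as in the proof of Theorem~\ref{hungzo}: take a bijection $Y:[0,1]\di \N$, note that $(\exists^{2})$ holds by \cite{kohlenbach2}*{\S3}, and write $z_{n}$ for the unique real of $[0,1]$ with $Y(z_{n})=_{\N}n$. The only step that must be revisited is the construction of the rationals $(r_{n})_{n\in\N}$. Where the earlier argument applies $\NCC$ to~\eqref{comf} and reads $r_{n}$ straight off the resulting choice function, I would instead apply $\NCC_{\w}$ to (a convenient variant of)~\eqref{comf}, which after the obvious coding is a true instance of the $\Sigma^{0}_{1}$-shape demanded by $\NCC_{\w}$ once $(\exists^{2})$ is available; this produces a bounding function $g:\N\di \N$ such that for every $n$ some code $m\le g(n)$ has $q_{m}$ a legitimate $(2n+4)$-th approximation of $z_{n}$, hence $|q_{m}-z_{n}|\le 2^{-(2n+3)}$. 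From $g$ I would then recover an honest approximation $r_{n}$ by a finite search through the candidates $q_{0},\dots,q_{g(n)}$. With the sequence $(r_{n})_{n\in\N}$ in hand the remainder is verbatim Theorem~\ref{hungzo}: running the construction of \cite{simpson2}*{II.4.9} with $r_{n}$ in place of $q_{n,2n+3}$ yields $y\in[0,1]$ with $y\ne_{\R}z_{n}$ for all $n$, so that $n_{0}:=Y(y)$ gives both $Y(z_{n_{0}})=_{\N}n_{0}=_{\N}Y(y)$ and $z_{n_{0}}\ne_{\R}y$, contradicting injectivity of $Y$.

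The step I expect to be the main obstacle is precisely the passage from the bound $g(n)$ to a usable $r_{n}$: at face value, deciding whether a given $q_{m}$ is a legitimate $(2n+4)$-th approximation of $z_{n}$ amounts to deciding whether $Y$ attains the value $n$ on a short interval about $q_{m}$, a $\Sigma^{1}_{1}$-fact about $Y$ that $(\exists^{2})$ does not settle. My strategy for defeating this is to choose the instance of~\eqref{comf} handed to $\NCC_{\w}$ so that its existential witness already records exactly the finite datum that the \cite{simpson2}*{II.4.9}-construction needs at stage $n$, and so that testing a candidate against that datum is a bounded, quantifier-free matter once $(\exists^{2})$ is granted; here the injectivity of $Y$ is the crucial lever, as it cuts the pool of genuinely possible witnesses at each $n$ down to (essentially) one. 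The resulting argument stays within $\RCAo+(\exists^{2})+\NCC_{\w}$, as needed.
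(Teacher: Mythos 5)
Your handling of the negative half is fine ($\QFAC^{0,1}$ yields $\NCC$ and hence $\NCC_{\w}$, so ${\bf P}^{*}$ does the job). The gap is in the positive half, and it sits exactly where you yourself locate it: the passage from the bound $g(n)$ supplied by $\NCC_{\w}$ to a usable rational $r_{n}$. Uniqueness of the witness does not help. Whatever variant of \eqref{comf} you feed to $\NCC_{\w}$, if its witness $m$ is to determine a $2^{-(2n+3)}$-approximation to $z_{n}$, then recognising \emph{which} $m\leq g(n)$ is the witness amounts to deciding, for finitely many candidate rationals $q$, whether $(\exists x\in B(q,2^{-(2n+3)}))(Y(x)=n)$ --- a $\Sigma^{1}_{1}$ question about $Y$ that $(\exists^{2})$ does not settle, and $\NCC_{\w}$, which only ever returns upper bounds, gives you no further means to settle it. You never exhibit the ``convenient variant'' that would make this test ``bounded and quantifier-free'', and I do not see how one could exist: an upper bound on the code of the unique member of an undecidable set does not identify that member. (In the paper's neighbouring proofs this decision step is precisely what $\BOOT^{-}$ or $\DCA$ is invoked for, and $\NCC_{\w}$ is not known to yield either.)

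The paper's proof abandons the diagonal-against-approximations route altogether and instead turns the weakness of $\NCC_{\w}$ --- that it only produces bounds --- into the whole argument. From $\neg\NBI$ and $(\exists^{2})$ one extracts a bijection $F:\N^{\N}\to\N$ and applies $\NCC_{\w}$ to the formula $A(n,m)$ asserting that $m=1+\sum_{i=1}^{n}w(i)(n)$ for the tuple $w$ whose entries are the $F$-preimages of $0,\dots,n-1$. The unique witness is by construction an upper bound for $f(n)$ for each of the first $n$ functions $f$ in the enumeration, so the bounding function $g$ provided by $\NCC_{\w}$ eventually dominates \emph{every} element of $\N^{\N}$, in particular itself --- a contradiction. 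The moral, which is worth internalising, is that when only a bounding principle is available one should apply it to a formula whose witness already \emph{is} the bound one needs, rather than trying to reconstruct a witness from a bound.
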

\begin{proof}
We assume $\neg\NBI$ and therefore have $(\exists^{2})$.  The latter allows us to find a bijection $F:\N^{\N}\di \N$.  Now consider the following formula $A(n, m)$:
\[\textstyle
(\exists w^{1^{*}})\big[|w|=n+1\wedge  (\forall i<|w|)(F(w(i)) = i-1) \wedge m =( \sum_{i = 1}^n w(i)(n)) + 1],
\]
and note that $(\forall n\in \N)(\exists m\in \N)A(n, m)$.  Let $g:\N\di \N$ be such that $(\forall n\in \N)(\exists m\leq g(n))A(n, m)$ as provided by $\NCC_{\w}$ (modulo obvious coding).
By definition, the function $g$ dominates all other functions, which yields a contradiction. 
\end{proof}
\subsubsection{Countable sets versus sets that are countable II}\label{uncountsym2}
In this section, we derive $\NBI$ from basic theorems about \emph{countable sets} where the latter has its \emph{usual} meaning, namely Definition~\ref{standard2} taken from \cite{hrbacekjech}.
Corollary~\ref{holycoroly} suggests an elegant base theory in which (theorems about) strongly countable sets behave `as they should'.  

\smallskip

First of all, we generalise Corollary \ref{forgot} to a stronger definition of `countable set', as can e.g.\ be found in \cite{hrbacekjech}.
We note that Borel uses this definition in \cite{opborrelen2}.
\bdefi[Strongly countable]\label{standard2}~
A set $A\subseteq \R$ is \emph{strongly countable} if there is $Y:\R\di \N$ with $(\forall x, y\in A)(Y(x)=Y(y)\di x=y)\wedge(\forall n\in \N)(\exists x\in A)(Y(x)=n)$. 
\edefi
Let $\HBC_{1}$ be $\HBC_{0}$ from Section \ref{uncountsyn} restricted to strongly countable sets.  
\begin{thm}
$\RCAo$ proves $\HBC_{1}\di \NBI$ while $\Z_{2}^{\omega}+\HBC_{1}$ cannot prove $\NIN$.
\end{thm}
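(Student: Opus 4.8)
The plan is to obtain the positive half by the device used in Corollary~\ref{forgot} and Theorem~\ref{flahu}, and the negative half by checking that the model ${\bf P}$ of Definition~\ref{defP} already validates $\HBC_{1}$.

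For $\RCAo\vdash\HBC_{1}\di\NBI$, I would first use the trick recorded after Theorem~\ref{Xxn}: in case $\neg(\exists^{2})$ all functions on $\R$ are continuous and $\NBI$ is trivial, so assume $(\exists^{2})$. Suppose $\neg\NBI$, i.e.\ fix a bijection $Y:[0,1]\di\N$. As in the proof of Theorem~\ref{flahu}, set $\Psi(x):=\frac{1}{2^{Y(x)+3}}$ and let $A\subset\R^{2}$ be the set of intervals $(x-\Psi(x),x+\Psi(x))$ for $x\in[0,1]$; this is a genuine set given $(\exists^{2})$ (cf.\ the footnote to Corollary~\ref{forgot}). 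The map $(a,b)\mapsto Y(\frac{a+b}{2})$ is injective on $A$ --- because $Y$ is injective and $x\mapsto(x-\Psi(x),x+\Psi(x))$ is injective --- and it is surjective onto $\N$ because $Y$ is; hence $A$ is strongly countable in the sense of Definition~\ref{standard2}. Since $\Psi>0$ everywhere, $A$ covers $[0,1]$, so $\HBC_{1}$ yields $(a_{0},b_{0}),\dots,(a_{k},b_{k})\in A$ with $(\forall x\in[0,1])(\exists i\leq k)(x\in(a_{i},b_{i}))$. Writing $(a_{i},b_{i})=(y_{i}-\Psi(y_{i}),y_{i}+\Psi(y_{i}))$ and using injectivity of $Y$ exactly as in Theorem~\ref{flahu}, the total length $\sum_{i\leq k}(b_{i}-a_{i})\leq\sum_{i\leq k}\frac{1}{2^{i+2}}\leq\frac12$ is below $1$, so finitely many such intervals cannot cover $[0,1]$: contradiction, and $\NBI$ follows.

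For the second part, by Lemma~\ref{lemmaP} and the proof of Theorem~\ref{dick} the structure ${\bf P}^{*}$ derived from Definition~\ref{defP} models $\Z_{2}^{\omega}+\QFAC^{0,1}+\neg\NIN$, so it suffices to show ${\bf P}^{*}\models\HBC_{1}$. Working inside ${\bf P}^{*}$, let $A\subset\R^{2}$ be strongly countable via $Y:\R^{2}\di\N$ and assume $A$ covers $[0,1]$. The formula `$f\in A\wedge Y(f)=_{0}n$' is quantifier-free in $f^{1},n^{0}$ with parameters the characteristic function of $A$ and $Y$, and $(\forall n)(\exists f)(f\in A\wedge Y(f)=n)$ holds by strong countability; $\QFAC^{0,1}$ then produces a sequence $E:\N\di\R^{2}$ with $E(n)\in A$ and $Y(E(n))=n$ for all $n$. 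Since $Y$ is injective on $A$ and surjective onto $\N$, one checks $\{E(n):n\in\N\}=A$, so the covering hypothesis becomes: the sequence of open intervals $(E(n))_{n\in\N}$ covers $[0,1]$. The Heine--Borel theorem for a sequence of open intervals is provable in $\WKL_{0}$ (\cite{simpson2}), hence in $\Z_{2}^{\omega}$, and supplies a finite subcover $E(n_{0}),\dots,E(n_{k})$; these lie in $A$, which is exactly what $\HBC_{1}$ requires. Thus ${\bf P}^{*}\models\HBC_{1}+\neg\NIN$ and so $\Z_{2}^{\omega}+\HBC_{1}\not\vdash\NIN$.

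The main obstacle is the verification ${\bf P}^{*}\models\HBC_{1}$, and specifically two points in it: first, that the $\QFAC^{0,1}$-instance genuinely has a quantifier-free matrix once $(\exists^{2})$ is available (so that `$f$ codes an element of $\R^{2}$', membership in $A$, and the value condition are all decidable, via Kohlenbach's hat function and the decidability of $=_{\R}$ under $\exists^{2}$); and second, more importantly, that the enumeration $E$ exhausts all of $A$, which uses both injectivity \emph{and} surjectivity of $Y$ on $A$. This compression step fails for plain countable sets --- consistently with the fact that $\Z_{2}^{\omega}+\QFAC^{0,1}$ cannot prove $\HBC_{0}$ --- so the restriction to strongly countable sets is precisely what makes the negative result go through. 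If one preferred an argument not relying on $\QFAC^{0,1}$ inside ${\bf P}$, one could instead feed the relation `$\{e\}(\SS^{2}_{\omega},\cdot)$ codes a pair in $A$ with $Y$-value $n$' into Gandy selection (Theorem~\ref{gandyselection}, Corollary~\ref{cor.inj}) to obtain an $\SS^{2}_{\omega}$-computable enumeration of $A$ directly.
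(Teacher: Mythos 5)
Your proposal is correct and follows essentially the same route as the paper: the positive half uses the same strongly countable set of canonical intervals $\{(x-\Psi(x),x+\Psi(x)):x\in[0,1]\}$ (you merely inline the measure computation from Theorem~\ref{flahu} for the specific $\Psi(x)=2^{-Y(x)-3}$, where the paper instead derives $\HBU$ and cites $\HBU\di\WHBU\di\NIN$), and the negative half is exactly the paper's observation that $\QFAC^{0,1}$ (hence the model ${\bf P}^{*}$ behind Theorem~\ref{dick}) enumerates a strongly countable set as a sequence and thereby yields $\HBC_{1}$ via Heine--Borel for sequences of intervals. Your spelled-out verification that the $\QFAC^{0,1}$-enumeration exhausts $A$, using both injectivity and surjectivity, is just the detail the paper leaves implicit.
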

\begin{proof}
The negative result follows from Theorem \ref{dick} as $\QFAC^{0,1}$ yields a sequence enumerating a strongly countable set, i.e.\ $\Z_{2}^{\omega}+\QFAC^{0,1}$ proves $\HBC_{1}$.  
For the positive result, let $Y:[0, 1]\di \N$ be a bijection and note that $[0,1]$ is strongly countable as in Definition \ref{standard2}.
Fix $\Psi:[0,1]\di \R^{+}$ and define $A$ as the strongly countable set $\{ (x-\Psi(x), x+\Psi(x)):x\in [0,1] \}$.   
Applying $\HBC_{1}$, there is a finite sub-cover and $\HBU$ follows.  Since $\HBU\di \WHBU$, Theorem \ref{flahu} yields $\NIN$, which contradicts our assumption $\neg \NBI$. 
\end{proof}
In the same way as in the previous proof, items \eqref{bokes}-\eqref{bokes3} from Section \ref{uncountsyn} formulated with Definition \ref{standard2} all yield principles that imply $\NBI$ but not $\NIN$.
This shall follow from the below results pertaining to $\DCA$ from Principle \ref{DAAS}.

\smallskip

Next, it is a natural question what the weakest comprehension axiom is that still proves $\NBI$.  As it happens,
we have a candidate, namely \emph{$\Delta$-comprehension} as in Principle \ref{DAAS}.
We discuss the importance of $\DCA$ in Remark \ref{DCA}.    
\begin{princ}[$\DCA$]\label{DAAS}
For $i=0, 1$, $Y_{i}^{2}$, and $A_i(n)\equiv (\exists f \in \N^\N)(Y_{i}(f,n)=0)$\textup{:}
\[
(\forall n\in \N)(A_0(n) \asa \neg A_1(n))\di (\exists X\subset \N)(\forall n\in \N)(n\in X\asa A_{0}(n)).
\]
\end{princ}
\noindent
As shown in \cite{dagsamIX}*{\S3.1}, $\NCC$ implies $\DCA$, but the following proof is interesting.
\begin{thm}\label{exnihilo}
$\RCAo$ proves $\DCA\di \NBI$, while $\Z_{2}^{\omega}+\DCA$ cannot prove $\NIN$.
\end{thm}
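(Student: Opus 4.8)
Proof proposal for Theorem \ref{exnihilo} ($\RCAo$ proves $\DCA\di \NBI$, while $\Z_{2}^{\omega}+\DCA$ cannot prove $\NIN$).

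The plan is to handle the two halves separately, with the positive direction being the one that needs genuine argument. For the negative half, the quickest route is to invoke the model ${\bf Q}$ from Definition \ref{defQ} and Theorem \ref{proleet}: that theorem already shows ${\bf Q}^*\models \Z_2^\omega+\neg\NBI$, and since $\NIN\di\NBI$ it follows that ${\bf Q}^*\models\neg\NIN$. So it suffices to check that ${\bf Q}^*\models\DCA$. For this I would argue that when $Y_0,Y_1$ are parameters from ${\bf Q}$ and the hypothesis $(\forall n)(A_0(n)\leftrightarrow\neg A_1(n))$ holds in ${\bf Q}$, the set $\{n : A_0(n)\}$ is simultaneously $\Sigma^1_1$ and $\Pi^1_1$ in those parameters, hence $\Delta^1_1$; since ${\bf Q}[1]=A$ is closed under all $\SS^2_k$ (in particular under hyperarithmetic reducibility) and all $\Pi^1_n$-formulas are absolute for $A$, this set lies in ${\bf Q}$. (One should note $\DCA$ is vacuously fine if $\neg(\exists^2)$, though inside ${\bf Q}^*$ we do have $\exists^2$.) The key point is just that $\Delta^1_1$-comprehension is available in any of our absoluteness-closed models, which is far weaker than the $\Pi^1_n$-comprehension ${\bf Q}$ actually satisfies.

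For the positive half, assume $\DCA$ and, by the standard trick recorded after Theorem \ref{Xxn}, assume $(\exists^2)$ (otherwise $\NBI$ is trivial). Suppose toward a contradiction that $Y:[0,1]\di\N$ is a bijection. Using $\exists^2$ we may pass freely between reals in $[0,1]$ and their binary expansions, and we want to manufacture a sequence enumerating $[0,1]$ — or rather enough of one to run the diagonal argument of \cite{simpson2}*{II.4.9} as in the proof of Theorem \ref{hungzo}. The idea is to use $\DCA$ to extract, for each $n$, the relevant finite approximation to the unique $x\in[0,1]$ with $Y(x)=n$. Concretely, for each $n$ and each candidate finite binary string $\sigma$ of length $2n+4$, the statement ``$(\exists x\in[0,1])(Y(x)=n\wedge \overline{[x](\cdot)}(2n+4)=\sigma)$'' is $\Sigma^1_1$-form (an existential real quantifier over a condition decidable in $\exists^2$), and because $Y$ is a bijection, for fixed $n$ exactly one $\sigma$ makes it true; equivalently, its negation is the disjunction over the other finitely many strings of the same kind of formula, so it is also expressible in the $A_1$-form. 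Thus $\DCA$ applies (coding the pair $(n,\sigma)$ as a single natural number, and using that the complement within the finite block of length-$(2n+4)$ strings is again a finite disjunction of $\Sigma^1_1$ formulas) and yields a set $X\subset\N$ from which, decidably, we read off for each $n$ the rational $r_n\in\Q\cap[0,1]$ giving the correct $(2n+4)$-bit approximation of the $x$ with $Y(x)=n$.

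With $(r_n)_{n\in\N}$ in hand one finishes exactly as in Theorem \ref{hungzo}: feed $r_n$ in place of $q_{n,2n+3}$ into the recursion \eqref{enuff} defining the nested intervals $(a_n,b_n)$, obtaining $y:=\lim_n a_n\in[0,1]$ with the property that whenever $Y(x)=n$ one has $x\ne_\R y$ (the construction only ever consults finitely much information about each target real, which is precisely what $r_n$ supplies). Then $n_0:=Y(y)$ gives the contradiction, since surjectivity of $Y$ provides $z\in[0,1]$ with $Y(z)=n_0$ yet $z\ne_\R y$ by construction. The main obstacle — really the only subtle point — is arranging the $\DCA$ application cleanly: one must make sure both the formula ``$\sigma$ is the correct approximation block for $n$'' and its negation are presented in the single-existential-real-quantifier form that $\DCA$ requires, which is where the finiteness of the set of length-$(2n+4)$ strings and the injectivity/surjectivity of $Y$ are used; once that is set up, everything else is the routine diagonalization of \cite{simpson2}*{II.4.9}.
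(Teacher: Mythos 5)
Your positive half is essentially correct and is a close cousin of the paper's argument: the paper applies $\DCA$ to the pair $A_0(n,m,r)\equiv(\exists x\in B(q_m,r)\cap[0,1])(Y(x)=n)$ and $A_1(n,m,r)\equiv(\exists y\in[0,1]\setminus B(q_m,r))(Y(y)=n)$ (bijectivity gives $A_0\asa\neg A_1$) and then runs interval-halving to produce $(x_n)_{n\in\N}$ with $Y(x_n)=n$, whereas you extract the finite approximation blocks directly and feed them into \eqref{enuff} as in Theorem \ref{hungzo}. Both routes use bijectivity in the same way to manufacture the $\Delta$-form, and your representation issues are handled by $\exists^2$ exactly as in the paper.

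The negative half, however, contains a genuine error. You propose to use ${\bf Q}^*$ and to verify ${\bf Q}^*\models\DCA$, but this is impossible: Theorem \ref{proleet} shows ${\bf Q}^*\models\neg\NBI$, and the first half of the very theorem you are proving gives $\RCAo\vdash\DCA\di\NBI$; since ${\bf Q}^*$ models $\RCAo$, it must satisfy $\neg\DCA$. The flaw in your absoluteness argument is that the formulas $A_i(n)\equiv(\exists f\in\N^\N)(Y_i(f,n)=0)$ in $\DCA$ carry \emph{type-two} parameters $Y_i$ ranging over all of ${\bf Q}[2]$ (in particular over the bijection built in Theorem \ref{proleet}); they are not $\Sigma^1_1$ or $\Pi^1_1$ formulas of $\L_2$, so $\Delta^1_1$-absoluteness for ${\bf Q}[1]$ says nothing about them. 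Indeed, running your own positive argument inside ${\bf Q}^*$ with that bijection exhibits a failing instance of $\DCA$ there. The intended (and correct) route is via the \emph{other} model: $\RCAo+\QFAC^{0,1}$ proves $\DCA$ (apply $\QFAC^{0,1}$ to $(\forall n)(A_0(n)\vee A_1(n))$ and let $X=\{n: Y_0(F(n),n)=0\}$), and ${\bf P}^*$ from Definition \ref{defP} models $\Z_2^\omega+\QFAC^{0,1}$ while refuting $\NIN$ by Theorem \ref{dick}; hence $\Z_2^\omega+\DCA$ cannot prove $\NIN$. Alternatively, one can use $\NCC\di\DCA$ together with Theorem \ref{hungzo}.
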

\begin{proof}
For the second part, $\RCAo+\QFAC^{0,1}$ proves $\DCA$ by \cite{samFLO2}*{Theorem 3.5}.  
This is done by applying $\QFAC^{0,1}$ to $(\forall n\in \N)(\neg A_{1}(n)\di  A_{0}(n))$.
Hence, $\Z_{2}^{\omega}+\DCA$ cannot prove $\NIN$ by (the second part of) Theorem \ref{dick}. 
Alternatively, $\NCC\di \DCA$ by \cite{dagsamIX}*{Theorem 3.1} and use Theorem \ref{hungzo}.  

\smallskip

For the first part, assume $\DCA$ and let $Y:[0,1]\di \N$ be a bijection.  
Recall we have access to $(\exists^{2})$. 
Let $(q_{n})_{n\in \N}$ be an enumeration of the rationals in $[0,1]$.
Consider the following equivalence for $n, m\in \N$ and $r\in \Q^{+}$:  
\be\label{frogke}
(\exists x\in B(q_{m}, r)\cap [0,1])(Y(x)=n)\asa (\forall y\in [0,1]\setminus B(q_{m}, r))(Y(y)\ne n).
\ee
Note that the reverse implication in \eqref{frogke} only holds because $Y$ is a bijection.  Modulo coding, $\DCA$ yields $X\subset \N^{2}\times \Q$ such that for all $n, m\in \N$ and $r\in \Q^{+}$:
\be\label{xdg}
(n,m, r)\in X\asa (\exists x\in B(q_{m}, r)\cap [0,1])(Y(x)=n)
\ee
We now use $X$ and the well-known interval-halving technique to create a sequence $(x_{n})_{n\in \N}$ as follows.  
For fixed $n\in \N$, define $[x_{n}](0)$ as $0$ if $(\exists x\in [0, 1/2))(Y(x)=n)$, and $1/2$ otherwise; define $[x_{n}](m+1)$ as $[x_{n}](m)$ if $(\exists x\in \big[[x_{n}](m), [x_{n}](m)+\frac{1}{2^{m+1}}\big)\big)(Y(x)=n)$, and $[x_{n}](m)+\frac{1}{2^{m+1}}$ otherwise.  By definition, we have $Y(x_{n})=n$ for all $n\in \N$; use \cite{simpson2}*{II.4.9} to find $y_{0}\in [0,1]$ not in the sequence $(x_{n})_{n\in \N}$.
Then $n_{0}=Y(y_{0})$ yields a contradiction as $x_{n_{0}}\ne_{\R} y $ and $Y(y_{0})=Y(x_{n_{0}})$.
\end{proof}
The previous proof inspired us to formulate Example \ref{XXY} in Appendix \ref{extraextrareadallaboutit}.
For the next corollary, let $\cocode_{1}$ be $\cocode_{0}$ restricted to strongly countable sets.  
\begin{cor}\label{holycoroly}
$\RCAo$ proves $\DCA\di \cocode_{1}$ and $[\cocode_{1}+\WKL]\di \HBC_{1}$.
\end{cor}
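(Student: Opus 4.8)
The plan is to prove the two implications separately, the first by adapting the argument of Theorem~\ref{exnihilo} and the second by adapting that of Corollary~\ref{forgot}. As usual I would work under $(\exists^{2})$: if $\neg(\exists^{2})$, then all functions on $\R$ are continuous by \cite{kohlenbach2}*{\S3}, and one checks — exactly as for $\cocode_{0}$ — that there is then no strongly countable subset of $[0,1]$ (nor of $\R^{2}$), so $\cocode_{1}$ and $\HBC_{1}$ hold vacuously; disposing of this continuous case is where the hypothesis $\WKL$ of the second implication is used. Assuming $(\exists^{2})$, I may freely decide equality of reals, pass to binary representations, and use $\ACAo$.

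For $\DCA\di\cocode_{1}$, let $A\subseteq[0,1]$ be strongly countable, witnessed by $Y:\R\di\N$, so that for each $n$ there is a \emph{unique} $z_{n}\in A$ with $Y(z_{n})=n$. The crucial point — and the reason \emph{strong} countability, not mere countability, is needed here — is that for every half-open dyadic interval $J=[a,a+2^{-m})\subseteq[0,1)$ (with $a$ of the form $j2^{-m}$) one has
\be\label{trafo}
(\exists x\in J\cap A)(Y(x)=n)\ \asa\ \neg(\exists y\in A\setminus J)(Y(y)=n),
\ee
since both sides are equivalent to $z_{n}\in J$; if some $n$ had no preimage in $A$, the right-hand side would be vacuously true and \eqref{trafo} would fail. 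Given $(\exists^{2})$, the left-hand side of \eqref{trafo} is of the form $(\exists f^{1})(Y_{0}(f,\langle n,a,m\rangle)=0)$ for a suitable $Y_{0}^{2}$ built from $\exists^{2},Y,F_{A}$, and the formula after the negation sign on the right is of the same form; hence \eqref{trafo} is an instance of $\DCA$. Applying $\DCA$ yields a set $X$ with $\langle n,a,m\rangle\in X\asa(\exists x\in J\cap A)(Y(x)=n)$. Now I would run the interval-halving construction already used in the proofs of Theorems~\ref{kloothommel} and~\ref{exnihilo}: for fixed $n$, use $X$ at each stage to decide which dyadic half of the current interval meets $A$ in a point mapped to $n$; this produces $x_{n}\in[0,1]$ with $|x_{n}-z_{n}|\leq 2^{-m}$ for all $m$, hence $x_{n}=_{\R}z_{n}\in A$. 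Then $(x_{n})_{n\in\N}$ is a sequence in $A$, and for $x\in\R$: if $x\in A$ then $x=_{\R}z_{Y(x)}=_{\R}x_{Y(x)}$; conversely $x=_{\R}x_{n}$ for some $n$ forces $x\in A$ (as $A$ respects $=_{\R}$). Since $A$ is non-empty, this establishes $\cocode_{1}$.

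For $[\cocode_{1}+\WKL]\di\HBC_{1}$, let $A\subset\R^{2}$ be strongly countable with $(\forall x\in[0,1])(\exists(a,b)\in A)(x\in(a,b))$. Coding $\R^{2}$ into $[0,1]$ in the standard way (using $(\exists^{2})$) transports $A$ to a non-empty strongly countable $\tilde A\subseteq[0,1]$; applying $\cocode_{1}$ to $\tilde A$ and pulling the enumeration back through the coding gives a sequence $\big((a_{n},b_{n})\big)_{n\in\N}$ listing exactly the members of $A$. Thus $[0,1]\subseteq\bigcup_{n\in\N}(a_{n},b_{n})$ is a covering by a \emph{sequence} of open intervals, so the Heine-Borel theorem for $[0,1]$, available from $\WKL$, supplies finitely many $(a_{n_{0}},b_{n_{0}}),\dots,(a_{n_{k}},b_{n_{k}})$ still covering $[0,1]$; these lie in $A$, which is the conclusion of $\HBC_{1}$.

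I expect no deep obstacle here, since this is a corollary: the conceptual content of the first implication is exactly the unique-preimage phenomenon that converts the locating predicate \eqref{trafo} into a $\Delta$-predicate, and the second is a routine application of $\cocode_{1}$ followed by the $\WKL$-version of Heine-Borel. The fiddliest points — and hence the closest thing to an obstacle — are checking that \eqref{trafo} is literally of the form demanded by $\DCA$, spelling out the coding of $\R^{2}$ into $[0,1]$ and its preservation of strong countability, and the case $\neg(\exists^{2})$; none of these needs ideas not already present in Theorems~\ref{kloothommel} and~\ref{exnihilo}.
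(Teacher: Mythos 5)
Your proposal is correct and follows the paper's own (very terse) proof: the first implication is exactly the argument of Theorem~\ref{exnihilo} with $[0,1]$ replaced by the strongly countable set $A$ — including the key point that the $\Delta$-equivalence \eqref{frogke} relativized to $A$ needs surjectivity of $Y$ onto $\N$ — and the second implication is $\cocode_{1}$ followed by the sequential Heine--Borel theorem, which is precisely what the paper's citation of \cite{simpson2}*{IV.1.1} supplies. The only (inessential) wrinkle is your remark that $\WKL$ is needed to dispose of the $\neg(\exists^{2})$ case of $\HBC_{1}$: a non-empty $A$ already yields a discontinuous characteristic function and hence $(\exists^{2})$, so $\WKL$ is used only for the finite subcover, as you in fact do in the main argument.
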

\begin{proof}
In the final part of the proof of the theorem, replace `$[0,1]$' by `$A$' and note that $\DCA\di \cocode_{1}$ follows.  
The second implication is by \cite{simpson2}*{IV.1.1}. 
\end{proof}
As argued in Remark \ref{DCA} and \cites{samrecount, samFLO2}, $\DCA$ yields a good base theory 
\emph{for a purpose rather unrelated to our current enterprise}.  
Another argument in favour of this axiom is that $\RCAo+\DCA$ proves $\WKL\di \HBC_{1}$ by Corollary \ref{holycoroly}, a very desirable \emph{catharsis} in light of Theorem~\ref{proleet}.  Similarly, $\ACA_{0}\di \BW_{1}$ given $\DCA$, where the former is $\BW_{0}$ restricted to strongly countable sets.     
\begin{rem}[Lifting proofs]\label{DCA}\rm
As suggested by its structure, $\DCA$ is the `higher-order' version of $\Delta_{1}^{0}$-comprehension, where the latter is included in $\RCA_{0}$.  
Using $\DCA$, one can almost verbatim `lift' second-order proofs to more general and interesting proofs in third-order (and higher) arithmetic.
As an example, we consider the proof that the \emph{monotone convergence theorem} implies $\ACA_{0}$ from \cite{simpson2}*{III.2.2} based on \emph{Specker sequences}.  
As explored in detail in \cites{samph, samFLO2, samrecount}, one can use this same proof \emph{with no essential modification} to establish that 
the \emph{monotone convergence theorem for \textbf{nets}} implies $\BOOT$ based on \emph{Specker \textbf{nets}}.
In this `lifted' proof, one uses $\DCA$ instead of $\Delta_{1}^{0}$-comprehension.  A proof of this implication \emph{not} using $\DCA$ is also given in \cite{samph}*{\S3}, but the point is that 
second-order proofs can be `recycled' as interesting proofs in third-order (and higher) arithmetic.  Many examples are discussed in \cite{samrecount, samFLO2}. 
\end{rem}
We suspect a connection between $\DCA$ and $\Delta_{1}^{1}$-comprehension (see \cite{simpson2} for the latter), but no evidence can be offered at this point in time.  

\smallskip

Finally, we note that $\HAR_{1}$, i.e.\  $\HAR_{0}$ restricted to strongly countable sets, satisfies $\cocode_{1}\di \HAR_{1}\di \NBI$.
In the same way as for Corollary~\ref{frex}, $\Z_{2}^{\omega}$ cannot prove that strongly countable RM-closed sets are given by a sequence.  
\begin{cor}\label{frex2}
The restriction of $\cocode_{1}$ or $\HAR_{1}$ to RM-closed sets $A\subset \R$ still implies $\NBI$.
\end{cor}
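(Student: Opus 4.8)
The plan is to argue exactly as in the proof of Corollary~\ref{frex}, replacing `injection' by `bijection' and `countable' by `strongly countable' throughout. Assume the restriction of $\cocode_{1}$ to RM-closed sets and, towards a contradiction, suppose $\neg\NBI$; fix a bijection $Y:[0,1]\di \N$. The unit interval is RM-closed, being the complement of the RM-open set $\big(\cup_{n}(-n,0)\big)\cup\big(\cup_{n}(1,1+n)\big)$, and it is strongly countable in the sense of Definition~\ref{standard2} precisely because $Y$ is a bijection on $[0,1]$. Hence the restricted form of $\cocode_{1}$ applies to $A\equiv [0,1]$ and yields a sequence $(x_{n})_{n\in \N}$ with $(\forall x\in [0,1])(\exists n\in \N)(x_{n}=_{\R}x)$.

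Now invoke \cite{simpson2}*{II.4.9} to obtain $y\in [0,1]$ with $y\ne_{\R}x_{n}$ for all $n\in \N$, contradicting the fact that $(x_{n})_{n\in \N}$ enumerates all of $[0,1]$; this gives $\NBI$. For $\HAR_{1}$, one argues in the same way: assuming its restriction to RM-closed sets and a bijection $Y:[0,1]\di \N$, the set $[0,1]$ is simultaneously RM-closed and strongly countable, so $\HAR_{1}$ would force $[0,1]$ to have measure zero, which is absurd.

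I do not expect any genuine obstacle here; the only points needing (brief) verification are that $[0,1]$ is honestly RM-closed and that a bijection $Y:[0,1]\di \N$ witnesses strong countability of $[0,1]$, both of which are immediate from the relevant definitions. If one wishes to dispatch the degenerate case explicitly, note that by the standing convention of Section~\ref{main} we may assume $(\exists^{2})$ when proving $\NBI$ (in its absence all $\R\di\R$-functions are continuous, and a continuous surjection from the connected set $[0,1]$ onto $\N$ cannot exist), though this is not actually required for the contradiction above.
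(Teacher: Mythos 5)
Your proposal is correct and follows exactly the route the paper intends: the paper gives no separate proof of Corollary~\ref{frex2} but explicitly says it goes ``in the same way as for Corollary~\ref{frex}'', i.e.\ observe that a bijection $Y:[0,1]\di\N$ makes $[0,1]$ simultaneously RM-closed and strongly countable, apply the restricted principle, and contradict \cite{simpson2}*{II.4.9} (or the positivity of the measure of $[0,1]$ in the case of $\HAR_{1}$). Your extra verifications (that $[0,1]$ is honestly RM-closed, and the remark on the degenerate case $\neg(\exists^{2})$) are harmless and consistent with the paper's standing conventions.
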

One can push the previous corollary further as follows: $\NBI$ follows from the statement \emph{an RM-closed strongly countable subset of $\R$ has measure $<+\infty$.}

\smallskip

In conclusion, we note that the above results (mainly) pertain to analysis, but $\NBI$ even follows from the {original} \emph{graph-theoretical} lemma by K\"onig from \cite{koning147}, as discussed in Example \ref{takethat}.  

\section{The uncountability of $\R$ in computability theory}\label{floep}
We establish the computational properties of $\NIN$ following Kleene's framework introduced in Section \ref{HCT}.  
\subsection{Comprehension and fan functionals}
We show that $N$ as in $\NIN(N)$ is \emph{hard to compute}, relative to the usual scale of comprehension functionals $\SS_{k}^{2}$.
\be\tag{$\NIN(N)$}
(\forall Y:[0,1]\di \N)(  N(Y)(0)\ne_{\R} N(Y)(1)\wedge  Y(N(Y)(0))=Y(N(Y)(1)) ).
\ee
We again stress that this should be interpreted as support for the study of non-normal functionals of type~3, as in e.g.\ Corollary~\ref{corkei}.
In fact, we shall study \emph{strongly} non-normal functionals, i.e.\ functionals that do not compute $\exists^{3}$ \emph{even relative to $\exists^{2}$}, as defined in Section \ref{floep2}.
The type of $N$ as in $\NIN(N)$ is written `$3$' for simplicity. 
\begin{thm}\label{ninot}
A functional $N^{3}$ as in $\NIN(N)$ is not computable from any type two functional.
\end{thm}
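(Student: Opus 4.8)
The plan is to argue by contradiction: suppose some type two functional $\Phi^2$ computes a realiser $N^3$ satisfying $\NIN(N)$. By the standard argument (cf.\ Theorem \ref{Xxn} and the trick in Section \ref{main}), we may assume $(\exists^2)$ is computable in $\Phi$, since otherwise $\NIN$ is trivial and no genuine realiser exists; replacing $\Phi$ by the join $\Phi \oplus \exists^2$, we may assume $\Phi$ is \emph{normal}. Now the point is that once $\Phi$ is normal, Gandy selection becomes available (Theorem \ref{gandyselection}), and Corollary \ref{cor.inj} gives a partial functional $G$, computable in $\Phi$, that is total on the set of $\Phi$-computable functions and satisfies $G(f) = e \di (\forall a)(f(a) = \{e\}(\Phi, a))$; moreover $G$ is injective. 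Composing $G$ with the real-coding machinery of Definition \ref{keepintireal} (namely restricting attention to reals in $[0,1]$ coded by $\Phi$-computable fast-converging Cauchy sequences), one obtains a $\Phi$-computable functional $Y_0$ defined on all such reals that is \emph{injective} on that set.

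\textbf{Building the bad input.} The next step is to turn $Y_0$ into an actual function $Y:[0,1]\di\N$ in the sense required by $\NIN(N)$, i.e.\ a type two object, and here I would invoke Theorem \ref{thm.ext} (or directly the model constructions of Section \ref{models}): let $A\subseteq\N^\N$ be the set of functions computable in $\Phi$, which is closed under computability relative to $\Phi$ and its own elements, and let $\TP$ be the Kleene closed type structure with $\Tp[1]=A$ and $\Phi$ (restricted to $A$) in $\Tp[2]$. Inside $\TP^*$, which is a model of $\RCAo$, the functional $Y_0$ is a genuine $[0,1]\di\N$ function, and it is injective. Feeding $Y_0$ to the alleged realiser $N$ (which, being $\Phi$-computable, also lives in $\TP$) produces reals $x=N(Y_0)(0)$ and $y=N(Y_0)(1)$ in $[0,1]$ that are $\Phi$-computable, hence lie in the domain of $Y_0$, with $x\ne_\R y$ yet $Y_0(x)=Y_0(y)$ — contradicting the injectivity of $Y_0$ on $\Phi$-computable reals.

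\textbf{Main obstacle.} The delicate point — and the step I expect to need the most care — is the interface between "$\Phi$-computable element of $\N^\N$" and "$\Phi$-computable real in $[0,1]$", i.e.\ making sure that $Y_0$ is genuinely extensional (maps $\Phi$-computably-equal reals to equal natural numbers) and \emph{total} on exactly the set of $\Phi$-computable reals, so that the outputs $N(Y_0)(0), N(Y_0)(1)$ land where we need them. Concretely: a real in $[0,1]$ is a $\Phi$-computable object, so Gandy selection assigns it a $\Phi$-index; but equal reals have many distinct Cauchy-sequence codes, so $G$ by itself is not extensional. The fix is to compose with a $\Phi$-computable normalisation — using $\exists^2$ to pass to a canonical (say, binary or hat-function) representative via Definition \ref{keepintireal}(b),(c) — before applying $G$, which restores extensionality while preserving injectivity on genuine reals. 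Once this is set up, one must also check the closure hypothesis of Theorem \ref{thm.ext}, namely that any function computable from $\Phi$ together with elements of $A$ is again in $A$; this is immediate from transitivity of "computable in" together with $A$ being precisely the $\Phi$-computable functions. With these pieces in place the contradiction is as described, proving that no type two functional computes a realiser $N$ for $\NIN$.
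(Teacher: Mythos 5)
Your opening moves coincide with the paper's: assume $N$ is computable in a type two $\Phi$, join with $\exists^{2}$ so that $\Phi$ is normal, and use Gandy selection (Corollary \ref{cor.inj}) to obtain a partial \emph{injective} $G$, computable in $\Phi$ and total on the $\Phi$-computable functions. The gap is in how you feed this object to $N$. You pass to the Kleene closed type structure $\TP$ with $\Tp[1]=A$ (the $\Phi$-computable functions) so that your $Y_{0}$ becomes total, and then assert that $N$ ``also lives in $\TP$'' and that $N(Y_{0})$ satisfies the $\NIN(N)$ specification there. But the hypothesis $\NIN(N)$ is a statement about \emph{total} $Y:[0,1]\di \N$ in the full type structure; $Y_{0}$ is not such an object, and the restriction of the Kleene algorithm for $N$ to $\TP$ is not guaranteed, without further argument, to terminate on $Y_{0}$, let alone to return a pair witnessing non-injectivity of $Y_{0}$. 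This transfer of the specification from the full structure to the restricted one is exactly the nontrivial content of the proof, and you have assumed it rather than proved it.

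The paper closes this gap without any model-theoretic detour: take $H$ to be \emph{any total extension} of $G$. Then $H$ is a legitimate input in the full structure, so $N(H)=(x,y)$ with $x\ne_{\R}y$ and $H(x)=H(y)$ holds by hypothesis. The Sandwich Lemma \ref{lemma.blowout} (applied to the partial $G\subseteq H$, partially computable in $\Phi$ and total on the $\Phi$-computable functions) shows that everything computable in $\Phi$ and $H$ is already computable in $\Phi$; in particular $x$ and $y$ are $\Phi$-computable, hence lie in the domain of $G$, so $H(x)=G(x)\ne G(y)=H(y)$ by injectivity of $G$, a contradiction. If you wish to keep your restricted-structure formulation, you must supply precisely this argument (via Lemmas \ref{lemma.absolute} and \ref{lemma.blowout}) to show that the computation of $N$ over $\TP$ on input $Y_{0}$ agrees with $N(H)$ for a total extension $H$ and that its outputs land in $A$. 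Your attention to extensionality of $Y_{0}$ on real codes is reasonable but peripheral; the missing step is the totality/absoluteness one.
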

\begin{proof}Let $F$ be of type 2; without loss of generality we may assume that $F$ is normal.  Let $G$ be the partial functional obtained from $F$ as in Corollary  \ref{cor.inj}. Let $H$ be any total extension of $G$. If $N$ as in $\NIN(N)$ is computable in $F$, then $N(H) = (x,y)$ such that $H(x) = H(y)$ for $x\ne y$. However, by Lemma \ref{lemma.blowout} both $x$ and $y$ are computable in $F$, so $H(x)\ne H(y)$ by the choice of $G$.
\end{proof}
While the proofs in the previous section are by contradiction, we now show that a realiser for $\WHBU$ also computes $N$ as in $\NIN(N)$.
As to the former, a \emph{$\Lambda$-functional} (or: \emph{weak fan functional}) is a type three functional such that $\Lambda(Y, \eps)$ outputs the finite sequence $y_{0}, \dots, y_{k}$ of distinct reals as in $\WHBU$.  
Slightly different definitions are used in e.g.\ \cites{dagsam, dagsamII} for `realisers for $\WHBU$', but all are equivalent up to a term of G\"odel's $T$.  The following is officially a corollary to Theorem \ref{flahu}. 
\begin{cor}\label{corkei}
Any $\Lambda$-functional computes $N$ as in $\NIN(N)$.  
\end{cor}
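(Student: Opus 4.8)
The plan is to observe that the implication $\WHBU\di\NIN$ established in Theorem~\ref{flahu} is uniformly effective, and that a $\Lambda$-functional supplies precisely the $\WHBU$-data needed to run it. So fix a $\Lambda$-functional and an arbitrary $Y:[0,1]\di\N$. From $Y$ we first define the gauge $\Psi:[0,1]\di\R^{+}$ by $\Psi(x):=\tfrac{1}{2^{Y(x)+3}}$; since $Y$ sends equal reals to equal naturals, $\Psi$ sends equal reals to equal reals, and the passage $Y\mapsto\Psi$ is given by a term of G\"odel's $T$, hence computable in $Y$. We then apply the $\Lambda$-functional to the pair $(\Psi,\tfrac12)$ to obtain pairwise distinct reals $y_{0},\dots,y_{k}\in[0,1]$ with $\tfrac12<\sum_{i\leq k}|J_{y_{i}}^{\Psi}|$, as in $\WHBU$. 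Note that nothing here requires $\exists^{2}$, consistent with $\Lambda$-functionals being strongly non-normal.

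To extract the $\NIN$-witness we repeat the counting argument of Theorem~\ref{flahu}. Since $J_{y_{i}}^{\Psi}\subseteq I_{y_{i}}^{\Psi}$, we have $|J_{y_{i}}^{\Psi}|\leq|I_{y_{i}}^{\Psi}|=2\Psi(y_{i})=\tfrac{1}{2^{Y(y_{i})+2}}$, whence $\tfrac12<\sum_{i\leq k}\tfrac{1}{2^{Y(y_{i})+2}}$. Were the map $i\mapsto Y(y_{i})$ injective on $\{0,\dots,k\}$, the values $Y(y_{0}),\dots,Y(y_{k})$ would be distinct natural numbers, forcing $\sum_{i\leq k}\tfrac{1}{2^{Y(y_{i})+2}}\leq\sum_{n=0}^{\infty}\tfrac{1}{2^{n+2}}=\tfrac12$, a contradiction. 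Hence there exist $i<j\leq k$ with $Y(y_{i})=_{\N}Y(y_{j})$; since equality on $\N$ is decidable, the least such pair is found by a finite search through the output of the $\Lambda$-functional. Setting $N(Y):=(y_{i},y_{j})$ for this pair, we have $y_{i}\ne_{\R}y_{j}$ by the pairwise distinctness clause of $\WHBU$ and $Y(y_{i})=_{\N}Y(y_{j})$ by choice, so $N$ satisfies $\NIN(N)$.

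Each of the three steps---forming $\Psi$, calling the $\Lambda$-functional, and the final bounded search over pairs $i<j\leq k$---is performed uniformly in $Y$ (and the $\Lambda$-functional), so $N$ is computable in the $\Lambda$-functional. There is no serious obstacle here: the only points needing care are that $\Psi$ and the search are genuinely S1-S9 operations, which is clear since both are primitive recursive relative to $Y$ and the list returned by $\Lambda$, and that this list really consists of reals that are pairwise $\ne_{\R}$---which is exactly the guarantee built into the statement of $\WHBU$.
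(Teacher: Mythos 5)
Your proposal is correct and follows essentially the same route as the paper's proof: the same gauge $\Psi_{0}(x)=\frac{1}{2^{Y(x)+3}}$, the same application of the $\Lambda$-functional to $(\Psi_{0},\frac12)$, and the same counting argument from Theorem \ref{flahu} to locate a pair $y_{i}\ne_{\R} y_{j}$ with $Y(y_{i})=Y(y_{j})$. You merely spell out the arithmetic and the final bounded search in more detail than the paper does.
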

\begin{proof}
To define $N(Y)$, define $\Psi_{0}(x):=\frac{1}{2^{Y(x)+3}}$ and consider $\Lambda(\Psi_{0}, \frac{1}{2})=(y_{0}, \dots, y_{k})$, where the $y_{i}$ are assumed distinct. 
Note that since $\frac{1}{2}<_{\R}\sum_{i\leq k}| J_{y_{i}}^{\Psi_{0}}|$ by definition, we cannot have that all $Y(y_{i})$ are distinct, similar to the proof of Theorem \ref{flahu}.  
Thus, let $N(Y)$ output any two $y_{i}, y_{j}$ in $\Lambda(\Psi_{0}, \frac{1}{2})$ such that $Y(y_{i})=Y(y_{j})$.
\end{proof}
Next, we show that realisers for $\BCT$ cannot be computed by any type-two functional, based on (the proof of) Theorem \ref{flahu} and Lemma \ref{lemma.blowout}.
This yields a simpler proof of \cite[Theorem 6.6]{dagsamVII}, which has the same content as Theorem \ref{triplez}. 
We recall that open sets (here and in \cite{dagsamVII}) are given by characteristic functions.
\bdefi[Realiser for $\BCT$]
A \emph{Baire-realiser} is a total functional $\zeta$ that takes as input a sequence $ \{X_n : n \in \N\}$ of subsets of $[0,1]$, and outputs a real $\zeta(\{X_n : n \in \N\}) \in \bigcap_{n \in \N}X_n$ whenever each $X_n$ is open and dense.
\edefi
\begin{thm}\label{triplez}
No Baire realiser is computable in any type two functional.  
\end{thm}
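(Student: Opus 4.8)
The statement to be established is Theorem \ref{triplez}: no Baire realiser is computable in any type two functional. My plan is to mimic the structure of the proof of Theorem \ref{ninot}, using the second part of Theorem \ref{flahu} (that $\BCT$ implies $\NIN$) made effective. The key observation is that the construction in the proof of Theorem \ref{flahu} which derives $\NIN$ from $\BCT$ is entirely \emph{uniform} and given by a term in G\"odel's $T$: from an injection $Y:[0,1]\di \N$ one defines the sequence of open dense sets $O_n=\{x\in[0,1]:Y(x)>n\}$ by a $T$-term, and any point in $\bigcap_n O_n$ immediately yields a contradiction because $\bigcap_n O_n$ is empty. So a Baire realiser $\zeta$, fed the sequence $\{O_n\}$ built from $Y$, would have to produce a real $\zeta(\{O_n\})$ lying in an empty set — which is impossible for an \emph{honest} injection $Y$, but the point of the computability argument is subtler.

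\textbf{Key steps.} First I would fix a type two functional $F$ and, as in Theorem \ref{ninot}, reduce to the case that $F$ is normal (replacing $F$ by the join $F\oplus\exists^2$ changes nothing, since if $\zeta$ is computable in $F$ it is computable in $F\oplus\exists^2$). Second, I would invoke Corollary \ref{cor.inj} to obtain the partial functional $G$ computable in $F$ that is defined precisely on the $f\in\N^\N$ computable in $F$, is injective, and satisfies $G(f)=e\di(\forall a)(f(a)=\{e\}(F,a))$. Third — and this is the heart — I would let $H$ be \emph{any total extension} of $G$ to all of $\N^\N$, viewed (via the coding of reals, using $\exists^2$ to pass between reals and their binary expansions, cf.\ Definition \ref{keepintireal}\eqref{detrippe}) as a candidate injection $[0,1]\di\N$. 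From $H$ one builds, by a $T$-term, the sequence $X_n=\{x\in[0,1]:H(x)>n\}$. Note each $X_n$ \emph{is} open (its complement is finite, since $H$ restricted to reals is injective) and dense. Now apply the hypothetical $F$-computation of $\zeta$ to this sequence: $\zeta(\{X_n:n\in\N\})=:x\in\bigcap_n X_n$. Fourth, I would invoke the Sandwich lemma (Lemma \ref{lemma.blowout}): the sequence $\{X_n\}$ is computable in $H$ by a $T$-term, but $H$ agrees with the partial functional $G\subseteq H$ which is itself partially computable in $F$ and total on the functions computable in $F$; hence everything computable in $\{X_n\}$ (equivalently in $H$ restricted appropriately) is computable in $F$. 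In particular the real $x=\zeta(\{X_n\})$, being computable in $\zeta$ and the sequence, is computable in $F$, so the associated $f\in\N^\N$ is in the domain of $G$. But then $x\in X_n$ for all $n$ means $H(x)>n$ for all $n$, which is absurd. This contradiction shows $\zeta$ is not computable in $F$.

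\textbf{Main obstacle.} The delicate point is making precise the interplay between "$H$ as a function $\N^\N\di\N$" and "$H$ as an injection $[0,1]\di\N$", and ensuring the Sandwich lemma applies. Concretely, $X_n$ as a subset of $[0,1]$ is given by a characteristic function $\R\di\{0,1\}$, and membership "$x\in X_n$" must be decidable from $H$ and $\exists^2$; since $H$ restricted to the (binary codes of) reals is injective, the set $[0,1]\setminus X_n$ is finite for each $n$, and one must check that the map $n\mapsto$ (that characteristic functional) is $T$-computable in $H,\exists^2$ — this is the analogue of the remark in Theorem \ref{flahu} that "$\{(n,x):x\in O_n\}$ is definable from $Y$ by a term in G\"odel's $T$." The only real care needed is that Lemma \ref{lemma.blowout} is stated for $F,G:\N^\N\di\N$, so one should phrase the argument so that the relevant objects (the sequence $\{X_n\}$, the output $x$) are coded as elements of $\N^\N$ and the partial functional in question is $G$ itself; transitivity of "computable in" then closes the loop exactly as in the proofs of Theorems \ref{ninot} and \ref{dick}. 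I expect no further surprises: this is essentially the computability-theoretic shadow of the second half of Theorem \ref{flahu}, just as Theorem \ref{ninot} is the shadow of the fact $\neg\NIN\di(\exists^2)$ together with the $\WHBU$/$\BCT$ constructions.
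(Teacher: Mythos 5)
There is a genuine gap at the heart of your third step. You take $H$ to be an arbitrary total extension of the partial injection $G$ and then claim that each $X_n=\{x\in[0,1]:H(x)>n\}$ is open with finite complement ``since $H$ restricted to reals is injective''. But no total extension of $G$ to all of $[0,1]$ can be injective: that is precisely $\NIN$, which holds in the metatheory $\ZFC$. Concretely, if $H$ is, say, constant $0$ outside the domain of $G$, then $X_0$ is a countable set of $F$-computable reals, hence not open, and its complement is uncountable. Since a Baire realiser $\zeta$ is only required to output a point of $\bigcap_n X_n$ \emph{when every $X_n$ is open and dense}, feeding it your sequence yields no constraint on the output and no contradiction. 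Your final step (``$H(x)>n$ for all $n$ is absurd'') therefore never gets off the ground.

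The repair is exactly the move the paper makes: do not pass through a total extension $H$ at all, but define $x\in X_n$ to hold iff $x$ is \emph{not} computable in $F$, or $x$ is computable in $F$ and $G(x)>n$. Now the complement of $X_n$ is the set of $F$-computable reals with $G$-value $\leq n$, which is finite by injectivity of $G$ on its domain, so each $X_n$ really is open and dense; at the same time the sequence is partially computable in $F$ on the set of $F$-computable reals, which is all that Lemma \ref{lemma.blowout} requires to conclude that $\zeta(\{X_n:n\in\N\})$ is computable in $F$. The contradiction is then not ``$H(x)>n$ for all $n$'' but rather that $\bigcap_n X_n$ contains no real computable in $F$. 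The remainder of your outline --- normalising $F$, invoking Corollary \ref{cor.inj}, the coding between $[0,1]$ and $\N^\N$, and the use of the Sandwich lemma together with transitivity of relative computability --- does match the paper's argument once the sets $X_n$ are defined correctly.
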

\begin{proof}
We take the (computational) connection between $[0,1]$ and $2^\N$ to be known.
Let $F$ be a normal functional of type 2, and assume that a Baire realiser $\zeta$ is computable in $F$. By Corollary \ref{cor.inj} there is a partial and injective functional $G$ with integer values and defined on all reals in $[0,1]$ computable in $F$ . Let $x \in X_n$ if $x$ is not computable in $F$, or if $x$ is computable in $F$ and $G(x) > n$. Each $X_n$ is open and dense, and the sequence $\{X_n : n \in \N\}$ is partially computable in $F$ on the set of reals computable in $F$. By Lemma \ref{lemma.blowout} and the assumption on $\zeta$, we must have that $\zeta(\{X_n : n \in \N\})$ is computable in $F$, contradicting the fact that $\cap_{n}X_n$ contains no reals computable in $F$.
\end{proof}
Finally, we obtain a computational generalisation of Theorem \ref{BOOM}, where item~\eqref{fli2} in Theorem \ref{BOOM2} corresponds to the former theorem.  
\begin{definition} 
Let $\Omega_{\BW}(A^2,Y^2,F^2) = \sup\{F(f) : f \in A\}$ whenever $A \subseteq 2^{\N}$, $Y:2^{\N} \rightarrow \N$ is total on $2^{\N}$ and injective on $A$, and $F:2^{\N} \rightarrow 2^{\N}$.
\end{definition}
By definition, $\Omega_{\BW}$ is a fixed partial object of type 3 that is not countably based, but with some surprising computational properties. 
\begin{theorem}\label{BOOM2}~
\begin{enumerate}
 \renewcommand{\theenumi}{\alph{enumi}}
\item If $f:\N\di \N$ is computable in $\Omega_{\BW}$ and $\exists^2$, then $f$ is hyperarithmetical.\label{fli1}
\item The functional $\SS^2_2$ is computable in $\Omega_{\BW}$ and the Suslin functional $\SS^2$.\label{fli2}
\item If \textsf{\textup{V = L}}, then $\exists^3$ is computable in $\Omega_{\BW}$ and the Suslin functional $\SS^2$.\label{fli3}
\end{enumerate}
\end{theorem}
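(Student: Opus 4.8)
The plan is to prove \eqref{fli2} by an effective rendering of the proof of Theorem~\ref{BOOM}, to prove \eqref{fli1} by a model-theoretic absoluteness argument of the kind underlying Lemmas~\ref{lemma.absolute}--\ref{lemma.blowout} and Theorem~\ref{thm.ext}, and to prove \eqref{fli3} by iterating the construction of~\eqref{fli2} up the projective hierarchy, with $\textsf{V = L}$ supplying the higher uniformisation theorems. For~\eqref{fli2}, recall that the proof of Theorem~\ref{BOOM} factors as $\BW_{0}^{C}\di\BOOT^{-}_{C}$ followed by $\FIVE^{\omega}+\BOOT^{-}_{C}\di\SIX$, and both halves are effective. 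First, a realiser $\mathcal{B}$ for $\BOOT^{-}_{C}$ is computable in $\Omega_{\BW}$ and $\exists^{2}$: for $Y^{2}$ with the `at most one witness' property, the sets $A$ and $B$ and the functional $F$ from that proof are primitive recursive in $Y$ and $\exists^{2}$ (the latter being needed to decide `$(\exists k)(Y(g,k)=0)$', and hence membership in $A$ and $B$), the injection witnessing countability of $B$ is explicit, and a single call to $\Omega_{\BW}$ on the characteristic function of $B$, that injection, and $F$ returns $\sup_{w\in B}F(w)$, whose $n$-th bit decides `$(\exists f\in 2^{\N})(Y(f,n)=0)$'. Next, $\SS^{2}_{2}$ is computable in $\mathcal{B}$ and $\SS^{2}$: given a $\Sigma^{1}_{2}$-formula $(\exists X\subset\N)\psi(n,X)$ with $\psi$ of class $\Pi^{1}_{1}$, the $\Pi^{1}_{1}$-uniformisation theorem (as used in the proof of Theorem~\ref{BOOM}) produces, primitive recursively in the index, a $\Pi^{1}_{1}$-formula $\psi'$ with $(\exists X)\psi(n,X)\leftrightarrow(\exists X)\psi'(n,X)$ and at most one witness $X$ per $n$; writing $\psi'$ in normal form $(\forall g^{1})(\exists m^{0})(t(\overline{g}m,\overline{X}m,n)=0)$, the type-two functional $Y(X,n):=1-\SS^{2}(\lambda\sigma.t(\sigma,\overline{X}|\sigma|,n))$ is computable in $\SS^{2}$ and has the `at most one witness' property, so feeding it to $\mathcal{B}$ returns the set $\{n:(\exists X)\psi(n,X)\}$, and membership in it decides the original formula (parameters being carried along uniformly). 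Composing the two steps yields~\eqref{fli2}.

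For~\eqref{fli1}, let $\mathcal{N}$ be the Kleene-closed type structure obtained from Theorem~\ref{thm.ext} with $\mathcal{N}[1]$ the set of hyperarithmetical reals, so that $\exists^{2}\in\mathcal{N}[2]$. The technical heart is a closure lemma: if $A,Y,F$ lie in $\mathcal{N}$ and $Y$ is total and injective on $S:=\{f:A(f)=1\}$, then $S$ is \emph{countable} by injectivity, hence contained in $\mathcal{N}[1]$ since a countable set that is $\Delta^{1}_{1}$ in hyperarithmetical parameters consists of hyperarithmetical reals, and — crucially using that the total injection $Y$ lets the search `$\exists f\in S$' in the supremum be replaced by an enumeration of $S$ along its $Y$-preimages — the value $\sup\{F(f):f\in S\}$ is again hyperarithmetical. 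Thus $\Omega_{\BW}$, restricted to inputs from $\mathcal{N}$ that meet its precondition, is total and maps back into $\mathcal{N}$, so by the Restriction Lemma~\ref{lemma.absolute} the `true' $\Omega_{\BW}$ agrees with this restriction on inputs from $\mathcal{N}$. Consequently any $f:\N\di\N$ computable in $\Omega_{\BW}$ and $\exists^{2}$ is computable over $\mathcal{N}$ and hence lies in $\mathcal{N}[1]$, i.e.\ is hyperarithmetical.

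For~\eqref{fli3}, assume $\textsf{V = L}$; the resulting $\Delta^{1}_{2}$-wellordering of $\N^{\N}$ provides $\Sigma^{1}_{k}$-uniformisation for all $k$ (compare the footnote on $\Sigma^{1}_{k+3}$-uniformisation preceding the theorem). The argument of~\eqref{fli2} then relativises level by level: from $\SS^{2}_{k}$ and $\Omega_{\BW}$ one computes $\SS^{2}_{k+1}$ — with $\SS^{2}_{1}=\SS^{2}$ as the base case — uniformly in $k$, and the recursion theorem packages this into a single index computing every $\SS^{2}_{k}$, hence the join $\SS^{2}_{\omega}$, from $\Omega_{\BW}$ and $\SS^{2}$. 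To pass from $\SS^{2}_{\omega}$ to $\exists^{3}$ one uses that, under $\textsf{V = L}$, whenever $(\exists f\in 2^{\N})(Y(f)=0)$ holds there is a witness inside some $L_{\tau}$ with $\tau$ a countable ordinal obtainable as a notation from $Y$ and $\SS^{2}_{\omega}$ through a Skolem-hull/condensation argument in $L$; since $2^{\N}\cap L_{\tau}$ is then a countable set carrying an explicit injection into $\N$ definable from $\SS^{2}_{\omega}$, a final application of $\Omega_{\BW}$ to it, with $F$ chosen to prepend a single $1$ to its argument, decides from the first bit of the output whether $Y$ has a root. I expect this final step to be the main obstacle: bounding the $L$-rank of a potential witness \emph{uniformly in an arbitrary type-two $Y$} and certifying that the resulting countable set and its injection are genuinely computable from $\SS^{2}_{\omega}$ is exactly where $\textsf{V = L}$ is indispensable and where the combinatorics are most delicate; keeping the complexity under control through the level-by-level iteration in the earlier part of~\eqref{fli3}, and the closure lemma of~\eqref{fli1}, are secondary technical points by comparison.
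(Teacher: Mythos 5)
Your treatments of items (a) and (b) are essentially sound and track the paper's own arguments. For (a) the paper notes that each $g\in A$ is the unique solution of $f\in A\wedge Y(f)=n$, hence hyperarithmetical, obtains an $\exists^{2}$-computable enumeration of $A$ via Gandy selection \emph{plus the boundedness theorem}, and then uses the recursion theorem to replace every oracle call to $\Omega_{\BW}$ by an $\exists^{2}$-computation; your closure lemma plus the Restriction Lemma is the same argument in model-theoretic dress, though you should make the appeal to boundedness explicit, since ``enumerate $S$ along its $Y$-preimages'' requires deciding the a priori $\Sigma^{1}_{1}$ question of which $n$ occur as $Y$-values. For (b) the paper skips the $\BOOT^{-}_{C}$ detour and applies $\Omega_{\BW}$ once, to the countable, $\SS^{2}$-injectible set of uniformised witnesses with $F(f)=\{n:P(f,n)\}$, but your two-step factorisation computes the same set.

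Item (c) is where the proposal has a genuine gap. Your plan is to climb to $\SS^{2}_{\omega}$ and then bound, ``uniformly in an arbitrary type-two $Y$'', a countable ordinal $\tau$ such that $Y$ has a root iff it has one in $L_{\tau}$. Such a $\tau$ exists by a Skolem-hull/condensation argument, but it is obtained from an elementary submodel of a structure containing the full third-order object $Y$, and there is no mechanism for extracting a notation for it from $Y$ and $\SS^{2}_{\omega}$ by an S1-S9 computation; if there were, the emptiness question would in effect be decided by countably based data, against the spirit of Hartley's theorem cited at the end of the section. The paper's proof avoids any such bounding. It first uses the method of item (b) to compute, from $\SS^{2}$ and $\Omega_{\BW}$, a function $H$ such that $H(g)$ is a code for an initial segment $(L_{\alpha},<_{\alpha})$ of $(L,<)$ containing $g$: this is a purely local computation of a $\Pi^{1}_{1}(g)$-singleton, with no uniformity over $Y$ required. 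Then, for $Z=\{f\in 2^{\N}:Y(f)=0\}$, the set $Z^{*}$ of $<_{L}$-minimal elements of $Z$ has at most one element and is arithmetically definable from $Z$ and $H$, because the quantifier over $<_{L}$-predecessors of $g$ becomes a number quantifier over the code $H(g)$. A single application of $\Omega_{\BW}$ to $Z^{*}$, with $Y$ the constant $1$ function and $F$ the identity, returns the unique element of $Z^{*}$ when $Z$ is non-empty, and testing whether the output lies in $Z$ yields $\kappa_{0}$ and hence $\exists^{3}$. The idea you are missing is thus to use $\Omega_{\BW}$ itself as a selection operator on at-most-one-element sets cut out by $<_{L}$-minimality, rather than to bound the $L$-level of a witness in advance; your level-by-level iteration to $\SS^{2}_{\omega}$ is also unnecessary on this route, and would itself need care since uniformising $(\exists X)(\exists W)\theta(n,X,W)$ requires uniqueness of the pair $\langle X,W\rangle$, not just of $X$.
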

\begin{proof}
For item \eqref{fli1}, let $A$, $Y$, $F$ be computable in $\exists^2$, and such that $\Omega_{\BW}(A,Y,F)$ is defined. If $g \in A$, let $n = Y(g)$. Then $\{g\} = \{f \in C : f \in A \wedge Y(f) = n\}$, so $g$ is hyperarithmetical. 
Using Gandy selection and the boundedness theorem for computations relative to $\exists^2$, we can find an enumeration of $A$ computable in $\exists^2$ uniformly computable in the indices for $A$ and  $Y$.  From this, we obtain an index for the hyperarithmetical least upper bound of $\{F(f) : f \in A\}$. The recursion theorem (relative to $\exists^2$) then yields a primitive recursive function $\rho$ such that  
\[
\{e\}(\Omega_{\BW},\exists^2,\vec f , \vec a) = b \rightarrow \{\rho(e)\}(\exists^2,\vec f , \vec a) = b.
\]
For item \eqref{fli2}, 
let $B\subset \N$ be given as follows: $n \in B\asa ( \exists f \in 2^{\N})P(f,n)$ where $P \in \Pi^1_1$ (and in $\L_{2}$) and where for all $n$ there is at most one $f\in 2^{\N}$ with $P(f,n)$. 
This is a normal form for $\Sigma^1_2$ thanks to the well-known $\Pi_{1}^{1}$-uniformisation theorem.
Now define the set $A\subset 2^{\N}$ as follows: $f \in A$ if $(\exists n\in \N) P(f,n)$ and define $G(f) = \{n : P(f,n)\}$.
The set $A$ is countable which can be see by selecting the least $n\in \N$ with $P(f,n)$ if there is one, and $0$ otherwise.   
This injection on $A$ is computable in $\SS^{2}$.
Using $\Omega_{\BW}$, we obtain $B = \cup\{G(f) : f \in A\}$, as required.

\smallskip

For item \eqref{fli3},  let $Y$ be the constant 1 function and let $F$ be the identity function on $2^{\N}$.
Now assume that \textsf{V = L}. Since we also assume the Suslin functional $\SS^{2}$ (and hence $\exists^{2}$), it suffices to compute $\kappa_{0}^3$ from \cite{dagsam} defined as
\be\label{tochk}
(\forall Y^{2})\big[  \kappa_{0}(Y)=0\asa (\exists f\in 2^{\N})(Y(f)=0  ) \big].
\ee
Given $g \in 2^{\N}$, the following relation is $\Pi_{1}^{1}$ :
\be\label{ziedewel}
\textup{$f$ is a code for an initial segment $(L_\alpha,<_\alpha)$ of $(L,<)$ with $g \in L_\alpha$}.
\ee
By the proof of item \eqref{fli2}, i.e.\ using $\Pi^1_1$-uniformisation, there is a function $H^2:2^{\N} \rightarrow \N$ computable in $\SS^{2}$ and $\Omega_{\BW}$ such that $H(g)$ is a code $f$ as in \eqref{ziedewel}.
Now, given $Z \subseteq C$, we let $Z^*$ be the set of `minimal' $g \in Z$, in the sense that for $h < g$ in the well-ordering of $L$, we have that $h \not \in Z$. 
Then $Z^*$ consists of at most one element and is arithmetically definable using $Z$ and $H$.  Applying $\Omega_{\BW}$  to $(Z^*,Y,F)$ yields:
\[
(\exists g \in 2^{\N} )(g \in Z) \leftrightarrow \Omega_{\BW}(Z^*,Y,F) \in Z,
\]
which gives us $\kappa_{0}^{3}$ as in \eqref{tochk}, and hence $\exists^{3}$. 
\end{proof}
While item \eqref{fli2} shows that $\Omega_{\BW}$ is rather powerful when combined with $\SS^{2}$, item~\eqref{fli1} shows that $\Omega_{\BW}$ is rather tame in the presence of $\exists^{2}$, as $f:\N\di \N$ is hyperarithmetical if and only if it is computable from $\exists^{2}$.  
This leads to the following corollary, where a \emph{Pincherle realiser} (PR for short) is any functional that outputs an upper bound on the length of the finite sub-cover from $\HBU$.  
A detailed study of PRs may be found in \cite{dagsamV}.
\begin{cor}
No PR can be computable in $\Omega_{\BW}+\exists^{2}$.
\end{cor}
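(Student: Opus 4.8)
The plan is to deduce this from Theorem~\ref{BOOM2}\eqref{fli1}, which says that every $f:\N\di\N$ computable in $\Omega_{\BW}$ and $\exists^{2}$ is hyperarithmetical, i.e.\ already computable in $\exists^{2}$. Read contrapositively, this means no non-hyperarithmetical function $\N\di\N$ is computable in $\Omega_{\BW}$ and $\exists^{2}$. Since Kleene computability is transitive (which we use freely, as elsewhere in this paper; cf.\ Section~\ref{kasic}), it will therefore suffice to exhibit a single $f:\N\di\N$ that is \emph{not} hyperarithmetical but \emph{is} computable in a PR together with $\exists^{2}$: if some PR were computable in $\Omega_{\BW}$ and $\exists^{2}$, then this $f$ would be computable in $\Omega_{\BW}$ and $\exists^{2}$ as well, contradicting Theorem~\ref{BOOM2}\eqref{fli1}.

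For the required $f$ I would appeal to the `explosion' of the Pincherle realiser in the presence of $\exists^{2}$ established in \cite{dagsamV} (see also Remark~\ref{pimp}): although a PR is weak in isolation, a PR together with $\exists^{2}$ is highly explosive, reaching the level of $\ATR_{0}$; at the computational level it computes the transfinite iteration of the Turing jump along recursive linear orders that \emph{appear} well-founded. Feeding such a computation a Harrison-style pseudo-well-ordering---a recursive linear order with no hyperarithmetical infinite descending chain that is nonetheless ill-founded---yields a function $f:\N\di\N$, computable in $\exists^{2}$ and a PR, that codes the transfinite jump hierarchy along that order. This $f$ is not hyperarithmetical, precisely because $\HYP$ fails to model $\ATR_{0}$ on account of exactly such orders. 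I would simply cite \cite{dagsamV} for the construction of $f$ and for the verification that $\exists^{2}$ together with a PR computes it, and then combine this with the previous paragraph to conclude the corollary.

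The one substantive ingredient---and the step I expect to be the main obstacle if one wants a self-contained argument---is that the explosion of a PR with $\exists^{2}$ genuinely escapes the hyperarithmetical sets, rather than merely being proof-theoretically strong; this is exactly where the Harrison pseudo-well-ordering is essential, and it is the content imported from \cite{dagsamV}. Everything else is bookkeeping: the contrapositive of Theorem~\ref{BOOM2}\eqref{fli1} together with transitivity of \emph{computable in}.
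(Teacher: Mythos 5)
Your argument is exactly the paper's: the paper also deduces the corollary from Theorem \ref{BOOM2}\eqref{fli1} plus transitivity of \emph{computable in}, citing \cite{dagsamV}*{Cor.\ 3.8} for the fact that any PR together with $\mu^{2}$ (equivalently $\exists^{2}$) computes a non-hyperarithmetical $f:\N\di\N$. Your additional discussion of Harrison pseudo-well-orderings is just an elaboration of the content of that citation, so the two proofs coincide.
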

\begin{proof}
By \cite{dagsamV}*{Cor.\ 3.8}, the combination of any PR and $\mu^{2}$ can compute functions $f:\N\di \N$ that are not hyperarithmetical.   
\end{proof}
Similar to the previous corollary, we believe that $\Z_{2}^{\omega}+\BW_{0}^{C}$ cannot prove $\HBU$, but do not have a proof at the moment. 

\smallskip

Finally, we should mention Hartley's results \cite{hartleycountable} where it is shown that, assuming $\textsf{CH}$, a functional of type 3 that is not countably based will compute $\exists^3$ relative to some functional $F$ of type 2. 
Surprisingly, in case that \textsf{V = L} holds and $\Omega_{BW}$ is given, we may chose the Suslin functional $\SS^{2}$ for this functional $F^{2}$.

\subsection{Computing \emph{de dicto} and \emph{de re}}
In this section, we discuss some subtle variations of the concept of `realiser for open-cover compactness', and how this `trickles down' to realisers for $\NIN$.

\smallskip

Now, the counterpart of $\Lambda$-functionals for $\HBU$ are called $\Theta$-functionals, i.e.\ realisers for $\HBU$ that return the finite sub-cover from the latter (see e.g.\ \cite{dagsam, dagsamII, dagsamIII, dagsamV}). 
Closely related, a \emph{Pincherle realiser} (PR for short) is a functional $M^{3}$ that returns an upper bound $M(\Psi)$ on the \emph{length} of finite sub-covers from $\HBU$ (see e.g.\ \cite{dagsamV}).
Hence, $\Theta$-functionals provide some finite sub-cover, while PRs provide a natural number (only) such that a finite sub-cover of this length (or shorter) \emph{exists}.  
In this spirit, we define a weak variation of $\NIN(N)$ as follows.
\begin{princ}[$\NIN_{0}(N_{0})$]
\[
(\forall Y:[0,1]\di \N)(\exists x, y\in [0,1])( x\ne y\wedge   Y(x)=Y(y)\wedge Y(x)\leq N_{0}(Y) ).
\]
\end{princ}
Similar to a PR, $N_{0}(Y)$ satisfying $\NIN_{0}(N_{0})$ does not return two real numbers that map to the same natural number, but only an upper bound for the latter.
We still have the following property. 
\begin{thm}\label{Xx}
A functional $N_{0}^{3}$ as in $\NIN_{0}(N_{0})$ is not computable from any type two functional.
\end{thm}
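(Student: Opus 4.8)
The plan is to imitate the proof of Theorem \ref{ninot}, adapting it to the fact that $N_{0}$ returns only a numerical bound rather than the two reals themselves. Suppose towards a contradiction that $N_{0}^{3}$ satisfying $\NIN_{0}(N_{0})$ were computable in some type two functional; as in the proof of Theorem \ref{ninot} one may assume this functional is normal, say $F$, and one identifies $[0,1]\di \N$-functions with type two functionals as usual (cf.\ the proof of Theorem \ref{triplez}). Let $D$ denote the set of reals in $[0,1]$ computable in $F$; there being only countably many Kleene indices, $D$ is countable. Let $G$ be the partial injective functional obtained from $F$ via Corollary \ref{cor.inj}, so that $G$ is defined exactly on $D$ and is injective there.

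The crucial observation to establish is that the natural number $N_{0}(H)$ is the \emph{same} for every total extension $H$ of $G$; write $m_{0}$ for this common value. This should follow from the sub-computation analysis underlying Lemmas \ref{lemma.absolute} and \ref{lemma.blowout}: in the $F$-computation of $N_{0}(H)$ only integer arguments are ever added in sub-computations, so by induction on the well-founded computation tree every function on which the oracle $H$ is queried is computable in $F$ together with $H$; using again that $G\subseteq H$ is computable in $F$, one shows by the same induction that each such function is in fact computable in $F$ alone, i.e.\ lies in $D$. Since $H$ agrees with $G$ on $D$, the entire computation is unaffected by the choice of extension.

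Granting this, I would then take the specific total extension $H^{*}$ of $G$ defined by $H^{*}(x):=G(x)$ for $x\in D$ and $H^{*}(x):=m_{0}+1$ for $x\in[0,1]\setminus D$. Because $D$ is countable while $[0,1]$ is not, $H^{*}$ takes the value $m_{0}+1$ on distinct reals, and since $G$ is injective on $D$, \emph{every} coincidence $H^{*}(x)=H^{*}(y)$ with $x\ne_{\R}y$ occurs at the value $m_{0}+1$. On the other hand $N_{0}(H^{*})=m_{0}$ by the previous paragraph. Hence there are no $x\ne_{\R}y$ in $[0,1]$ with $H^{*}(x)=H^{*}(y)\le N_{0}(H^{*})$, which contradicts $\NIN_{0}(N_{0})$ applied to $Y:=H^{*}$.

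The main obstacle is the crucial observation, i.e.\ making precise that $N_{0}(H)$ does not depend on which total extension $H$ of $G$ is chosen. This is the same mechanism that appears in Lemmas \ref{lemma.absolute}--\ref{lemma.blowout} and in the proof of Theorem \ref{ninot} (where it is used only to conclude that the two \emph{output} reals are computable in $F$); here it must be pushed slightly further, to the statement that the \emph{value} of the computation is unchanged, and the induction has to be arranged so that at each point where $H$ is queried the relevant function has already been placed in $D$. Everything else — the countability of $D$, the construction of $H^{*}$, and the final contradiction — is routine.
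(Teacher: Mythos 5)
Your proof is correct and follows essentially the same route as the paper's: both rest on Corollary \ref{cor.inj} together with the restriction/sandwich mechanism of Lemmas \ref{lemma.absolute} and \ref{lemma.blowout}, which forces the $F$-computation of $N_{0}$ on a total extension of $G$ to query the oracle only on reals computable in $F$, so that the output cannot depend on the values taken outside the domain of $G$, while injectivity of $G$ forces every collision to occur at the constant value off that domain. The only cosmetic difference is that the paper runs the argument with the whole family $H_{n}$ of extensions that are constant $n$ off the domain, obtaining $N_{0}(H_{n})\geq n$ for all $n$ against the constancy of $N_{0}(H_{n})$, whereas you first extract the common value $m_{0}$ and then diagonalise once with the extension constant $m_{0}+1$.
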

\begin{proof}
We modify the proof of Theorem \ref{ninot}.  Let $F$ and $G$ be as in the proof of the latter. Let $H_n$ be the extension of $G$ that is constant $n$ outside the domain of $G$. If $\NIN_{0}(N_0)$ we must have that $N_0(H_n) \geq n$. On the other hand, if $N_0$ is computable in $F$ we must have that $N_0(H_n)$ is independent of $n$, by Lemma \ref{lemma.absolute}. 
Thus, $N_0$ is not computable in $F$ and we are done.
\end{proof}
A number of `weak' functionals do compute $N_{0}$ as in $\NIN_{0}(N_{0})$.  
It is interesting to note that even very weak statements of measure theory yield functionals that are hard to compute as in Theorem \ref{Xx}.  
Indeed, recall Tao's pigeon hole principle $\PHM$ from Section \ref{uncountsyn} and let $\PHM(T^{3})$ be the statement that for a sequence $(E_{n})_{n\in \N}$ of sets in $[0,1]$, $T(\lambda n.E_{n})=n_{0}$ is such that $E_{n_{0}}$ has positive measure if the union $\cup_{n\in \N}E_{n}$ has positive measure and is RM-closed (see Corollary \ref{frex}). 
\begin{thm}
Any $T^{3}$ as in $\PHM(T)$ computes $N_{0}$ as in $\NIN_{0}(N_{0})$.
Any PR computes $N_{0}$ as in $\NIN_{0}(N_{0})$.
\end{thm}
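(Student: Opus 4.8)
The plan is to derive, from any $T^3$ satisfying $\PHM(T)$ (respectively any Pincherle realiser), a functional $N_0$ satisfying $\NIN_0(N_0)$ by mimicking the proof strategy of Theorem~\ref{flahu} and Corollary~\ref{forgot}, but tracking the numerical bound explicitly rather than deriving a contradiction. The key observation is that the proofs that $\PHM$ (restricted to RM-closed sets, cf.\ Corollary~\ref{frex}) and $\HBU$ each imply $\NIN$ are \emph{effective} in the given data, so the realisers compute the witnessing bound. Note first that throughout we may assume $(\exists^2)$ is available: if all functions on $\R$ are continuous, then any $Y:[0,1]\di \N$ is continuous hence constant on each of a countable base of intervals, and a trivial $N_0$ (computable even without parameters) works; more carefully, $T$ or a PR together with the behaviour on constant-type-$2$ inputs lets us decide $(\exists^2)$, so we split on $(\exists^2)\vee\neg(\exists^2)$ as done repeatedly above.

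First I would treat the PR case, which is the cleaner one. Given $Y:[0,1]\di \N$, form the canonical cover $\Psi_0(x):=\frac{1}{2^{Y(x)+3}}$ as in Corollary~\ref{corkei}. A PR $M$ returns $M(\Psi_0)=k$, an upper bound on the length of a finite subcover $I_{y_0}^{\Psi_0},\dots,I_{y_m}^{\Psi_0}$ (with $m<k$) of $[0,1]$. Now the point is purely combinatorial: if all of $Y(y_0),\dots,Y(y_m)$ were distinct, then $\sum_{i\le m}|I_{y_i}^{\Psi_0}|\le \sum_{i\le m}\frac{1}{2^{Y(y_i)+2}}\le \sum_{j=0}^{\infty}\frac{1}{2^{j+2}}\cdot 2 <1$, contradicting that these intervals cover $[0,1]$ (which has measure $1$). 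Wait — one must be slightly careful here because the intervals may overlap heavily and a subcover of $[0,1]$ need not have total length $\ge 1$ unless... actually it does: any finite cover of $[0,1]$ by open intervals has total length $\ge 1$. Hence among any finite subcover there must be two indices $i\ne j$ with $Y(y_i)=Y(y_j)$, and for such a pair necessarily $y_i\ne_{\R} y_j$ (since $\Psi_0$ would give two disjoint singleton-ish intervals otherwise — more precisely if $y_i=_{\R}y_j$ they are literally the same point and not two distinct witnesses, so we discard that case). Crucially, any index $i\le m$ appearing in the subcover has $Y(y_i)<$ some function of $k$: indeed the interval $I_{y_i}^{\Psi_0}$ has length $\frac{1}{2^{Y(y_i)+2}}$, and for the $m+1\le k$ intervals to cover $[0,1]$ at least one has length $\ge \frac{1}{k}$, forcing $Y(y_i)\le \log_2 k$ for that one; iterating (removing covered pieces) one gets that \emph{every} index used satisfies $Y(y_i)\le$ a primitive-recursive bound $B(k)$. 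Therefore $N_0(Y):=B(M(\Psi_0))$ works: there exist distinct $x,y$ with $Y(x)=Y(y)\le N_0(Y)$.

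For the $\PHM(T)$ case I would follow the fifth paragraph of the proof of Theorem~\ref{kloothommel} together with the RM-closed refinement of Corollary~\ref{frex}. Given $Y:[0,1]\di \N$, set $E_n:=\{x\in[0,1]:Y(x)=n\}$, so $A=\cup_n E_n=[0,1]$ is RM-closed and has measure $1>0$; moreover (as remarked after Corollary~\ref{frex}) this restricted instance has decidable elementhood given $(\exists^2)$, so it is a legitimate input to $T$. Then $T(\lambda n.E_n)=n_0$ is an index with $E_{n_0}$ of positive measure; in particular $E_{n_0}$ contains at least two distinct reals $x,y$, and $Y(x)=Y(y)=n_0$. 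So $N_0(Y):=T(\lambda n.E_n)$ satisfies $\NIN_0(N_0)$ directly — even more, it returns the \emph{exact} value rather than a bound. The main obstacle I anticipate is the bookkeeping in the PR case: verifying that every index used in \emph{some} finite subcover (not just one conveniently chosen interval) has $Y$-value bounded by an explicit function of $k$, and making sure the argument is carried out in a way that only uses $(\exists^2)$ for the underlying real-number manipulations (converting reals to binary, computing the lengths, enumerating rational points). This is the kind of interval-halving/measure estimate that appears in the proof of Theorem~\ref{flahu}, so it should go through, but it requires care that the bound $B(k)$ is genuinely computable (indeed primitive recursive) in $k$ alone. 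Everything else — that the resulting $N_0$ has type $3$ and the stated specification holds — is then routine.
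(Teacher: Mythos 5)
Your first half (the $\PHM(T)$ case) is correct and coincides with the paper's argument: $N_{0}(Y):=T(\lambda n.E_{n})$ with $E_{n}=\{x\in[0,1]:Y(x)=n\}$ returns the witnessing value $n_{0}$ itself, and $E_{n_{0}}$ having positive measure forces two distinct reals with that common $Y$-value. The second half also follows the paper's route (build $\Psi_{0}$ from $Y$, feed it to the Pincherle realiser, pigeonhole on interval lengths), and you are right that there is a point needing care here that is easy to gloss over; but your justification of that point fails. The claim that \emph{every} index occurring in the finite subcover satisfies $Y(y_{i})\le B(k)$ is false: nothing prevents the subcover whose existence the PR guarantees from containing redundant intervals of arbitrarily small length, i.e.\ points with arbitrarily large $Y$-value (adjoining any further point to a subcover yields another subcover, of length still $\le k$ once $k$ is large enough). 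Your ``iterating, removing covered pieces'' argument breaks down as soon as the intervals already selected cover all of $[0,1]$, after which the remaining indices are unconstrained. Consequently the pair produced by the pigeonhole is not shown to have its common value below $B(M(\Psi_{0}))$, which is exactly what $\NIN_{0}(N_{0})$ demands.

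The statement you actually need is weaker and true: the \emph{least repeated value} $n^{*}$ among $Y(y_{0}),\dots,Y(y_{m})$ is bounded by a computable function of $k$. Indeed, if every value $n<n^{*}$ occurs at most once, those intervals contribute total length at most $\sum_{n<n^{*}}2^{-(n+2)}<\tfrac12$ (with your $+3$ normalisation, so lengths are $2^{-(Y(y_{i})+2)}$), while the remaining at most $k+1$ intervals each have length at most $2^{-(n^{*}+2)}$; since a finite open cover of $[0,1]$ has total length at least $1$, this forces $(k+1)\,2^{-(n^{*}+2)}>\tfrac12$, i.e.\ $n^{*}<\log_{2}(k+1)-1\le k$. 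With this counting in place, $N_{0}(Y):=M(\Psi_{0})$ already satisfies the specification and no auxiliary bound $B$ is needed; this is the functional the paper takes, although the paper does not spell out this verification either. Finally, your preliminary case split on $(\exists^{2})$ is harmless but unnecessary here: this is a computability-theoretic statement proved in the metatheory, and both realisers are given by terms of G\"odel's $T$ in $M$ (resp.\ $T$) and $Y$, not by a proof inside $\RCAo$.
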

\begin{proof}
For the second part, fix $Y:[0,1]\di \N$, let $M$ be a PR, and consider $k_{0}:=M(\Psi_{0})$ for $\Psi_{0}(x):= \frac{1}{2^{Y(x)+2}}$. 
By the definition of PR, there are distinct $y_{0}, \dots, y_{k_{0}}\in [0,1]$ such that $\cup_{i\leq k_{0}}I_{y_{i}}^{\Psi_{0}}$ covers $[0,1]$.  
In particular, we have $1<_{\R}\sum_{i\leq k}| J_{y_{i}}^{\Psi_{0}}|$.
However, if $Y(y_{i})\ne Y(y_{j})$ for all $i, j\leq k_{0}$, then $\frac{1}{2}>_{\R}\sum_{i\leq k}| J_{y_{i}}^{\Psi_{0}}|$ by the definition of $\Psi_{0}$.
Hence, for some $i, j\leq k_{0}$ we must have $Y(y_{i})=Y(y_{j})$ and define $N_{0}(Y):=M(\Psi_{0})$, which satisfies $\NIN_{0}(N_{0})$

\smallskip

For the first part, let $T^{3}$ satisfy $\PHM(T)$. 
Fix $Y:[0,1]\di \N$ and define $E_{n}:= \{ x\in [0,1]:Y(x)=n  \}$.  
Clearly $[0,1]=\cup_{n\in \N}E_{n}$ has positive measure and let $T(\lambda n. E_{n})=n_{0}\in \N$ be such that $E_{n_{0}}$ has positive measure. 
There must be at least two reals in $E_{n_{0}}$ as the empty set and singletons have measure zero by definition.  
Define $N_{0}(Y)$ as this number $n_{0}$ and note that $\NIN_{0}(N_{0})$.
\end{proof}
Note that the previous proof still goes through if we require that the coverings from the definition of `$\cup_{n\in \N}E_{n}$ has positive measure' are given as input for $T^{3}$.  
Moreover, combined with $\mu^{2}$, the functional $\blambda$ as in Example \ref{frag} computes $N_{0}$ as in $\NIN_{0}(N_{0})$ in the same way as realisers for $\PHM$ do. 

\smallskip

In light of the previous, we offer the following conjecture. 
\begin{conj}
No PR can compute any $N$ as in $\NIN(N)$.  No $T^{3}$ as in $\PHM(T)$ can compute any $N$ as in $\NIN(N)$.
\end{conj}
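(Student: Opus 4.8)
The plan is to follow the template of Theorems~\ref{ninot} and~\ref{Xx}. Assume a Pincherle realiser $M^{3}$ and an index $e$ with $N(Y)=\{e\}(M,Y)$ for every $Y\colon[0,1]\di\N$; since replacing $M$ by its join with $\exists^{2}$ only increases what is computable, we may assume $\exists^{2}$ is available, though $M+\exists^{2}$ remains \emph{non-normal} at type three. The crucial step would be an analogue of Corollary~\ref{cor.inj}: a partial injective functional $G$ with integer values, defined on precisely those reals in $[0,1]$ that are computable in $M$, and itself computable in $M$. Given such a $G$, let $H$ be the total extension of $G$ that is constant outside $\operatorname{dom}(G)$, run $\{e\}(M,H)=(x,y)$, and argue — via the Restriction and Sandwich Lemmas (\ref{lemma.absolute} and~\ref{lemma.blowout}), exactly as in the proofs of Theorems~\ref{ninot} and~\ref{Xx} — that every real queried by this computation, and in particular $x$ and $y$, is computable in $M$, hence lies in $\operatorname{dom}(G)$. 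Then $H(x)=G(x)\ne G(y)=H(y)$ when $x\ne y$, while $x=y$ is also forbidden by $\NIN(N)$; contradiction. The argument for $\PHM$-realisers $T$ is entirely parallel: from $T$ one extracts the same kind of injective $G$ on the reals computable in $T$ and feeds $T$ the sequence $E_{n}=\{x\in[0,1]:H(x)=n\}$, as in the theorem just preceding the conjecture.

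The obstacle — and the reason this is only a conjecture — is precisely the production of $G$. For a \emph{normal} type-two functional $F$, $G$ is delivered by Gandy selection (Corollary~\ref{cor.inj}), which crucially exploits normality; but a Pincherle realiser is a non-normal type-three object (it does not compute $\exists^{2}$, and $M+\exists^{2}$ still does not compute $\exists^{3}$), so Gandy selection is unavailable at the level we need, and an effective enumeration of the reals computable in a non-normal functional need not exist at all. So the heart of the matter is to show that a Pincherle realiser (resp.\ a $\PHM$-realiser) has only a \emph{bounded} search capacity — it can report a bound on where a collision must occur, but cannot \emph{locate} the two colliding reals — which is essentially the content of the conjecture itself.

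An alternative, model-theoretic route would be to build a Kleene closed type structure containing a Pincherle realiser (resp.\ a $\PHM$-realiser) but in which $\NIN$ fails, i.e.\ which contains an injection from $[0,1]$ to $\N$. Here the difficulty is that the known models of $\neg\NIN$, such as $\mathbf P$ from Definition~\ref{defP}, are built from comprehension functionals $\SS^{2}_{k}$ and do not validate $\HBU$ or $\WHBU$, hence contain no such realiser, while the constructions of~\cite{dagsamV} that do produce Pincherle realisers proceed through selection machinery that, applied to the injection one wants to keep, tends to re-validate $\NIN$. Merging the two — keeping an injection $[0,1]\di\N$ alive while closing under ``finding a short subcover'' — is exactly the gap, and it appears to require the same structural understanding of non-normal type-three functionals noted above. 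For this reason we expect the decisive step to be a genuine new result on the computational weakness of Pincherle and $\PHM$-realisers, rather than a routine adaptation of the arguments already in this section.
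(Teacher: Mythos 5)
This statement is explicitly labelled a \emph{Conjecture} in the paper, and the authors give no proof of it; there is therefore nothing in the paper to measure your argument against. Your assessment is the honest and, as far as I can tell, correct one: you do not claim a proof, you explain why the two available techniques fail. Specifically, you are right that the template of Theorems \ref{ninot} and \ref{Xx} breaks down at the production of the injective partial functional $G$ --- Gandy selection (Theorem \ref{gandyselection} and Corollary \ref{cor.inj}) requires a \emph{normal} type-two oracle, whereas a Pincherle realiser or a $\PHM$-realiser is a non-normal type-three object (indeed the corollary to Theorem \ref{BOOM2} shows no PR is computable in $\Omega_{\BW}+\exists^{2}$, and by \cite{dagsamV} a PR together with $\exists^{2}$ does not compute $\exists^{3}$), so no analogue of Corollary \ref{cor.inj} is available relative to $M$. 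You are also right that the model-theoretic route is blocked for the stated reason: the models ${\bf P}$ and ${\bf Q}$ of $\neg\NIN$ resp.\ $\neg\NBI$ are built from the comprehension functionals $\SS^{2}_{k}$ and do not contain realisers for $\HBU$ or $\WHBU$ (since $\Z_{2}^{\omega}+\QFAC^{0,1}$ refutes neither $\neg\HBU$ nor $\neg\WHBU$ being consistent with it --- cf.\ Section \ref{lll}), while the known constructions of Pincherle realisers do not obviously coexist with an injection from $[0,1]$ to $\N$. Your conclusion that settling the conjecture requires a genuinely new insight into the computational weakness of these non-normal realisers, rather than an adaptation of the arguments in this section, is exactly why the authors left it open.
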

Finally, we have previously discussed inductive definitions (\cite{dagcie18, dagsamVII}) and 
the following amusing observation illustrates the power of non-monotone induction; it brings us nothing new with respect to known relative computability.
Fix some $F:\N^{\N}\di \N$ and define $G_F(A) := A \cup {F(A)}$ for $A\in 2^{\N}$.  Non-monotone inductive definitions yield the existence of $A$ such that $G_{F}(A)\subseteq A$.  
In fact, this induction stops exactly when we have found $A \ne B$ such that $F(A) = F(B)$, so this induction is thus a simple $\NIN$-realiser.

\appendix
\section{Principles related to the uncountability of $\R$}\label{extraextrareadallaboutit}
In this appendix, we list some results related to $\NIN$ and $\NBI$.
We only sketch the results as introducing the extra technical machinery (say in $\RCAo$) would be cumbersome or take too much space. 
All but the first result are positive in nature.
\begin{exa}[Well-ordering the reals]\rm
Assuming sub-sets of $[0,1]$ are given as characteristic functions,  $\Z_{2}^{\omega}+\QFAC^{0,1}+\WO([0,1])$ does not imply $\NIN$, where $\WO([0,1])$ expresses that \emph{the unit interval can be well-ordered}.
Indeed, $\neg\NIN$ readily implies $\WO([0,1])$ by noting that $x\preceq y \equiv Y(x)\leq_{0}Y(y)$ yields a well-order in case $Y:[0,1]\di \N$ is an injection. 
Similarly, the latter observation establishes that $\neg\NBI$ is equivalent to the statement \emph{there is a total order $\trianglelefteq$ of $[0,1]$ such that $([0,1], \trianglelefteq)$ is order-isomorphic to $(\N, \leq_{\N})$}.
\end{exa}
\begin{exa}[Ramsey's theorem]\rm
It is well-known that (infinite) Ramsey's theorem for two colours and pairs, abbreviated $\RT_{2}^{2}$, does not generalise beyond the countable.  
This failure is denoted `$2^{\aleph_{0}}\not\di (2^{\aleph_{0}})_{2}^{2}$' and can be found in \cite{reim}*{Prop.~2.36}, going back to Sierpi\'nski (\cite{grotesier2}).  
Assuming that $\R$ has a total order $\preceq$ in which each $a\in \R$ has a unique successor $S(a)\in \R$, one can use the aforementioned proof by Sierpi\'nski to show that $2^{\aleph_{0}}\di (2^{\aleph_{0}})_{2}^{2}$ implies $\neg\NIN$. 
\end{exa}
\begin{exa}[Cantor-Schr\"oder-Bernstein theorem]\label{CSB}\rm
An early theorem of set theory that implies $\NIN$ when combined with $\NBI$ is the \emph{Cantor-Schr\"oder-Bernstein theorem}, originally published without proof by Cantor in \cite{cantor3} (see \cite{cantor33}*{p.~413}).
This theorem states that if there is an injection $f:A\di B$ and an injection $g:B\di A$, then there is a bijection between $A$ and $B$.
Thus, assuming $\neg \NIN$, there would be a bijection between $[0,1]$ and $\N$, contradicting $\NBI$.
\end{exa}
\begin{exa}[Perfect sets]\rm
Cantor proves the following in \cite{cantorb}*{\S16} around 1879:
\[
\textup{\emph{If a subset $A\subset \R^{n}$ is countable, then it cannot be perfect}}.
\]
The restriction of this theorem to $[0,1]$ (rather than $\R^{n}$) readily implies $\NIN$, where `perfect' means `closed without isolated points', like in RM (\cite{simpson2}*{VI.1.4}).
\end{exa}
\begin{exa}[RM of topology]\label{mummyrem}\rm
The RM of topology is developed in e.g.\ \cite{mummy, mummyphd, mummymf}, working in second-order arithmetic.  
Topological spaces are represented via \emph{countable bases} and Hunter has investigated the existence of the 
latter in higher-order RM (\cite{hunterphd}), with some striking results.  Indeed, countable bases are intimately connected to $(\exists^{3})$ by \cite{hunterphd}*{Prop.\ 2.15}.  Our results are more modest, but significant nonetheless: countable bases in second-order RM are given by a sequences.  Hence, one seems to need $\cocode_{0}$ (or $\cocode_{1}$) to guarantee that the scope of the second-order RM of topology is the same as the RM of topology for (strongly) countable bases \emph{when formulated with Definition \ref{standard}}, i.e.\ as usual.
\end{exa}
\begin{exa}[Baire category theorem]\rm
We have studied the connection between $\NIN$ and $\BCT$ in Theorem \ref{flahu}.  One can also formulate $\BCT'$ which states that for a \emph{countable collection} of dense RM-open sets in $\R$, 
there is at least one real in all the members of this collection; one readily proves that $\BCT'\di \NIN$.
We note that Baire used terms like `infinit\'e d\'enombrable d'ensembles' (=countable infinity of sets) in the formulation of (what we now call) the Baire category theorem (see \cite{beren2}*{p.\ 65}).
In this way, $\BCT'$ is actually quite close to the historical original. 
\end{exa}
\begin{exa}[Baire classes]\rm
One can derive $\NIN$ from basic properties of \emph{Baire classes} on the unit interval.
Now, \emph{Baire classes} go back to Baire's 1899 dissertation (\cite{beren}).
A function is `Baire class $0$' if it is continuous and `Baire class $n+1$' if it is the pointwise limit of Baire class $n$ functions.  
Each of these levels is non-trivial and there are functions that do not belong to any level, as shown by Lebesgue (see \cite{kleine}*{\S6.10}). 
Baire's \emph{characterisation theorem} (\cite{beren}*{p.\ 127}) expresses that a function is Baire class $1$ iff there is a point of continuity of the induced function on each perfect set.
Using the latter formulation of Baire class 1, $\NIN$ follows from either of the statements \emph{Baire class $2$ does not contain all functions} and \emph{any Baire class 2 function can be represented by a double sequence of continuous functions}.  
\end{exa}
\begin{exa}[Uncountable sums]\rm
The concept \emph{unordered sum} is a device for bestowing meaning upon `uncountable sums' $\sum_{x\in I}f(x)$ for any index set $I$ and $f:I\di \R$.  
Whenever $\sum_{x\in I}f(x)$ exists, it must be a `normal' series of the form $\sum_{i\in \N}f(y_{i})$ (see e.g.\ \cite{taomes}*{p.\ xii}); when the antecedent is formulated using the Cauchy criterion of convergence, this fact implies $\NIN$.
This is of historical interest as Kelley notes in \cite{ooskelly}*{p.\ 64} that E.H.\ Moore's study of unordered sums (see \cite{moorelimit2, moorelimit3, moorelimit4}) led to the concept of nets with his student Smith (\cite{moorsmidje}).
Unordered sums can be found in basic or applied textbooks (\cites{hunterapp,sohrab, taomes}) and can be used to develop measure theory (\cite{ooskelly}*{p.\ 79}).  
Tukey develops topology in \cite{tukey1} based on \emph{phalanxes}, a special kind of net with the same structure on the index set as uncountable sums.  
\end{exa}
\begin{exa}[Topology]\rm
The following topological results formulated in third-order arithmetic (see \cite{sahotop}) are connected to $\HBU$ and the Lindel\"of lemma and therefore imply $\NIN$, though we do not have a direct proof of the latter.  
\begin{enumerate}
\item The topological dimension of $[0,1]$ is at most $1$.
\item The Urysohn identity for the dimensions of $[0,1]$.
\item The paracompactness of $[0,1]$ formulated with uncountable coverings. 
\item The existence of partitions of unity for uncountable coverings of $[0,1]$.  
\end{enumerate}
Presumably, many topological notions pertaining to $\R$ depend on its uncountability. 
\end{exa}
\begin{exa}[Separation]\label{XXY}\rm
Separation axioms of the following kind play an important role in RM (see e.g.\ \cite{simpson2}*{I.11.7}):
\[
(\forall n\in \N)(\neg \varphi_{0}(n)\vee \neg\varphi_{1}(n))\di (\exists X\subset \N)(\forall n\in \N)(\varphi_{0}(n)\di n\in X\wedge \varphi_{1}(n)\di n\not\in X).
\]
One readily proves that $\HBU$ is equivalent to this schema for $\varphi_{i}(n)\equiv (\exists f\in 2^{\N})(Y(f, n)=0)$.  Moreover, this schema readily implies $\NIN$ as in the proof of Theorem \ref{exnihilo}.
Indeed, for an injection $Y:[0,1]\di \N$, we cannot have $(\exists x\in A)(Y(x)=n)$ and $(\exists y\in  [0,1]\setminus A)(Y(y)=n)$ at the same time, for any $A\subset [0,1]$.
\end{exa}
\begin{exa}[Connectedness]\rm
A space is \emph{connected} if it is not the sum of two open disjoint sets.  This notion is considered in RM in \cite{simpson2}*{X.1.5} and \cite{brownvi}*{p.\ 193}; the unit interval is mentioned as being connected.  
The connectedness of $[0,1]$ implies $\NIN$ for a general enough notion of open set that includes (i) uncountable unions, and (ii) boolean combinations of uncountable unions that are again open (according to the usual definition).  
\end{exa}
The following two examples pertain to the (fourth order) Lebesgue integral/measure and establish that its very basic properties cannot be proved in $\Z_{2}^{\omega}+\QFAC^{0,1}$.
\begin{exa}[Lebesgue integral]\rm
The Lebesgue integral is well-known and one can derive $\NIN$ from the former's axiomatic formulation as an operator $I:([0,1]\di \R)\di \R$ satisfying the following rather basic properties (assuming $\ACAo$). 
\begin{itemize}
\item For $a, b, c\in \R$, $I(B_{a, b, c})=a\times b$ where $B_{a, b, c}$ is a `box' with height $b$, width $a$, and bottom left corner $(c, 0)$ such that $a+c\leq 1$.  
\item Finite additivity for finite sums of non-overlapping `box' functions. 
\item Dominated convergence theorems for functions as in the previous item. 
\end{itemize}
We may replace the third item by the monotone convergence theorem.  In fact, the dominated and monotone convergence theorems for the Lebesgue integral, as formulated in Bishop's constructive framework (\cite{bish1}*{Ch.\ 6}), also imply $\NIN$.  
\end{exa}
\begin{exa}[Lebesgue measure]\label{frag}\rm
The Lebesgue measure is well-known and one can derive $\NIN$ from the former's axiomatic formulation as an operator $\blambda:([0,1]\di \R)\di \R$ satisfying the following rather basic properties (assuming $\ACAo$). 
Note that we view `subsets of $[0,1]$ as characteristic functions' as in \cite{dagsamVI, kruisje, dagsamVII}.
\begin{itemize}
\item For any $x\in [0,1]$, $\blambda(\emptyset)=\blambda(\{x\})=0$ and $\blambda([0,1])=1$.
\item We have $\blambda(\cup_{n\in \N}E_{n})=0$ if $(\forall n\in \N)(\blambda(E_{n})=0)$.
\end{itemize}
The same result follows if we take the last item together with $\blambda([a,b])=|a-b|$ for $[a, b]\subseteq [0,1]$ and $\blambda(E)\leq \blambda(E\cup [c,d] )$ for $E\cup [c, d]\subseteq [0,1]$.  
We could also replace the last item by \emph{disjoint} countable additivity, a property provable in $\RCA_{0}$ for the second-order approach (\cite{simpson2}*{X.1.6}).
Another suitable property is the `continuity from below' of the Lebesgue measure.  
\end{exa}
\begin{exa}[Lebesgue integral II]\rm
The monotone convergence theorem for the Lebesgue integral is well-known.  The following special case implies $\NIN$ but does not involve the bound from $\Arz$; the conclusion can be stated as in $\WHBU$.
\begin{center}
\emph{For a monotone sequence of Riemann integrable functions $(f_{n})_{n\in \N}$ suppose that $\lim_{n\di \infty}f_{n}(x)=f(x)$ for all $x\in [0,1]$ and $\lim_{n\di \infty} \int_{0}^{1}f_{n}(x)dx$ exists.  Then the Lebesgue integral $\int_{[0,1]}f$ exists.}
\end{center}
In particular, define $g_{n}(x)$ as $1/x$ if $Y(x)=n$ and $x\ne 0$, and $0$ otherwise.  Then $\lim_{n\di \infty} f_{n}(x)=\frac{1}{x}$ for $x\ne 0$ and $f_{n}(x):=\sum_{i=0}^{n}g_{i}(x)$, as for Theorem \ref{arzen}.
\end{exa}
\begin{exa}[Probability theory]\rm
Kolmogorov's three axioms (\cite{kolliek}) of a probability measure $P$ on events $E$ in a sample space $\Omega$ are as follows.
\begin{itemize} 
\item Any event $E$ has a probability in $[0,1]$, i.e.\ $0\leq P(E)\leq 1$.
\item The sample space $\Omega$ satisfies $P(\Omega)=1$. 
\item For $(E_{n})_{n\in \N}$ mutually exclusive events, we have $P(\cup_{n\in \N}E_{n})=\sum_{n=0}^{\infty}P(E_{n})$.
\end{itemize}
In case $\Omega=[0,1]$ and $P(\{x\})=0$ for all $x\in [0,1]$, $\NIN$ follows by noting that for $E_{n}=\{x\in [0,1]:Y(x)=n\}$, we have $1=P([0,1])=P(\cup_{n}E_{n})=\sum_{n}P(E_{n})=0$.
\end{exa}

\begin{exa}[Borel-Cantelli lemma]\rm
The Borel-Cantelli lemma is formulated in e.g.\ \cite{fitzro}*{p.\ 46} as follows:  
\begin{center}
\emph{Let $(E_{k})_{k\in \N}$ be a countable collection of measurable sets with $\sum_{k=0}^{\infty} m( E_{k}) <\infty$. Then almost all $x\in \R$ belong to at most finitely many of the $E_{k}$'s.}
\end{center}
By applying this lemma to $E_{k}=\{x\in [0, 1]: Y(x)=k\}$, we can show that $Y:[0,1]\di \N$ is not an injection. 
\end{exa}
\begin{exa}[Measure and RM-closed sets]\rm
The previous examples pertain to measure theory formulated using higher types, while the following statement is formulated exclusively using `second-order' measure theory. 
In fact, the only higher-order object is the countable collection $A$, as RM-closed sets are represented as sequences of intervals with rational end-points. 
\begin{center}
\emph{For a countable collection $A$ of RM-closed sets in $[0,1]$ with measure zero, $\cup A$ also has measure zero.  }
\end{center}
Note that `$x\in \cup A$' if $x\in E$ for some element $E$ of $A$.
The previous principle readily implies $\NIN$ using the previous arguments.  
\end{exa}

\begin{exa}[Universal theorems]\rm
It is a commonplace that theorems on $\R\di \R$-functions generally only deal with a \emph{sub-class}, e.g.\ all continuous or differentiable functions.   
There are `universal' theorems that apply to \emph{all} $\R\di \R$-functions.
It is easy to show that $\NIN$ follows from \cite{bloemeken}*{Theorem III} as follows.
\begin{center}
\emph{With every function $f( x, y )$ there is associated \(not uniquely, however\) a dense set $D$ of the $XY$ plane such that $f(x, y)$ is continuous, if $(x, y)$ ranges over $D$.}
\end{center}
There are of course more examples of similar, but less basic, theorems.  We believe that E.H.\ Moore's \emph{general analysis} (\cite{moorelimit1, mooreICM}) contains the \emph{first} universal theorems. 
\end{exa}
\begin{exa}[Weak covering lemmas]\rm
There are numerous covering lemmas and related results that imply $\HBU$ or $\LIN(\R)$, as discussed in \cite{dagsamIII}.
The following principle is among the weakest covering lemmas that imply $\NIN$.
\begin{center}
\emph{There are non-identical $a, b\in [0,1]$ such that for any $\Psi:[0,1]\di \R^{+}$ there is a sequence $(x_{n})_{n\in \N}$ such that $[a,b]\subset \cup_{n\in \N}I_{x_{n}}^{\Psi}$.}
\end{center}
One readily derives the latter from the former, which has no first-order strength. 
\end{exa}
\begin{exa}[Weak converence]\rm
Banach's weak convergence theorem from \cite{dies}*{p.\ 405, Theorem 1.2} states the following:
\begin{center}
\emph{Let $(f_{n})_{n\in \N}$ be a uniformly bounded sequence of scalar-valued
functions defined on a set $S$. Then $f_{n}$ converge weakly to zero in the space $B(S)$
of bounded functions on $S$ under the supremum norm \textbf{\textup{iff}} for any sequence $(s_{k})_{k\in \N}$ of points in $S$ we have $\lim_{n\di \infty}\lim_{k\di \infty}f_{n}(s_{k})=0$.}
\end{center}
One derives $\NIN$ from this theorem in the same way as for Theorem \ref{arzen}.
\end{exa}
Finally, a lot can be said about various lemmas due to K\"onig from \cite{koning147}.
\begin{exa}[K\"onig's lemmas]\label{takethat}\rm
As is well-known, $\ACA_{0}$ is equivalent to the statement \emph{every infinite finitely branching tree has a path} (\cite{simpson2}*{III.7.2}).  
We shall refer to the latter as \emph{K\"onig's tree lemma}; Simpson refers to \cite{koning147} as the original source for K\"onig's tree lemma in \cite{simpson2}*{p.\ 125}, but \cite{koning147} does not even mention the word `tree' (i.e.\ the word `Baum' in German).  In fact, the formulation involving trees apparently goes back to Beth around 1955 in \cite{bethweter}, as discussed in detail \cite{wever}.  
K\"onig's original lemmas from \cite{koning147}, formulated there both in the lingo of graph theory and set theory, imply $\NBI$.  
\end{exa}
\begin{exa}[Non-monotone inductive definitions]\rm
The first author has studied the computational properties of the Heine-Borel theorem and the Lindel\"of lemma in relation to \emph{non-monotone inductive definitions} in \cite{dagcie18, dagnmi}. 
The latter notion expresses the iteration of functionals along countable ordinals, which is not easily expressed in weak systems like $\RCAo$.  
The following principle expresses a weak property of non-monotone inductive definitions, namely that there is a fixed point of the operation $\I(F, A):=A\cup F(A) $ that is reached `from below', as follows:
\[\label{braf}
(\forall F:2^{\N}\di 2^{\N})(\exists B\subseteq \N)\big[F(B)\subseteq B\wedge (\forall n\in B)[ n\in B \di (\exists A\subsetneq B)(n\in F(A))\big].
\]
\noindent
It is straightforward to derive $\NIN$ from the previous sentence. 
\end{exa}
For the final example, we need the following rather basic definition.
\begin{defi}[Finite]\label{deadd}\rm
Any $X\subset \R$ is \emph{finite} if there is $N\in \N$ such that for any finite sequence $(x_{0}, \dots, x_{N})$ of distinct reals, there is $i\leq N$ such that $x_{i}\not \in X$.
\edefi
The motivation for this definition of finite set, as opposed to the standard\footnote{In $\ZF$ set theory, a set $A$ is `finite' if there is some bijection to $\{0, 1, \dots, n\}$ for some $n\in \N$; a set $A$ is `Dedekind finite' if any injective mapping from $A$ to $ A$ is also surjective.\label{krukk}} and Dedekind$^{\ref{krukk}}$ definitions, may also be found in the next example.   
\begin{exa}[Weak countability]\label{tarkin}\rm
The following principles readily\footnote{For $A\subset [0,1]$ with $Y:[0,1]\di \R$ injective on $A$, the injection is also a height function.} imply $\NIN$:
\begin{itemize}
\item \emph{for any sequence $(X_{n})_{n\in \N}$ of finite sets, there is $y \in \big([0,1]\setminus \cup_{n\in \N}X_{n}\big)$},
\item \emph{the unit interval is not weakly countable}, 
\end{itemize}
where a set $A\subset \R$ is \emph{weakly countable} if there is a \emph{height} $H:\R\di \N$ for $A$, i.e.\ for all $n\in \N$, $A_{n}:= \{ x\in A: H(x)<n\}$ is finite (Definition \ref{deadd}). 
We note that the notion of `height' is mentioned in e.g.\ \cite{demol}*{p.\ 33} and \cite{vadsiger}.  

\smallskip

As to naturalness, consider the (necessarily countable) set of discontinuities of some function $f:[0,1]\di \R$ of bounded variation (see \cite{dagsamXI}*{\S3.3} for details):
\be\label{bicok}
A:=\{x\in [0,1]:f(x+)\ne f(x-)\},
\ee
where the left and right limits $f(x-)$ and $f(x+)$ have their usual definition. 
The set $A$ as in \eqref{bicok} is readily shown to be weakly countable, say in $\RCAo$; to find an injection from $A$ to $\N$, it seems $\Z_{2}^{\omega}+\QFAC^{0,1}$ does not suffice. 
Similarly, the set 
\be\label{bicok2}\textstyle
A_{k}:=\{x\in [0,1]:|f(x+)- f(x-)|>\frac{1}{2^{k}}\}
\ee
is finite (Definition \ref{deadd}), but we are unable to exhibit even just an injection from $A$ to $\{0, 1, \dots, m\}$ for some $m\in \N$, working in $\Z_{2}^{\omega}+\QFAC^{0,1}$.

\smallskip

In conclusion, \emph{if} one wants to work in a weak logical system, \emph{then} certain finite sets that `appear in the wild', like the set in \eqref{bicok2}, 
are best studied via the notion of finite set as in Definition \ref{deadd}, and not the definition from Footnote~\ref{krukk} involving bijections or injections.
The above notion of weak countability is similarly preferable over the usual definition involving injections or bijections to $\N$. 
\end{exa}

\begin{ack}\rm
We thank Anil Nerode and Pat Muldowney for their helpful suggestions and Jeff Hirst and Carl Mummert for suggesting the principle $\NBI$ to us. 
Our research was supported by the John Templeton Foundation via the grant \emph{a new dawn of intuitionism} with ID 60842 and by the \emph{Deutsche Forschungsgemeinschaft} via the DFG grant SA3418/1-1.
Opinions expressed in this paper do not necessarily reflect those of the John Templeton Foundation.   
\end{ack}

\begin{bibdiv}
\begin{biblist}
\bib{ahar}{article}{
  author={Aharoni, Ron},
  author={Magidor, Menachem},
  author={Shore, Richard A.},
  title={On the strength of K\"{o}nig's duality theorem for infinite bipartite graphs},
  journal={J. Combin. Theory Ser. B},
  volume={54},
  date={1992},
  number={2},
  pages={257--290},
}

\bib{arse2}{article}{
  author={Arzel\`a, Cesaro},
  title={Sulla integrazione per serie},
  year={1885},
  journal={Atti Acc. Lincei Rend., Rome 1},
  pages={532--537},
}

\bib{avi2}{article}{
  author={Avigad, Jeremy},
  author={Feferman, Solomon},
  title={G\"odel's functional \(``Dialectica''\) interpretation},
  conference={ title={Handbook of proof theory}, },
  book={ series={Stud. Logic Found. Math.}, volume={137}, },
  date={1998},
  pages={337--405},
}

\bib{beren2}{article}{
  author={Baire, Ren\'{e}},
  title={Sur les fonctions de variables r\'eelles},
  journal={Ann. di Mat.},
  date={1899},
  pages={1--123},
  volume={3},
  number={3},
}

\bib{beren}{book}{
  author={Baire, Ren\'{e}},
  title={Le\c {c}ons sur les fonctions discontinues},
  language={French},
  series={Les Grands Classiques Gauthier-Villars},
  note={Reprint of the 1905 original},
  publisher={\'{E}ditions Jacques Gabay, Sceaux},
  date={1995},
  pages={viii+65},
}

\bib{barbu}{book}{
  author={Barbu, Viorel},
  author={Precupanu, Teodor},
  title={Convexity and optimization in Banach spaces},
  series={Springer Monographs in Mathematics},
  edition={4},
  publisher={Springer, Dordrecht},
  date={2012},
  pages={xii+368},
}

\bib{bauer1}{article}{
  author={Bauer, Andrej},
  title={An injection from the Baire space to natural numbers},
  journal={Math. Structures Comput. Sci.},
  volume={25},
  date={2015},
  number={7},
  pages={1484--1489},
}

\bib{bethweter}{book}{
  author={Beth, E. W.},
  title={Semantic entailment and formal derivability},
  series={Mededelingen der koninklijke Nederlandse Akademie van Wetenschappen, afd. Letterkunde. Nieuwe Reeks, Deel 18, No. 13},
  publisher={N. V. Noord-Hollandsche Uitgevers Maatschappij, Amsterdam},
  date={1955},
  pages={34},
}

\bib{bloemeken}{article}{
  author={Blumberg, Henry},
  title={New properties of all real functions},
  journal={Trans. Amer. Math. Soc.},
  volume={24},
  date={1922},
  number={2},
  pages={113--128},
}

\bib{bish1}{book}{
  author={Bishop, Errett},
  title={Foundations of constructive analysis},
  publisher={McGraw-Hill},
  date={1967},
  pages={xiii+370},
}

\bib{duboiske}{book}{
  author={du Bois-Reymond, Paul},
  title={Die allgemeine Functionentheorie I},
  language={German},
  note={Part I, reproduction of the 1882 original with afterword and selected bibliography by Detlef Laugwitz},
  publisher={Wissenschaftliche Buchgesellschaft, Darmstadt},
  date={1968},
  pages={xiv+296},
}

\bib{mazout}{book}{
  author={Bressoud, David M.},
  title={A radical approach to Lebesgue's theory of integration},
  series={MAA Textbooks},
  publisher={Cambridge University Press, Cambridge},
  date={2008},
  pages={xiv+329},
}

\bib{opborrelen2}{book}{
  author={Borel, E.},
  title={Le\c {c}ons sur la th\'eorie des fonctions},
  year={1898},
  publisher={Gauthier-Villars, Paris},
  pages={pp.\ 136},
}

\bib{brownvi}{article}{
  author={Brown, Douglas K.},
  author={Giusto, Mariagnese},
  author={Simpson, Stephen G.},
  title={Vitali's theorem and WWKL},
  journal={Arch. Math. Logic},
  volume={41},
  date={2002},
  number={2},
  pages={191--206},
}

\bib{browner}{article}{
  author={Brown, Douglas K.},
  title={Notions of compactness in weak subsystems of second order arithmetic},
  conference={ title={Reverse mathematics 2001}, },
  book={ series={Lect. Notes Log.}, volume={21}, publisher={Assoc. Symbol. Logic}, },
  date={2005},
  pages={47--66},
}

\bib{boekskeopendoen}{book}{
  author={Buchholz, Wilfried},
  author={Feferman, Solomon},
  author={Pohlers, Wolfram},
  author={Sieg, Wilfried},
  title={Iterated inductive definitions and subsystems of analysis},
  series={LNM 897},
  publisher={Springer},
  date={1981},
  pages={v+383},
}

\bib{cantor1}{article}{
  author={Cantor, Georg},
  title={Ueber eine Eigenschaft des Inbegriffs aller reellen algebraischen Zahlen},
  journal={J. Reine Angew. Math.},
  volume={77},
  date={1874},
  pages={258--262},
}

\bib{cantor2}{article}{
  author={Cantor, Georg},
  journal={Journal f\"ur die reine und angewandte Mathematik},
  pages={242-258},
  title={Ein Beitrag zur Mannigfaltigkeitslehre.},
  volume={84},
  year={1877},
}

\bib{cantorb}{article}{
  author={Cantor, Georg},
  journal={Mathematische Annalen},
  title={Ueber unendliche, lineare Punktmannichfaltigkeite},
  volume={17-23},
  note={Published in parts: 1879-1884},
}

\bib{cantor3}{book}{
  title={Mitteilungen zur Lehre vom Transfiniten},
  author={Cantor, Georg},
  number={v. 1},
  year={1887},
  publisher={Pfeffer},
  pages={pp.\ 67},
}

\bib{cantor33}{book}{
  author={Cantor, Georg},
  title={Gesammelte Abhandlungen mathematischen und philosophischen Inhalts},
  note={Reprint of the 1932 original},
  publisher={Springer},
  date={1980},
  pages={vii+489},
}

\bib{koolsla}{article}{
  author={Carslaw, H.S.},
  title={{Term-by-term integration of infinite series.}},
  journal={{The Mathematical Gazette}},
  volume={13},
  pages={437--441},
  year={1927},
  publisher={Cambridge University Press, Cambridge; Mathematical Association (MA), Leicester},
}

\bib{cohen1}{article}{
  author={Cohen, Paul},
  title={The independence of the continuum hypothesis},
  journal={Proc. Nat. Acad. Sci. U.S.A.},
  volume={50},
  date={1963},
  pages={1143--1148},
}

\bib{cohen2}{article}{
  author={Cohen, Paul},
  title={The independence of the continuum hypothesis. II},
  journal={Proc. Nat. Acad. Sci. U.S.A.},
  volume={51},
  date={1964},
  pages={105--110},
}

\bib{cousin1}{article}{
  author={Cousin, Pierre},
  title={Sur les fonctions de $n$ variables complexes},
  journal={Acta Math.},
  volume={19},
  date={1895},
  pages={1--61},
}

\bib{dauben1}{book}{
  author={Dauben, Joseph Warren},
  title={Georg Cantor},
  note={His mathematics and philosophy of the infinite},
  publisher={Princeton University Press, Princeton, NJ},
  date={1990},
  pages={xiv+404},
}

\bib{devil}{book}{
  author={Devlin, Keith J.},
  title={Constructibility},
  series={Perspectives in Mathematical Logic},
  publisher={Springer},
  date={1984},
  pages={xi+425},
}

\bib{diendien}{article}{
  author={Diener, Hannes},
  title={Variations on a theme by Ishihara},
  journal={Math. Structures Comput. Sci.},
  volume={25},
  date={2015},
  number={7},
  pages={1569--1577},
}

\bib{dies}{article}{
  author={Diestel, Joe},
  author={Swart, Johan},
  title={The Riesz theorem},
  conference={ title={Handbook of measure theory, Vol. I, II}, },
  book={ publisher={North-Holland, Amsterdam}, },
  date={2002},
  pages={401--447},
}

\bib{dinipi}{book}{
  author={U. {Dini}},
  title={{Fondamenti per la teorica delle funzioni di variabili reali}},
  year={1878},
  publisher={{Nistri, Pisa}},
}

\bib{damirzoo}{misc}{
  author={Dzhafarov, Damir D.},
  title={Reverse Mathematics Zoo},
  note={\url {http://rmzoo.uconn.edu/}},
}

\bib{ewa}{collection}{
  author={Ewald, William (ed.)},
  title={From Kant to Hilbert: a source book in the foundations of mathematics. Vol. I, II},
  series={Oxford Science Publications},
  publisher={Oxford University Press},
  date={1996},
  pages={Vol. I: xviii and 648+xxviii pp.; Vol. II: pp. i--xvi+649--1340},
}

\bib{littlefef}{book}{
  author={Feferman, Solomon},
  title={How a Little Bit goes a Long Way: Predicative Foundations of Analysis},
  year={2013},
  note={unpublished notes from 1977-1981 with updated introduction, \url {https://math.stanford.edu/~feferman/papers/pfa(1).pdf}},
}

\bib{nofega}{book}{
  author={Ferreir\'{o}s, Jos\'{e}},
  title={Labyrinth of thought},
  edition={2},
  note={A history of set theory and its role in modern mathematics},
  publisher={Birkh\"{a}user Verlag, Basel},
  date={2007},
  pages={xxvi+466},
}

\bib{fourchette}{article}{
  author={Fr\'{e}chet, Maurice},
  title={Sur quelques points du Calcul Fonctionel},
  journal={Rend. Circ. Maum. Palermo},
  volume={XXII},
  date={1906},
  pages={1--72},
}

\bib{fried}{article}{
  author={Friedman, Harvey},
  title={Some systems of second order arithmetic and their use},
  conference={ title={Proceedings of the International Congress of Mathematicians (Vancouver, B.\ C., 1974), Vol.\ 1}, },
  book={ },
  date={1975},
  pages={235--242},
}

\bib{fried2}{article}{
  author={Friedman, Harvey},
  title={ Systems of second order arithmetic with restricted induction, I \& II (Abstracts) },
  journal={Journal of Symbolic Logic},
  volume={41},
  date={1976},
  pages={557--559},
}

\bib{supergandy}{article}{
  author={Gandy, Robin},
  title={General recursive functionals of finite type and hierarchies of functions},
  journal={Ann. Fac. Sci. Univ. Clermont-Ferrand No.},
  volume={35},
  date={1967},
  pages={5--24},
}

\bib{goeset}{article}{
  author={G\"odel, Kurt},
  title={The Consistency of the Axiom of Choice and of the Generalized Continuum-Hypothesis},
  journal={Proceedings of the National Academy of Science},
  year={1938},
  volume={24},
  number={12},
  pages={556-557},
}

\bib{gorkoen}{article}{
  author={Gordon, Russell A.},
  title={A convergence theorem for the Riemann integral},
  journal={Math. Mag.},
  volume={73},
  date={2000},
  number={2},
  pages={141--147},
}

\bib{grayk}{article}{
  author={Gray, Robert},
  title={Georg Cantor and transcendental numbers},
  journal={Amer. Math. Monthly},
  volume={101},
  date={1994},
  number={9},
  pages={819--832},
}

\bib{hanky}{article}{
  author={Hankel, Herrmann},
  title={Untersuchungen \"{u}ber die unendlich oft oscillirenden und unstetigen Functionen},
  language={German},
  journal={Math. Ann.},
  volume={20},
  date={1882},
  number={1},
  pages={63--112},
}

\bib{harny2}{article}{
  author={Harnack, Axel},
  title={Vereinfachung der Beweise in der Theorie der Fourier'schen Reihe},
  language={German},
  journal={Math. Ann.},
  volume={19},
  date={1881},
  number={2},
  pages={235--279},
}

\bib{harny}{article}{
  author={Harnack, Axel},
  title={Ueber den Inhalt von Punktmengen},
  journal={Math. Ann.},
  volume={25},
  year={1885},
  pages={241-250},
}

\bib{hartleycountable}{article}{
  author={Hartley, John P.},
  title={The countably based functionals},
  journal={J. Symbolic Logic},
  volume={48},
  date={1983},
  number={2},
  pages={458--474},
}

\bib{kakeye}{book}{
  author={Hawkins, Thomas},
  title={Lebesgue's theory of integration},
  edition={2},
  note={Its origins and development},
  publisher={AMS Chelsea Publishing, Providence, RI},
  date={2001},
  pages={xvi+227},
}

\bib{hellyeah}{article}{
  author={Helly, Eduard},
  title={{\"Uber lineare Funktionaloperationen.}},
  journal={{Wien. Ber.}},
  volume={121},
  pages={265--297},
  year={1912},
  publisher={Akademie der Wissenschaften in Wien, Wien},
}

\bib{hilbertlist}{article}{
  author={Hilbert, David},
  title={Mathematical problems},
  note={Reprinted from Bull. Amer. Math. Soc. {\bf 8} (1902), 437--479},
  journal={Bull. Amer. Math. Soc. (N.S.)},
  volume={37},
  date={2000},
  number={4},
  pages={407--436},
}

\bib{hilbertendlich}{article}{
  author={Hilbert, David},
  title={\"{U}ber das Unendliche},
  language={German},
  journal={Math. Ann.},
  volume={95},
  date={1926},
  number={1},
  pages={161--190},
}

\bib{ConAC}{book}{
  author={Howard, Paul},
  author={Rubin, Jean E.},
  title={Consequences of the axiom of choice},
  series={Mathematical Surveys and Monographs},
  volume={59},
  publisher={American Mathematical Society, Providence, RI},
  date={1998},
  pages={viii+432},
}

\bib{hrbacekjech}{book}{
  author={Hrbacek, Karel},
  author={Jech, Thomas},
  title={Introduction to set theory},
  series={Monographs and Textbooks in Pure and Applied Mathematics},
  volume={220},
  edition={3},
  publisher={Marcel Dekker, Inc., New York},
  date={1999},
  pages={xii+291},
}

\bib{hunterapp}{book}{
  author={Hunter, John K.},
  author={Nachtergaele, Bruno},
  title={Applied analysis},
  publisher={World Scientific Publishing Co., Inc., River Edge, NJ},
  date={2001},
  pages={xiv+439},
}

\bib{hunterphd}{book}{
  author={Hunter, James},
  title={Higher-order reverse topology},
  note={Thesis (Ph.D.)--The University of Wisconsin - Madison},
  publisher={ProQuest LLC, Ann Arbor, MI},
  date={2008},
  pages={97},
}

\bib{reim}{book}{
  author={Katz, Matthew},
  author={Reimann, Jan},
  title={An introduction to Ramsey theory},
  series={Student Mathematical Library},
  volume={87},
  note={Fast functions, infinity, and metamathematics},
  publisher={American Mathematical Society, Providence, RI; Mathematics Advanced Study Semesters, University Park, PA},
  date={2018},
  pages={xiv+207},
}

\bib{ooskelly}{book}{
  author={Kelley, John L.},
  title={General topology},
  note={Reprint of the 1955 edition; Graduate Texts in Mathematics, No. 27},
  publisher={Springer-Verlag},
  date={1975},
  pages={xiv+298},
}

\bib{kesteisdenbeste}{article}{
  author={Kestelman, H.},
  title={Riemann Integration of Limit Functions},
  journal={Amer. Math. Monthly},
  volume={77},
  date={1970},
  number={2},
  pages={182--187},
}

\bib{search}{article}{
  title={Searching for an analogue of $\textsf {ATR}_{0}$ in the weihrauch lattice},
  journal={Journal of Symbolic Logic},
  publisher={Cambridge University Press},
  author={Kihara, Takayuki},
  author={Marcone, Alberto},
  author={Pauly, Arno},
  year={2020},
  pages={1--37},
}

\bib{kleeneS1S9}{article}{
  author={Kleene, Stephen C.},
  title={Recursive functionals and quantifiers of finite types. I},
  journal={Trans. Amer. Math. Soc.},
  volume={91},
  date={1959},
  pages={1--52},
}

\bib{kleine}{book}{
  author={Kleiner, Israel},
  title={Excursions in the history of mathematics},
  publisher={Birkh\"auser/Springer},
  date={2012},
}

\bib{koning147}{article}{
  author={K\"{o}nig, D\'{e}nes},
  title={\"Uber eine Schlussweise aus dem Endlichen ins Unendliche.},
  journal={Acta Litterarum ac Scientarum Ser. Sci. Math. Szeged},
  volume={3},
  date={1927},
  pages={121--130},
}

\bib{kohlenbach4}{article}{
  author={Kohlenbach, Ulrich},
  title={Foundational and mathematical uses of higher types},
  conference={ title={Reflections on the foundations of mathematics}, },
  book={ series={Lect. Notes Log.}, volume={15}, publisher={ASL}, },
  date={2002},
  pages={92--116},
}

\bib{kohlenbach2}{article}{
  author={Kohlenbach, Ulrich},
  title={Higher order reverse mathematics},
  conference={ title={Reverse mathematics 2001}, },
  book={ series={Lect. Notes Log.}, volume={21}, publisher={ASL}, },
  date={2005},
  pages={281--295},
}

\bib{kohlenbach3}{book}{
  author={Kohlenbach, Ulrich},
  title={Applied proof theory: proof interpretations and their use in mathematics},
  series={Springer Monographs in Mathematics},
  publisher={Springer-Verlag},
  place={Berlin},
  date={2008},
  pages={xx+532},
}

\bib{kolliek}{book}{
  author={Kolmogorov, A. N.},
  title={Foundations of the Theory of Probability},
  publisher={Chelsea Publishing Company, New York, N. Y.},
  date={1950},
  pages={viii+71},
}

\bib{kreupel}{article}{
  author={Kreuzer, Alexander P.},
  title={Bounded variation and the strength of Helly's selection theorem},
  journal={Log. Methods Comput. Sci.},
  volume={10},
  date={2014},
  number={4},
  pages={4:16, 15},
}

\bib{kruisje}{article}{
  author={Kreuzer, Alexander P.},
  title={Measure theory and higher order arithmetic},
  journal={Proc. Amer. Math. Soc.},
  volume={143},
  date={2015},
  number={12},
  pages={5411--5425},
}

\bib{kunen}{book}{
  author={Kunen, Kenneth},
  title={Set theory},
  series={Studies in Logic},
  volume={34},
  publisher={College Publications, London},
  date={2011},
  pages={viii+401},
}

\bib{longmann}{book}{
  author={Longley, John},
  author={Normann, Dag},
  title={Higher-order Computability},
  year={2015},
  publisher={Springer},
  series={Theory and Applications of Computability},
}

\bib{lucky}{article}{
  author={Luckhardt, Horst},
  title={The real elements in a consistency proof for simple type theory. I},
  conference={ title={$\vDash $ ISILC Proof Theory Symposion}, address={Proc. Internat. Summer Inst. and Logic Colloq., Kiel}, date={1974}, },
  book={ publisher={Springer, Berlin}, },
  date={1975},
  pages={233--256. Lecture Notes in Math., Vol. 500},
}

\bib{luxeternam}{article}{
  author={Luxemburg, W. A. J.},
  title={Arzel\`a's dominated convergence theorem for the Riemann integral},
  journal={Amer. Math. Monthly},
  volume={78},
  date={1971},
  pages={970--979},
}

\bib{demol}{book}{
   author={Moll, Victor H.},
   title={Numbers and functions},
   series={Student Mathematical Library},
   volume={65},
   publisher={American Mathematical Society},
   date={2012},
   pages={xxiv+504},
}

\bib{mooreICM}{article}{
  author={Moore, E. H.},
  title={On a Form of General Analysis with Aplication to Linear Differential and Integral Equations},
  journal={Atti IV Cong. Inter. Mat. (Roma,1908)},
  volume={2},
  date={1909},
  pages={98--114},
}

\bib{moorelimit1}{book}{
  author={Moore, E. H.},
  title={Introduction to a Form of General Analysis},
  journal={The New Haven Mathematical Colloquium},
  publisher={Yale University Press},
  date={1910},
  pages={1--150},
}

\bib{moorelimit2}{article}{
  author={Moore, E. H.},
  journal={Proceedings of the National Academy of Sciences of the United States of America},
  number={12},
  pages={628--632},
  publisher={National Academy of Sciences},
  title={Definition of Limit in General Integral Analysis},
  volume={1},
  year={1915},
}

\bib{moorelimit4}{inproceedings}{
  author={Moore, E. H.},
  title={On Power Series in General Analysis},
  booktitle={Festschrift David Hilbert zu Seinem Sechzigsten Geburtstag am 23. Januar 1922, Springer},
  year={1922},
  publisher={Springer},
  pages={355--364},
}

\bib{moorelimit3}{article}{
  author={Moore, E. H.},
  title={On power series in general analysis},
  journal={Math. Ann.},
  volume={86},
  date={1922},
  number={1-2},
  pages={30--39},
}

\bib{moorsmidje}{article}{
  author={Moore, E. H.},
  author={Smith, H.},
  title={A General Theory of Limits},
  journal={Amer. J. Math.},
  volume={44},
  date={1922},
  pages={102--121},
}

\bib{mullingitover}{book}{
  author={Muldowney, P.},
  title={A general theory of integration in function spaces, including Wiener and Feynman integration},
  volume={153},
  publisher={Longman Scientific \& Technical, Harlow; John Wiley},
  date={1987},
  pages={viii+115},
}

\bib{mummy}{article}{
  author={Mummert, Carl},
  author={Simpson, Stephen G.},
  title={Reverse mathematics and $\Pi _2^1$ comprehension},
  journal={Bull. Symbolic Logic},
  volume={11},
  date={2005},
  number={4},
  pages={526--533},
}

\bib{mummyphd}{book}{
  author={Mummert, Carl},
  title={On the reverse mathematics of general topology},
  note={Thesis (Ph.D.)--The Pennsylvania State University},
  publisher={ProQuest LLC, Ann Arbor, MI},
  date={2005},
  pages={109},
}

\bib{mummymf}{article}{
  author={Mummert, Carl},
  title={Reverse mathematics of MF spaces},
  journal={J. Math. Log.},
  volume={6},
  date={2006},
  number={2},
  pages={203--232},
}

\bib{nieyo}{article}{
  title={The reverse mathematics of theorems of Jordan and Lebesgue},
  journal={The Journal of Symbolic Logic},
  publisher={Cambridge University Press},
  author={Nies, Andr\'e},
  author={Triplett, Marcus A.},
  author={Yokoyama, Keita},
  year={2021},
  pages={1--18},
}

\bib{dagcie18}{article}{
  author={Normann, Dag},
  title={Functionals of Type 3 as Realisers of Classical Theorems in Analysis},
  year={2018},
  journal={Proceedings of CiE18, Lecture Notes in Computer Science 10936},
  pages={318--327},
}

\bib{dagnmi}{article}{
  author={Normann, Dag},
  title={Computability and Non-monotone induction },
  journal={Submitted, arXiv: \url {https://arxiv.org/abs/2006.03389}},
  date={2020},
  pages={pp.\ 41},
}

\bib{dagsam}{article}{
  author={Normann, Dag},
  author={Sanders, Sam},
  title={Nonstandard Analysis, Computability Theory, and their connections},
  journal={Journal of Symbolic Logic},
  volume={84},
  number={4},
  pages={1422--1465},
  date={2019},
}

\bib{dagsamII}{article}{
  author={Normann, Dag},
  author={Sanders, Sam},
  title={The strength of compactness in Computability Theory and Nonstandard Analysis},
  journal={Annals of Pure and Applied Logic, Article 102710},
  volume={170},
  number={11},
  date={2019},
}

\bib{dagsamIII}{article}{
  author={Normann, Dag},
  author={Sanders, Sam},
  title={On the mathematical and foundational significance of the uncountable},
  journal={Journal of Mathematical Logic, \url {https://doi.org/10.1142/S0219061319500016}},
  date={2019},
}

\bib{dagsamVI}{article}{
  author={Normann, Dag},
  author={Sanders, Sam},
  title={Representations in measure theory},
  journal={Submitted, arXiv: \url {https://arxiv.org/abs/1902.02756}},
  date={2019},
}

\bib{dagsamVII}{article}{
  author={Normann, Dag},
  author={Sanders, Sam},
  title={Open sets in Reverse Mathematics and Computability Theory},
  journal={Journal of Logic and Computability},
  volume={30},
  number={8},
  date={2020},
  pages={pp.\ 40},
}

\bib{dagsamV}{article}{
  author={Normann, Dag},
  author={Sanders, Sam},
  title={Pincherle's theorem in reverse mathematics and computability theory},
  journal={Ann. Pure Appl. Logic},
  volume={171},
  date={2020},
  number={5},
  pages={102788, 41},
}

\bib{dagsamIX}{article}{
  author={Normann, Dag},
  author={Sanders, Sam},
  title={The Axiom of Choice in Computability Theory and Reverse Mathematics},
  journal={To appear in \emph {Journal of Logic and Computation}},
  pages={pp.\ 25},
  date={2021},
}

\bib{dagsamXI}{article}{
  author={Normann, Dag},
  author={Sanders, Sam},
  title={On robust theorems due to Bolzano, Weierstrass, and Cantor in Reverse Mathematics},
  journal={See \url {https://arxiv.org/abs/2102.04787}},
  pages={pp.\ 30},
  date={2021},
}

\bib{dagsamXII}{article}{
  author={Normann, Dag},
  author={Sanders, Sam},
  title={Betwixt Turing and Kleene},
  journal={LNCS 13137, proceedings of LFCS22},
  pages={pp.\ 18},
  date={2022},
}

\bib{dagsamXIII}{article}{
  author={Normann, Dag},
  author={Sanders, Sam},
  title={On the computational properties of basic mathematical notions},
  journal={Submitted, arxiv: \url {https://arxiv.org/abs/2203.05250}},
  pages={pp.\ 43},
  date={2022},
}

\bib{olipo1}{article}{
  author={Oliva, Paulo},
  author={Powell, Thomas},
  title={Bar recursion over finite partial functions},
  journal={Ann. Pure Appl. Logic},
  volume={168},
  date={2017},
  number={5},
  pages={887--921},
}

\bib{rathjenICM}{article}{
  author={Rathjen, Michael},
  title={The art of ordinal analysis},
  conference={ title={International Congress of Mathematicians. Vol. II}, },
  book={ publisher={Eur. Math. Soc., Z\"urich}, },
  date={2006},
}

\bib{fitzro}{book}{
  author={Royden, H. L.},
  author={Fitzpatrick, P.M.},
  title={Real analysis},
  edition={4},
  publisher={Pearson Education},
  date={2010},
  pages={pp.\ 505},
}

\bib{yamayamaharehare}{article}{
  author={Sakamoto, Nobuyuki},
  author={Yamazaki, Takeshi},
  title={Uniform versions of some axioms of second order arithmetic},
  journal={MLQ Math. Log. Q.},
  volume={50},
  date={2004},
  number={6},
  pages={587--593},
}

\bib{sayo}{article}{
  author={Sanders, Sam},
  author={Yokoyama, Keita},
  title={The {D}irac delta function in two settings of {R}everse {M}athematics},
  year={2012},
  journal={Archive for Mathematical Logic},
  volume={51},
  number={1},
  pages={99-121},
}

\bib{samph}{article}{
  author={Sanders, Sam},
  title={Plato and the foundations of mathematics},
  year={2019},
  journal={Submitted, arxiv: \url {https://arxiv.org/abs/1908.05676}},
  pages={pp.\ 40},
}

\bib{samFLO2}{article}{
  author={Sanders, Sam},
  title={Lifting recursive counterexamples to higher-order arithmetic },
  year={2020},
  journal={Proceedings of LFCS2020, Lecture Notes in Computer Science 11972, Springer},
  pages={249-267},
}

\bib{sahotop}{article}{
  author={Sanders, Sam},
  title={Reverse Mathematics of topology: dimension, paracompactness, and splittings},
  year={2020},
  journal={Notre Dame Journal for Formal Logic},
  pages={537-559},
  volume={61},
  number={4},
}

\bib{samrecount}{article}{
  author={Sanders, Sam},
  title={Lifting countable to uncountable mathematics},
  year={2021},
  journal={Information and Computation, DOI: \url {https://doi.org/10.1016/j.ic.2021.104762}},
  pages={pp.\ 25},
}

\bib{samrep}{article}{
  author={Sanders, Sam},
  title={Representations and the foundations of mathematics},
  year={2022},
  journal={To appear in \emph {Notre Dame Journal for Formal Logic}, arxiv: \url {https://arxiv.org/abs/1910.07913}},
}

\bib{samcie22}{article}{
  author={Sanders, Sam},
  title={Reverse Mathematics of the uncountability of $\R $},
  year={2022},
  journal={Submitted, arxiv: \url {https://arxiv.org/abs/2203.05292}},
  pages={pp.\ 12},
}

\bib{paultoo}{article}{
  author={Shafer, Paul},
  title={The strength of compactness for countable complete linear orders},
  journal={Computability},
  volume={9},
  date={2020},
  number={1},
  pages={25--36},
}

\bib{grotesier2}{article}{
  author={Sierpi\'{n}ski, Waclaw},
  title={Sur un probl\`eme de la th\'{e}orie des relations},
  language={French},
  journal={Ann. Scuola Norm. Super. Pisa Cl. Sci. (2)},
  volume={2},
  date={1933},
  number={3},
  pages={285--287},
}

\bib{simpson1}{collection}{
  title={Reverse mathematics 2001},
  series={Lecture Notes in Logic},
  volume={21},
  editor={Simpson, Stephen G.},
  publisher={ASL},
  place={La Jolla, CA},
  date={2005},
  pages={x+401},
}

\bib{simpson2}{book}{
  author={Simpson, Stephen G.},
  title={Subsystems of second order arithmetic},
  series={Perspectives in Logic},
  edition={2},
  publisher={CUP},
  date={2009},
  pages={xvi+444},
}

\bib{sigohi}{incollection}{
  author={Simpson, Stephen G.},
  title={{The G\"odel hierarchy and reverse mathematics.}},
  booktitle={{Kurt G\"odel. Essays for his centennial}},
  pages={109--127},
  year={2010},
  publisher={Cambridge University Press},
}

\bib{snutg}{article}{
  author={Smith, Henry J. Stephen},
  title={On the Integration of Discontinuous Functions},
  journal={Proc. Lond. Math. Soc.},
  volume={6},
  date={1874/75},
  pages={140--153},
}

\bib{sohrab}{book}{
  author={Sohrab, Houshang H.},
  title={Basic real analysis},
  edition={2},
  publisher={Birkh\"{a}user/Springer, New York},
  date={2014},
  pages={xii+683},
}

\bib{stillerebron}{book}{
  author={Stillwell, John},
  title={Roads to infinity},
  note={The mathematics of truth and proof},
  publisher={A K Peters},
  date={2010},
  pages={xii+203},
}

\bib{stilstebron}{book}{
  author={Stillwell, John},
  title={The real numbers},
  series={Undergraduate Texts in Mathematics},
  note={An introduction to set theory and analysis},
  publisher={Springer, Cham},
  date={2013},
  pages={xvi+244},
}

\bib{stillebron}{book}{
  author={Stillwell, J.},
  title={Reverse mathematics, proofs from the inside out},
  pages={xiii + 182},
  year={2018},
  publisher={Princeton Univ.\ Press},
}

\bib{zwette}{book}{
  author={Swartz, Charles},
  title={Introduction to gauge integrals},
  publisher={World Scientific},
  date={2001},
  pages={x+157},
}

\bib{taote}{book}{
  author={Tao, Terence},
  title={Structure and randomness},
  note={Pages from year one of a mathematical blog},
  publisher={American Mathematical Society, Providence, RI},
  date={2008},
  pages={xii+298},
}

\bib{taoeps}{book}{
  author={Tao, Terence},
  title={An epsilon of room, I: real analysis},
  series={Graduate Studies in Mathematics},
  volume={117},
  publisher={American Mathematical Society, Providence, RI},
  date={2010},
}

\bib{taomes}{book}{
  author={Tao, Terence},
  title={An introduction to measure theory},
  series={Graduate Studies in Mathematics},
  volume={126},
  publisher={American Mathematical Society, Providence, RI},
  date={2011},
  pages={xvi+206},
}

\bib{thomon3}{article}{
  author={Thomson, B.},
  title={Monotone convergence theorem for the Riemann integral},
  journal={Amer. Math. Monthly},
  volume={117},
  date={2010},
  number={6},
  pages={547--550},
}

\bib{troelstra1}{book}{
  author={Troelstra, Anne Sjerp},
  title={Metamathematical investigation of intuitionistic arithmetic and analysis},
  note={Lecture Notes in Mathematics, Vol.\ 344},
  publisher={Springer Berlin},
  date={1973},
  pages={xv+485},
}

\bib{troeleke1}{book}{
  author={Troelstra, Anne Sjerp},
  author={van Dalen, Dirk},
  title={Constructivism in mathematics. Vol. I},
  series={Studies in Logic and the Foundations of Mathematics},
  volume={121},
  publisher={North-Holland},
  date={1988},
  pages={xx+342+XIV},
}

\bib{troeleke2}{book}{
  author={Troelstra, Anne Sjerp},
  author={van Dalen, Dirk},
  title={Constructivism in mathematics. Vol. II},
  series={Studies in Logic and the Foundations of Mathematics},
  volume={123},
  publisher={North-Holland},
  date={1988},
  pages={i-xviii+345-88},
}

\bib{tukey1}{book}{
  author={Tukey, John W.},
  title={Convergence and Uniformity in Topology},
  series={Annals of Mathematics Studies, no. 2},
  publisher={Princeton University Press, Princeton, N. J.},
  date={1940},
  pages={ix+90},
}

\bib{tur37}{article}{
  author={Turing, Alan},
  title={On computable numbers, with an application to the Entscheidungs-problem},
  year={1936},
  journal={Proceedings of the London Mathematical Society},
  volume={42},
  pages={230-265},
}

\bib{vadsiger}{book}{
   author={Vatssa, B.S.},
   title={Discrete Mathematics (4th edition)},
      publisher={New Age International},
   date={1993},
   pages={314},
}

\bib{vitaliorg}{article}{
  author={Vitali, Guiseppe},
  title={Sui gruppi di punti e sulle funzioni di variabili reali.},
  journal={Atti della Accademia delle Scienze di Torino, vol XLIII},
  date={1907},
  number={4},
  pages={229--247},
}

\bib{wica}{article}{
  author={Wikipedia contributors},
  journal={Wikipedia, The Free Encyclopedia, \url {https://en.wikipedia.org/wiki/Cantor\%27s_first_set_theory_article}},
  title={Cantor's first set theory article},
  year={2020},
}

\bib{wever}{article}{
  author={Weaver, George},
  title={K\"{o}nig's Infinity Lemma and Beth's tree theorem},
  journal={Hist. Philos. Logic},
  volume={38},
  date={2017},
  number={1},
  pages={48--56},
}

\bib{yokoyamaphd}{book}{
  author={Yokoyama, Keita},
  title={Standard and non-standard analysis in second order arithmetic},
  series={Tohoku Mathematical Publications},
  volume={34},
  note={PhD Thesis, Tohoku University, 2007},
  place={Sendai},
  date={2009},
  pages={iv+130},
  url={http://www.math.tohoku.ac.jp/tmj/PDFofTMP/tmp34.pdf},
}

\bib{young1}{article}{
  author={Young, W. H.},
  title={On Non-Uniform Convergence and Term-by-Term Integration of Series},
  journal={Proc. London Math. Soc. (2)},
  volume={1},
  date={1904},
  pages={89--102},
}

\end{biblist}
\end{bibdiv}

\bye